\providecommand{\U}[1]{\protect\rule{.1in}{.1in}}
\newtheorem{theorem}{Theorem}[section]
\newtheorem{corollary}[theorem]{Corollary}
\newtheorem{proposition}[theorem]{Proposition}
\newtheorem{lemma}[theorem]{Lemma}
\theoremstyle{definition}
\newtheorem{definition}[theorem]{Definition}
\theoremstyle{remark}
\numberwithin{equation}{section}
\begin{document}
\title{MONOTONE COMPLETE C*-ALGEBRAS AND GENERIC DYNAMICS}
\author{Kazuyuki SAIT\^{O}}
\address{2-7-5 Yoshinari, Aoba-ku, Sendai, 989-3205, Japan }
\email{yk.saito@beige.plala.or.jp}
\author{J.D. Maitland WRIGHT}
\address{Mathematics Institute, University of Aberdeen, Aberdeen AB24 3UE }
\curraddr{Christ Church, University of Oxford, Oxford 0X1 1DP}
\email{j.d.m.wright@abdn.ac.uk ; maitland.wright@chch.ox.ac.uk}
\thanks{}
\thanks{}
\subjclass[2010]{Primary46L99,37B99}
\date{}
\dedicatory{ }
\begin{abstract}
Let $S$ be the Stone space of a complete, non-atomic, Boolean algebra. Let $G$
be a countably infinite group of homeomorphisms of $S$. Let the action of $G$
on $S$ have a free dense orbit. Then we prove that, on a generic subset of
$S,$ the orbit equivalence relation coming from this action can also be
obtained by an action of the Dyadic Group, $\bigoplus\mathbb{Z}_{2}.$ As an
application, we show that if $M$ is the monotone cross product $C^{\ast}%
$-algebra, arising from the natural action of $G$ on $C(S)$, and if the
projection lattice in $C(S)$ is countably generated then $M$ can be
approximated by an increasing sequence of finite dimensional subalgebras. On
each $S,$ in a class considered earlier, we construct a natural action of
$\bigoplus\mathbb{Z}_{2}$ with a free, dense orbit. Using this we exhibit\ a
huge family of small monotone complete $C^{\ast}$-algebras, $(B_{\lambda
},\lambda\in\Lambda)$ with the following properties: (i) Each $B_{\lambda}$ is
a Type III factor which is not a von Neumann algebra. (ii) Each $B_{\lambda}$
is a quotient of the Pedersen-Borel envelope of the Fermion algebra and hence
is strongly hyperfinite. The cardinality of $\Lambda$ is $2^{c}$, where
$c=2^{\aleph_{0}}.$ When $\lambda\neq\mu$ then $B_{\lambda}$ and $B_{\mu}$
take different values in the classification semi-group; in particular, they
cannot be isomorphic.

\end{abstract}
\maketitle









\section{Introduction: monotone complete $C^{\ast}$-algebras}

Let $A$ be a $C^{\ast}$-algebra. Its self-adjoint part, $A_{sa},$ is a
partially ordered, real Banach space whose positive cone is $\{zz^{\ast}:z\in
A\}.$ If each upward directed, norm-bounded subset of $A_{sa},$ has a least
upper bound then $A$ is said to be \textit{monotone complete. }Each monotone
complete $C^{\ast}$-algebra has a unit element (this follows by considering
approximate units). Unless we specify otherwise, all $C^{\ast}$-algebras
considered in this note will possess a unit element. Every von Neumann algebra
is monotone complete but the converse is false.

Monotone complete $C^{\ast}$-algebras arise in several different areas. For
example, each injective operator system can be given the structure of a
monotone complete $C^{\ast}$-algebra, in a canonical way. Injective operator
spaces can be embedded as "corners" of monotone complete $C^{\ast}$-algebras,
see Theorem 6.1.3 and Theorem 6.1.6 \cite{m} and \cite{t,u}.

When a monotone complete $C^{\ast}$-algebra is commutative, its lattice of
projections is a complete Boolean algebra. Up to isomorphism, every complete
Boolean algebra arises in this way.

We recall that each commutative (unital) $C^{\ast}$-algebra can be identified
with $C(X),$ the algebra of complex valued continuous functions on some
compact Hausdorff space $X$. Then $C(X)$ is monotone complete precisely when
$X$ is \textit{extremally disconnected}, that is, the closure of each open
subset of $X$ is also open.

Monotone complete $C^{\ast}$-algebras are a generalisation of von Neumann
algebras. The theory of the latter is now very well advanced. In the seventies
the pioneering work of Connes, Takesaki and other giants of the subject
transformed our knowledge of von Neumann algebras, see \cite{zq}. By contrast,
the theory of monotone complete $C^{\ast}$-algebras is very incomplete with
many fundamental questions unanswered. But considerable progress has been made
in recent years.

This article follows on from \cite{zk} where we introduced a classification
semi-group for small monotone complete $C^{\ast}$-algebras which divides them
into $2^{c}$ distinct equivalence classes. But it is not necessary to have
read that paper in order to understand this one. Our aim is to be
comprehensible by anyone with a grounding in functional analysis and some
exposure to the more elementary parts of $C^{\ast}$-algebra theory, say, the
first chapter of \cite{zr}.

A monotone complete $C^{\ast}$-algebra is said to be a \textit{factor} if its
centre is one dimensional; we may regard factors as being as far removed as
possible from being commutative. Just as for von Neumann algebras, each
monotone complete factor is of Type I or Type II$_{1}$ or Type II$_{\infty},$
or Type III. Old results of Kaplansky \cite{z,za,zb} imply that each Type I
factor is a von Neumann algebra. This made it natural for him to ask if this
is true for every factor. The answer is "no", in general. We call a factor
which is not a von Neumann algebra \textit{wild.}

A $C^{\ast}$-algebra is \textit{separably representable} when it has an
isometric $\ast$-representation on a separable Hilbert space. As a consequence
of more general results, Wright \cite{zzb} showed that if a monotone complete
factor is separably representable (as a $C^{\ast}$-algebra) then it is a von
Neumann algebra. So, in these circumstances, Kaplansky's question has a
positive answer.

Throughout this note, a topological space is said to be \textit{separable} if
it has a countable dense subset; this is a weaker property than having a
countable base of open sets. (But if the topology is metrisable, they
coincide.) Akemann \cite{a} showed that a von Neumann algebra has a faithful
representation on a separable Hilbert space if, and only if its state space is separable.

We call a $C^{\ast}$-algebra with a separable state space \textit{almost
separably representable}. Answering a question posed by Akemann, see \cite{a},
Wright \cite{zzf} gave examples of monotone complete $C^{\ast}$-algebras which
have separable state spaces but which are NOT separably representable.

If a monotone complete factor $M$ possesses a strictly positive functional and
is not a von Neumann algebra then, as an application of a more \ general
result in \cite{zzg} $M$ must be of Type III, see also \cite{zf}. Whenever an
algebra is almost separably representable then it possesses a strictly
positive functional. (See Corollary 3.2). So if a wild factor is almost
separably representable then it must be of Type III.

A (unital) $C^{\ast}$-algebra\ $A$\ is said to be \textit{small} if there
exists a unital complete isometry of $A$ into$\ L(H),$ where $H$ is a
separable Hilbert space. See \cite{zg} and \cite{v,zk}.

It turns out that $A$ is small if, and only if, $A\otimes M_{n}(%
\mathbb{C}
)$ has a separable state space for $n=1,2,...,$\cite{zg}. So clearly every
small $C^{\ast}$-algebra is almost separably representable. We do not know if
the converse is true, but it is true for monotone complete factors, \cite{zg}.

Examples of (small) wild factors were hard to find. The first examples were
due to Dyer \cite{k} and Takenouchi \cite{zp}. As a consequence of a strong
uniqueness theorem of Sullivan-Weiss-Wright \cite{zn}, it turned out that the
Dyer factor and the Takenouchi factor were isomorphic. See also \cite{ze}
where the Dyer factor was identified as a monotone cross product of the
Dixmier algebra by an action of the dyadic rationals.

Another method of finding wild factors was given by Wright \cite{zz}. He
showed that each $C^{\ast}$-algebra $A$\ could be embedded in its "regular
$\sigma-$completion", $\widehat{A}$. When $A$ is separably representable then
$\widehat{A}$ is monotone complete and almost separably representable.
Furthermore, when $A$ is infinite dimensional, unital and simple, then
$\widehat{A}$ is a wild factor. But it was very hard to distinguish between
these factors. Indeed one of the main results of \cite{zn} showed that an
apparently large class of wild factors were, in fact, a unique (hyperfinite)
factor. Some algebras were shown to be different in \cite{zh,w,zj}. In 2001 a
major breakthrough by Hamana \cite{v} showed that there were $2^{c}$
non-isomorphic (small) wild factors, where $c=2^{\aleph_{o}}.$ This pioneering
paper has not yet received as much attention as it deserves.

In \cite{zk} we introduced a quasi-ordering between monotone complete
$C^{\ast}$-algebras. From this quasi-ordering we defined an equivalence
relation and used this to construct a\ classification semi-group $\mathcal{W}
$ for a class of monotone complete $C^{\ast}$-algebras. This semi-group is
abelian, partially ordered, and has the Riesz decomposition property. For each
monotone complete, small $C^{\ast}$-algebra $A$ we assign a "normality
weight", $w(A)\in\mathcal{W}$. If $A$ and $\ B$ are algebras then $w(A)=w(B),$
precisely when these algebras are equivalent\textit{.} It turns out that
algebras which are very different\textit{\ }can be equivalent. In particular,
the von Neumann algebras correspond to the zero element of the semi-group. It
might have turned out that $\mathcal{W}$ is very small and fails to
distinguish between more than a few algebras. This is not so; the cardinality
of $\mathcal{W}$ is $2^{c},$ where $c=2^{\aleph_{0}}.$

One of the useful properties of $\mathcal{W}$ is that it can sometimes be used
to replace problems about factors by problems about commutative algebras
\cite{zk}. For example, let $G_{j}$ be a countable group acting freely and
ergodically on a commutative monotone complete algebra $A_{j}$ $(j=1,2).$ By a
cross-product construction using these group actions, we can obtain monotone
complete $C^{\ast}$-factors $B_{j}$ $(j=1,2).$ Then it is easy to show that
$wA_{j}=wB_{j}$. So if the commutative algebras $A_{1}$ and $A_{2}$ are not
equivalent, then $wB_{1}\neq wB_{2}.$ In particular, $B_{1}$and $B_{2}$ are
not isomorphic.

Influenced by $K-$theory, it is natural to wish to form the Grothendieck group
of the semi-group $\mathcal{W}$. This turns out to be futile, since this
Grothendieck group is trivial, because every element of $\mathcal{W}$ is
idempotent. By a known general theory \cite{r}, this implies that
$\mathcal{W}$ can be identified with a join semi-lattice. The Riesz
Decomposition Property for the semigroup turns out to be equivalent to the
semi-lattice being distributive. So the known theory of distributive join
semi-lattices can be applied to $\mathcal{W}$.

To each monotone complete $C^{\ast}$-algebra $A$ we associated a
\textit{spectroid} invariant $\partial A$\textit{\ \cite{zk}}. Just as a
spectrum is a set which encodes information about an operator, a spectroid
encodes information about a monotone complete $C^{\ast}$-algebra. It turns out
that equivalent algebras have the same spectroid. So the spectroid may be used
as a tool for classifying elements of $\mathcal{W}$. For a generalisation of
spectroid, see \cite{zv}.

One of the many triumphs of Connes in the theory of von Neumann algebras, was
to show that the injective von Neumann factors are precisely those which are
hyperfinite \cite{f}, see also \cite{zq}. It is natural to conjecture an
analogous result for wild factors. See \cite{e,zw} But this is not true. For,
by applying deep results of Hjorth and Kechris \cite{xa}, it is possible to
exhibit a small, wild, hyperfinite factor which is not injective. We shall
give details of this, and other more general results in a sequel to this paper.

When dealing with monotone complete $C^{\ast}$-algebras, saying precisely what
we mean by "hyperfinite", "strongly hyperfinite", "approximately finite
dimensional" and "nearly approximately finite dimensional", requires subtle
distinctions which are not needed in von Neumann algebra theory. See Section
12 for details.

(\# )Let $\Lambda$ be a set of cardinality $2^{c},$ where $c=2^{\aleph_{0}}.$
Then we showed in \cite{zk} that there exists a family of monotone complete
$C^{\ast}$-algebras $\{B_{\lambda}:\lambda\in\Lambda\}$ with the following
properties. Each $B_{\lambda}$ is a monotone complete factor of Type III, and
also a small $C^{\ast}$-algebra. For $\lambda\neq\mu,$ $B_{\lambda}$ and
$B_{\mu}$ have different spectroids and so $wB_{\lambda}\neq wB_{\mu}$ and, in
particular, $B_{\lambda}$ is \ not isomorphic to $B_{\mu}.$ We show, in
Section 12, that, by using the machinery constructed in this paper, we may
choose each $B_{\lambda}$ such that it is generated by an increasing sequence
of full matrix algebras.

\section{Introduction: Generic dynamics}

An elegant account of Generic Dynamics is given by Weiss \cite{zuw}; the term
first occurred in \cite{zn}. In these articles, the underlying framework is a
countable group of homeomorphisms acting on a complete separable metric space
with no isolated points (a perfect Polish space). This corresponds to dynamics
on a unique compact, Hausdorff, extremally disconnected space (the Stone space
of the complete Boolean algebra of regular open subsets of $%
\mathbb{R}
$).

Let $G$ be a countable group. Unless we specify otherwise, $G$ will always be
assumed to be infinite and equipped with the discrete topology. Let $X$ be a
Hausdorff topological space with no isolated points. Further suppose that $X$
is a Baire space i.e. such that the only meagre open set is the empty set.
(This holds if $X$ is compact or a G-delta subset of a compact Hausdorff space
or is homeomorphic to a complete separable metric space.) A subset $Y$\ of $X$
is said to be \textit{generic, }if $X\backslash Y$ is meagre.\textit{\ }

Let $\varepsilon$ be an action of $G$ on $X$\ as homeomorphisms of $X$.

In classical dynamics we would require the existence of a Borel measure on $X
$ which was $G$-invariant or quasi-invariant, and discard null sets. In
topological dynamics, no measure is required and no sets are discarded. In
generic dynamics, we discard meagre Borel sets.

We shall concentrate on the situation where, for some $x_{0}\in X,$ the orbit
$\{ \varepsilon_{g}(x_{0}):g\in G\}$ is dense in $X$. Of course this cannot
happen unless $X$ is separable. Let $S$ be the Stone space of the (complete)
Boolean algebra of regular open sets of $X.$ Then, see below, the action
$\varepsilon$ of $G$ on $X$ induces an action $\widehat{\varepsilon}$ of $G$
as homeomorphisms of $S;$ which will also have a dense orbit.

When, as in \cite{zuw}, $X$ is a perfect Polish space, then, as mentioned
above, $S$ is unique; it can be identified with the Stone space of the regular
open sets of $%
\mathbb{R}
.$ But if we let $X$ range over all separable compact subspaces of the
separable space, $2^{%
\mathbb{R}
},$ then we obtain $2^{c}$ essentially different $S$; where $S$ is compact,
separable and extremally disconnected. For each such $S,$ $C(S)$ is a
subalgebra of $\ell^{\infty}.$

Let $E$ be the relation of orbit equivalence on $S.$ That is, $sEt$, if, for
some group element $g,$ $\widehat{\varepsilon}_{g}(s)=t.$ Then we can
construct a monotone complete $C^{\ast}$-algebra $M_{E}$ from the orbit
equivalence relation. When there is a free dense orbit, the algebra will be a
factor with a maximal abelian subalgebra, $A,$ which is isomorphic to $C(S).$
There is always a faithful, normal, conditional expectation from $M_{E}$ onto
$A.$

For $f\in C(S),$ let $\gamma^{g}(f)=f\circ\widehat{\varepsilon}_{g^{-1}}$.
Then $g\rightarrow\gamma^{g}$ is an action of $G$ as automorphisms of $C(S).$
Then we can associate a monotone complete $C^{\ast}$-algebra $M(C(S),G),$ the
\textit{monotone cross-product }(see \cite{zp}) with this action. When the
action $\widehat{\varepsilon}$ is free, then $M(C(S),G)$ \ is naturally
isomorphic to $M_{E}$. In other words, the monotone cross-product does not
depend on the group, only on the orbit equivalence relation. This was a key
point in \cite{zn} where a strong uniqueness theorem was proved.

In this article we consider $2^{c}$ algebras $C(S),$ each taking different
values in the weight semi-group $\mathcal{W}$. (Here $c=2^{\aleph_{0}},$ the
cardinality of $%
\mathbb{R}
.$)

There is no uniqueness theorem but we do show the following. Let $G$ be a
countably infinite group. Let $\alpha$ be an action of $G$ as homeomorphisms
of $S$ and suppose this action has a single orbit which is dense and free.
Then, modulo meagre sets, the orbit equivalence relation obtained can also be
obtained by an action of $\bigoplus\mathbb{Z}_{2}$ as homeomorphisms of $S.$

This should be compared with the situation in classical dynamics. e.g. It is
shown in \cite{g} that any action by an amenable group is orbit equivalent to
\ an action of $%
\mathbb{Z}
.$ But, in general, non-amenable groups give rise to orbit equivalence
relations which do not come from actions of $%
\mathbb{Z}
.$

On each of $2^{c},$ essentially different, compact extremally disconnected
spaces we construct a natural action of $\bigoplus\mathbb{Z}_{2}$ with a free,
dense orbit. This gives rise to a family of monotone complete $C^{\ast}
$-algebras, $(B_{\lambda},\lambda\in\Lambda)$ with the properties (\#)
described above.

Let $E$ be the orbit equivalence relation arising from a free, ergodic action
of $G.$ Furthermore, suppose that the complete Boolean algebra of projections
in $C(S)$ is countably generated. Let $\mathcal{N}(M_{E}$ $)$ be the smallest
monotone closed $\ast-$subalgebra of $M_{E}$ which contains the normalising
unitaries of $A$ (that is the set of all unitaries $u$ such that $u^{\ast
}Au=A.$). Then $\mathcal{N}(M_{E}$ $)$ is an approximately finite dimensional
(AFD) factor. More precisely there is an increasing sequence of finite
dimensional, unital, $\ast-$subalgebras of $\mathcal{N}(M_{E}$ $),$ whose
union $\sigma-$generates $\mathcal{N}(M_{E}$)$.$ (In contrast to the situation
for von Neumann factors, we do not know whether we can always take these
finite dimensional subalgebras to be full matrix algebras.) As pointed out
above, we need to make a number of subtle distinctions when approximating
monotone complete algebras by finite dimensional subalgebras; see Section12
for details. For example $M_{E}$ is "nearly AFD". But in Section 11 we
construct huge numbers of examples of $\bigoplus\mathbb{Z}_{2}$ actions on
spaces $S$, which give rise to factors which we show to be strongly hyperfinite.

\section{Monotone $\sigma-$complete $C^{\ast}$-algebras}

Although our focus is on monotone complete $C^{\ast}$-algebras, we also need
to consider more general objects, the monotone $\sigma-$complete $C^{\ast}$-algebras.

A $C^{\ast}$-algebra is \textit{monotone} $\sigma-$\textit{complete} if each
norm bounded, monotone increasing sequence of self-adjoint elements has a
least upper bound.

\begin{lemma}
Let $A$ be a monotone $\sigma-$complete $C^{\ast}$-algebra. Let there exist a
positive linear functional $\mu:A\rightarrow%
\mathbb{C}
$ which is faithful. Then $A$ is monotone complete. Let $\Lambda$ be a
downward directed subset of $A_{sa}$ which is bounded below. Then there exists
a monotone decreasing sequence $(x_{n}),$ with each $x_{n}\in\Lambda$, such
that the greatest lower bound of $(x_{n}),$ $%
{\textstyle\bigwedge\limits_{n=1}^{\infty}}
x_{n},$ is the greatest lower bound of $\Lambda.$

\begin{proof}
See \cite{zm}.
\end{proof}
\end{lemma}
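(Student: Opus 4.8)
The plan is to derive everything from a single construction: reduce the meet of a directed set to the meet of a monotone sequence by using the real-valued functional to select a cofinal chain, and then let monotone $\sigma$-completeness supply the meet of that sequence. The role of faithfulness is precisely to upgrade $\sigma$-completeness to full monotone completeness. Observe first that the monotone completeness assertion is the special case of the second assertion applied to $-\Lambda$: if $\Gamma$ is upward directed and norm bounded then $-\Gamma$ is downward directed and bounded below, and $\bigvee\Gamma=-\bigwedge(-\Gamma)$. So it suffices to treat a downward directed, bounded below set $\Lambda\subseteq A_{sa}$. Fixing $x_{0}\in\Lambda$ and replacing $\Lambda$ by $\{x\in\Lambda:x\le x_{0}\}$ (which is cofinal downward and has the same lower bounds), then translating and rescaling, we may assume $0\le x\le 1$ for every $x\in\Lambda$.

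The functional that actually does the work is not $\mu$ itself but its normal part. By the normal--singular decomposition of positive functionals (valid on monotone $\sigma$-complete $C^{\ast}$-algebras) write $\mu=\mu_{0}+\mu_{s}$ with $\mu_{0}$ positive and normal and $\mu_{s}$ positive and singular. The one genuinely non-formal step, which I expect to be the main obstacle, is to show that faithfulness of $\mu$ forces $\mu_{0}$ to be faithful. The point is that a singular positive functional vanishes on a suitably large family of ``small'' elements (it is supported at the limit of a sequence of projections decreasing to $0$), so it cannot compensate for any place where $\mu_{0}$ degenerates; concretely, if $x\ge 0$ and $\mu_{0}(x)=0$ one must produce a decreasing sequence on which $\mu_{s}$ already vanishes in order to conclude $\mu(x)=0$ and hence $x=0$. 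Once $\mu_{0}$ is known to be faithful and normal we discard $\mu$ and work solely with $\mu_{0}$.

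With a faithful normal $\mu_{0}$ in hand, set $\gamma=\inf\{\mu_{0}(x):x\in\Lambda\}$, a finite real number. Using downward directedness, build a decreasing sequence $(x_{n})$ in $\Lambda$ with $\mu_{0}(x_{n})\downarrow\gamma$: having chosen $x_{n}$, pick $y\in\Lambda$ with $\mu_{0}(y)$ within $1/n$ of $\gamma$ and then $x_{n+1}\in\Lambda$ below both $x_{n}$ and $y$. By monotone $\sigma$-completeness the meet $a=\bigwedge_{n}x_{n}$ exists in $A$, and since $\mu_{0}$ is normal, $\mu_{0}(a)=\lim_{n}\mu_{0}(x_{n})=\gamma$. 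I claim $a=\bigwedge\Lambda$. That $a$ dominates every lower bound of $\Lambda$ is automatic, since any such bound lies below each $x_{n}$ and hence below $a$; the content is that $a$ is itself a lower bound. Fix $y\in\Lambda$ and, exactly as above, choose a decreasing sequence $(z_{n})$ in $\Lambda$ with $z_{n}\le x_{n}$, $z_{n}\le y$, and $\mu_{0}(z_{n})\to\gamma$ (the last by the squeeze $\gamma\le\mu_{0}(z_{n})\le\mu_{0}(x_{n})$). Put $w=\bigwedge_{n}z_{n}$; then $w\le y$, while $z_{n}\le x_{n}$ gives $w\le a$, and normality gives $\mu_{0}(w)=\gamma=\mu_{0}(a)$. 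Hence $\mu_{0}(a-w)=0$ with $a-w\ge 0$, so faithfulness of $\mu_{0}$ forces $a=w\le y$. As $y\in\Lambda$ was arbitrary, $a$ is a lower bound, so $a=\bigwedge\Lambda$, and the construction exhibits the required decreasing sequence $(x_{n})$ with $\bigwedge_{n}x_{n}=\bigwedge\Lambda$.

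The crux, as flagged, is the passage from faithful $\mu$ to faithful normal part $\mu_{0}$; without it one cannot assert $\mu_{0}(a)=\gamma$, and indeed a merely faithful $\mu$ need not be $\sigma$-normal (on $\ell^{\infty}$ a faithful functional with nonzero singular part fails $\mu(a_{n})\to 0$ for the tails $a_{n}=\chi_{\{i\ge n\}}\downarrow 0$), so the naive argument with $\mu$ in place of $\mu_{0}$ breaks down at exactly this point. An alternative that sidesteps the decomposition is the faithfulness-only sublemma: if $0\le c_{n}$ is decreasing with $\mu(c_{n})\to 0$ then $\bigwedge_{n}c_{n}=0$, because $\mu(\bigwedge_{n}c_{n})\le\mu(c_{n})\to 0$ and $\mu$ is faithful. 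One would like to apply this to $c_{n}=x_{n}-z_{n}$, using $a-y\le x_{n}-z_{n}$ and $\mu(x_{n}-z_{n})\to 0$; the difficulty is that $(x_{n}-z_{n})$ need not be monotone, and dominating $a-y$ above by positive elements of small $\mu$ does not by itself yield $(a-y)_{+}=0$ in the non-commutative order. This is why routing through the normal part appears to be the cleanest path.
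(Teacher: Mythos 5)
Your overall skeleton is the natural one, and the second half of your argument (selecting $(x_n)\subset\Lambda$ with $\mu_0(x_n)\downarrow\gamma$, then comparing $a=\bigwedge_n x_n$ with $w=\bigwedge_n z_n$ below an arbitrary $y\in\Lambda$) is correct \emph{once} a faithful $\sigma$-normal positive functional is available. But the step you yourself flag as the main obstacle --- that faithfulness of $\mu$ forces its normal part $\mu_0$ to be faithful --- is not merely unproven; it is false, so the lemma cannot be reached by this route. A commutative monotone complete $C^{\ast}$-algebra carrying a faithful $\sigma$-normal positive functional is a von Neumann algebra: its projection lattice is then a complete Boolean algebra with a strictly positive countably additive measure, i.e.\ a measure algebra, and the algebra is the $L^{\infty}$ of that measure algebra. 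Yet the Dixmier algebra $B^{\infty}([0,1])/M([0,1])$ is commutative, monotone complete, has a faithful state (it is $C(S)$ with $S$ separable, so $\sum_n 2^{-n}\delta_{s_n}$ over a countable dense set $\{s_n\}\subset S$ is faithful), and is classically \emph{not} a von Neumann algebra; in fact it admits no nonzero $\sigma$-normal positive functional at all, because a finite countably additive Borel measure on $[0,1]$ vanishing on meagre sets is identically zero. So there the normal part of every faithful $\mu$ is $0$. The same applies to the algebras $C(S_R)$ of Section 11, which are precisely the algebras to which this lemma gets applied (e.g.\ in Lemma 11.11): they have faithful states but no faithful $\sigma$-normal functional. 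Indeed, if your claim were true, your argument would show that every commutative monotone $\sigma$-complete $C^{\ast}$-algebra with a faithful state is a von Neumann algebra, contradicting the existence of the wild algebras and factors this paper is devoted to constructing.

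So there is a genuine, and along this route irreparable, gap. The entire force of the lemma (for which the paper gives no proof, citing \cite{zm}) is that faithfulness alone suffices, with no normality anywhere; a correct proof must work directly with the possibly highly non-normal $\mu$. What faithfulness does give directly is countability/exhaustion control --- for instance, every orthogonal family of nonzero projections is countable, since its finite subfamilies have $\mu$-mass at most $\mu(1)$ --- and it is this kind of chain-condition argument, not a passage to a normal part, that can carry the proof. Your fallback paragraph is honest about why the faithfulness-only sublemma does not close the argument ($(x_n-z_n)$ need not be monotone, and domination of $a-y$ by positive elements of small $\mu$-value does not force $(a-y)_+=0$), so as it stands the proposal contains no complete proof of either assertion of the lemma.
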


\begin{corollary}
When an almost separably representable algebra is unital and monotone
$\sigma-$complete then it is monotone complete.
\end{corollary}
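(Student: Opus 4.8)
The plan is to reduce the statement to Lemma 3.1, which guarantees that a monotone $\sigma$-complete $C^{\ast}$-algebra possessing a faithful positive linear functional is automatically monotone complete. Thus the entire task is to manufacture a faithful state out of the separability of the state space. Since $A$ is unital, its state space $K(A)$ is a weak-$\ast$ compact convex set, and ``almost separably representable'' means precisely that $K(A)$ is separable in the weak-$\ast$ topology. I would therefore begin by fixing a countable weak-$\ast$ dense subset $\{\phi_{n}:n\in\mathbb{N}\}$ of $K(A)$.

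Next I would define $\mu=\sum_{n=1}^{\infty}2^{-n}\phi_{n}$. Because each $\phi_{n}$ has norm one, the series converges in norm to a positive linear functional with $\mu(1)=1$, so $\mu$ is itself a state. The crucial point is that $\mu$ is \emph{faithful}. To see this, suppose $x\in A_{sa}$ with $x\geq 0$ and $\mu(x)=0$. Since every summand $2^{-n}\phi_{n}(x)$ is non-negative, this forces $\phi_{n}(x)=0$ for every $n$. The evaluation map $\phi\mapsto\phi(x)$ is weak-$\ast$ continuous on $K(A)$, and it vanishes on the dense set $\{\phi_{n}\}$, so it vanishes identically on $K(A)$. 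Invoking the standard norm formula $\|x\|=\sup\{\phi(x):\phi\in K(A)\}$ for a positive element, we conclude $\|x\|=0$, hence $x=0$. This establishes faithfulness.

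With a faithful positive linear functional in hand, Lemma 3.1 applies directly to the monotone $\sigma$-complete algebra $A$ and yields that $A$ is monotone complete, completing the argument.

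I do not expect a serious obstacle here: the construction of $\mu$ as a convex combination of a dense sequence of states is the decisive idea, and the only step requiring care is the faithfulness verification. The subtlety there is purely that one must pass from vanishing on a dense subset of states to vanishing on all states, which is handled cleanly by the weak-$\ast$ continuity of evaluation together with the norm formula for positive elements; the unitality of $A$ is what makes both of these tools available.
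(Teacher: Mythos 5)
Your proof is correct and follows exactly the paper's own route: take a weak-$\ast$ dense sequence of states, form $\sum_{n}2^{-n}\phi_{n}$, and invoke Lemma 3.1. The only difference is that you spell out the faithfulness verification (via weak-$\ast$ continuity of evaluation and the norm formula for positive elements), which the paper simply asserts.
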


\begin{proof}
If $A$ is almost separably representable we can find states $(\phi
_{n})(n=1,2...)$ which are dense in its state space. Then $\phi=\sum
_{n=1}^{\infty}\frac{1}{2^{n}}\phi_{n}$ is a faithful, positive linear functional.
\end{proof}

Let $A$ be a $C^{\ast}$-subalgebra of $L(H).$ Let $V$ be a real subspace of
the real Banach space $L(H)_{sa}$. We call $V$ a $\sigma-$closed subspace of
$L(H)_{sa}$ if, whenever $(a_{n})$ is an upper bounded, monotone increasing
sequence in $V$ then its limit in the weak operator topology is in $V.$
Consider the family of all $\sigma-$closed subspaces which contain $A_{sa},$
then the intersection of this family is the (smallest) $\sigma-$closed
subspace containing $A_{sa}.$ By a theorem of Pedersen this is the
self-adjoint part of a monotone $\sigma-$complete $C^{\ast}$-subalgebra of
$L(H).$ See Theorem 4.5.4 \cite{zd}.

Let $B$ be a (unital) $C^{\ast}$-algebra. Let us recall some well known
classical results \cite{zr,zd}. Let $(\pi,H)$ be the universal representation
of $B$ i.e. the direct sum of all the GNS representations corresponding to
each state of $B.$ Then the second dual of $B$, $B^{\prime\prime},$ may be
identified with the von Neumann envelope of $\pi(B)$ in $L(H).$ Let
$B_{sa}^{\infty}$ be the smallest subspace of $B_{sa}^{\prime\prime},$( the
self-adjoint part of $B^{\prime\prime}$), which is closed under taking limits
(in the weak operator topology) of bounded, monotonic sequences. Let
$B^{\infty}=B_{sa}^{\infty}+iB_{sa}^{\infty}.$ Then by Pedersen's theorem
$B^{\infty}$ is a monotone $\sigma-$complete $C^{\ast}$-subalgebra of
$B^{\prime\prime}.$ We call $B^{\infty}$ the \textit{Pedersen-Borel} envelope
of $B.$(Pedersen called this simply the "Borel envelope", it has also, with
some justice, been called the Baire envelope.)

Let $B$ be a monotone $\sigma-$complete $C^{\ast}$-algebra. We recall that
$V\subset B_{sa}$ is a $\sigma-$\textit{subspace} of $B_{sa},$ if $V$ is a
real vector subspace of $B_{sa}$ such that, whenever $(b_{n})$ is a monotone
increasing sequence in $V,$ which has a supremum $b$ in $B_{sa},$ then $b\in
V.$ (In particular, the $\sigma-$subspaces of $L(H)$ are precisely the
$\sigma-$closed subspaces of $L(H).$)

A $\sigma-$subalgebra of $B$ is a $\ast$-subalgebra whose self-adjoint part is
a $\sigma-$subspace of $B_{sa}.$ It follows from Lemma 1.2 \cite{zx} that each
$\sigma-$subalgebra is closed in norm and hence is a C*-subalgebra; see also
\cite{e}.

Further, $J$ is a $\sigma-$ideal of $B$ if $J$ is a $C^{\ast}$-ideal of $B$
and also a $\sigma-$subalgebra of $B.$

When $B$ and $A$ are monotone $\sigma-$complete $C^{\ast}$-algebras, a
positive linear map $\phi:B\rightarrow A$ is said to be $\sigma-$%
\textit{normal} if, whenever $(b_{n})$ is monotone increasing and bounded
above, then $\phi$ maps the supremum of $(b_{n}),$ $\ $to the supremum of
$(\phi(b_{n})),$i.e. $\phi(%
{\textstyle\bigvee\limits_{n=1}^{\infty}}
b_{n})=$ $%
{\textstyle\bigvee\limits_{n=1}^{\infty}}
\phi(b_{n}).$

\begin{lemma}
Let $A$ be a monotone $\sigma-$complete $C^{\ast}$-algebra and let $J$ be a
$\sigma-$ideal of $A.$ Let $q$ be the quotient homomorphism of $A$ onto $A/J.$
Then $A/J$ is monotone $\sigma-$complete and $q$ is $\sigma-$normal. Let
$(c_{n})$ be a monotone increasing sequence in the self-adjoint part of $A/J$
which is bounded above by $c.$ Then there exists a monotone increasing
sequence $(a_{n})$ in $A_{sa}$ such that $q(a_{n})=c_{n}$ for each $n$ and
$\ (a_{n})$ is bounded above by $a$ where $q(a)=c.$
\end{lemma}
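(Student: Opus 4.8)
The plan is to prove the lifting assertion (the second half of the statement) first, because the monotone $\sigma$-completeness of $A/J$ and the $\sigma$-normality of $q$ both drop out of it once the $\sigma$-ideal hypothesis on $J$ is used. The technical heart is the following order-interval lifting fact, which I would isolate as a separate lemma: if $W\in A$ with $W\geq 0$ and $0\leq\delta\leq q(W)$ in $A/J$, then there exists $e\in A$ with $0\leq e\leq W$ and $q(e)=\delta$. Granting this, I would produce the required increasing, bounded lift as follows. After translating by a scalar multiple of the unit I may assume $c_{1}\geq 0$. Put $d_{n}=c_{n+1}-c_{n}\geq 0$, fix a positive lift $a_{1}$ of $c_{1}$ and a positive lift $w$ of $c-c_{1}$, and build partial sums $s_{n}$ recursively: given $s_{n-1}\leq w$ with $q(s_{n-1})=c_{n}-c_{1}$, apply the lifting lemma to $W=w-s_{n-1}\geq 0$ (whose image is $c-c_{n}\geq d_{n}$) to obtain $0\leq e_{n}\leq w-s_{n-1}$ with $q(e_{n})=d_{n}$, and set $s_{n}=s_{n-1}+e_{n}$. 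Then $a_{n+1}=a_{1}+s_{n}$ is monotone increasing, satisfies $q(a_{n+1})=c_{n+1}$, and is bounded above by $a=a_{1}+w$, which lifts $c$.

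To deduce that $A/J$ is monotone $\sigma$-complete and that $q$ is $\sigma$-normal, let $(a_{n})$ be the increasing lift just constructed and set $a^{\ast}=\bigvee_{n}a_{n}$, which exists because $A$ is monotone $\sigma$-complete. I claim $q(a^{\ast})=\bigvee_{n}c_{n}$. Plainly $q(a^{\ast})\geq q(a_{n})=c_{n}$. If $d$ is any upper bound for $(c_{n})$, choose a lift $b$ with $q(b)=d$; then $(a_{n}-b)_{+}\in J$, since $q$ commutes with the continuous functional calculus and its image is $(c_{n}-d)_{+}=0$, and this sequence is increasing (because $(a_{n})$ is) and bounded above by $(a^{\ast}-b)_{+}$. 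Its supremum $j$ therefore lies in $J$, precisely because $J$ is a $\sigma$-ideal, and from $a_{n}\leq b+(a_{n}-b)_{+}\leq b+j$ we get $a^{\ast}\leq b+j$, hence $q(a^{\ast})\leq q(b)=d$. Thus $\bigvee_{n}c_{n}$ exists and equals $q(a^{\ast})$, so $A/J$ is monotone $\sigma$-complete. The identical computation, applied to an arbitrary norm-bounded increasing sequence $(x_{n})$ in $A_{sa}$ with supremum $x$ and with $y=\bigvee_{n}q(x_{n})$ (which now exists), gives $q(x)=y$; that is, $q$ is $\sigma$-normal.

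The main obstacle is the order-interval lifting lemma, which must be proved exactly, not merely approximately. The natural first move is to regularise: writing $D=q(W)$, the element $r_{\varepsilon}=(D+\varepsilon)^{-1/2}\delta(D+\varepsilon)^{-1/2}$ is a positive contraction in $A/J$, so it admits a positive contraction lift $\rho_{\varepsilon}$ in $A$, and $e_{\varepsilon}=W^{1/2}\rho_{\varepsilon}W^{1/2}$ satisfies $0\leq e_{\varepsilon}\leq W$ with $q(e_{\varepsilon})\to\delta$ in norm as $\varepsilon\downarrow 0$. Passing from these approximate lifts to an exact one while preserving the constraint $e\leq W$ is the delicate step, since $q$ is not yet known to be $\sigma$-normal and so suprema of the $e_{\varepsilon}$ cannot be pushed through $q$ without care. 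This order-interval surjectivity of a quotient $\ast$-homomorphism is the classical lifting result I would invoke (or prove by a successive-approximation argument internal to the hereditary subalgebra generated by $W$). Everything else in the statement is bookkeeping around this lemma and the $\sigma$-ideal property of $J$.
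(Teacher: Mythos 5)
Your proposal contains a genuine error and a genuine gap, one in each half of the argument. The error is in your deduction of monotone $\sigma$-completeness and $\sigma$-normality: you assert that the sequence $\bigl((a_{n}-b)_{+}\bigr)$ is increasing ``because $(a_{n})$ is'' and that it is bounded above by $(a^{\ast}-b)_{+}$. Both claims fail, because $t\mapsto t_{+}$ is not operator monotone: $x\leq y$ does not imply $x_{+}\leq y_{+}$. For example, $x=\left(\begin{smallmatrix}1&0\\0&-5\end{smallmatrix}\right)\leq y=\left(\begin{smallmatrix}2&2\\2&2\end{smallmatrix}\right)$ and $y\geq0$, yet $y-x_{+}=\left(\begin{smallmatrix}1&2\\2&2\end{smallmatrix}\right)$ has negative determinant, so $x_{+}\not\leq y_{+}=y$. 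Hence the supremum $j$ you need may not exist, and the single place where you use the $\sigma$-ideal hypothesis rests on a false premise. The conclusion is repairable, but only by using the $\sigma$-ideal property through a different mechanism, which is really the heart of the lemma: the elements $k_{n}:=(a_{n}-b)_{+}$ lie in $J_{+}$ and are norm bounded, so the increasing sequence $s_{N}=\sum_{n\leq N}2^{-n}(1+\Vert k_{n}\Vert)^{-1}k_{n}$ has a supremum $j\in J$ (here the $\sigma$-ideal property enters legitimately); its range projection $p=\bigvee_{m}j^{1/m}$ is a projection belonging to $J$ with $k_{n}\leq\Vert k_{n}\Vert p$ for every $n$. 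Then $a_{n}\leq b+Mp$ for all $n$, so $a^{\ast}\leq b+Mp$ and $q(a^{\ast})\leq q(b)=d$, which is what you wanted.

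The gap is in the first half: the whole construction of the bounded increasing lift rests on the order-interval lifting lemma ($0\leq\delta\leq q(W)$ lifts to some $e$ with $0\leq e\leq W$), which you never prove. Your regularisation argument correctly shows that $q\bigl([0,W]\bigr)$ is norm dense in $[0,q(W)]$, but passing from density to exact surjectivity is precisely the difficult point: $[0,W]$ is not compact, the approximants $e_{\varepsilon}$ need not converge, and a successive-approximation scheme requires an openness/stability property of $q$ restricted to $[0,W]$, not mere density. This is not a result one can wave at as ``classical''; it needs either a genuine proof or a precise citation, and for a general closed ideal it is delicate. Moreover, in the present setting it is unnecessary: the projection trick above gives the bounded lift directly. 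Lift $c_{1}$ and each increment $c_{n+1}-c_{n}$ to positive elements and sum, obtaining an increasing (possibly unbounded) lift $(b_{n})$ of $(c_{n})$; lift $c$ to $x\geq0$; put $k_{n}=(b_{n}-x)_{+}\in J_{+}$ and build, exactly as above, a projection $p\in J$ dominating every range projection of $k_{n}$. Then $a_{n}:=(1-p)b_{n}(1-p)$ is increasing, satisfies $q(a_{n})=c_{n}$ since $q(p)=0$, and from $b_{n}\leq x+k_{n}$ and $(1-p)k_{n}(1-p)=0$ it is bounded above by $a:=(1-p)x(1-p)$, which lifts $c$. This is essentially the route of Wright's paper \cite{zx}, Proposition 1.3 and Lemma 1.1, which is all the paper itself offers as proof of this lemma; your proposal, by contrast, routes everything through an unproved lifting theorem and never uses the $\sigma$-ideal hypothesis in a way that actually works.
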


\begin{proof}
This follows from Proposition 1.3 and Lemma 1.1 \cite{zx}, see also \cite{ze}.
\end{proof}

The following representation theorem was proved by Wright in \cite{zy}. It may
be thought of as a non-commutative generalisation of a theorem of Loomis and
Sikorski in Boolean algebras \cite{s}.

\begin{proposition}
Let $B$ be a monotone $\sigma-$complete $C^{\ast}$-algebra. Then there exists
a $\sigma-$normal homomorphism, $\pi,$ from $B^{\infty}$ onto $B,$ such that
$\pi(b)=b$ for every $b\in B.$ Let $J$ be the kernel of the homomorphism
$\pi.$ Then $J$ is a $\sigma-$ideal of $B^{\infty}$and $B=B^{\infty}/J.$
\end{proposition}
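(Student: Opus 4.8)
The plan is to realise $B$ as a quotient of its Pedersen--Borel envelope by constructing a canonical $\sigma$-normal retraction $\pi\colon B^{\infty}\to B$. Throughout I would work inside the universal representation, so that $B\subseteq B^{\infty}\subseteq B''\subseteq L(H)$ and, since $B''$ is a von Neumann algebra, the supremum in $B''$ of a bounded monotone increasing sequence coincides with its weak operator limit. First I would stratify $B^{\infty}_{sa}$ by the Borel hierarchy: put $B_{0}=B_{sa}$ and, for a countable ordinal $\alpha$, let $B_{\alpha}$ consist of all $B''$-suprema and $B''$-infima of bounded monotone sequences drawn from $\bigcup_{\beta<\alpha}B_{\beta}$, so that $B^{\infty}_{sa}=\bigcup_{\alpha<\omega_{1}}B_{\alpha}$. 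I would then define $\pi$ by transfinite recursion on $\alpha$, setting $\pi|_{B}=\mathrm{id}$ and, whenever $x=\bigvee_{B''}x_{n}$ for a monotone increasing sequence $(x_{n})$ from lower levels, setting $\pi(x)=\bigvee_{B}\pi(x_{n})$, the latter supremum existing because $B$ is monotone $\sigma$-complete.

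The crux is well-definedness, and the prototype is the first level. The key observation is that a $B$-supremum always dominates the corresponding $B''$-supremum: if $(a_{n})$ increases in $B_{sa}$ then $\bigvee_{B}a_{n}\ge\bigvee_{B''}a_{n}$, since $\bigvee_{B}a_{n}$ is an upper bound for $(a_{n})$ inside $B\subseteq B''$. Consequently, if $(a_{n})$ and $(c_{m})$ are bounded monotone increasing sequences in $B_{sa}$ with $\bigvee_{B''}a_{n}\le\bigvee_{B''}c_{m}$, then for each $n$ one has $a_{n}\le\bigvee_{B''}a_{n}\le\bigvee_{B''}c_{m}\le\bigvee_{B}c_{m}$, so $\bigvee_{B}c_{m}$ is an upper bound in $B$ for $(a_{n})$ and hence $\bigvee_{B}a_{n}\le\bigvee_{B}c_{m}$. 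Taking the two sequences to have equal $B''$-suprema shows that $\pi$ is well defined and order preserving at level one. I would propagate this through the hierarchy by transfinite induction, carrying the inductive invariants that $\pi$ is well defined, positive and order preserving, and that $\pi$ is $\sigma$-normal on the part already constructed; at each new level the same device, now applied to the $\pi$-images, delivers well-definedness and monotonicity of the extension.

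Once $\pi$ is a well-defined, positive, $\sigma$-normal map on all of $B^{\infty}_{sa}$, I would establish the algebraic structure. Additivity passes up the hierarchy because the sum of two bounded monotone increasing sequences is again of this form and suprema are additive in both $B$ and $B''$; complexifying then gives a linear map on $B^{\infty}$. Multiplicativity I would obtain by a further induction using the separate monotone continuity of the product, namely that $a_{n}\uparrow a$ and $c\ge 0$ give $c^{1/2}a_{n}c^{1/2}\uparrow c^{1/2}ac^{1/2}$, together with polarisation; this reduces $\pi(xy)=\pi(x)\pi(y)$ to the trivial identity on $B$. Since $\pi|_{B}=\mathrm{id}$, the map $\pi$ is a surjective $\sigma$-normal $\ast$-homomorphism from $B^{\infty}$ onto $B$ which fixes $B$ pointwise.

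Finally, set $J=\ker\pi$. Being the kernel of a $\ast$-homomorphism it is a norm-closed two-sided $\ast$-ideal, and it is $\sigma$-closed: if $x_{n}\uparrow x$ in $B^{\infty}_{sa}$ with each $x_{n}\in J$, then $\sigma$-normality gives $\pi(x)=\bigvee_{B}\pi(x_{n})=0$, so $x\in J$; hence $J$ is a $\sigma$-ideal of $B^{\infty}$. Applying Lemma 3.3 to the pair $(B^{\infty},J)$, the quotient $B^{\infty}/J$ is monotone $\sigma$-complete with $\sigma$-normal quotient map, and the $\ast$-isomorphism $B^{\infty}/J\cong B$ induced by $\pi$ is an isomorphism of monotone $\sigma$-complete algebras, giving $B=B^{\infty}/J$. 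I expect the main obstacle to be the bookkeeping in the transfinite induction, specifically verifying that well-definedness, order preservation and multiplicativity are simultaneously preserved as one climbs the Borel hierarchy, since the elements entering at each new level need not lie in the domain where these properties have already been checked.
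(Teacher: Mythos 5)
The paper itself offers no proof of this proposition (it is quoted from \cite{zy}), so your argument must stand on its own, and it has a genuine gap at its core: the transfinite recursion defining $\pi$ is not well defined beyond the first level of the hierarchy. Your level-one argument is correct, but only because the terms of the two sequences lie in $B$ itself, where $\pi$ is the identity: the chain $a_{n}\le\bigvee_{B''}a_{n}\le\bigvee_{B''}c_{m}\le\bigvee_{B}c_{m}$ silently identifies $a_{n}$ with $\pi(a_{n})$ and $\bigvee_{B}c_{m}$ with $\bigvee_{B}\pi(c_{m})$. To run ``the same device applied to the $\pi$-images'' one level up, you must pass from $x_{n}\le\bigvee_{B''}y_{m}$ (with $x_{n},y_{m}\in B_{1}$) to $\pi(x_{n})\le\bigvee_{B}\pi(y_{m})$, and this needs the invariant $\pi(z)\ge z$ for every $z$ that arises as a supremum of lower-level elements (dually $\pi(z)\le z$ for infima). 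The same invariant is also the only thing that can settle the mixed comparison --- $x$ an infimum of inf-type elements, $y$ a supremum of sup-type elements, $x\le y$ --- where the hypothesis yields no pairwise inequality between the terms, so the inductive hypothesis gives no traction at all.

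That invariant provably cannot be maintained. Every $x\in B_{1}$ is the $B''$-supremum of an increasing sequence from $B_{1}$ (the constant sequence, or $\left((1-\tfrac{1}{n})x\right)$ for $x\ge 0$), so if the invariant survived to level two it would force $\pi(x)\ge x$ for \emph{all} $x\in B_{1}$, and dually $\pi(x)\le x$; hence $\pi$ would be the identity on $B_{1}$, giving $B_{1}=B_{sa}$, hence $B^{\infty}=B$ and $J=\{0\}$. But $J\neq\{0\}$ in general, already for $B=\ell^{\infty}$: with $a_{n}=\chi_{\{1,\dots,n\}}$ one has $\bigvee_{B}a_{n}=1$, while the weak limit $x''=\bigvee_{B''}a_{n}$ satisfies $\phi(x'')=\lim_{n}\phi(a_{n})=0$ for any Banach limit $\phi$, so $x''\neq 1$; since any $\sigma$-normal retraction must send $x''$ to $\bigvee_{B}a_{n}=1$, the non-zero element $1-x''$ lies in $J$. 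So no inequality of the form $\pi(z)\ge z$ or $\pi(z)\le z$ can persist up the hierarchy, and with it collapses your proof of well-definedness, order preservation and (a fortiori) multiplicativity. This is not bookkeeping: the compatibility of the infinitely many representations of each Baire element is the actual content of the theorem, and it is precisely why Wright's proof in \cite{zy} does not proceed by recursion on the Borel hierarchy but identifies $B$ with the quotient globally, in the spirit of the Loomis--Sikorski theorem of which this result is the non-commutative generalisation.
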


\begin{corollary}
Let $A$ and $B$ be $C^{\ast}$-algebras and let $B$ be monotone $\sigma
-$complete. Let $\phi:A\rightarrow B$ be a positive linear map. Then $\phi$
has a unique extension to a $\sigma-$normal positive linear map,
$\widehat{\phi},$ from $A^{\infty}$ into $B.$ When $\phi$ is a $\ast
$-homomorphism the following hold: First $\widehat{\phi}$ is also a $\ast
$-homomorphism. Secondly, the range of $\widehat{\phi}$ is a $\sigma-$
subalgebra of $B.$ Thirdly, the self adjoint part of $\widehat{\phi}%
[A^{\infty}]$\ is the smallest $\sigma-$subspace of $B_{sa}$ which contains
$\phi\lbrack A_{sa}]. $ Finally, the kernel of $\widehat{\phi}$ is a $\sigma
-$ideal, $J$, such that \ \ $A^{\infty}/J\approx\widehat{\phi}[A^{\infty}].$
\end{corollary}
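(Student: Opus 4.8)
The plan is to build $\widehat{\phi}$ from the bitranspose and then control it by a monotone-class argument, exploiting that $A^{\infty}_{sa}$ is by construction the smallest $\sigma$-subspace of $A^{\prime\prime}_{sa}$ containing $A_{sa}$. First I would form the second transpose $\phi^{\ast\ast}:A^{\prime\prime}\rightarrow B^{\prime\prime}$, which is positive and normal, and restrict it to $A^{\infty}$. Since $\phi^{\ast\ast}$ is normal it carries suprema of bounded monotone increasing sequences to suprema, so the set $\{a\in A^{\infty}_{sa}:\phi^{\ast\ast}(a)\in B^{\infty}_{sa}\}$ is a $\sigma$-subspace containing $A_{sa}$ and hence is all of $A^{\infty}_{sa}$; thus $\phi^{\ast\ast}$ maps $A^{\infty}$ into $B^{\infty}$. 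Composing with the canonical $\sigma$-normal homomorphism $\pi:B^{\infty}\rightarrow B$ of the Proposition above (which restricts to the identity on $B$) produces $\widehat{\phi}=\pi\circ(\phi^{\ast\ast}|_{A^{\infty}})$, a $\sigma$-normal positive map agreeing with $\phi$ on $A$. Uniqueness is then immediate: if $\psi$ is another $\sigma$-normal positive extension, the set $\{a\in A^{\infty}_{sa}:\psi(a)=\widehat{\phi}(a)\}$ is a $\sigma$-subspace containing $A_{sa}$, so the two maps coincide.

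Now suppose $\phi$ is a $\ast$-homomorphism. Then $\phi^{\ast\ast}$ is a normal $\ast$-homomorphism (the standard bidual extension), and $\pi$ is a $\ast$-homomorphism, so $\widehat{\phi}$ is a $\ast$-homomorphism, which is the first assertion. For the kernel, set $J=\ker\widehat{\phi}$; it is a $C^{\ast}$-ideal, and $\sigma$-normality of $\widehat{\phi}$ forces it to be a $\sigma$-subspace (if $a_{n}\nearrow a$ with each $a_{n}\in J_{sa}$ then $\widehat{\phi}(a)=\bigvee_{n}\widehat{\phi}(a_{n})=0$), hence a $\sigma$-ideal. By the quotient Lemma above, $A^{\infty}/J$ is monotone $\sigma$-complete and the quotient map $q$ is $\sigma$-normal. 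I would then factor $\widehat{\phi}=\overline{\phi}\circ q$ with $\overline{\phi}$ an injective $\ast$-homomorphism onto $\widehat{\phi}[A^{\infty}]$, and, using the lifting of monotone sequences supplied by the same Lemma (lift $c_{n}\nearrow c$ to a bounded $a_{n}$ in $A^{\infty}$ and pass to $\bigvee_{n}a_{n}$), verify that $\overline{\phi}$ is itself $\sigma$-normal; this yields the isomorphism $A^{\infty}/J\approx\widehat{\phi}[A^{\infty}]$ asserted at the end.

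The step I expect to be the main obstacle is showing that the range $\widehat{\phi}[A^{\infty}]$ is $\sigma$-\emph{closed} in $B$, that is, a $\sigma$-subalgebra. Here I would use that $\overline{\phi}$, being an injective $\ast$-homomorphism, is isometric with order-preserving inverse on its image: given a bounded monotone increasing sequence $b_{n}=\overline{\phi}(c_{n})$ in the range with supremum $b\in B_{sa}$, the $c_{n}$ form a norm-bounded monotone increasing sequence in the monotone $\sigma$-complete algebra $A^{\infty}/J$, so $c=\bigvee_{n}c_{n}$ exists, and $\sigma$-normality of $\overline{\phi}$ gives $\overline{\phi}(c)=\bigvee_{n}b_{n}=b$, whence $b$ lies in the range. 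Finally, for the third assertion, $\widehat{\phi}[A^{\infty}]_{sa}=\widehat{\phi}[A^{\infty}_{sa}]$ is a $\sigma$-subspace of $B_{sa}$ containing $\phi[A_{sa}]$, so it contains the smallest such $\sigma$-subspace $W$; conversely the pullback $\{a\in A^{\infty}_{sa}:\widehat{\phi}(a)\in W\}$ is a $\sigma$-subspace containing $A_{sa}$, forcing $\widehat{\phi}[A^{\infty}_{sa}]\subseteq W$, and the two inclusions give equality.
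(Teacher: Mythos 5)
The paper gives no proof of this corollary — it simply cites Proposition 1.1 of Wright's paper on regular $\sigma$-completions \cite{zz} — and your argument is correct and is, in substance, exactly the route that citation takes: extend via the bitranspose $\phi^{\ast\ast}$, push $A^{\infty}$ into $B^{\infty}$ by a monotone-class argument, and retract to $B$ with the $\sigma$-normal quotient map of Proposition 3.4, with Lemma 3.3 supplying the monotone liftings needed to make $\overline{\phi}$ $\sigma$-normal and hence to identify $A^{\infty}/J$ with the range. Your auxiliary verifications (normality and positivity of $\phi^{\ast\ast}$, the kernel being a $\sigma$-ideal, and the use of order-reflection by the injective $\ast$-homomorphism $\overline{\phi}$ to get $\sigma$-closedness of the range) are all sound.
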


For a proof, see Proposition 1.1 \cite{zz}.

\ 

REMARK Let $S$ be a subset of a monotone $\sigma-$complete $C^{\ast}$-algebra
$B.$ Let $A$ be the smallest (unital) $C^{\ast}$-subalgebra of $B$ which
contains $S.$ Let $\phi$ be the inclusion map from $A$ into $B$. By applying
the preceding result, the smallest $\sigma-$subspace of $B_{sa}$ which
contains $A_{sa}$ is the self adjoint part of a $\sigma-$subalgebra $C $ of
$B.$ It is now natural to describe $C$ as the $\sigma-$subalgebra of $B $
which is $\sigma-$generated by $S.$

\section{Extending continuous functions\textbf{\ }}

We gather together some topological results which will be useful later. The
most important of these is Theorem 4.7. We hope the presentation here is clear
enough to ensure that the reader can reconstruct any missing proofs without
difficulty. If we have misjudged this, we apologise and refer the reader to
\cite{q}, see also \cite{b}.

Throughout this section, $K$ is a compact Hausdorff space and $D$ is a dense
subset of $K$, equipped with the relative topology induced by $K.$ It is easy
to see that $K$ has no isolated points if, and only if $D$ has no isolated points.

Let us recall that a topological space $T$ is \textit{extremally disconnected}
if the closure of each open subset is still an open set.

When $K$ is extremally disconnected then, whenever $Z$ is a compact Hausdorff
space and $f:D\rightarrow Z$ is continuous, there exists a unique extension of
$f$ to a continuous function $F:K\rightarrow Z.$ In other words, $K$ is the
Stone-Czech compactification of $D.$ (This is Theorem 4.7.)

For any compact Hausdorff space $K,$ the closed subsets of $D,$ in the
relative topology, are all of the form $F\cap D$ where $F$ is a closed subset
of $K.$ For any $S\subset K,$ we denote the closure of $S$ (in the topology of
$K$) by $cl(S).$ For $S\subset D,$ we note that the closure of this set in the
relative topology of $D$ is $cl(S)\cap D.$ We denote this by $cl_{D}(S).$ We
also use $intS$ for the interior of $S$ and, when $S\subset D,$ the interior
with respect to the relative topology is denoted by $int_{D}S.$

The following lemmas are routine point-set topology.

\begin{lemma}
Let $K$ be a compact Hausdorff space and $D$ a dense subset of $K.$

(i) \textit{For any open subset }$U$\textit{\ of }$K$\textit{\ we have}
$cl(U)=cl(U\cap D).$

(ii) \textit{Let }$U,V$\textit{\ be open subsets of }$K.$\textit{Then
}$V\subset Cl(U)$\textit{\ if, and only if,}

$V\cap D\subset Cl(U\cap D)\cap D=cl_{D}(U\cap D).$

(iii) \textit{Let }$U$ be an open subset of $K.$ Then

$D\cap int(clU)=int_{D}(cl_{D}(U\cap D)).$

(iv) If $U$ is a regular open subset of $K$ then $U\cap D$ is a regular open
subset of $D$ in the relative topology of $D.$ Conversely, if $E$ is a regular
open subset of $D$ in the relative topology, then $E=V\cap D$ where $V$ is a
regular open subset of $K.$
\end{lemma}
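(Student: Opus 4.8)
The plan is to prove each of the four statements about the relationship between open sets in $K$ and their traces on the dense subset $D$. Throughout I would use the standard fact that since $D$ is dense in $K$, every nonempty open subset of $K$ meets $D$, and that the relative-closure formula $cl_D(S) = cl(S) \cap D$ holds by definition.

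For (i), I would first note that $U \cap D \subset U$ gives $cl(U \cap D) \subset cl(U)$ immediately. For the reverse inclusion, I would take any point $x \in cl(U)$ and any open neighbourhood $W$ of $x$; then $W \cap U$ is a nonempty open set, so by density it meets $D$, showing $W$ meets $U \cap D$, hence $x \in cl(U \cap D)$. For (ii), the forward direction is trivial by intersecting the inclusion $V \subset cl(U)$ with $D$ and invoking (i) to rewrite $cl(U) \cap D = cl(U \cap D) \cap D = cl_D(U \cap D)$. The converse is where I expect to do real work: assuming $V \cap D \subset cl_D(U \cap D)$, I would need to promote this back up to $K$. I would take $x \in V$, use that $V$ is open to pick a neighbourhood basis, and exploit density together with the hypothesis to show every neighbourhood of $x$ meets $U \cap D \subset U$, forcing $x \in cl(U)$; the key is that $V$ being open lets me find points of $V \cap D$ arbitrarily close to $x$.

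For (iii) I would combine (i) and (ii). The set $int(cl\,U)$ is the largest open subset of $cl(U)$, and $int_D(cl_D(U \cap D))$ is the largest relatively open subset of $cl_D(U \cap D)$. I would show these two descriptions match after intersecting with $D$: using (ii) to translate the condition ``$V$ open and $V \subset cl(U)$'' into the relative condition ``$V \cap D$ open in $D$ and $V \cap D \subset cl_D(U \cap D)$'', and checking that the correspondence $V \mapsto V \cap D$ between open sets of $K$ and open sets of $D$ respects these maximality conditions. The main subtlety is that relatively open sets of $D$ are exactly traces $V \cap D$ of open sets $V$ of $K$, so the suprema of the two families of open sets correspond under intersection with $D$.

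Finally, for (iv), I would recall that a set is regular open precisely when it equals the interior of its own closure. Given $U$ regular open in $K$, so $U = int(cl\,U)$, I would intersect with $D$ and apply (iii) to obtain $U \cap D = D \cap int(cl\,U) = int_D(cl_D(U \cap D))$, which is exactly the statement that $U \cap D$ is regular open in $D$. For the converse, starting from a regular open $E$ in $D$, I would set $V = int(cl\,E)$ (closure and interior taken in $K$), verify $V$ is regular open in $K$ by the idempotency of the operation $U \mapsto int(cl\,U)$, and then use (iii) again to confirm $V \cap D = E$. I expect step (ii), the converse direction, to be the main obstacle, since it is the one place where one must pass from a relative-closure containment back to a genuine closure containment in $K$, and all the remaining parts are essentially formal consequences of (i) and (ii) via the regular-open calculus.
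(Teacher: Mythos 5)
Your proposal is correct, and there is nothing in the paper to compare it against: the authors explicitly omit the proof, calling the lemma ``routine point-set topology'' (with a pointer to Gillman--Jerison), and your reconstruction --- (i) by density of $D$, (ii) forward by intersecting with $D$ and backward by locating points of $V\cap D$ arbitrarily close to a given $x\in V$ whose neighbourhoods must then meet $U\cap D$, (iii) via the trace correspondence $V\mapsto V\cap D$ between open sets of $K$ and relatively open sets of $D$, (iv) by the regular-open calculus --- is exactly the standard argument being alluded to. The one step worth writing out in full is the converse half of (iv): since (iii) is stated for open subsets of $K$, you should first write $E=U\cap D$ with $U$ open in $K$, observe that $cl(E)=cl(U\cap D)=cl(U)$ by (i), so that your $V=int(cl(E))=int(cl(U))$, and only then does (iii) yield $V\cap D=int_{D}(cl_{D}(U\cap D))=int_{D}(cl_{D}(E))=E$ by regularity of $E$ in $D$.
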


For any topological space $Y$ we let $RegY$ denote the Boolean algebra of
regular open subsets of $Y$.

Let $H:\mathcal{P}(K)\rightarrow\mathcal{P(}D)$ be defined by $H(S)=S\cap D. $

\begin{lemma}
The function $H,$ when restricted to $RegK,$ is a Boolean isomorphism of
$RegK$ onto $RegD.$
\end{lemma}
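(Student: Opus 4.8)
The plan is to show that the restriction of $H$ to $RegK$ is a bijection onto $RegD$ and that it carries each Boolean operation to the corresponding one. Recall that on $RegY$ the lattice operations are $U\wedge V=U\cap V$, $\ U\vee V=int(cl(U\cup V))$, the complement is the exterior $\neg U=Y\setminus cl(U)$, and the bounds are $\emptyset$ and $Y$; all of these produce regular open sets. So it suffices to check that $H$ transforms each of these into the matching operation computed inside $D$.

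First the bijection. That $H$ maps $RegK$ into $RegD$, and does so surjectively, is exactly part (iv) of the preceding lemma. For injectivity, suppose $U,V\in RegK$ with $U\cap D=V\cap D$. Taking closures and using part (i), $cl(U)=cl(U\cap D)=cl(V\cap D)=cl(V)$, whence $int(cl(U))=int(cl(V))$; since $U$ and $V$ are regular open this gives $U=V$. Thus $H|_{RegK}$ is a bijection of $RegK$ onto $RegD$.

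Next, preservation of the operations. Meets are immediate, since the meet in $RegD$ is again intersection: $H(U\cap V)=(U\cap V)\cap D=(U\cap D)\cap(V\cap D)=H(U)\wedge H(V)$. For joins, apply part (iii) to the open set $W=U\cup V$; because $W\cap D=(U\cap D)\cup(V\cap D)$ we obtain $H(U\vee V)=D\cap int(cl(U\cup V))=int_D(cl_D((U\cap D)\cup(V\cap D)))$, which is precisely $H(U)\vee H(V)$ in $RegD$. For complements, part (i) gives $cl_D(U\cap D)=cl(U\cap D)\cap D=cl(U)\cap D$, so $H(\neg U)=(K\setminus cl(U))\cap D=D\setminus cl_D(U\cap D)=\neg(U\cap D)$, the complement computed in $RegD$. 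Finally $H(\emptyset)=\emptyset$ and $H(K)=D$, the bounds of $RegD$. Hence $H|_{RegK}$ is a Boolean isomorphism onto $RegD$.

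Since part (iv) of the preceding lemma already supplies the surjectivity (and the fact that regular open sets of $K$ meet $D$ in regular open sets of $D$), the only genuine work is matching the join, and that is handled cleanly by part (iii); every other step reduces to intersecting with $D$ and invoking part (i). One could instead note that $H$ and its inverse are order preserving — the reverse implication again using part (i) to pass from $U\cap D\subseteq V\cap D$ to $cl(U)\subseteq cl(V)$ and hence to $U\subseteq V$ — and then appeal to the fact that an order isomorphism between Boolean algebras is automatically a Boolean isomorphism; but the direct verification above is just as short and makes the correspondence of operations explicit.
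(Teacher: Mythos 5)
Your proof is correct. Note that the paper itself gives no proof of this lemma (it is one of the results declared to be ``routine point-set topology,'' with the reader referred to Gillman--Jerison if needed), and your argument is exactly the intended reconstruction: well-definedness and surjectivity from part (iv) of the preceding lemma, injectivity and complements from part (i), joins from part (iii), so there is nothing to fault and nothing in the paper to compare it against beyond that.
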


\begin{lemma}
A Hausdorff topological\textbf{\ } space $T$ is extremally disconnected if,
and only if, each regular open set is closed, and hence clopen.
\end{lemma}

\begin{corollary}
Let $D$ be a dense subset of a compact Hausdorff extremally disconnected space
$S.$ Let $D$ be equipped with the relative topology. Then $D$ is an extremally
disconnected space.
\end{corollary}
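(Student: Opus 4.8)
The plan is to deduce this directly from Lemma 4.3 together with the correspondence between regular open sets of $S$ and of $D$. First I would note that $D$, being a subspace of the Hausdorff space $S$, is itself Hausdorff, so the characterisation of extremal disconnectedness in Lemma 4.3 is available: it suffices to show that every regular open subset of $D$ is closed in $D$.

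So let $E$ be an arbitrary regular open subset of $D$ in the relative topology. By Lemma 4.1(iv) (equivalently, by the Boolean isomorphism $H$ of Lemma 4.2), there is a regular open subset $V$ of $S$ with $E = V \cap D$. Now I would invoke the hypothesis that $S$ is extremally disconnected: by Lemma 4.3 the regular open set $V$ is closed in $S$, indeed clopen. Since the closed subsets of $D$ in the relative topology are exactly the sets of the form $F \cap D$ with $F$ closed in $S$ (as recorded in the text preceding Lemma 4.1), it follows that $E = V \cap D$ is closed in $D$.

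Having shown that each regular open subset of $D$ is closed, I would apply Lemma 4.3 once more, this time in the reverse direction, to conclude that the Hausdorff space $D$ is extremally disconnected. There is no serious obstacle here: the entire argument is a transport along the identification of $\mathrm{Reg}\,S$ with $\mathrm{Reg}\,D$ furnished by the earlier lemmas. The only point that requires a moment's care is the bookkeeping for relative closures, namely that a clopen $V \subset S$ intersects $D$ in a relatively closed set; this is immediate from the description of closed subsets of $D$ noted above.
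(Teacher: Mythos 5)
Your proposal is correct and follows essentially the same route as the paper's own proof: pass from a regular open subset of $D$ to a regular open subset of $S$ via Lemma 4.1(iv), use Lemma 4.3 to see it is clopen in $S$, deduce the original set is clopen in $D$, and apply Lemma 4.3 again. Your extra remarks (that $D$ is Hausdorff, and the bookkeeping on relative closures) are minor points the paper leaves implicit.
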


\begin{proof}
Let $V$ be a regular open subset of $D.$ Then, by Lemma 4.1, part (iv), there
exists $U,$ a regular open subset of $S,$ such that $V=U\cap D.$ By Lemma 4.3,
$U$ is a clopen subset of $S.$ Hence $V$ is a clopen subset of $D$ in the
relative topology. Again appealing to Lemma 4.3, we have that $D$ is an
extremally disconnected space.
\end{proof}

\begin{lemma}
Let $D$ be an extremally disconnected topological space. Also let $D$ be
homeomorphic to a subspace of a compact Hausdorff space. Then $\beta D$, its
Stone-Czech compactification, is extremally disconnected.
\end{lemma}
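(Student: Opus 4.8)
The plan is to prove directly that the closure in $\beta D$ of every open set is again open; by definition this is exactly extremal disconnectedness. Since $D$ is (homeomorphic to) a subspace of a compact Hausdorff space, $D$ is completely regular and Hausdorff, so $\beta D$ exists, $D$ is dense in $\beta D$, and the results of Lemma 4.1 apply with $K=\beta D$. The essential tool will be the defining universal property of the Stone-Czech compactification: every continuous map from $D$ into a compact Hausdorff space extends to a continuous map on $\beta D$.

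So let $U$ be an open subset of $\beta D$ and put $W = U \cap D$, an open subset of $D$. Because $D$ is extremally disconnected, the set $C := cl_D(W)$ is open in $D$, and being a closure it is also closed; hence $C$ is clopen in $D$. Its characteristic function $\chi_C \colon D \to \{0,1\}$ is therefore continuous, and since $\{0,1\}$ is compact Hausdorff it extends to a continuous map $F \colon \beta D \to \{0,1\}$. Set $\widetilde{C} = F^{-1}(\{1\})$; this is a clopen subset of $\beta D$, and since $F$ restricts to $\chi_C$ on the dense set $D$ we have $\widetilde{C} \cap D = C$.

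I then identify $\widetilde{C}$ with the relevant closure. As $\widetilde{C}$ is open in $\beta D$, Lemma 4.1(i) gives $cl_{\beta D}(\widetilde{C}) = cl_{\beta D}(\widetilde{C}\cap D) = cl_{\beta D}(C)$; but $\widetilde{C}$ is closed, so $\widetilde{C} = cl_{\beta D}(C)$. On the other hand $C = cl_D(W) = cl_{\beta D}(W)\cap D$, so $W \subseteq C \subseteq cl_{\beta D}(W)$, whence $cl_{\beta D}(W) = cl_{\beta D}(C) = \widetilde{C}$. Finally, applying Lemma 4.1(i) once more to the open set $U$ yields $cl_{\beta D}(U) = cl_{\beta D}(W) = \widetilde{C}$, which is clopen and in particular open. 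As $U$ was an arbitrary open set, $\beta D$ is extremally disconnected.

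The argument is mostly bookkeeping once the central idea is in place; the one step requiring care is the identification $cl_{\beta D}(W) = \widetilde{C}$, where I must pass back and forth between closures taken in $D$ and in $\beta D$. This is precisely where Lemma 4.1(i) and the density of $D$ do the work, and where the extension $F$ of $\chi_C$ is used to manufacture a clopen subset of $\beta D$ out of the clopen set $C$ in $D$. I would expect this closure-comparison to be the only genuine obstacle; the existence and universal property of $\beta D$, and the clopenness of $C$, are immediate from the hypotheses.
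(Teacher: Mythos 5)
Your proof is correct. Note that the paper itself offers no proof of this lemma: it is one of the facts in Section 4 declared to be ``routine point-set topology,'' with the reader referred to Gillman--Jerison for anything not easily reconstructed, so there is no argument of the authors' to compare yours against. What you have written is precisely the standard reconstruction, and it fits the paper's toolkit: the hypothesis that $D$ embeds in a compact Hausdorff space gives complete regularity, hence the existence of $\beta D$ with $D$ dense; extremal disconnectedness of $D$ makes $C=cl_D(U\cap D)$ clopen; extending $\chi_C$ to a continuous $F:\beta D\to\{0,1\}$ produces the clopen set $\widetilde{C}=F^{-1}(\{1\})$ with $\widetilde{C}\cap D=C$; and Lemma 4.1(i) (valid for $K=\beta D$, since $D$ is dense there) plus the subspace-closure identity $cl_D(W)=cl_{\beta D}(W)\cap D$ gives the chain $cl_{\beta D}(U)=cl_{\beta D}(U\cap D)=cl_{\beta D}(C)=\widetilde{C}$. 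Every step checks: in particular the sandwich $W\subseteq C\subseteq cl_{\beta D}(W)$, which forces $cl_{\beta D}(W)=cl_{\beta D}(C)$, is exactly the closure bookkeeping you flagged as the delicate point, and it is handled correctly. Your use of the extension $F$ is, in disguise, the classical fact that the completely separated sets $C$ and $D\setminus C$ have disjoint clopen closures covering $\beta D$ (compare the paper's Lemma 4.6, stated there for the converse setting), so your route is the natural one rather than a detour.
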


\begin{lemma}
Let $D$ be a dense subspace of a compact Hausdorff extremally disconnected
space $Z$. When $A$ is a clopen subset of $D$ in the relative topology, then
$clA$ is a clopen subset of $Z.$\ Let $A$ and $B$ be disjoint clopen subsets
of $D,$ in the relative topology. Then $clA$ and $clB$ are disjoint clopen
subsets of $Z.$
\end{lemma}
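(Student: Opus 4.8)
The plan is to exploit the correspondence between the regular open sets of $Z$ and those of $D$, established in Lemmas 4.1 and 4.2, together with the characterisation of extremal disconnectedness in Lemma 4.3. The key observation is that a clopen set is automatically regular open, so the clopen subsets of $D$ sit inside the Boolean algebra $RegD$, to which the transfer machinery of this section applies.

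First I would treat the single set $A$. Since $A$ is clopen in $D$ it is open and equal to its own $D$-closure, so $int_D(cl_D A) = int_D A = A$; that is, $A$ is a regular open subset of $D$. By the converse half of Lemma 4.1(iv) there is a regular open subset $V$ of $Z$ with $A = V \cap D$. Because $Z$ is extremally disconnected, Lemma 4.3 shows $V$ is clopen in $Z$, in particular closed, so $cl V = V$. I then invoke Lemma 4.1(i) with the open set $V$, which gives $cl A = cl(V \cap D) = cl V = V$. Hence $cl A = V$ is clopen in $Z$, which is the first assertion.

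For the second assertion I apply the first part to each of $A$ and $B$: there are clopen subsets $V_A, V_B$ of $Z$ with $cl A = V_A$, $cl B = V_B$, $A = V_A \cap D$, and $B = V_B \cap D$. Then $V_A \cap V_B$ is open in $Z$ and $(V_A \cap V_B) \cap D = A \cap B = \emptyset$. Since $D$ is dense in $Z$, every nonempty open subset of $Z$ meets $D$; therefore $V_A \cap V_B = \emptyset$, i.e. $cl A \cap cl B = \emptyset$. (One could instead deduce disjointness from the fact that disjoint open sets in an extremally disconnected space have disjoint closures, or from the fact that the inverse of the Boolean isomorphism $H$ of Lemma 4.2 preserves meets; but the density argument is the most economical.)

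This is routine point-set topology and I do not anticipate a serious obstacle. The only point demanding care is the bookkeeping between closures taken relative to $D$ and closures taken in $Z$: the crucial identification $cl A = V$ rests squarely on Lemma 4.1(i), and one must verify that the $V$ supplied by Lemma 4.1(iv) is precisely the regular open set whose trace on $D$ is $A$. Once that identification is secured, the remaining steps follow immediately from extremal disconnectedness and from the density of $D$ in $Z$.
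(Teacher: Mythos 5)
Your proof is correct: the paper itself gives no proof of this lemma (it is one of the results Section 4 declares to be routine point-set topology, reconstructible from Lemmas 4.1--4.3), and your argument fills it in exactly along the intended lines, using Lemma 4.1(iv) to lift $A$ to a regular open $V\subset Z$, Lemma 4.3 to see $V$ is clopen, Lemma 4.1(i) to identify $clA=V$, and density of $D$ for disjointness. Nothing to correct.
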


The following result was given by Gillman and Jerison, see page 96 \cite{q},
as a byproduct of other results. The argument given here may be slightly
easier and more direct.

\begin{theorem}
Let $D$ be a dense subspace of a compact Hausdorff extremally disconnected
space $S$. Then $S$ is the Stone-Czech compactification of $D.$ More
precisely, there exists a unique homeomorphism from $\beta D$ onto $S$ which
restricts to the identity homeomorphism on $D.$
\end{theorem}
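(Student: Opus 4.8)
The plan is to prove that $S$ is the Stone-Čech compactification of $D$. The essential characterisation of $\beta D$ (for a Tychonoff space $D$) is that it is a compactification of $D$ in which every continuous map from $D$ into a compact Hausdorff space extends continuously to the whole compactification; equivalently, that disjoint zero-sets of $D$ have disjoint closures. So the strategy is to verify the universal extension property directly, using the earlier lemmas of this section, and then invoke the uniqueness of the Stone-Čech compactification.

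First I would establish the universal property. Let $Z$ be an arbitrary compact Hausdorff space and let $f \colon D \to Z$ be continuous. I want to produce a continuous extension $F \colon S \to Z$. The natural route is to reduce to the clopen structure that Lemma 4.6 gives us. Since $S$ is extremally disconnected, by Corollary 4.4 the dense subspace $D$ is itself extremally disconnected, and by Lemma 4.3 its topology has a base of clopen sets. The key technical input is Lemma 4.6: disjoint clopen subsets of $D$ have disjoint clopen closures in $S$. Using this, for each pair of disjoint closed sets $C_0, C_1$ in $Z$ one can separate their preimages $f^{-1}(C_0)$ and $f^{-1}(C_1)$ inside $D$ by a clopen set (exploiting extremal disconnectedness of $D$ to get clopen separation), and then take closures in $S$. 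This should yield that disjoint closed subsets of $Z$ pull back, under the hoped-for extension, to sets with disjoint closures in $S$ — which is precisely the condition allowing $f$ to extend continuously to $S$.

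Concretely, I would define $F$ on a point $s \in S$ as the unique point of $Z$ lying in every set of the form $cl(f^{-1}(N) \cap D)$ as $N$ ranges over closed neighbourhoods whose preimages cluster at $s$; compactness of $Z$ guarantees the intersection is nonempty, and the separation property from Lemma 4.6 guarantees it is a single point, so $F$ is well defined. Continuity of $F$ and the fact that it restricts to $f$ on $D$ then follow from the clopen separation, since for a clopen $W \subset Z$ the set $F^{-1}(W)$ will agree with $cl(f^{-1}(W) \cap D)$, which is clopen in $S$ by Lemma 4.6. This is the heart of the matter, and I expect the main obstacle to lie exactly here: showing that the two candidate closures partition $S$ cleanly (i.e. that $cl(f^{-1}(W)\cap D)$ and $cl(f^{-1}(Z\setminus W)\cap D)$ are complementary clopen sets covering $S$), which requires both the disjointness half and a covering argument using density of $D$ together with Lemma 4.1(i).

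Finally, having verified the universal extension property, I would appeal to the standard uniqueness of the Stone-Čech compactification: any two compactifications of $D$ with the universal extension property are canonically homeomorphic by a homeomorphism restricting to the identity on $D$. Since $D$ is dense in the compact Hausdorff space $S$ and $S$ enjoys this property, there is a unique homeomorphism $\beta D \to S$ fixing $D$ pointwise. The uniqueness of the extension follows from density of $D$ in $\beta D$ and the Hausdorff property of $S$: two continuous maps agreeing on a dense set must coincide. This completes the identification $S \cong \beta D$.
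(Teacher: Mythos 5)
Your overall strategy --- verify that $S$ itself has the universal extension property and then invoke uniqueness of the Stone-Czech compactification --- is viable and genuinely different from the paper's argument, but as written it has a genuine gap at exactly the point you flagged as the heart of the matter. Your continuity argument for the extension $F$ is carried out only on clopen subsets $W\subset Z$: you argue that $F^{-1}(W)$ agrees with $cl(f^{-1}(W))$, which is clopen by Lemma 4.6. But $Z$ is an \emph{arbitrary} compact Hausdorff space; if $Z$ is connected (e.g. $Z=[0,1]$) its only clopen sets are $\varnothing$ and $Z$, so this check verifies nothing about continuity. Continuity must instead be extracted from the closed-set separation property you did (essentially) establish --- that disjoint closed sets $C_{0},C_{1}\subset Z$ satisfy $cl_{S}f^{-1}(C_{0})\cap cl_{S}f^{-1}(C_{1})=\varnothing$ --- which by Taimanov's extension theorem (equivalently, the zero-set criterion for $\beta D$ in Gillman and Jerison) is exactly equivalent to extendability of $f$ over $S$. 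If you cite that standard theorem, your proof closes up; if you want the pointwise construction of $F$ to be self-contained, both the definition (as stated it asks for a point of $Z$ lying in sets $cl(f^{-1}(N)\cap D)$, which are subsets of $S$, not of $Z$) and the continuity proof must be redone with closed neighbourhoods in $Z$ rather than clopen sets. A second, smaller repair: separating $f^{-1}(C_{0})$ from $f^{-1}(C_{1})$ by a clopen set is not automatic for disjoint \emph{closed} sets in an extremally disconnected space (such spaces need not be normal); you must first apply Urysohn in $Z$ to produce disjoint \emph{open} preimages containing them, and then use that disjoint open subsets of the extremally disconnected space $D$ have disjoint clopen closures, before applying Lemma 4.6.

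For comparison, the paper runs the universal property in the opposite direction, which is why its proof is so short: the inclusion $D\hookrightarrow S$ extends, by the universal property of $\beta D$, to a continuous surjection $\alpha:\beta D\rightarrow S$ restricting to the identity on $D$; injectivity of $\alpha$ follows because distinct points of $\beta D$ lie in disjoint clopen sets (here Corollary 4.4 and Lemma 4.5 are used to know $\beta D$ is extremally disconnected), these sets are the closures of their traces on $D$ by Lemma 4.1(i), and Lemma 4.6 keeps the images of those traces apart in $S$; compactness then upgrades the continuous bijection $\alpha$ to a homeomorphism. In that arrangement only one map --- the inclusion --- ever needs to be extended, so the quantification over arbitrary compact Hausdorff targets $Z$, which is where your difficulties arise, disappears entirely.
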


\begin{proof}
Since $D$\textbf{\ }is a subspace of the compact Hausdorff space,\textbf{\ }%
$S$, $D$\textbf{\ }is completely regular and hence has a well defined
Stone-Czech compactification. By the fundamental property of $\beta D$, there
exists a unique continuous surjection $\alpha$ from $\beta D$ onto $S, $ which
restricts to the identity on $D.$

Let $a$ and $b$ be distinct points in $\beta D.$ Then there exist disjoint
clopen sets $U$ and $V$ such that $a\in U$ and $b\in V.$ Let $A=U\cap D$ and
$B=V\cap D$ then $U=cl_{\beta D}A$ and $V=cl_{\beta D}B.$ So $\alpha\lbrack
U]\subset cl_{S}A$ and $\alpha\lbrack V]\subset cl_{S}B.$ By Lemma 4.6,
$cl_{S}A$ and $cl_{S}B$ are disjoint. Hence $\alpha(a)$ and $\alpha(b)$ are
distinct points of $S.$ Thus $\alpha$ is injective. It now follows from
compactness, that $\alpha$ is a homeomorphism.
\end{proof}

\section{Ergodic discrete group actions on topological spaces}

In this section, $Y$ is a Hausdorff topological space which has no isolated
points. For example, a compact Hausdorff space with no isolated points, or a
dense subset of such a space.

When $G$ is a group of bijections of $Y,$ and $y\in Y,$ we denote the orbit
$\{g(y):g\in G\}$ by $G[y].$

\begin{lemma}
\textit{Let }$G$\textit{\ be a countable group of homeomorphisms of }%
$Y$\textit{.}

\textit{(i) If there exists }$x_{0}\in Y$\textit{\ such that the orbit
}$G[x_{0}]$\textit{\ is dense in }$Y$\textit{\ then every }$G-$%
\textit{invariant open subset of }$Y$\textit{\ is either empty or dense.}

\textit{(ii) If every non-empty open }$G-$invariant \textit{subset of }%
$Y$\textit{\ is dense then, for each }$x$\textit{\ in }$Y,$\textit{\ the orbit
}$G[x]$\textit{\ is either dense or nowhere dense.}
\end{lemma}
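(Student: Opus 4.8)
The plan is to prove both parts directly from the definitions, using the fact that orbits and their closures behave well under the group action. Let me sketch the two parts.

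For part (i), I want to show that any $G$-invariant open set $U$ is either empty or dense. Suppose $U$ is a non-empty $G$-invariant open set. The key observation is that $\mathrm{cl}(U)$ is then also $G$-invariant (since each $g$ is a homeomorphism, $g[\mathrm{cl}(U)] = \mathrm{cl}(g[U]) = \mathrm{cl}(U)$). I would argue that since $U$ is non-empty and open, and the orbit $G[x_0]$ is dense, some translate of $x_0$ must land in $U$; that is, $g(x_0) \in U$ for some $g \in G$. By $G$-invariance of $U$, applying any $h \in G$ gives $hg(x_0) \in U$, so the entire orbit $G[x_0]$ lies in $U$. But $G[x_0]$ is dense in $Y$, and $U$ is open, hence $\mathrm{cl}(U) \supseteq \mathrm{cl}(G[x_0]) = Y$, so $U$ is dense. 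This handles (i) cleanly.

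For part (ii), I assume the hypothesis that every non-empty open $G$-invariant set is dense, and I want to show that for each $x \in Y$, the orbit $G[x]$ is either dense or nowhere dense. The natural approach is to consider $\mathrm{cl}(G[x])$ and its interior. First I would note that $G[x]$ is a $G$-invariant set, so its closure $\mathrm{cl}(G[x])$ is $G$-invariant, and therefore $\mathrm{int}(\mathrm{cl}(G[x]))$ is an open $G$-invariant set (the interior of a $G$-invariant set is $G$-invariant because each $g$ is a homeomorphism, hence an open map preserving the set). Now invoke the hypothesis: this open $G$-invariant set is either empty or dense. If $\mathrm{int}(\mathrm{cl}(G[x]))$ is empty, then $G[x]$ is nowhere dense by definition. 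If instead $\mathrm{int}(\mathrm{cl}(G[x]))$ is dense, then I want to conclude $G[x]$ is dense, i.e. $\mathrm{cl}(G[x]) = Y$.

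The main obstacle is this last implication in (ii): passing from "$\mathrm{int}(\mathrm{cl}(G[x]))$ is dense" to "$G[x]$ is dense." Here I expect to need the assumption that $Y$ has no isolated points, which is the standing hypothesis of the section. The idea is that a dense open set $W = \mathrm{int}(\mathrm{cl}(G[x]))$ satisfies $\mathrm{cl}(W) = Y$; since $W \subseteq \mathrm{cl}(G[x])$ and $\mathrm{cl}(G[x])$ is closed, we get $Y = \mathrm{cl}(W) \subseteq \mathrm{cl}(G[x])$, forcing $\mathrm{cl}(G[x]) = Y$. This step is actually purely formal once the interior is known to be dense, so the real content is verifying that the interior of a closed $G$-invariant set is a genuinely $G$-invariant open set to which the hypothesis applies. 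I would carry out the steps in the order: establish $G$-invariance is preserved by closure and interior; apply part-style reasoning or the hypothesis to dichotomize the interior; then translate the two cases into "nowhere dense" and "dense" respectively. I do not anticipate needing the no-isolated-points assumption beyond ensuring the ambient setting is well-behaved, but I would keep it in reserve in case the dichotomy requires ruling out degenerate single-point orbits.
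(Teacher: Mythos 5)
Your proposal is correct. Part (i) is exactly the paper's argument: a point $g(x_{0})$ of the dense orbit lands in the non-empty invariant open set $U$, invariance pulls the whole orbit into $U$, and density of the orbit forces density of $U$. For part (ii) you take a mild variant of the paper's route: the paper argues contrapositively, observing that if $G[y]$ is not dense then $Y\setminus cl(G[y])$ is a non-empty $G$-invariant open set, hence dense by hypothesis, so $cl(G[y])$ has empty interior; you instead apply the hypothesis to the $G$-invariant open set $int(cl(G[x]))$ and dichotomize directly, which costs you one extra (but purely formal) step — from density of $int(cl(G[x]))$ to $cl(G[x])=Y$ — that the paper's choice of invariant set avoids. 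Both arguments are elementary and of the same depth; the paper's is marginally shorter, while yours has the small advantage of exhibiting the dense/nowhere-dense dichotomy directly rather than through the contrapositive. You are also right that the no-isolated-points hypothesis is never needed; the paper's proof does not use it either.
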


\begin{proof}
(i) Let $U$ be a $G-$invariant open set which is not empty. Since $G[x_{0}]$
is dense, for some $g\in G,$we have $g(x_{0})\in U.$ But $U$ is $G-$invariant.
So $x_{0}\in U.$ Hence $G[x_{0}]\subset U.$ So $U$ is dense in $Y. $

(ii) Suppose $y$ is an element of $Y$ such that $G[y]$ is not dense in
$Y.$\ Then $Y\backslash clG[y]$ $\ $is a non-empty $G-$invariant open set. So
it is dense in $Y.$ So $clG[y]$ has empty interior.
\end{proof}

When $G$ is a group of homeomorphisms of $Y$ its action is said to be
\textit{ergodic} if each $G-$invariant open subset of $Y$ is either empty or
dense in $Y.$

\begin{lemma}
Let $Y$ be an extremally disconnected space. Let $G$ be a group of
homeomorphisms of $Y.$\ Then the action of $G$ is ergodic, if, and only if,
the only $G-$invariant clopen subsets are $Y$ and $\varnothing.$
\end{lemma}

\begin{proof}
Let $U$ be a $G-$invariant open set. Then $clU$ and $Y\backslash clU$ are $G-
$invariant clopen sets. Then $U$ is neither empty nor dense, if, and only if,
$clU$ and $Y\backslash clU$ are non-trivial clopen sets.
\end{proof}

\section{Induced actions}

Let $X$ be a compact Hausdorff space. Then, see Lemma 13 \cite{zk}, $X$ is
separable if, and only if $C(X)$ is isomorphic to a closed (unital)
*-subalgebra of $\ell^{\infty}.$

The regular $\sigma$-completion of an arbitrary $C^{\ast}$-algebra was defined
in \cite{zz}. For the commutative algebra, $C(X),$ its regular $\sigma
$-completion can be identified with the monotone $\sigma$-complete $C^{\ast}%
$-algebra $B_{0}^{\infty}(X)/M_{0}(X),$ where $B_{0}^{\infty}(X)$ is the
algebra of bounded Baire measurable functions on $X$ and $M_{0}(X)$ is the
ideal of all $f$ in $B_{0}^{\infty}(X)$ for which $\{x:f(x)\neq0\}$ is meagre.
Let $S$ be the structure space of $B_{0}^{\infty}(X)/M_{0}(X)$ i.e. this
algebra can be identified with $C(S).$

Let $j:C(X)\rightarrow B_{0}^{\infty}(X)/M_{0}(X)$ be the natural embedding.
This is an injective (isometric) *-homomorphism.

Suppose that $X$ is separable. Then there exists an injective *-homomorphism
$h:C(X)\rightarrow\ell^{\infty}.$ Since $\ell^{\infty}$ is monotone complete,
$h$ extends to a homomorphism $H:C(S)\rightarrow\ell^{\infty}.$ From standard
properties of regular $\sigma-$completions \cite{zz}, $H$ is also injective.
Hence $C(S)$ supports a strictly positive linear functional. By Lemma 3.1 it
follows that $C(S)$ is monotone complete and hence $S$ is extremally disconnected.

Since $H$ is an injective homomorphism, it follows that there is a surjective
continuous map from $\beta%
\mathbb{N}
$ onto $S.$ So $S$ is separable.

\ 

REMARK Because $C(S)$ is monotone complete, $S$ is the Stone structure space
of the complete Boolean algebra of regular open subsets of $X$. It follows
from the Birkhoff-Ulam Theorem in Boolean algebras, and the linearisation
arguments in \cite{zza} that $C(S)$ can be identified with $B^{\infty
}(X)/M(X)$ where $B^{\infty}(X)$ is the algebra of bounded Borel measurable
functions on $X$ and $M(X)$ is the ideal of all bounded Borel functions with
meagre support. In other words, $B^{\infty}(X)/M(X)$ is isomorphic to
$B_{0}^{\infty}(X)/M_{0}(X)$ which is isomorphic to $C(S).$

By the usual duality between compact Hausdorff spaces and commutative (unital)
$C^{\ast}$-algebras, there is a continuous surjection $\rho$ from $S $ onto
$X$ such that $j(f)=f\circ\rho$ for each $f$ in $C(X).$

By the basic properties of regular $\sigma$-completions, for each self-adjoint
$b$ in $C(S),$ the set

$\{j(a):a\in C_{%
\mathbb{R}
}(X)$ and $j(a)\leq b\}$ has $b$ as its least upper bound in $C(S)_{sa}=C_{%
\mathbb{R}
}(S).$

\begin{lemma}
Let $Y$ be a subset of $S$ such that $\rho\lbrack Y]$ is dense in $X.$ Then
$Y$ is dense in $S.$
\end{lemma}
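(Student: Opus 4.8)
The plan is to argue by contradiction. Suppose $Y$ is not dense in $S$. Since $S$ is compact Hausdorff (indeed extremally disconnected), the complement $U = S \setminus \mathrm{cl}(Y)$ is a nonempty open set with $U \cap Y = \varnothing$. Choosing a point $s_{0} \in U$ and applying Urysohn's lemma, I would produce a continuous function $h : S \to [0,1]$ with $h(s_{0}) = 1$ and $h \equiv 0$ on $S \setminus U$; thus $h \in C(S)_{sa}$, $h \geq 0$, and $h \neq 0$, while $h$ vanishes at every point of $Y$.

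The key tool is the approximation property recorded just before the statement: for each self-adjoint $b \in C(S)$ one has $b = \bigvee \{ j(a) : a \in C_{\mathbb{R}}(X),\ j(a) \leq b \}$ in $C(S)_{sa}$. Applying this with $b = h$, I would observe that if every $a$ in the indexing set satisfied $j(a) \leq 0$, then $0$ would be an upper bound and the supremum would be $\leq 0$, contradicting $h \geq 0$, $h \neq 0$. Hence there exists $a \in C_{\mathbb{R}}(X)$ with $j(a) = a \circ \rho \leq h$ but $j(a) \not\leq 0$. The latter means $a(\rho(s)) > 0$ for some $s \in S$, so $V = \{ x \in X : a(x) > 0 \}$ is a nonempty open subset of $X$.

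Now I would exploit the hypothesis that $\rho[Y]$ is dense in $X$: the nonempty open set $V$ must meet $\rho[Y]$, so there is $y \in Y$ with $\rho(y) \in V$, i.e. $j(a)(y) = a(\rho(y)) > 0$. Because $j(a) \leq h$ and the order on $C(S)_{sa}$ is pointwise, this gives $h(y) \geq j(a)(y) > 0$. But $y \in Y$ is disjoint from $U$, so $h(y) = 0$, a contradiction. Therefore $Y$ is dense in $S$.

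The conceptual heart of the argument, and the step I expect to be the main obstacle, is passing from ``$Y$ misses an open subset of $S$'' to ``some continuous function on $X$ detects this gap along $\rho[Y]$''. This is exactly where the irreducibility (minimality) of the canonical map $\rho$ from the Stone space of $\mathrm{Reg}(X)$ onto $X$ is doing the work; rather than invoking the general theory of irreducible maps and the Gleason absolute, I would route everything through the stated least-upper-bound approximation of elements of $C(S)$ by images of $C(X)$, which packages precisely the needed minimality in a form tailored to this setting. A parallel route, if preferred, is to note that the clopen sets $\widehat{V}$ with $V \in \mathrm{Reg}(X)$ form a base for $S$, verify the containment $\rho^{-1}[V] \subseteq \widehat{V}$ by a Urysohn-function computation, and then use density of $\rho[Y]$ to make each nonempty $\widehat{V}$ meet $Y$ directly.
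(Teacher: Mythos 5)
Your proof is correct and is essentially the paper's argument: both assume $Y$ is not dense, fix a nonempty open subset of $S$ missed by $Y$, and derive a contradiction from the stated least-upper-bound property of $j$ combined with the density of $\rho[Y]$ in $X$. The only differences are cosmetic: the paper takes the characteristic function $\chi_{E}$ of a nonempty clopen set $E$ disjoint from $cl(Y)$ (available since $S$ is extremally disconnected) and shows directly that every minorant $j(a)\leq\chi_{E}$ satisfies $a\leq0$ on the dense set $\rho[Y]$, hence everywhere, forcing $\chi_{E}\leq0$; you use a Urysohn function in place of $\chi_{E}$ and run the same density argument in contrapositive form, extracting one minorant that is somewhere positive and locating a witness in $\rho[Y]$.
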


\begin{proof}
Let us assume that $Y$ is not dense in $S.$ Then there exists a non-empty
clopen set $E$ which is disjoint from $clY.$

Let $j(a)\leq\chi_{E}.$ Then $j(a)(s)\leq0$ for $s\in Y.$ So $a(\rho(s))\leq0$
for $\rho(s)\in\rho\lbrack Y].$ Hence $a\leq0.$ But this implies $\chi_{E}%
\leq0$ which is a contradiction.
\end{proof}

Let $Y$ be any compact Hausdorff space. Let $Homeo(Y)$ be the group of all
homeomorphisms from $Y$ onto $Y$. Let $AutC(Y)$ be the group of all $\ast
-$automorphisms of $C(Y)$. For $\phi\in Homeo(Y)$ let $h_{\phi}(f)=f\circ\phi$
for each $f\in C(Y).$ Then $\phi\rightarrow h_{\phi}$ is a bijection from the
group $Homeo(Y)$ onto $AutC(Y)$ which switches the order of multiplication. In
other words it is a group anti-isomorphism.

Let $\theta$ be a homeomorphism of $X$ onto $X.$ As above, let $h_{\theta}$ be
the corresponding $\ast-$automorphism of $C(X).$ Also $f\rightarrow
f\circ\theta$ \ induces an automorphism $\widehat{h_{\theta}}$ of $B^{\infty
}(X)/M(X).$ Since $B^{\infty}(X)/M(X)$ can be identified with $C(S), $ there
exists $\widehat{\theta}$ in\ $Homeo(S)$ corresponding to $\widehat{h_{\theta
}}$ $.$ Clearly, $\ \widehat{h_{\theta}}$ restricts to the automorphism,
$h_{\theta},$ of $C(X).$

\begin{lemma}
$\widehat{h_{\theta}}$ is the unique automorphism of $C(S)$ which is an
extension of $h_{\theta}.$ Hence $\widehat{\theta}$ is uniquely determined by
$\theta.$ Furthermore, the map $\theta\rightarrow\widehat{\theta\text{ }}$ is
an injective group homomorphism from $Homeo(X)$ into $Homeo(S).$\textbf{\ }
\end{lemma}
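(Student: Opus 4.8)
The plan is to treat existence as already settled and to let uniqueness be the engine that drives everything else. The automorphism $\widehat{h_{\theta}}$ has been built so that $\widehat{h_{\theta}}\circ j=j\circ h_{\theta}$, i.e. it restricts to $h_{\theta}$ on the copy $j[C(X)]$; so it is an extension of $h_{\theta}$, and all that remains is to show that no other automorphism of $C(S)$ does the same job, and then to read off the functorial statements.

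For uniqueness I would exploit a single structural fact: any $\ast$-automorphism $\Phi$ of $C(S)$ is an order isomorphism of $C(S)_{sa}$ (both $\Phi$ and $\Phi^{-1}$ preserve positivity), and an order isomorphism carries an existing least upper bound to the least upper bound of the image set. I then feed in the approximation-from-below recorded earlier in this section: each self-adjoint $b\in C(S)$ is the supremum of the upward directed family $\{\,j(a):a\in C_{\mathbb{R}}(X),\ j(a)\le b\,\}$. Hence for any automorphism $\Phi$ extending $h_{\theta}$ I can compute $\Phi(b)=\sup\{\,j(h_{\theta}(a)):a\in C_{\mathbb{R}}(X),\ j(a)\le b\,\}$, an expression that mentions only $h_{\theta}$ and the ambient order of $C(S)$. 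Two such automorphisms therefore agree on $C(S)_{sa}$, hence on $C(S)$; this is the uniqueness assertion. The same computation applies verbatim with any automorphism $\gamma$ of $C(X)$ in place of $h_{\theta}$, so I would record the slightly stronger statement that every automorphism of $C(X)$ has at most one extension to an automorphism of $C(S)$. Since $\widehat{\theta}$ is by construction the homeomorphism of $S$ dual to $\widehat{h_{\theta}}$, and the passage between $Homeo(S)$ and $AutC(S)$ is bijective, $\widehat{\theta}$ is then determined by $\theta$.

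The homomorphism and injectivity claims are bookkeeping built on this uniqueness. First I would observe that $\alpha\mapsto\widehat{\alpha}$ is multiplicative on $AutC(X)$: for $\alpha,\beta\in AutC(X)$ the automorphism $\widehat{\alpha}\,\widehat{\beta}$ satisfies $(\widehat{\alpha}\,\widehat{\beta})\circ j=j\circ\alpha\beta$, so it is the extension of $\alpha\beta$, and by uniqueness it equals $\widehat{\alpha\beta}$. Combining this with the two group anti-isomorphisms $\theta\mapsto h_{\theta}$ and $\phi\mapsto h_{\phi}$ and the defining identity $h_{\widehat{\theta}}=\widehat{h_{\theta}}$, the short chain
\[
h_{\widehat{\theta_{1}\theta_{2}}}=\widehat{h_{\theta_{1}\theta_{2}}}=\widehat{h_{\theta_{2}}h_{\theta_{1}}}=\widehat{h_{\theta_{2}}}\,\widehat{h_{\theta_{1}}}=h_{\widehat{\theta_{2}}}h_{\widehat{\theta_{1}}}=h_{\widehat{\theta_{1}}\widehat{\theta_{2}}}
\]
gives $\widehat{\theta_{1}\theta_{2}}=\widehat{\theta_{1}}\,\widehat{\theta_{2}}$ once the injectivity of $\phi\mapsto h_{\phi}$ is invoked; the two anti-isomorphisms cancel to leave a genuine homomorphism. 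Injectivity of $\theta\mapsto\widehat{\theta}$ then follows because $\widehat{\theta}=\mathrm{id}_{S}$ forces $\widehat{h_{\theta}}=\mathrm{id}_{C(S)}$, hence $h_{\theta}=\mathrm{id}_{C(X)}$, and then $f\circ\theta=f$ for every $f\in C(X)$ yields $\theta=\mathrm{id}_{X}$ since $C(X)$ separates the points of the Hausdorff space $X$.

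The one genuinely delicate point---the main obstacle---is that uniqueness is asserted among \emph{all} automorphisms of $C(S)$, whereas the extension theorem Corollary 3.5 only guarantees uniqueness among $\sigma$-normal maps. The way around this is exactly the remark above that a $\ast$-automorphism is automatically an order isomorphism and so automatically respects those suprema that exist in $C(S)_{sa}$; once that observation is in place, the representation of each self-adjoint element as a supremum of elements of $j[C(X)]$ does all the work, and the remaining steps are formal manipulations of the two dualities.
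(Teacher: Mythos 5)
Your proof is correct and takes essentially the same route as the paper: uniqueness is obtained from the fact that any automorphism of $C(S)$ is an order isomorphism together with the representation of each self-adjoint $b$ as the supremum of $\{j(a):a\in C_{\mathbb{R}}(X),\ j(a)\le b\}$, and the homomorphism and injectivity claims then follow from uniqueness by composing the two anti-isomorphisms. The point you flag as delicate --- that uniqueness must hold among \emph{all} automorphisms, not just $\sigma$-normal maps --- is resolved exactly as in the paper, where the order-isomorphism property does the work implicitly in the line ``$a\leq b$ if, and only if, $Ha\leq Hb$''.
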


\begin{proof}
Let $H$ be an automorphism of $B^{\infty}(X)/M(X)=C(S)$, which is an extension
of $h_{\theta}$. Let $b$ be a self-adjoint element of $B^{\infty}(X)/M(X).$
Then, for $a\in C_{%
\mathbb{R}
}(X),a\leq b$ if, and only if, $Ha\leq$ $Hb$ i.e. $h_{\theta}a\leq Hb.$ So
$Hb$ is the supremum of $\{h_{\theta}(a):a\in C_{%
\mathbb{R}
}(X),$ $a\leq b\}.$ Hence $H=\widehat{h_{\theta}}.$ That is,
$\widehat{h_{\theta}}$ $\ $is the unique extension of $h_{\theta}$ to an
automorphism of $C(S).$

Let $h_{1}$ and $h_{2}$ be in $AutC(X).$ Then for $a\in C(X),$ we have%
\[
\widehat{h_{1}h_{2}}(a)=h_{1}h_{2}(a)=\widehat{h_{1}}h_{2}(a)=\widehat{h_{1}%
}\widehat{h_{2}}(a).
\]

By uniqueness, it now follows that $\widehat{h_{1}h_{2}}=\widehat{h_{1}%
}\widehat{h_{2}}.$ Hence $h\rightarrow\widehat{h}$ is an injective group
homomorphism of $AutC(X)$ into $AutC(S).$ So the map $\theta\rightarrow
\widehat{\theta}$ is the composition of a group anti-isomorphism with an
injective group homomorphism composed with a group anti-isomorphism. So it is
an injective group homomorphism.
\end{proof}

\begin{corollary}
$\theta(\rho s)=\rho(\widehat{\theta}s)$ for each $s\in S.$
\end{corollary}

\begin{proof}
For $a\in C(X),\ s\in S,$%
\[
a\circ\theta(\rho s)=h_{\theta}(a)(\rho s)=\widehat{h_{\theta}}%
(j(a)(s)=j(a)(\widehat{\theta}s)=a(\rho(\widehat{\theta}s)).
\]
Hence $\theta(\rho s)=\rho(\widehat{\theta}s).$
\end{proof}

Throughout this paper, unless we specify otherwise, $G$ is a countable
infinite group. Let $\varepsilon:G\rightarrow Homeo(X)$ be a homomorphism into
the group of homeomorphisms of $X.$ That is, $\varepsilon$ is an action of $G$
on $X.$ For each $g\in G,$ let $\widehat{\varepsilon}_{g}$ be the
homeomorphism of $S$ onto $S$ induced by $\varepsilon_{g}.$ Then
$\widehat{\varepsilon}$ is the action of $G$ on $S$ induced by $\varepsilon.
$\bigskip

Let us recall that an action $\varepsilon:G\rightarrow Homeo(X)$ is
\textit{non-degenerate }if it is injective. We shall normally only use
non-degenerate actions.

\begin{proposition}
Let $x_{0}$ be a point in $X$ such that the orbit $\{\varepsilon_{g}%
(x_{0}):g\in G\}$ is dense in $X.$ Let $s_{0}\in S$ such that $\rho
s_{0}=x_{0}.$ Then $\{\widehat{\varepsilon}_{g}(s_{0}):g\in G\}$is an orbit
which is dense in $S.$
\end{proposition}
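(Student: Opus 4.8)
The plan is to reduce the assertion---density of the orbit $\{\widehat{\varepsilon}_{g}(s_{0}):g\in G\}$ in $S$---to the hypothesis that $\{\varepsilon_{g}(x_{0}):g\in G\}$ is dense in $X$, using the intertwining relation between $\rho$ and the two actions together with the density-transfer Lemma 6.1. The word "orbit" in the conclusion is automatic, since the set in question is by definition the orbit of $s_{0}$ under $\widehat{\varepsilon}$; the only real content is density.

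First I would set $Y=\{\widehat{\varepsilon}_{g}(s_{0}):g\in G\}$ and compute its image $\rho[Y]$. The key tool is Corollary 6.3, applied to the homeomorphism $\theta=\varepsilon_{g}$. Since $\widehat{\varepsilon}_{g}$ is, by the definition given just before the proposition, exactly the homeomorphism of $S$ induced by $\varepsilon_{g}$ (that is, $\widehat{\varepsilon}_{g}=\widehat{\theta}$ with $\theta=\varepsilon_{g}$), the corollary gives $\rho(\widehat{\varepsilon}_{g}(s_{0}))=\varepsilon_{g}(\rho(s_{0}))=\varepsilon_{g}(x_{0})$ for every $g\in G$. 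Hence $\rho[Y]=\{\varepsilon_{g}(x_{0}):g\in G\}$, which is precisely the orbit of $x_{0}$ in $X$, and this is dense in $X$ by hypothesis. Now Lemma 6.1 states exactly that any subset of $S$ whose image under $\rho$ is dense in $X$ is itself dense in $S$; applying it to $Y$ yields that $Y$ is dense in $S$, which is the desired conclusion.

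The proof is essentially bookkeeping, and I do not expect a serious obstacle: all the substantive topological work has already been carried out, the density transfer in Lemma 6.1 and the construction of the equivariance map $\rho$ in Corollary 6.3. The one point that must be checked with a little care is that the intertwining identity of Corollary 6.3, stated there for a single homeomorphism $\theta$ and its induced map $\widehat{\theta}$, transfers verbatim to each group element $\varepsilon_{g}$; this is immediate once one notes that $\widehat{\varepsilon}_{g}$ is defined to be the induced homeomorphism $\widehat{(\varepsilon_{g})}$, so no separate argument for the group structure of the action is needed here.
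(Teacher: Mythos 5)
Your proof is correct and follows exactly the paper's own argument: the paper likewise applies Corollary 6.3 to get $\varepsilon_{g}(x_{0})=\rho(\widehat{\varepsilon}_{g}(s_{0}))$ for each $g$, and then invokes Lemma 6.1 to conclude that the orbit $\{\widehat{\varepsilon}_{g}(s_{0}):g\in G\}$ is dense in $S$. Your additional remarks (that "orbit" is automatic and that Corollary 6.3 applies to each $\varepsilon_{g}$ since $\widehat{\varepsilon}_{g}$ is by definition the induced homeomorphism) are just careful bookkeeping of the same route.
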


\begin{proof}
By Corollary 6.3, $\varepsilon_{g}(x_{0})=\rho(\widehat{\varepsilon}_{g}%
(s_{0})).$ It now follows from Lemma 6.1, that the orbit $\{
\widehat{\varepsilon}_{g}(s_{0}):g\in G\}$ is dense in $S.$
\end{proof}

\begin{definition}
An orbit $\{\varepsilon_{g}(x_{0}):g\in G\}$ is said to be \textbf{free}%
\textit{\ }if, for $g\neq\imath$, $\varepsilon_{g}(x_{0})\neq x_{0}.$
Equivalently, for $g\neq\imath,$ $\varepsilon_{g}$ leaves no point of the
orbit fixed.
\end{definition}

It is easy to see that the existence of at least one free orbit implies that
the action is non-degenerate.

\begin{definition}
Let $Y$ be a subset of $X$ which is invariant under the action $\varepsilon$.
Then the action $\varepsilon$ is \textbf{free} on $Y$ if, for each $y\in Y, $
the orbit $\{ \varepsilon_{g}(y):g\in G\}$is free.
\end{definition}

\begin{lemma}
Let $G,X$ and $\varepsilon$ be as above. Let $x_{0}\in X$ be such that the
orbit $\{\varepsilon_{g}(x_{0}):g\in G\}$ is both dense and free. Then there
exists a $G$-invariant $Y,$ which is a dense $G_{\delta}$ subset of $X $ such
that for $g\neq\imath,$ $\varepsilon_{g}$ has no fixed point in $Y. $ Also
$x_{0}\in Y.$
\end{lemma}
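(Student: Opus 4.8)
The plan is to delete from $X$ the set of all points that are fixed by some non-trivial group element and to verify that what remains is a dense $G_{\delta}$ which happens to be $G$-invariant. For each $g\in G$ with $g\neq\imath$, put $\mathrm{Fix}(g)=\{x\in X:\varepsilon_{g}(x)=x\}$; since $X$ is Hausdorff and $\varepsilon_{g}$ is continuous, each $\mathrm{Fix}(g)$ is closed. The first step is to show that $\mathrm{Fix}(g)$ meets the orbit $G[x_{0}]$ in no point. Indeed $\varepsilon_{h}(x_{0})\in\mathrm{Fix}(g)$ holds precisely when $\varepsilon_{h^{-1}gh}(x_{0})=x_{0}$, and since the orbit of $x_{0}$ is free this forces $h^{-1}gh=\imath$, that is $g=\imath$. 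Hence for $g\neq\imath$ the closed set $\mathrm{Fix}(g)$ is disjoint from the dense orbit $G[x_{0}]$, so it has empty interior and is therefore nowhere dense.

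Next I would set $N=\bigcup_{g\neq\imath}\mathrm{Fix}(g)$. As $G$ is countable, $N$ is a countable union of nowhere dense closed sets, hence an $F_{\sigma}$ meagre set, and $Y=X\setminus N$ is a $G_{\delta}$. A compact Hausdorff space is a Baire space, so $Y$ is dense: any non-empty open set contained in $N$ would be a non-empty meagre open set, contradicting the Baire property. By construction $Y\cap\mathrm{Fix}(g)=\varnothing$ for every $g\neq\imath$, so $\varepsilon_{g}$ has no fixed point in $Y$; and $x_{0}\in Y$ because the orbit of $x_{0}$ is free, so $x_{0}\notin\mathrm{Fix}(g)$ for any $g\neq\imath$.

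The one remaining point, and the only step that is not purely routine, is the $G$-invariance of $Y$, equivalently of $N$. Here I would use the identity $\varepsilon_{h}[\mathrm{Fix}(g)]=\mathrm{Fix}(hgh^{-1})$: if $\varepsilon_{g}x=x$, then applying $\varepsilon_{h}$ and inserting $\varepsilon_{h^{-1}}\varepsilon_{h}$ gives $\varepsilon_{hgh^{-1}}(\varepsilon_{h}x)=\varepsilon_{h}x$, and the reverse inclusion is symmetric. Since conjugation by $h$ permutes the non-identity elements of $G$, this yields $\varepsilon_{h}[N]=\bigcup_{g\neq\imath}\mathrm{Fix}(hgh^{-1})=N$, so $N$ and hence $Y$ is $G$-invariant. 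The essential content is the nowhere-density of each $\mathrm{Fix}(g)$, which rests entirely on disjointness from the dense free orbit; the density of $Y$ is then a standard Baire-category argument, and the invariance is the conjugation bookkeeping just indicated.
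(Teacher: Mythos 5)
Your proposal is correct and follows essentially the same route as the paper: both show each fixed-point set is closed and disjoint from the dense free orbit (hence nowhere dense), take the countable union, and use the conjugation identity $\varepsilon_{h}[\mathrm{Fix}(g)]=\mathrm{Fix}(hgh^{-1})$ to get $G$-invariance of the complement. Your write-up merely makes explicit two points the paper leaves implicit, namely the Baire-category argument for density of $Y$ and the verification that freeness of the orbit rules out fixed points on the whole orbit, not just at $x_{0}$.
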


\begin{proof}
Fix $g\neq\imath,$ let $K_{g}=\{x\in X:\varepsilon_{g}(x)=x\}.$ Then $K_{g}, $
the fix-point set of $\varepsilon_{g},$ is closed. Let $U$ be the interior of
$K_{g}.$ Then the orbit $\{ \varepsilon_{h}(x_{0}):h\in G\}$ is disjoint from
$K_{g}.$ So its closure is disjoint from $U.$ But since the orbit is dense,
this means that $K_{g}$ has empty interior.

Let $Z=%
{\textstyle\bigcup}
\{K_{g}:g\in G,g\neq\imath\}.$ Then $Z$ is the union of countably many closed
nowhere dense sets. A calculation shows that
\[
\varepsilon_{h}[K_{g}]=K_{hgh^{-1}}%
\]
and from this it follows that $Z$ is $G$-invariant.

Put $Y=X\backslash Z.$ Then $Y$ has all the required properties.
\end{proof}

\begin{theorem}
Let $G,X$ and $\varepsilon$ be as above. Let $\widehat{\varepsilon}$ be the
action of $G$ on $S$ induced by the action $\varepsilon$ on $X.$ Let $x_{0}\in
X$ such that the orbit $\{ \varepsilon_{g}(x_{0}):g\in G\}$ is both dense and
free. Let $s_{0}\in S$ such that $\rho s_{0}=x_{0}.$ Then $\{
\widehat{\varepsilon}_{g}(s_{0}):g\in G\}$ is a dense free orbit in $S.$

Furthermore, there exists $Y,$ a $G$-invariant, dense $G_{\delta}$ subset of
$S,$ with $s_{0}\in Y,$ such that the action $\widehat{\varepsilon}$ is free
on $Y.$
\end{theorem}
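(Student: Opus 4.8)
The plan is to push everything down to the base space $X$ via the intertwining identity $\varepsilon_{g}(\rho s)=\rho(\widehat{\varepsilon}_{g}s)$ supplied by Corollary 6.3, exploit the corresponding statements already established on $X$, and then transport the conclusions back up to $S$. Density of the orbit $\{\widehat{\varepsilon}_{g}(s_{0}):g\in G\}$ in $S$ is immediate from Proposition 6.4. For freeness, I would argue by contradiction: if $\widehat{\varepsilon}_{g}(s_{0})=s_{0}$ for some $g\neq\imath$, then applying $\rho$ and invoking Corollary 6.3 gives $\varepsilon_{g}(x_{0})=\varepsilon_{g}(\rho s_{0})=\rho(\widehat{\varepsilon}_{g}s_{0})=\rho s_{0}=x_{0}$, contradicting the freeness of the orbit of $x_{0}$ in $X$. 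Hence no non-identity element of $G$ fixes $s_{0}$, and the orbit in $S$ is free.

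For the second assertion I would apply Lemma 6.7 to the action $\varepsilon$ on $X$, obtaining a $G$-invariant dense $G_{\delta}$ set $Y_{0}\subseteq X$ with $x_{0}\in Y_{0}$ on which $\varepsilon_{g}$ has no fixed point for $g\neq\imath$, and then set $Y=\rho^{-1}[Y_{0}]$. Because $\rho$ intertwines the two actions, $\widehat{\varepsilon}_{g}[Y]=\rho^{-1}[\varepsilon_{g}[Y_{0}]]=Y$, so $Y$ is $G$-invariant; and $s_{0}\in Y$ since $\rho s_{0}=x_{0}\in Y_{0}$. Writing $Y_{0}=\bigcap_{n}U_{n}$ with each $U_{n}$ open and dense in $X$, we have $Y=\bigcap_{n}\rho^{-1}[U_{n}]$ with each $\rho^{-1}[U_{n}]$ open; moreover $\rho[\rho^{-1}[U_{n}]]=U_{n}$ is dense in $X$, so by Lemma 6.1 each $\rho^{-1}[U_{n}]$ is dense in $S$. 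Since $S$ is compact Hausdorff, hence a Baire space, $Y$ is a dense $G_{\delta}$. Finally, if $y\in Y$ and $\widehat{\varepsilon}_{g}(y)=y$ for some $g\neq\imath$, then applying $\rho$ and Corollary 6.3 exactly as above yields $\varepsilon_{g}(\rho y)=\rho y$ with $\rho y\in Y_{0}$, a contradiction; thus $\widehat{\varepsilon}$ is free on $Y$ in the sense of Definition 6.6.

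I expect no serious obstacle here, since the real work is concentrated in the already-proved Lemma 6.7; the only delicate point is the transfer of \emph{density} upward along $\rho$, which is precisely what Lemma 6.1 guarantees and is the single place where the extremal disconnectedness of $S$ is used. As an alternative, one could argue directly on $S$, mirroring the proof of Lemma 6.7: for each $g\neq\imath$ the fixed-point set $L_{g}=\{s\in S:\widehat{\varepsilon}_{g}(s)=s\}$ is closed and, being disjoint from the dense free orbit of $s_{0}$ established in the first part, has empty interior; then $Y=S\setminus\bigcup_{g\neq\imath}L_{g}$ does the job, with $G$-invariance following from $\widehat{\varepsilon}_{h}[L_{g}]=L_{hgh^{-1}}$. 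I would favour the lifting route, as it reuses Lemma 6.7 with almost no extra computation.
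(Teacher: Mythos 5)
Your proof is correct. The first half (density via Proposition 6.4 / Corollary 6.3 and Lemma 6.1, freeness of the orbit of $s_{0}$ by applying $\rho$ and the intertwining identity) is exactly the paper's argument. For the ``furthermore'' part you diverge: the paper simply re-applies Lemma 6.7 \emph{upstairs}, to the compact Hausdorff space $S$ with the action $\widehat{\varepsilon}$ and the point $s_{0}$, which is legitimate precisely because the first half has just produced a dense free orbit in $S$; this makes the second assertion a one-line deduction. You instead apply Lemma 6.7 \emph{downstairs} on $X$ to get $Y_{0}$, and transport it up by setting $Y=\rho^{-1}[Y_{0}]$, verifying $G$-invariance from the intertwining relation, membership of $s_{0}$, the $G_{\delta}$ property, density of each $\rho^{-1}[U_{n}]$ via Lemma 6.1 together with surjectivity of $\rho$, and freeness by pushing a putative fixed point down to $Y_{0}$ — all of these checks are sound (note that each $U_{n}\supseteq Y_{0}$ is automatically dense, as you implicitly use). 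The paper's route is shorter and needs no further topology; yours costs a few extra verifications but yields slightly more information, namely that the free invariant set in $S$ can be taken to be the full $\rho$-preimage of the free invariant set in $X$, tying the two generic sets together explicitly. The ``alternative'' you sketch at the end (fixed-point sets $L_{g}$ are closed, disjoint from the dense orbit, hence nowhere dense; remove their union) is, modulo packaging, exactly what the paper's citation of Lemma 6.7 amounts to. One small quibble: Lemma 6.1 rests on the sup property of the embedding $j:C(X)\to C(S)$ coming from the regular $\sigma$-completion rather than on extremal disconnectedness per se, so your closing remark slightly misattributes where that hypothesis enters.
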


\begin{proof}
By Corollary 6.3, $\varepsilon_{g}(\rho s_{0})=\rho(\widehat{\varepsilon}%
_{g}s_{0}).$ That is, $\varepsilon_{g}(x_{0})=\rho(\widehat{\varepsilon}%
_{g}s_{0}).$

It now follows from Lemma 6.1 that the orbit $\{ \widehat{\varepsilon}%
_{g}(s_{0}):g\in G\}$ is dense in $S.$

Now suppose that $\widehat{\varepsilon}_{h}s_{0}=s_{0}.$ Then $\rho
(\widehat{\varepsilon}_{h}s_{0})=\rho(s_{0}).$ So $\varepsilon_{h}%
(x_{0})=x_{0}.$ Hence $h=\imath.$ It now follows that $\{\widehat{\varepsilon
}_{g}(s_{0}):g\in G\}$ is a dense free orbit in $S.$

The rest of the theorem follows by applying Lemma 6.7.
\end{proof}

REMARK Let $D$ be a countable dense subset of a compact Hausdorff space $K.$
Let $\alpha$ be a homeomorphism of $D$ onto $D.$ Then, in general, $\alpha$
need not extend to a homeomorphism of $K.$ But, from the fundamental
properties of the Stone-Czech compactification, $\alpha$ does extend to a
unique homeomorphism of $\beta D,$ say $\theta_{\alpha}.$ Let $S_{1}$ be the
Gelfand--Naimark structure space of $B^{\infty}(\beta D)/M(\beta D)$. Then,
from the results of this section, $\theta_{\alpha}$ induces a homeomorphism
$\widehat{\theta_{\alpha}}$ of $S_{1}.$ Let $S$ be the structure space of
$B^{\infty}(K)/M(K).$ Then, by Lemma 4.2, $S$ is homeomorphic to $S_{1}.$
Hence each homeomorphism of $D$ induces a canonical homeomorphism of $S.$ So
each action of $G$, as homeomorphisms of $D,$ induces, canonically, an action
of $G$ as homeomorphisms of $S.$

\section{Orbit equivalence}

\ Let $S$ be a compact Hausdorff extremally disconnected space with no
isolated points. Let $\varepsilon$ be an action of $G$ as homeomorphisms of
$S$ which is non-degenerate.

\begin{definition}
Let $Z$ be a $G$-invariant subset of $S.$ Then the action $\varepsilon$ is
said to be \textbf{pseudo-free} on $Z$ \textit{\ }if, for every $g\in G,$ the
fixed point set $\{z\in Z:\varepsilon_{g}(z)=z\}$ is a clopen subset of $Z$ in
the relative topology.
\end{definition}

REMARK\ If an action is free on $Z$ then, for $g\neq\imath,$ its fixed point
set is empty. So each free action is also pseudo-free. In particular, each
free orbit is also pseudo-free.

In the rest of this section, $s_{0}\in S$ such that\ the orbit $D=\{
\varepsilon_{g}(s_{0}):g\in G\}$ is dense in $S.$ To simplify our notation, we
shall write "$g$" for $\varepsilon_{g}.$ The restriction of $g$ to $D$ is a
homeomorphism of $D$ onto $D.$ We shall abuse our notation by also denoting
this restriction by "$g$".

From the results of Section 4, $S$ is the Stone-Czech compactification of $D.
$ So any homeomorphism of $D$ has a unique extension to a homeomorphism of
$S.$

\begin{lemma}
Let $O$ be a non-empty open subset of $S.$ Then $O\cap D$ is an infinite set.
\end{lemma}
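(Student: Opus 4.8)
The plan is to argue by contradiction, exploiting both standing hypotheses on $S$ — that it is compact Hausdorff with no isolated points, and that $D = \{\varepsilon_g(s_0) : g \in G\}$ is dense. The crucial observation is that a non-empty open subset of such a space cannot be finite, whereas deleting a finite (hence closed) set from $O$ would leave a non-empty open set disjoint from $D$, contradicting density. Note that density alone is not enough: it only gives $O \cap D \neq \varnothing$, so the no-isolated-points hypothesis must be brought in to upgrade ``non-empty'' to ``infinite''.

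First I would record the auxiliary fact that every non-empty open subset $U$ of $S$ is infinite. Indeed, if $U$ were finite then, since $S$ is Hausdorff and hence $T_1$, every finite subset of $U$ is closed; thus for each $u \in U$ the set $U \setminus \{u\}$ is closed, so $\{u\} = U \setminus (U \setminus \{u\})$ is open in $S$. This would make $u$ an isolated point of $S$, contradicting the hypothesis that $S$ has none. Applying this to $O$ shows that $O$ is infinite.

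Next, suppose towards a contradiction that $F := O \cap D$ is finite. Because $S$ is Hausdorff, the finite set $F$ is closed, so $O \setminus F$ is open. Since $O$ is infinite and $F$ is finite, $O \setminus F$ is non-empty. Moreover $(O \setminus F) \cap D = (O \cap D) \setminus F = \varnothing$, so $O \setminus F$ is a non-empty open subset of $S$ disjoint from $D$. This contradicts the density of $D$ in $S$, and we conclude that $O \cap D$ is infinite.

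I do not expect a genuine obstacle here; the whole argument is routine point-set topology. The only step needing a moment's care is the auxiliary fact in the second paragraph, since it is precisely the combination of the $T_1$ separation property with the absence of isolated points — and not density — that excludes the finite case; density is then invoked at the very end to reach the contradiction.
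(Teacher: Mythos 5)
Your proof is correct and uses essentially the same argument as the paper: assume $O\cap D$ is finite, remove it from $O$ to get an open set disjoint from $D$, and play density against the no-isolated-points hypothesis. The paper merely orders the two ingredients differently (density first forces $O$ to equal the finite set, then an isolated point appears), while you first show $O$ is infinite and then contradict density; the content is identical.
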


\begin{proof}
Suppose $O\cap D$ is a finite set, say, $\{p_{1},p_{2},\cdots,p_{n}\}.$

Then $O\setminus\{p_{1},p_{2},\cdots,p_{n}\}$ is an open subset of $S$ which
is disjoint from $D$. But $D$ is dense in $S$. Hence $O=\{p_{1},p_{2}%
,\cdots,p_{n}\}.$ So $\{p_{1}\}$ is an open subset of $S.$ But $S$ has no
isolated points. So this is a contradiction.
\end{proof}

Let $Z$ be a $G$-invariant dense subset of $S$ and let $h$ be a bijection of
$Z$ onto itself. Then $h$ is said to be \textit{strongly }$G$%
-\textit{decomposable} over $Z$ if there exist a sequence of pairwise disjoint
clopen subsets of $Z,$ $(A_{j})$ where $Z=\bigcup A_{j},$ and a sequence
$(g_{j})$ in $G$ such that
\[
h(x)=g_{j}(x)\text{ for }x\in A_{j}.
\]
When this occurs, $h$ is a continuous, open map. Hence it is a homeomorphism
of $Z$ onto $Z.$

We also need a slightly weaker condition. Let $h$ be a homeomorphism of $S$
onto itself. Then $h$ is $G$-\textit{decomposable} (over $S)$ if there exist a
sequence of pairwise disjoint clopen subsets of $S,$ $(K_{j})$ where $\bigcup
K_{j}$ is dense in $S,$ and a sequence $(g_{j})$ in $G$ such that%

\[
h(x)=g_{j}(x)\text{ for }x\in K_{j}.
\]

REMARK The set $\bigcup K_{j}$ is an open dense set, hence its compliment is a
closed nowhere dense set.

\begin{lemma}
Let $h$ be a homeomorphism of $D$ onto $D.$ Let $h$ be strongly $G$%
-decomposable over $D$. Let $\widehat{h}$ be the unique extension of $h$ to a
homeomorphism of $S.$ Then $\widehat{h}$ is $G$-decomposable over $S.$
\end{lemma}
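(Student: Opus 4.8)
The plan is to transport the strong $G$-decomposition of $h$ over $D$ to a $G$-decomposition of $\widehat{h}$ over $S$ by passing to closures. Write $D=\bigcup A_j$, where the $A_j$ are pairwise disjoint clopen subsets of $D$ and $h(x)=g_j(x)$ for $x\in A_j$; here each $g_j$ denotes the homeomorphism $\varepsilon_{g_j}$ of $S$ as well as its restriction to $D$. I would set $K_j=cl(A_j)$, the closure taken in $S$. Since $D$ is a dense subspace of the compact Hausdorff extremally disconnected space $S$, Lemma 4.6 applies verbatim: each $K_j$ is clopen in $S$, and, because the $A_j$ are pairwise disjoint, the closures $K_j$ are pairwise disjoint clopen subsets of $S$. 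Density of $\bigcup K_j$ is then immediate, since $A_j\subset K_j$ gives $D=\bigcup A_j\subset\bigcup K_j$ and $D$ is dense in $S$.

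The only substantive step is to check that $\widehat{h}$ agrees with $g_j$ on all of $K_j$, not merely on $A_j$. For this I would invoke a density-plus-continuity argument. The maps $\widehat{h}$ and $g_j$ are both continuous from $S$ to $S$, and on $A_j\subset D$ they coincide, because $\widehat{h}$ restricts to $h$ on $D$ while $h(x)=g_j(x)$ there. Since $A_j$ is dense in its closure $K_j$ and $S$ is Hausdorff, two continuous maps into $S$ that agree on $A_j$ must agree on all of $K_j$. Hence $\widehat{h}(x)=g_j(x)$ for every $x\in K_j$, which is precisely the assertion that $\widehat{h}$ is $G$-decomposable over $S$, with defining data $(K_j)$ and $(g_j)$.

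I expect no real obstacle here: the whole argument rests on Lemma 4.6 --- which in turn uses the extremal disconnectedness of $S$ --- to guarantee that the closures of disjoint clopen pieces of $D$ remain disjoint clopen pieces of $S$, together with the elementary fact that a continuous map into a Hausdorff space is determined on a closure by its values on a dense subset. If any care is needed, it is only in keeping straight that the same symbol $g_j$ is being used for a group element, for a homeomorphism of $S$, and for its restriction to $D$.
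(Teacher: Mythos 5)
Your proposal is correct and follows essentially the same route as the paper: take closures of the $A_j$, invoke Lemma 4.6 to get pairwise disjoint clopen subsets of $S$ whose union contains $D$ and is therefore dense, and conclude $\widehat{h}=g_j$ on $cl(A_j)$ by continuity. The paper compresses the final step into the phrase ``by continuity,'' which is exactly the density-plus-Hausdorff argument you spell out.
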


\begin{proof}
Let $(A_{j})$ be a sequence of pairwise disjoint clopen subsets of $D$. Then,
by Lemma 4.6, $(clA_{j})$ is a sequence of pairwise disjoint clopen subsets of
$S.$ Let $(g_{j})$ be a sequence in $G$ such that $h(x)=g_{j}(x)$ for $x\in
A_{j}.$ Then, by continuity, $\widehat{h}(x)=g_{j}(x)$ for $x\in clA_{j}.$
Also the open set $\bigcup clA_{j}$ is dense in $S$ (because it contains $D).$
\end{proof}

Let $\Gamma$ be a countable, infinite group of homeomorphisms of $S$ which
acts transitively on $D.$

If each $\gamma\in$ $\Gamma$ is strongly $G$-decomposable over $D$ \ and each
$g\in G$ is strongly $\Gamma$-decomposable over $D$ then $\Gamma$ and $G $ are
said to be \textit{strongly equivalent.}

Let us recall that orbit equivalence, with respect to the action of $G$ on $S
$, is defined by
\[
x\sim_{G}y\text{ if, and only if }g(x)=y\text{ for some }g\text{ in }G.
\]

\begin{lemma}
Let $\Gamma$ and $G$ be strongly equivalent over $D$. Then there exists a
$G_{\delta}$ set $Y,$ where $D\subset Y\subset S$, and $Y$ is both
$G$-invariant and $\Gamma$-invariant, such that $\Gamma$ and $G$ are strongly
equivalent over $Y.$
\end{lemma}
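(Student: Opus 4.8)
The plan is to delete a meagre, group-invariant set of bad points and let $Y$ be what remains. First I would extract, for each group element, a clopen decomposition that is valid off a nowhere dense set. Fix $\gamma\in\Gamma$; since $\Gamma$ acts on $D$, the restriction $\gamma|_{D}$ is a homeomorphism of $D$ onto $D$, and by hypothesis it is strongly $G$-decomposable over $D$. By Lemma 7.3 its extension (namely $\gamma$ itself) is $G$-decomposable over $S$: there are pairwise disjoint clopen sets $(K^{\gamma}_{j})$ in $S$ and elements $(g^{\gamma}_{j})$ of $G$ with $\gamma=g^{\gamma}_{j}$ on $K^{\gamma}_{j}$ and with $\bigcup_{j}K^{\gamma}_{j}$ dense. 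Tracing the proof, $K^{\gamma}_{j}=cl(A^{\gamma}_{j})$ for the original clopen sets $A^{\gamma}_{j}\subset D$, so $D\subset\bigcup_{j}K^{\gamma}_{j}$ and the closed nowhere dense set $N_{\gamma}=S\setminus\bigcup_{j}K^{\gamma}_{j}$ is disjoint from $D$. The symmetric argument applied to each $g\in G$ yields closed nowhere dense sets $M_{g}$, disjoint from $D$, off which $g$ agrees on clopen pieces with elements of $\Gamma$.

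Next I would force invariance. Let $H\subset Homeo(S)$ be the subgroup generated by $G\cup\Gamma$; it is countable because $G$ and $\Gamma$ are. Put $W=\bigcup_{\gamma\in\Gamma}N_{\gamma}\cup\bigcup_{g\in G}M_{g}$ and saturate: $\widetilde{W}=\bigcup_{\sigma\in H}\sigma[W]$. Each $\sigma\in H$ is a homeomorphism, hence maps a closed nowhere dense set to a closed nowhere dense set, so $\widetilde{W}$ is a countable union of closed nowhere dense sets, i.e.\ a meagre $F_{\sigma}$. Set $Y=S\setminus\widetilde{W}$. Then $Y$ is a $G_{\delta}$; since $S$ is compact Hausdorff, hence a Baire space, $Y$ is dense. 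By construction $Y$ is $H$-invariant, so invariant under both $G$ and $\Gamma$. Because $D$ is $H$-invariant (it is the $G$-orbit of $s_{0}$, and $\Gamma$ acts transitively on it) and disjoint from $W$, each translate $\sigma[W]$ is disjoint from $\sigma[D]=D$; hence $D\cap\widetilde{W}=\varnothing$ and $D\subset Y$.

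Finally I would transfer the decompositions to $Y$. For $\gamma\in\Gamma$ put $A_{j}=K^{\gamma}_{j}\cap Y$: these are pairwise disjoint clopen subsets of $Y$ in the relative topology, and since $Y\subset S\setminus N_{\gamma}=\bigcup_{j}K^{\gamma}_{j}$ they cover $Y$; on each, $\gamma=g^{\gamma}_{j}\in G$. Thus $\gamma$ is strongly $G$-decomposable over $Y$, and the mirror-image argument using the $M_{g}$ shows each $g\in G$ is strongly $\Gamma$-decomposable over $Y$. Hence $\Gamma$ and $G$ are strongly equivalent over $Y$.

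The one genuinely delicate point, and the main obstacle, is reconciling invariance with smallness: simply discarding the $N_{\gamma}$ and $M_{g}$ leaves a dense $G_{\delta}$ on which everything works, but that set need not be group-invariant. Saturating under the generated group $H$ repairs this, and the saturation stays meagre for exactly two reasons: $H$ is countable (as both $G$ and $\Gamma$ are), and homeomorphisms preserve the ideal of closed nowhere dense sets. Everything else is a routine relative-topology check once $Y$ is both invariant and co-meagre.
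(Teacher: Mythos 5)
Your proposal is correct and is essentially the paper's own argument viewed from the complement: the paper applies Lemma 7.3 to get dense open sets $U_{\gamma}$, $V_{g}$ containing $D$, intersects them to get a $G_{\delta}$ set $W\supset D$, and then takes $Y=\bigcap\{\lambda[W]:\lambda\in\Lambda\}$ over the countable group $\Lambda$ generated by $\Gamma$ and $G$ --- which is exactly your construction of deleting the saturated union of the nowhere dense bad sets. Your explicit verification that the clopen decompositions restrict to relative-clopen decompositions of $Y$ is a detail the paper leaves implicit, but the route is the same.
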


\begin{proof}
Let $\Lambda$ be the countable group generated by $\Gamma$ and $G.$

By Lemma 7.3, for each $\gamma\in\Gamma,$ there is an open subset of $S$,
$U_{\gamma},$ such that $D\subset U_{\gamma}$ and $\gamma$ decomposes with
respect to $G,$ over $U_{\gamma}.$ Similarly, for each $g\in G,$ there is a
corresponding open subset of $S$, $V_{g},$ such that $D\subset V_{g}$ and $g$
decomposes with respect to $\Gamma,$ over $V_{g}.$

Let $W$ be the intersection of all the $U_{\gamma}$ and all the $V_{g}$ then
$W$ is a $G_{\delta}$ subset of $S$ such that $D\subset W.$ Now let $Y$ be the
intersection of $\{\lambda\lbrack W]:\lambda\in\Lambda\}.$ Then $Y $ is the
required $G_{\delta}$ set.
\end{proof}

\begin{corollary}
Let $\Gamma$ and $G$ be strongly equivalent over $D$. Then there exists a
$G_{\delta}$ set $Y,$ where $D\subset Y\subset S$, and $Y$ is both
$G$-invariant and $\Gamma$-invariant, such that the orbit equivalence
relations $\sim_{G}$ and $\sim_{\Gamma}$ coincide on $Y.$
\end{corollary}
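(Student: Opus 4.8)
The plan is to deduce this directly from the preceding result, Lemma 7.4, so that essentially all the work is already done and what remains is to unwind the definition of strong equivalence. First I would apply Lemma 7.4 to obtain a $G_{\delta}$ set $Y$ with $D\subset Y\subset S$, invariant under both $G$ and $\Gamma$, over which $\Gamma$ and $G$ are strongly equivalent. Because $Y$ is invariant under both groups, the full $G$-orbit and the full $\Gamma$-orbit of any point of $Y$ remain inside $Y$, so it is meaningful to compare the restricted relations $\sim_{G}$ and $\sim_{\Gamma}$ on $Y$.

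The key observation is that strong decomposability translates pointwise into orbit containment. To show that $\sim_{\Gamma}$ is contained in $\sim_{G}$ on $Y$, I would take $x,y\in Y$ with $y=\gamma(x)$ for some $\gamma\in\Gamma$. Since $\Gamma$ and $G$ are strongly equivalent over $Y$, the map $\gamma$ is strongly $G$-decomposable over $Y$: there is a partition of $Y$ into pairwise disjoint clopen pieces $(A_{j})$ and a sequence $(g_{j})$ in $G$ with $\gamma=g_{j}$ on $A_{j}$. The point $x$ lies in exactly one $A_{j}$, and for that index $y=\gamma(x)=g_{j}(x)$, whence $x\sim_{G}y$.

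By the symmetric argument, using that each $g\in G$ is strongly $\Gamma$-decomposable over $Y$, I would show that $x\sim_{G}y$ implies $x\sim_{\Gamma}y$. Combining the two inclusions yields that $\sim_{G}$ and $\sim_{\Gamma}$ coincide on $Y$, which is exactly the assertion.

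I do not expect any genuine obstacle: the substantive content is entirely packaged into Lemma 7.4, and the remaining step is the elementary remark that if a homeomorphism agrees with a fixed group element at a point then the image of that point lies in the corresponding orbit. The only matter requiring a moment's care is that the $G$- and $\Gamma$-invariance of $Y$, already guaranteed by Lemma 7.4, ensures no point escapes the set on which the two relations are being compared, so that the clopen decompositions indeed partition $Y$ and the relevant images stay within $Y$.
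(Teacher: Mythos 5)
Your proof is correct and is exactly the argument the paper intends: the paper's own proof of this corollary is simply the word ``Straightforward,'' meaning precisely the reduction to Lemma 7.4 followed by the pointwise unwinding of strong decomposability that you spell out. Nothing further is needed.
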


\begin{proof}
Straightforward.
\end{proof}

\begin{lemma}
Let each $\gamma$ in $\Gamma$ be strongly $G$-decomposable over $D.$ Let the
action of $G$ be pseudo-free on $D.$ Then $\Gamma$ and $G$ are strongly
equivalent over $D.$
\end{lemma}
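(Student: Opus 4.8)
The hypotheses already supply one half of the requirement for strong equivalence, namely that each $\gamma\in\Gamma$ is strongly $G$-decomposable over $D$. So the plan is to establish the reverse: each $g\in G$ is strongly $\Gamma$-decomposable over $D$. Fixing such a $g$, I would enumerate $\Gamma=\{\gamma_{1},\gamma_{2},\dots\}$ and, for each $k$, consider the agreement set
\[
E_{k}=\{x\in D:g(x)=\gamma_{k}(x)\}.
\]
The goal is to show that each $E_{k}$ is clopen in $D$, that the sets $E_{k}$ cover $D$, and then to disjointify them.

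The clopenness of $E_{k}$ is the heart of the argument, and it is exactly where the two hypotheses combine. First, $E_{k}$ is closed: both $g$ and $\gamma_{k}$ restrict to continuous maps $D\to D$, so $x\mapsto(g(x),\gamma_{k}(x))$ is continuous into the Hausdorff space $D\times D$, and $E_{k}$ is the preimage of the (closed) diagonal. For openness I would use that $\gamma_{k}$ is strongly $G$-decomposable over $D$: choose pairwise disjoint clopen sets $(A_{j})$ with $D=\bigcup_{j}A_{j}$ and elements $(g_{j})$ in $G$ such that $\gamma_{k}(x)=g_{j}(x)$ for $x\in A_{j}$. On each such piece the condition $g(x)=\gamma_{k}(x)$ becomes $g(x)=g_{j}(x)$, that is, $x$ is a fixed point of $g_{j}^{-1}g\in G$; hence
\[
E_{k}\cap A_{j}=A_{j}\cap\{x\in D:(g_{j}^{-1}g)(x)=x\}.
\]
By pseudo-freeness of the $G$-action on $D$, the fixed-point set of $g_{j}^{-1}g$ is clopen in $D$, so $E_{k}\cap A_{j}$ is clopen. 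Therefore $E_{k}=\bigcup_{j}(E_{k}\cap A_{j})$ is open, and combined with closedness it is clopen.

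It remains to cover and disjointify. Since $D$ is $G$-invariant, $g(x)\in D$ for every $x\in D$, and since $\Gamma$ acts transitively on $D$ there is some $\gamma_{k}$ with $\gamma_{k}(x)=g(x)$; thus $x\in E_{k}$ and $D=\bigcup_{k}E_{k}$. Setting $B_{1}=E_{1}$ and $B_{k}=E_{k}\setminus(E_{1}\cup\cdots\cup E_{k-1})$ produces pairwise disjoint clopen sets, as finite Boolean combinations of clopen sets, with $\bigcup_{k}B_{k}=D$ and $g(x)=\gamma_{k}(x)$ for $x\in B_{k}$. This exhibits $g$ as strongly $\Gamma$-decomposable over $D$. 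As this holds for every $g\in G$, together with the given hypothesis it shows that $\Gamma$ and $G$ are strongly equivalent over $D$. The only genuine obstacle is the clopenness of the agreement sets $E_{k}$, and the argument above shows precisely why both the decomposability of the $\gamma_{k}$ and the pseudo-freeness of the $G$-action are needed there.
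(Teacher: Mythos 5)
Your proof is correct, and its engine is identical to the paper's: on a clopen piece $A_{j}$ where $\gamma_{k}$ coincides with some $g_{j}\in G$, the set where $g$ agrees with $\gamma_{k}$ is $A_{j}\cap\{x\in D:g_{j}^{-1}g(x)=x\}$, which is clopen by pseudo-freeness. Where you differ is in the organization. The paper argues locally and greedily: transitivity of $\Gamma$ produces, around $s_{0}$ and then around each point of the countable set $D$, a clopen neighbourhood on which $g$ agrees with some element of $\Gamma$, and the disjoint clopen cover of $D$ is then assembled by exhaustion over an enumeration of $D$ --- a step the paper leaves essentially implicit (``Since $D$ is countable, we can find a sequence of disjoint clopen sets...''). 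You instead enumerate $\Gamma$ and work with the global agreement sets $E_{k}=\{x\in D:g(x)=\gamma_{k}(x)\}$: openness of $E_{k}$ comes from the mechanism above, closedness from the diagonal argument (an ingredient the paper never needs), the covering from transitivity and $G$-invariance of $D$, and the disjointification is the explicit $B_{k}=E_{k}\setminus(E_{1}\cup\cdots\cup E_{k-1})$. Your route makes the final decomposition completely explicit and replaces the enumeration of $D$ by an enumeration of $\Gamma$; the price is the closedness argument, which is genuinely needed in your setup, since a difference of merely open sets need not be clopen. Had you disjointified the doubly indexed clopen pieces $E_{k}\cap A_{j}$ directly, you could have dispensed with closedness altogether and obtained a proof strictly parallel to the paper's but still fully explicit.
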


\begin{proof}
Let $g\in G.$ Since $\Gamma$ acts transitively on \ $D$, there exists
$\gamma_{1}$ in $\Gamma$ such that $g(s_{0})=\gamma_{1}(s_{0}).$

Since $\gamma_{1}$ is strongly $G$-decomposable over $D,$ there exist a clopen
set $A_{1}\subset D$ with $s_{0}\in A_{1}$ and $g_{1}$ in $G,$ such that
\[
\gamma_{1}(s)=g_{1}(s)\text{ for all }s\text{ in }A_{1}.
\]
In particular, $g(s_{0})=g_{1}(s_{0}).$ So $g_{1}^{-1}g(s_{0})=s_{0}.$ But the
action of $G$ is pseudo-free. So there exists a clopen neighbourhood of
$s_{0},$ $K_{1}\subset A_{1}$ such that $g_{1}^{-1}g(s)=s$ for $s\in K_{1}.$
Then $g(s)=\gamma_{1}(s)$ for $s\in K_{1}.$

Since $D$ is countable, we can find a sequence of disjoint clopen sets,
$(K_{j})$ and a sequence $(\gamma_{j})$ in $\Gamma$ such that $g(s)=\gamma
_{j}(s)$ for $s\in K_{j},$ and $\bigcup K_{j}=D.$ Thus $g$ is strongly
$\Gamma$-decomposable over $D.$ So $\Gamma$ and $G$ are strongly equivalent.
\end{proof}

\begin{lemma}
Let $A$ and $B$ be disjoint clopen subsets of $D.$ Let $a\in A$ and $b\in B.$
Then there exists a homeomorphism $h$ from $D$ onto $D$ with the following
properties. First $h$ is strongly $G$-decomposable. Secondly $h$ interchanges
$A$ and $B$ and leaves each point of $D\backslash(A\cup B)$ fixed. Thirdly,
$h(a)=b.$ Fourthly $h=h^{-1}.$
\end{lemma}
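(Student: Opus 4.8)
The plan is to realise $h$ as an involution that interchanges $A$ and $B$ by means of a single bijection $\phi\colon A\to B$ pieced together from group elements on a clopen partition of $A$. Granting such a $\phi$ with $\phi(a)=b$, I would set $h=\phi$ on $A$, $h=\phi^{-1}$ on $B$, and $h=\mathrm{id}$ on the clopen set $D\setminus(A\cup B)$. If $A=\bigsqcup_j V_j$ with each $V_j$ clopen and $\phi=g_j$ on $V_j$, then $B=\bigsqcup_j g_j[V_j]$ and $\phi^{-1}=g_j^{-1}$ on the clopen set $g_j[V_j]$; together with the identity on $D\setminus(A\cup B)$ this exhibits $h$ as strongly $G$-decomposable over $D$. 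The equalities $h[A]=B$, $h[B]=A$, $h(a)=b$ and $h^2=\mathrm{id}$ are then immediate, so $h=h^{-1}$; and by the remark following the definition of strong decomposability, $h$ is automatically a homeomorphism of $D$.

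The substance of the proof is the construction of $\phi$ by a back-and-forth (marriage) argument. Since $D$ is a single $G$-orbit, $G$ acts transitively on $D$; moreover $D$ is countably infinite, extremally disconnected (Corollary 4.4) and has no isolated points, so every nonempty clopen subset is infinite and splits into two nonempty clopen pieces. Fixing an enumeration of $D$, I would maintain at each finite stage a finite family of pairwise disjoint clopen rectangles $V\subseteq A$, each matched through a group element $g$ to $g[V]\subseteq B$, with the unmatched remainders $A'\subseteq A$ and $B'\subseteq B$ kept nonempty and clopen. To begin, transitivity gives $g_0$ with $g_0(a)=b$; I match a proper clopen neighbourhood $V_0\ni a$ with $g_0[V_0]\ni b$, which secures $\phi(a)=b$. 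At an $A$-step I take the first enumerated point $p$ still unmatched in $A$, choose an unmatched $q\in B$ and $g\in G$ with $g(p)=q$, and let $V$ be a proper clopen neighbourhood of $p$ inside $A'\cap g^{-1}[B']$, chosen so small that $g[V]$ is a proper clopen subset of $B'$; the $B$-steps are symmetric. Alternating the two sides, each point of $A$ and each point of $B$ becomes matched after finitely many steps, so in the limit the clopen pieces tile $A$ and their images tile $B$, producing the desired bijection $\phi=\bigsqcup_j g_j|_{V_j}$.

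The delicate point, and the only real obstacle, is the bookkeeping that exhausts both sides simultaneously: I must never cover all of $B$ while some of $A$ remains unmatched, nor conversely. This is exactly why at every step the matched neighbourhood is shrunk to leave nonempty clopen remainders on both sides while still capturing the designated point; the absence of isolated points makes such shrinking always possible, and transitivity of the orbit action lets an arbitrary point of $A$ be routed into any nonempty clopen region of the unmatched part of $B$. Once this invariant is in place, checking that the accumulated pieces are genuinely disjoint clopen sets tiling $A$ and $B$, and that the alternation forces full exhaustion in the limit, is routine.
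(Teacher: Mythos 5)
Your proof is correct and follows essentially the same route as the paper: both construct the matching piecewise from group elements acting on proper clopen neighbourhoods, keep the unmatched remainders nonempty and clopen, exhaust both sides by enumeration, and then define $h$ as $g_j$ on $V_j$, as $g_j^{-1}$ on $g_j[V_j]$, and as the identity on $D\setminus(A\cup B)$. The only cosmetic difference is that the paper needs no alternation: at each single step it pairs the first unmatched point of $A$ with the first unmatched point of $B$ through one group element (using transitivity of the orbit), whereas you alternate $A$-steps and $B$-steps; the bookkeeping is otherwise identical.
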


\begin{proof}
Since $a$ and $b$ are in the same orbit of $G,$ there exists $g_{1}$ in $G$
such that $g_{1}(a)=b.$ Then $A\cap g_{1}^{-1}[B]$ is a clopen neighbourhood
of $a$ which is mapped by $g_{1\text{ }}$into $B.$ Since $S$ is extremally
disconnected and has no isolated points and by making use of Lemma 4.6, we can
find a strictly smaller clopen neighbourhood of $a,$ say $A_{1}.$ By dropping
to a clopen sub-neighbourhood if necessary, we can also demand that
$g_{1}[A_{1}]$ is a proper clopen subset of $B.$ Let $B_{1}=g_{1}[A_{1}].$

By Lemma 7.2, $A$ and $B$ are infinite sets. Since they are subsets of $D,$
they are both countably infinite. Enumerate them both. Let $a_{2}$ be the
first term of the enumeration of $A$ which is not in $A_{1}$ and let $b_{2}$
be the first term of the enumeration of $B$ which is not in $B_{1}.$ Then
there exists $g_{2}$ in $G$ such that $g_{2}(a_{2})=b_{2}.$ Now let $A_{2}$ be
a clopen neighbourhood of $a_{2},$ such that $A_{2}$ is a proper subset of
$A\setminus A_{1}$ and $g_{2}[A_{2}]$ is a proper subset of $B\setminus
B_{1}.$ Proceeding inductively, we obtain a sequence, $(A_{n})$ of disjoint
clopen subsets of $A$; a sequence $(B_{n})$ of disjoint clopen subsets of $B$
and a sequence $(g_{n})$ from $G$ such that $g_{n}$ maps $A_{n}$ onto $B_{n}.
$ Furthermore $A=\bigcup A_{n}$ and $B=\bigcup B_{n}.$

We define $h$ as follows. For $s\in A_{n},$ $h(s)=g_{n}(s).$ For $s\in B_{n},
$ $h(s)=g_{n}^{-1}(s).$ For $s\in D\setminus(A\cup B),h(s)=s$. Then $h $ has
all the required properties.
\end{proof}

\begin{lemma}
Let $\alpha$ and $\beta$ be homeomorphisms of $D$ onto itself. Suppose that
each homeomorphism is strongly $G$-decomposable. Then $\beta\alpha$ is
strongly $G$-decomposable.
\end{lemma}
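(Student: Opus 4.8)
The plan is to refine the two given decompositions into a single common countable clopen partition of $D$ and then simply read off the appropriate element of $G$ on each piece. Since strong $G$-decomposability is witnessed by a partition together with a matching sequence of group elements, producing such a witness for $\beta\alpha$ is all that is required; continuity and bijectivity then come for free (as noted after the definition, any map admitting such a decomposition is automatically a homeomorphism, and in any case $\beta\alpha$ is a composite of homeomorphisms).

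Concretely, I would first record the two hypotheses: there are pairwise disjoint clopen sets $(A_{j})$ with $D=\bigcup_{j}A_{j}$ and elements $(g_{j})$ of $G$ such that $\alpha(x)=g_{j}(x)$ for $x\in A_{j}$; and pairwise disjoint clopen sets $(B_{k})$ with $D=\bigcup_{k}B_{k}$ and elements $(h_{k})$ of $G$ such that $\beta(y)=h_{k}(y)$ for $y\in B_{k}$. Then I would set
\[
C_{jk}=A_{j}\cap g_{j}^{-1}[B_{k}].
\]
Because each $g_{j}$ restricts to a homeomorphism of $D$ onto $D$, the set $g_{j}^{-1}[B_{k}]$ is clopen in $D$, so each $C_{jk}$, being an intersection of two clopen sets, is clopen in $D$. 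The family $(C_{jk})_{j,k}$ is countable and pairwise disjoint: distinct $j$ force disjointness through the $A_{j}$, while for fixed $j$ the sets $g_{j}^{-1}[B_{k}]$ are disjoint since the $B_{k}$ are and $g_{j}$ is a bijection. I would then check that these pieces exhaust $D$: using $\bigcup_{k}B_{k}=D$ and the $G$-invariance of the orbit $D$ (so that $g_{j}^{-1}[D]=D$), one gets $\bigcup_{k}C_{jk}=A_{j}\cap g_{j}^{-1}[D]=A_{j}$, and hence $\bigcup_{j,k}C_{jk}=\bigcup_{j}A_{j}=D$. Finally, for $x\in C_{jk}$ we have $\alpha(x)=g_{j}(x)\in B_{k}$, so $\beta\alpha(x)=h_{k}(g_{j}(x))=(h_{k}g_{j})(x)$, with $h_{k}g_{j}\in G$. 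Relabelling $(C_{jk})$ as a single sequence with associated elements $h_{k}g_{j}$ exhibits $\beta\alpha$ as strongly $G$-decomposable over $D$.

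There is no substantial obstacle here; the proof is a routine common-refinement argument. The only two points that genuinely need care are that each $C_{jk}$ really is clopen in the relative topology of $D$ (which relies on the already-noted fact that the restriction of each $g\in G$ to $D$ is a homeomorphism of $D$) and that the refined partition covers all of $D$ (which relies on the $G$-invariance of $D$ to guarantee $g_{j}^{-1}[D]=D$). Both are immediate once the setup is in place, so the entire argument is bookkeeping with the two given partitions.
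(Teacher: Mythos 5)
Your proposal is correct and is essentially the paper's own proof: both form the common refinement of the two clopen partitions (your $A_{j}\cap g_{j}^{-1}[B_{k}]$ equals the paper's $A_{j}\cap\alpha^{-1}[B_{k}]$, since $\alpha$ agrees with $g_{j}$ on $A_{j}$) and read off $\beta\alpha=h_{k}g_{j}$ on each piece. The extra care you take over clopenness of the pieces and exhaustion of $D$ is sound and only makes explicit what the paper leaves implicit.
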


\begin{proof}
Let $\{A_{i}:i\in\mathbb{N}\}$ be a partition of $D$ into clopen sets and
$(g_{i}^{\alpha})$ a sequence in $G$ which gives the $G$-decomposition of
$\alpha.$ Similarly, let $\{B_{j}:j\in\mathbb{N}\}$ be a partition of $D$ into
clopen sets and $(g_{j}^{\beta})$ a sequence in $G$ which gives the
$G$-decomposition of $\beta.$ Then $\{A_{i}\cap\alpha^{-1}[B_{j}%
]:i\in\mathbb{N},j\in\mathbb{N}\}$ is a partition of $D$ into clopen sets.

Let $s\in A_{i}\cap\alpha^{-1}[B_{j}].$ Then $\beta\alpha(s)=g_{j}^{\beta
}(\alpha(s))=g_{j}^{\beta}g_{i}^{\alpha}(s).$
\end{proof}

\begin{lemma}
Let $D$ be enumerated as $(s_{0},s_{1},...).$ Let $(D_{k})(k=1,2...)$ be a
monotone decreasing sequence of clopen neighbourhoods of $s_{0}$ such that
$s_{n}\notin D_{n}$ for any $n$. Then the following statements hold.

(a) There is a sequence $(h_{k})(k=1,2...)$ of homeomorphisms of $D$ onto $D$
where $h_{k}=h_{k}^{-1}.$ \ For $1\leq k\leq n,$ the $h_{k}$ are mutually
commutative. Each $h_{k\text{ }}$is strongly $G-$decomposable over $D.$

(b) For each positive integer $n,$ there exists a finite family of pairwise
disjoint, clopen subsets of $D,$
\[
\{K^{n}(\alpha_{1},\alpha_{2},...,\alpha_{n}):(\alpha_{1},\alpha_{2}%
,...\alpha_{n})\in{\mathbb{Z}_{2}}^{n}\}
\]
whose union is $D.$

(c) Let $K_{0}=D.$ For $1\leq p\leq n-1$ $K^{p}(\alpha_{1},\alpha
_{2},...,\alpha_{p})=K^{p+1}(\alpha_{1},\alpha_{2},...,\alpha_{p},0)\cup
K^{p+1}(\alpha_{1},\alpha_{2},...,\alpha_{p},1).$

(d) For $1\leq p\leq n,$ $K^{p}(0,0,...,0)\subset D_{p}$ and $s_{0}\in
K^{p}(0,0,...,0).$

(e) Let $(\alpha_{1},\alpha_{2},...,\alpha_{p})\in{\mathbb{Z}_{2}}^{p}$ where
$1\leq p\leq n.$ Then the homeomorphism $h_{1}^{\alpha_{1}}h_{2}^{\alpha_{2}%
}...h_{p}^{\alpha_{p}}$ interchanges $K^{p}(\beta_{1},\beta_{2},...,\beta
_{p})$ with $K^{p}(\alpha_{1}+\beta_{1},\alpha_{2}+\beta_{2},...,\alpha
_{p}+\beta_{p}).$

(f) For each $n$, $\{s_{0},s_{1},...,s_{n}\}\subset\{h_{1}^{\alpha_{1}}%
h_{2}^{\alpha_{2}}...h_{n}^{\alpha_{n}}(s_{0}):(\alpha_{1},\alpha
_{2},...,\alpha_{n})\in{\mathbb{Z}_{2}}^{n}\}.$

(g) For each $s\in D,$ if $h_{1}^{\alpha_{1}}h_{2}^{\alpha_{2}}...h_{n}%
^{\alpha_{n}}(s)=s$ then $\alpha_{1}=\alpha_{2}=...=\alpha_{n}=0.$
\end{lemma}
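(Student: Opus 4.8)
This is a dyadic-approximation construction. We have a countable dense orbit $D=\{s_0,s_1,\dots\}$, a decreasing sequence of clopen neighbourhoods $D_k$ of $s_0$ with $s_n\notin D_n$, and we want to build an involutive, commuting family $h_1,\dots,h_n,\dots$ of strongly $G$-decomposable self-homeomorphisms of $D$, together with a nested dyadic partition $\{K^n(\alpha_1,\dots,\alpha_n)\}$ indexed by ${\mathbb Z}_2^n$, so that $h_1^{\alpha_1}\cdots h_n^{\alpha_n}$ acts on the partition exactly like addition in ${\mathbb Z}_2^n$, the first $n{+}1$ points of $D$ lie in the ${\mathbb Z}_2^n$-orbit of $s_0$, and the action of ${\mathbb Z}_2^n$ on $s_0$ is free (condition (g)). So the plan is a simultaneous induction on $n$, constructing $h_n$ and the refinement $K^n\to K^{n+1}$ at each stage.

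**Let me think about the construction.** The seed of everything is Lemma 7.7, which, given two disjoint clopen sets and chosen points, produces a strongly $G$-decomposable involution interchanging them. The idea is to define $h_n$ as such an involution swapping two halves of $D$. Concretely, I would set up the induction so that at stage $n$ I already have the partition $\{K^{n-1}(\vec\alpha)\}$ refining to $\{K^n(\vec\alpha)\}$. First I split $D$ (really, I split each block consistently) into two disjoint clopen pieces, a ``$0$-half'' and a ``$1$-half''. To guarantee (d), I must arrange that $K^n(0,\dots,0)\subset D_n$ and $s_0\in K^n(0,\dots,0)$; since the $D_k$ are clopen neighbourhoods of $s_0$ and $S$ is extremally disconnected with no isolated points, I can always shrink the chosen clopen piece to sit inside $D_n$ while still containing $s_0$ (Lemma 7.7's flexibility in choosing $A_1$ inside a prescribed clopen neighbourhood is exactly what allows this). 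Then I define $h_n$ via Lemma 7.7 to interchange the $0$-half and the $1$-half of $D$ (pointing $s_0$ to a chosen representative in the $1$-half), getting an involution that is strongly $G$-decomposable and fixes the rest. Commutativity of $h_1,\dots,h_n$ (part (a)) is the delicate bookkeeping requirement: I would ensure it by building $h_n$ so that it respects the partition $\{K^{n-1}(\vec\alpha)\}$ already fixed by the earlier $h_k$, i.e. $h_n$ maps each block of the coarser partition to itself while flipping only the new $n$th coordinate. Because each $h_k$ ($k<n$) permutes blocks by flipping the $k$th coordinate and $h_n$ flips only the $n$th, these two coordinate-flips commute, giving (e) and hence (a).

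**The inductive engine.** Part (e) is the heart: I would prove by induction on $p$ that $h_1^{\alpha_1}\cdots h_p^{\alpha_p}$ sends $K^p(\vec\beta)$ to $K^p(\vec\alpha+\vec\beta)$. The base case $p=1$ is the defining property of $h_1$ (swap the two halves = flip $\alpha_1$). For the inductive step I use that $h_p$ flips the $p$th coordinate while preserving all earlier blocks, together with the refinement relation (c); strong $G$-decomposability is preserved under composition by Lemma 7.9. Once (e) holds, (f) follows: since $\Gamma$-type transitivity on the finite set $\{s_0,\dots,s_n\}$ is achieved by noting that the points $h_1^{\alpha_1}\cdots h_n^{\alpha_n}(s_0)$, as $\vec\alpha$ ranges over ${\mathbb Z}_2^n$, lie in distinct blocks $K^n(\vec\alpha)$ and cover all $2^n$ blocks; I must arrange the construction (choosing which representative each swap sends $s_0$ to) so that each of $s_1,\dots,s_n$ is captured as such an image — this is where the condition $s_n\notin D_n$ is used, to ensure $s_n$ lands in a nonzero block and thus gets hit. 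Finally (g) is freeness: if $h_1^{\alpha_1}\cdots h_n^{\alpha_n}(s)=s$, then by (e) the point $s$ would have to lie in both $K^n(\vec\beta)$ and $K^n(\vec\alpha+\vec\beta)$ for the block containing it, forcing $\vec\alpha=0$ since the blocks are pairwise disjoint.

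**Main obstacle.** The genuinely hard part is not any single clause but the \emph{simultaneous} satisfaction of commutativity (a), the nesting/refinement (c)--(d), and the covering property (f) in one coherent induction. In particular, when defining $h_n$ I must interchange the two halves \emph{within each existing block simultaneously and compatibly}, so that $h_n$ commutes with all earlier $h_k$ while still using only countably many group elements (to keep strong $G$-decomposability). The bookkeeping that makes $h_n$ flip exactly the $n$th ${\mathbb Z}_2$-coordinate uniformly across all $2^{n-1}$ blocks, while guaranteeing that the specific point $s_n$ enters the orbit and that $K^n(0,\dots,0)\subset D_n$, is where all the care is required; each individual swap is handed to us by Lemma 7.7, but assembling them into a single globally-defined, block-respecting involution is the crux. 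I would therefore organize the proof as an explicit recursion: enumerate, at stage $n$, first refine every block into a ``$0$-part'' and ``$1$-part'' (shrinking the $s_0$-block into $D_n$), then invoke Lemma 7.7 blockwise to build $h_n$ as the union of the resulting involutions, and finally verify (a)--(g) by the coordinate-flip description established in (e).
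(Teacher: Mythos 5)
Your architecture (induction on $n$, Lemma 7.7 as the seed, the dyadic block partition, the coordinate-flip description of $h_1^{\alpha_1}\cdots h_n^{\alpha_n}$, and your disjointness argument deducing (g) from (e)) matches the paper's, but there is a genuine gap at exactly the step you flag as the crux: making $h_{n+1}$ commute \emph{pointwise} with $h_1,\dots,h_n$. You argue that since each $h_k$ flips the $k$th coordinate of the block labels and $h_{n+1}$ flips only the $(n{+}1)$st, "these coordinate-flips commute, giving (e) and hence (a)." That establishes only that the induced permutations of the finite set of blocks commute; it does not make the homeomorphisms commute as maps of $D$. If you build $h_{n+1}$ by invoking Lemma 7.7 independently inside each block $K^n(\vec\alpha)$ — "blockwise, as the union of the resulting involutions," as you propose — then for $s$ in the zero block, $h_k h_{n+1}(s)$ uses the swap chosen in the zero block, while $h_{n+1}h_k(s)$ uses the swap chosen in the block containing $h_k(s)$, and Lemma 7.7 gives you no control forcing the latter swap to equal the $h_k$-conjugate of the former. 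So (a) fails for the construction as you describe it, and with it the homomorphism property of $\gamma$ needed in Theorem 7.10.

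The paper's resolution is to invoke Lemma 7.7 \emph{once per stage}, on the zero block only: with $s_{n+1}\in K^n(\alpha_1,\dots,\alpha_n)$, put $c=h_1^{\alpha_1}\cdots h_n^{\alpha_n}(s_{n+1})\in K^n(0,\dots,0)$, take $b=c$ (or any other point of the zero block if $c=s_0$), choose a clopen $A\subset K^n(0,\dots,0)\cap D_{n+1}$ with $s_0\in A$, $b\notin A$, and let $h$ swap $A$ with $B=K^n(0,\dots,0)\setminus A$, with $h(s_0)=b$ and $h=h^{-1}$. Then transport $h$ by conjugation: for $s\in K^n(\alpha_1,\dots,\alpha_n)$ define
\[
h_{n+1}(s)=h_1^{\alpha_1}\cdots h_n^{\alpha_n}\,h\,h_1^{\alpha_1}\cdots h_n^{\alpha_n}(s).
\]
Because the swaps on all blocks are now conjugates of the single $h$, commutativity with each $h_j$ becomes a short direct computation, strong $G$-decomposability follows blockwise from Lemma 7.8, the refinement $K^{n+1}(\vec\alpha,j)=h_1^{\alpha_1}\cdots h_n^{\alpha_n}[A]$ or $h_1^{\alpha_1}\cdots h_n^{\alpha_n}[B]$ is automatic, and the same choice of $b$ captures $s_{n+1}$ for (f). A minor further correction: the hypothesis $s_n\notin D_n$ is not what makes $s_{n+1}$ "land in a nonzero block and thus get hit" — the inductive step captures $s_{n+1}$ wherever it lands, via the case split on whether $c=s_0$. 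Rather, it is what makes the base case legitimate (Lemma 7.7 needs $s_1\in D\setminus D_1$), and, combined with (d), it keeps each $s_n$ out of the shrinking zero blocks, which is what later applications of the lemma rely on.
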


\begin{proof}
We give an inductive argument. First, let $A=D_{1}$ and let $B=D\backslash
D_{1}.$ By applying Lemma 7.7, there exists a homeomorphism $h_{1}$ of $D$
onto itself, where $h_{1}$ interchanges $D_{1}$ and $D\setminus D_{1},$ and
maps $s_{0}$ to $s_{1}.$ (So (f) holds for $n=1.$) Also $h_{1}=h_{1}^{-1}.$

For any $s\in D,$ $h_{1}(s)\ $and $s$ are elements of disjoint clopen sets.
Hence (g) holds for $n=1.$

Now let $K^{1}(0)=D_{1}$ and $K^{1}(1)=D\backslash D_{1}.$ Let us now suppose
that we have constructed the homeomorphisms $h_{1},h_{2},...,h_{n}$ and the
clopen sets
\[
\{K^{p}(\alpha_{1},\alpha_{2},...\alpha_{p}):(\alpha_{1},\alpha_{2}%
,...\alpha_{p})\in{\mathbb{Z}_{2}}^{p}\} \ \text{ for }p=1,2,...,n.
\]
We now need to make the $(n+1)$th step of the inductive construction. For some
$(\alpha_{1},\alpha_{2},...,\alpha_{n})\in\{0,1\}^{n}$, $s_{n+1}\in
K^{n}(\alpha_{1},\alpha_{2},...,\alpha_{n}).$ Let $c=h_{1}^{\alpha_{1}}%
h_{2}^{\alpha_{2}}...h_{n}^{\alpha_{n}}(s_{n+1}).$ Then $c\in K^{n}%
(0,0,...,0).$

If $c\neq s_{0}$ let $b=c.$ If $c=s_{0}$ then let $b$ be any other element of
$K^{n}(0,0,...,0).$ Now let $A$ be a clopen subset of $K^{n}(0,0,...,0)\cap
D_{n+1}$ such that $s_{0}\in A$ and $b\notin A.$ Let $B=K^{n}%
(0,0,...,0)\setminus A.$ We apply Lemma 7.7 to find a homeomorphism $h $ of
$D$ onto itself, which interchanges $A$ and $B,$ leaves every point outside
$A\cup B$ fixed, maps $s_{0}$ to $b$, and $h=h^{-1}.$ Also, $h$ is strongly
$G$-decomposable.

Let $K^{n+1}(0,0...,0)=A$ and $K^{n+1}(0,0...,1)=B.$ By construction, (d)
holds for $p=n+1.$ Let $K^{n+1}(\alpha_{1},\alpha_{2},...,\alpha_{p}%
,0)=h_{1}^{\alpha_{1}}h_{2}^{\alpha_{2}}...h_{n}^{\alpha_{n}}[A]$ and
$K^{n+1}(\alpha_{1},\alpha_{2},...,\alpha_{p},1)=h_{1}^{\alpha_{1}}%
h_{2}^{\alpha_{2}}...h_{n}^{\alpha_{n}}[B].$ Then (b) holds for $n+1$ and (c)
holds for $p=n.$

We now define $h_{n+1}$ as follows. For $s\in K^{n}(\alpha_{1},\alpha
_{2},...,\alpha_{n}),$
\[
h_{n+1}(s)=h_{1}^{\alpha_{1}}h_{2}^{\alpha_{2}}...h_{n}^{\alpha_{n}}%
hh_{1}^{\alpha_{1}}h_{2}^{\alpha_{2}}...h_{n}^{\alpha_{n}}(s).
\]
\textbf{Claim 1 } $h_{n+1}$ commutes with $h_{j}$ for $1\leq j\leq n.$

To simplify our notation we shall take $j=1,$ but the calculation works in
general, since each of $\{h_{r}:r=1,2...n\}$ commutes with the others.

Let $s\in D.$ Then $s\in K^{n}(\alpha_{1},\alpha_{2},...,\alpha_{n})$ for some
$(\alpha_{1},\alpha_{2},...,\alpha_{n})\in{\mathbb{Z}_{2}}^{n}.$

So $h_{1}s\in K^{n}(\alpha_{1}+1,\alpha_{2},..,\alpha_{n}).$ Then
\[
h_{n+1}(h_{1}s)=h_{1}^{\alpha_{1}+1}h_{2}^{\alpha_{2}}...h_{n}^{\alpha_{n}%
}hh_{1}^{\alpha_{1}+1}h_{2}^{\alpha_{2}}...h_{n}^{\alpha_{n}}(h_{1}s).
\]
So
\begin{align*}
h_{n+1}h_{1}(s)  &  =h_{1}h_{1}^{\alpha_{1}}h_{2}^{\alpha_{2}}...h_{n}%
^{\alpha_{n}}hh_{1}^{\alpha_{1}}h_{2}^{\alpha_{2}}...h_{n}^{\alpha_{n}}%
h_{1}h_{1}(s)\\
&  =h_{1}h_{n+1}(s).
\end{align*}
From this we see that $h_{n+1}$ commutes with $h_{1}$. Similarly $h_{n+1}$
commutes with $h_{j}$ for $2\leq j\leq n.$

\textbf{Claim 2 }$h_{n+1}$is $G$-decomposable.

By Lemma 7.8, $h_{1}^{\alpha_{1}}h_{2}^{\alpha_{2}}...h_{n}^{\alpha_{n}}%
hh_{1}^{\alpha_{1}}h_{2}^{\alpha_{2}}...h_{n}^{\alpha_{n}}$ is $G$%
-decomposable. So, on restricting to the clopen set $K^{n}(\alpha_{1}%
,\alpha_{2},...,\alpha_{n})$ this gives that $h_{n+1}$ is $G$-decomposable
over each $K^{n}(\alpha_{1},\alpha_{2},...,\alpha_{n}).$ Hence $h_{n+1}$ is
$G$-decomposable.

So, by Claim 1 and Claim 2, (a) holds for $n+1.$ It is straightforward to show
that (b),(c), (d) and (e) hold for $n+1.$

Now consider (f). Either $c=s_{0}$ in which case, $s_{0}=h_{1}^{\alpha_{1}%
}h_{2}^{\alpha_{2}}...h_{n}^{\alpha_{n}}(s_{n+1})$ which gives $s_{n+1}%
=h_{1}^{\alpha_{1}}h_{2}^{\alpha_{2}}...h_{n}^{\alpha_{n}}(s_{0}),$ or $c\neq
s_{0},$ in which case
\[
h_{n+1}(h_{1}^{\alpha_{1}}h_{2}^{\alpha_{2}}...h_{n}^{\alpha_{n}}%
(s_{n+1}))=h(h_{1}^{\alpha_{1}}h_{2}^{\alpha_{2}}...h_{n}^{\alpha_{n}}%
(s_{n+1}))=s_{0}.
\]
This gives $s_{n+1}=h_{n+1}h_{1}^{\alpha_{1}}h_{2}^{\alpha_{2}}...h_{n}%
^{\alpha_{n}}(s_{0}).$ Because the homeomorphisms commute, this gives (f) for
$n+1.$

Finally consider (g). Let $s\in D$ with $h_{1}^{\alpha_{1}}h_{2}^{\alpha_{2}%
}...h_{n+1}^{\alpha_{n+1}}(s)=s.$ If $\alpha_{n+1}=0$ then (g) implies
$\alpha_{1}=\alpha_{2}=...=\alpha_{n}=0.$ So now suppose $\alpha_{n+1}=1$. Let
$h_{1}^{\beta_{1}}h_{2}^{\beta_{2}}...h_{n}^{\beta_{n}}(s)\in K^{n}%
(0,0...,0).$ Then, since the $h_{r}$ all commute, we can suppose without loss
of generality that $s\in$ $K^{n}(0,0...,0).$ Then $h_{n+1}(s)=h_{1}%
^{\alpha_{1}}h_{2}^{\alpha_{2}}...h_{n}^{\alpha_{n}}(s).$ But $h_{n+1}$ maps
$K^{n}(0,0...,0)$ to itself and $h_{1}^{\alpha_{1}}h_{2}^{\alpha_{2}}%
...h_{n}^{\alpha_{n}}$ maps $K^{n}(0,0...,0)$ to $K^{n}(\alpha_{1},\alpha
_{2},...,\alpha_{n}).$ So $h_{n+1}(s)\in K^{n}(0,0...,0)\cap K^{n}(\alpha
_{1},\alpha_{2},...,\alpha_{n}).$ But this intersection is only non-empty if
$\alpha_{1}=\alpha_{2}=...=\alpha_{n}=0.$ So $h_{n+1}(s)=s.$ But $h_{n+1\text{
}}$acting on $K^{n}(0,0...,0),$ interchanges $K^{n+1}(0,0...,0)$ with
$K^{n+1}(0,0...,1)$.

So $s\in K^{n+1}(0,0...,1)\cap K^{n+1}(0,0...,0),$ which is impossible.
\end{proof}

Let us recall that $\bigoplus\mathbb{Z}_{2}$ is the direct sum of an infinite
sequence of copies of $\mathbb{Z}_{2}.$ So each element of the group is an
infinite sequence of 0s and 1s, with 1 occurring only finitely many
times.\textbf{\ } We sometimes refer to it as the Dyadic Group.

\begin{theorem}
Let $S$ be a compact Hausdorff extremally disconnected space with no isolated
points. Let $G$ be a countably infinite group. Let $\varepsilon:G\rightarrow
Homeo(S)$ be a non-degenerate action of $G$ as homeomorphisms of $S$. Let
$s_{0}$ be a point in $S$ such that the orbit $\{\varepsilon_{g}(s_{0}):g\in
G\}=D,$\ is dense and pseudo-free. Then there exist an action $\gamma
:\bigoplus\mathbb{Z}_{2}\rightarrow Homeo(S)$ and $Y$, a G-delta subset of $S$
with $D\subset Y,$ such that the following properties hold.

(1) The orbit $\{\gamma_{\delta}(s_{0}):\delta\in\bigoplus\mathbb{Z}_{2}\}$ is
free and coincides with the set $D.$

(2) The groups of homeomorphisms $\varepsilon\lbrack G]$ and $\gamma
\lbrack\bigoplus\mathbb{Z}_{2}]$ are strongly equivalent over $Y,$ which is
invariant under the action of both these groups.

(3) The orbit equivalence relations corresponding, respectively, to
$\varepsilon\lbrack G]$ and $\gamma\lbrack\bigoplus\mathbb{Z}_{2}]$ coincide
on $Y.$

(4) $\gamma$ is an isomorphism.
\end{theorem}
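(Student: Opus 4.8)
The plan is to build the $\bigoplus\mathbb{Z}_2$-action directly from the commuting involutions produced by Lemma 7.9, and then to read off the orbit-theoretic conclusions from the decomposability machinery developed earlier in this section. First I would arrange the hypotheses of Lemma 7.9: enumerate the countable dense orbit as $D=(s_0,s_1,\dots)$, and, using that $S$ is extremally disconnected Hausdorff with no isolated points, choose for each $n\geq 1$ a clopen neighbourhood $U_n$ of $s_0$ with $s_n\notin U_n$; then $D_n=U_1\cap\cdots\cap U_n$ is a monotone decreasing sequence of clopen neighbourhoods of $s_0$ with $s_n\notin D_n$. Lemma 7.9 now supplies commuting, strongly $G$-decomposable involutions $(h_k)$ of $D$ satisfying its properties (f) and (g). Since the $h_k$ commute and each squares to the identity, $\delta=(\alpha_1,\alpha_2,\dots)\mapsto h_1^{\alpha_1}h_2^{\alpha_2}\cdots$ is a group homomorphism of $\bigoplus\mathbb{Z}_2$ into the homeomorphisms of $D$. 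By Theorem 4.7 each $h_k$ extends uniquely to a homeomorphism $\widehat{h_k}$ of $S=\beta D$, and uniqueness of the extension together with the density of $D$ forces the $\widehat{h_k}$ to remain commuting involutions; I would therefore set $\gamma_\delta=\widehat{h_1}^{\alpha_1}\widehat{h_2}^{\alpha_2}\cdots$, yielding the action $\gamma:\bigoplus\mathbb{Z}_2\to Homeo(S)$.

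To obtain (1) and (4) I would argue as follows. Each $\widehat{h_k}$ restricts to $h_k$ on $D$ and carries $D$ into $D$, so the orbit $\{\gamma_\delta(s_0):\delta\in\bigoplus\mathbb{Z}_2\}$ is contained in $D$; property (f) shows it contains every $s_n$, hence equals $D$. Property (g) applied at $s_0$ says that $\gamma_\delta(s_0)=s_0$ forces $\delta=e$, which is precisely freeness of the orbit, completing (1). The same instance of (g) shows $\gamma$ has trivial kernel: if $\gamma_\delta=\mathrm{id}_S$ then in particular it fixes $s_0$, so $\delta=e$. Hence $\gamma$ is an injective homomorphism, i.e. an isomorphism onto $\Gamma:=\gamma[\bigoplus\mathbb{Z}_2]$, which establishes (4); note that $\Gamma$ is then countably infinite and, by the orbit computation, acts transitively on $D$.

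For (2) and (3) I would invoke the equivalence lemmas. Each restriction $\gamma_\delta|_D=h_1^{\alpha_1}\cdots h_n^{\alpha_n}$ is a finite composite of strongly $G$-decomposable maps, so Lemma 7.8 makes it strongly $G$-decomposable over $D$. As the action of $G$ is pseudo-free on $D$ by hypothesis, Lemma 7.6 then yields that $\Gamma$ and $G$ are strongly equivalent over $D$. Lemma 7.4 upgrades this to a $G_\delta$ set $Y$ with $D\subset Y\subset S$, invariant under both $G$ and $\Gamma$, over which the two groups remain strongly equivalent; this is (2). Corollary 7.5 then gives that the orbit equivalence relations $\sim_G$ and $\sim_\Gamma$ coincide on $Y$, which is (3).

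I expect the real content to be concentrated in Lemma 7.9, which is already proved, so that the remaining work is largely bookkeeping. The step requiring the most care is the passage from $D$ to $S$: one must confirm that the unique continuous extensions $\widehat{h_k}$ still commute and remain involutions, and that freeness and injectivity, which Lemma 7.9 guarantees only on the orbit $D$, are enough to force injectivity of $\gamma$ as an action on all of $S$. Both points rest on the density of $D$ in $S=\beta D$ and the uniqueness clause of Theorem 4.7: two homeomorphisms of $S$ agreeing on the dense set $D$ coincide, which propagates each algebraic relation from $D$ to $S$ and lets the fixed-point information at $s_0$ control the whole action.
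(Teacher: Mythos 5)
Your proposal is correct and follows essentially the same route as the paper: apply Lemma 7.9 (with $G$ replaced by $\varepsilon[G]$) to get the commuting involutions $h_k$, extend them to $S$, define $\gamma$ via finite products, read (1) and (4) off parts (f) and (g), and obtain (2) and (3) from Lemmas 7.6/7.4 and Corollary 7.5. In fact you fill in details the paper leaves implicit — the construction of the decreasing clopen neighbourhoods $D_n$, the verification that the extensions $\widehat{h}_k$ remain commuting involutions, and the explicit use of Lemma 7.8 and the pseudo-freeness hypothesis via Lemma 7.6 — so this is a faithful, slightly more complete version of the paper's own argument.
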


\begin{proof}
We replace $"G"$ by "$\varepsilon\lbrack G]$" in the statement of Lemma 7.9 to
find a sequence $(h_{r})$ of homeomorphisms of $D$ onto itself with the
properties listed in that lemma. Each $h_{r}$ has a unique extension to a
homeomorphism $\widehat{h}_{r}$ of $S$ onto itself. For each $\mathbf{\alpha}$
$\in\bigoplus\mathbb{Z}_{2},$ there exist a natural number $n$ and
$(\alpha_{1},\alpha_{2},...,\alpha_{n})\in{\mathbb{Z}_{2}}^{n}$ such that
$\mathbf{\alpha}=(\alpha_{1},\alpha_{2},...\alpha_{n},0,0,...).$ We define
$\gamma_{\mathbf{\alpha}}\mathbf{=}\widehat{h}_{1}^{\alpha_{1}}\widehat{h}%
_{2}^{\alpha_{2}}...\widehat{h}_{n}^{\alpha_{n}}.$ Then $\gamma$ is a
homomorphism of $\bigoplus\mathbb{Z}_{2}$ into $Homeo(S).$

By Lemma 7.9 (f), the orbit $\{\gamma_{\mathbf{\alpha}}(s_{0}):\mathbf{\alpha
}\in\bigoplus\mathbb{Z}_{2}\}$ coincides with $D.$ By Lemma 7.9 (g), this
orbit is free for the action $\gamma.$

By Lemma 7.9 and Lemma 7.4, there exists a $G_{\delta}$ set $Y$, $D\subset
Y\subset S,$ which is invariant under the action of both $\gamma
\lbrack\bigoplus\mathbb{Z}_{2}]$ and $\varepsilon\lbrack G]$ also
$\varepsilon\lbrack G]$ and $\gamma\lbrack\bigoplus\mathbb{Z}_{2}]$ are
strongly equivalent over $Y.$ The statement (3) now follows from Corollary
7.5. Finally, statement (4) follows from part (g) of Lemma 7.9.\bigskip
\end{proof}

We shall make no direct use of $%
\mathbb{Z}
-$actions in this article but the following corollary seems worth including.
We sketch an argument which makes use of the above result and the notation
used in Lemma 7.9.

\begin{corollary}
There exists a homeomorphism $\phi:S\rightarrow S$ and a dense $G_{\delta}%
$-subset $S_{0}\subset S$ with the following properties. First, $S_{0}$ is
invariant under the action of $G.$ Secondly, $\phi\lbrack S_{0}]=S_{0}.$
Thirdly, the orbit equivalence relation coming from $G$ and the $%
\mathbb{Z}
-$orbit equivalence relation coming from $\phi,$ coincide on $S_{0}.$

\begin{proof}
(Sketch) It follows from the preceding theorem that we may replace $G$ by
$\bigoplus\mathbb{Z}_{2}.$ More precisely, we shall let $G$ be the group of
homeomorphisms of $D$ generated by $(h_{r})(r=1,2...).$

Let $E_{1}=K^{1}(0)$ and $F_{1}=K^{1}(1).$ Let $E_{j+1}=K^{j+1}(\mathbf{1},0)
$ where $\mathbf{1}\in{\mathbb{Z}_{2}^{j}}$ and let $F_{j+1}=K^{j+1}%
(\mathbf{0},1)$ where $\mathbf{0}\in{\mathbb{Z}_{2}^{j}}.$

We observe that if $K^{n}(\alpha_{1},\alpha_{2},...,\alpha_{n})$ has non-empty
intersection with

$K^{n+p}(\beta_{1},\beta_{2},...,\beta_{n+p})$ then $\alpha_{1}=\beta
_{1},...,\alpha_{n}=\beta_{n}.$ From this it follows that $(E_{n})(n=1,2...)$
is a sequence of pairwise disjoint clopen subsets of $D.$ Similarly,
$(F_{n})(n=1,2...)$ is a sequence of pairwise disjoint clopen subsets of $D.$
We find that $D=%
{\displaystyle\bigcup\limits_{n=1}}
$ $E_{n}$ and $%
{\displaystyle\bigcup\limits_{n=1}}
F_{n}=D\backslash\{s_{0}\}.$

For $s\in E_{n}$ let $\phi(s)=h_{1}h_{2}...h_{n}(s).$ Then $\phi$ is a
continuous map of $E_{n}$ onto $F_{n}.$ From this it is straightforward to see
that $\phi$ is a continuous bijection of $D$ onto $D\backslash\{s_{0}\}.$
Similarly, $\phi^{-1}$ is a continuous bijection from $D\backslash\{s_{0}\}$
onto $D.$

Since $S$ has no isolated points, $D\backslash\{s_{0}\}$ is dense in $S.$ But,
see Section 4, $S$ can be identified with the Stone-Czech compactification of
any dense subset of itself. Applying this to $\phi^{-1}$ and $\phi$ we find
continuous extensions which are homeomorphisms of $S$ onto $S$ and which are
inverses of each other. We abuse notation and denote the extension of $\phi$
to the whole of $S$ by $\phi.$ Then $j\rightarrow\phi^{j}$ is the $%
\mathbb{Z}
-$action considered here; let $\Delta$ be the group generated by $\phi$. We
shall further abuse our notation by writing $h_{r}$ for $\widehat{h_{r}}$, the
extension to a homeomorphism of $S.$

On applying Lemma 4.6, we see that $(clE_{n})(n=1,2...)$ is a sequence of
pairwise disjoint clopen subsets of $S.$ So its union is a dense open subset
of $\dot{S}$ which we shall denote by $O_{1}.$ By continuity, for $s\in
clE_{n}$ we have $\phi(s)=h_{1}^{1}h_{2}^{1}...h_{n}^{1}(s).$
Similarly,$(clF_{n})(n=1,2...)$ is a sequence of pairwise disjoint clopen
subsets of $S$ whose union, $O_{2},$ is also dense in $S.$

Let $\Gamma$ be the countable group generated by $\phi$ and $G.$ Let $S_{0} $
be the intersection $%
{\displaystyle\bigcap}
\{$ $\gamma\lbrack O_{1}\cap O_{2}];$ $\gamma\in\Gamma\}.$ Then $S_{0}$ is a
dense $G_{\delta}-$subset of $S$ which is invariant under the action of
$\Gamma.$ From the definition of $\phi,$ it is clear that $\phi$ is strongly
$G-$decomposable over $S_{0}.$ (Recall that we have identified $G$ with
$\bigoplus\mathbb{Z}_{2}.$) Similarly, $\phi^{-1}$ is also strongly
$G-$decomposable over $S_{0}.$ Hence each element of $\Delta$ is strongly
$\ G-$decomposable over $S_{0}.$

Let $H(\Delta)$ be the group of all homeomorphisms $h,$ of $S$ onto $S,$ such
that $h$ is strongly $\Delta-$decomposable with respect to a finite partition
of $S$ into clopen sets. We shall show that $h_{1}$ is in $H(\Delta).$

For $s\in E_{1}=K^{1}(0)$ we have $\phi(s)=h_{1}(s)$ and, for $s\in
F_{1}=K^{1}(1)$, $\phi^{-1}(s)=h_{1}(s).$ We observe that $clE_{1}$ and
$clF_{1}$ are disjoint clopen sets whose union is $S.$ Also,%

\[
h_{1}=\phi\chi_{clE_{1}}+\phi^{-1}\chi_{clF_{1}}%
\]

So $h_{1}\in H(\Delta).$

We now suppose that $h_{1},...h_{n}$ are in $H(\Delta)$. We wish to show
$h_{n+1}\in H(\Delta).$ Let $s\in K^{n+1}(\mathbf{\beta},0)$ where
$\mathbf{\beta\in%
\mathbb{Z}
}_{2}^{n}.$\ By Lemma 7.9 (e) $h_{1}^{\beta_{1}+1}...h_{n}^{\beta_{n}+1}(s)\in
K^{n+1}(\mathbf{1},0).$ So, from the definition of $\phi,$%

\[
\phi(h_{1}^{\beta_{1}+1}...h_{n}^{\beta_{n}+1}(s))=h_{1}^{\beta_{1}}%
...h_{1}^{\beta_{n}}h_{n+1}^{1}(s)
\]

Making use of commutativity of the $h_{r},$we get $h_{n+1}(s)==h_{1}%
^{\beta_{1}}...h_{n}^{\beta_{n}}\phi h_{1}^{\beta_{1}+1}...h_{n}^{\beta_{n}%
+1}(s)$ Then, by using continuity, this holds for each $s\in clK^{n+1}%
(\mathbf{\beta},0).$ By a similar argument, for $s\in clK^{n+1}(\mathbf{\beta
},1)$ we get $h_{n+1}(s)=h_{1}^{\beta_{1}+1}...h_{n}^{\beta_{n}+1}\phi
h_{1}^{\beta_{1}}...h_{n}^{\beta_{n}}(s).$

Since $\{clK^{n+1}(\mathbf{\alpha}):\mathbf{\alpha}\in%
\mathbb{Z}
_{2}^{n+1}\}$ is a finite collection of disjoint clopen sets whose union is
$S,$it follows that $h_{n+1}\in H(\Delta).$

So, by induction, $G\subset H(\Delta).$

It now follows that, on $S_{0},$the orbit equivalence relation coming from the
action of $G$ coincides with the orbit equivalence relation arising from the $%
\mathbb{Z}
-$action generated by $\phi.$
\end{proof}
\end{corollary}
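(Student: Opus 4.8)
The plan is to leverage Theorem 7.10, which reduces the problem to the case of the Dyadic Group $\bigoplus\mathbb{Z}_2$ acting via the commuting involutions $(h_r)$ constructed in Lemma 7.9. The strategy is to build a single homeomorphism $\phi$ whose orbit equivalence relation matches that of $G$ on a suitable generic set, by splitting $D$ into two sequences of clopen pieces $(E_n)$ and $(F_n)$ on which $\phi$ acts by the prefix maps $h_1 h_2 \cdots h_n$. The reason for this choice is that the hierarchical structure of the sets $K^n(\alpha_1,\ldots,\alpha_n)$ from Lemma 7.9, together with property (c) (the refinement relation) and property (e) (how the products $h_1^{\alpha_1}\cdots h_n^{\alpha_n}$ permute these sets), should force the $(E_n)$ and $(F_n)$ to be pairwise disjoint and to cover $D$ and $D\setminus\{s_0\}$ respectively.

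First I would verify the disjointness and covering claims via the key observation that $K^n(\alpha_1,\ldots,\alpha_n)$ meets $K^{n+p}(\beta_1,\ldots,\beta_{n+p})$ only when the first $n$ coordinates agree; this follows by iterating (c). Next I would define $\phi$ piecewise by $\phi(s)=h_1 h_2 \cdots h_n(s)$ for $s\in E_n$, check it is a continuous bijection of $D$ onto $D\setminus\{s_0\}$, and similarly handle $\phi^{-1}$. Then, using that $S$ is the Stone-\v{C}ech compactification of any dense subset (Theorem 4.7) and that $D\setminus\{s_0\}$ is dense because $S$ has no isolated points, I would extend $\phi$ and $\phi^{-1}$ to mutually inverse homeomorphisms of $S$. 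Passing to closures via Lemma 4.6 gives that $(cl\,E_n)$ and $(cl\,F_n)$ are pairwise disjoint clopen families with dense unions $O_1$ and $O_2$; the desired $G_\delta$ set $S_0$ is then the intersection of all $\Gamma$-translates of $O_1\cap O_2$, where $\Gamma$ is generated by $\phi$ and $G$.

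The crucial step, and the main obstacle, is showing that on $S_0$ the two orbit equivalence relations genuinely coincide. One inclusion is easy: by construction $\phi$ is strongly $G$-decomposable over $S_0$ (recalling that we have identified $G$ with $\bigoplus\mathbb{Z}_2$), and hence so is every element of the group $\Delta$ generated by $\phi$, so the $\Delta$-orbit relation is contained in the $G$-orbit relation. The harder direction requires showing that each generator $h_r$ is strongly $\Delta$-decomposable, which I would prove by induction on $r$. For the base case, the formula $h_1 = \phi\chi_{cl\,E_1} + \phi^{-1}\chi_{cl\,F_1}$ exhibits $h_1$ explicitly as strongly $\Delta$-decomposable relative to the finite clopen partition $\{cl\,E_1, cl\,F_1\}$ of $S$.

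For the inductive step I would assume $h_1,\ldots,h_n$ lie in the group $H(\Delta)$ of homeomorphisms that are strongly $\Delta$-decomposable with respect to a \emph{finite} clopen partition, and then solve the defining relation $\phi(h_1^{\beta_1+1}\cdots h_n^{\beta_n+1}(s)) = h_1^{\beta_1}\cdots h_n^{\beta_n} h_{n+1}(s)$ for $h_{n+1}$ on each piece $cl\,K^{n+1}(\boldsymbol{\beta},0)$, using Lemma 7.9(e) to track how the prefix products move these sets and using commutativity of the $h_r$ to isolate $h_{n+1}(s) = h_1^{\beta_1}\cdots h_n^{\beta_n}\,\phi\, h_1^{\beta_1+1}\cdots h_n^{\beta_n+1}(s)$, with the symmetric formula on $cl\,K^{n+1}(\boldsymbol{\beta},1)$. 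Since $\{cl\,K^{n+1}(\boldsymbol{\alpha}):\boldsymbol{\alpha}\in\mathbb{Z}_2^{n+1}\}$ is a finite clopen partition of $S$ and each listed operation lies in $H(\Delta)$ by the inductive hypothesis, $h_{n+1}\in H(\Delta)$. The delicate point throughout is maintaining \emph{finiteness} of the partitions at each stage — this is what distinguishes $H(\Delta)$ from mere $\Delta$-decomposability and is what ultimately guarantees the reverse inclusion $G\subset H(\Delta)$, giving equality of the two orbit equivalence relations on $S_0$.
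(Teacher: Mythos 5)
Your proposal is correct and follows essentially the same route as the paper's own sketch: the same reduction via Theorem 7.10, the same decomposition of $D$ into the sets $E_{n}=K^{n}(\mathbf{1},0)$ and $F_{n}=K^{n}(\mathbf{0},1)$, the same piecewise definition $\phi=h_{1}\cdots h_{n}$ on $E_{n}$ with Stone-\v{C}ech extension, the same choice of $S_{0}$, and the same induction showing each $h_{r}$ lies in $H(\Delta)$ via the formula $h_{n+1}=h_{1}^{\beta_{1}}\cdots h_{n}^{\beta_{n}}\,\phi\,h_{1}^{\beta_{1}+1}\cdots h_{n}^{\beta_{n}+1}$ on $cl\,K^{n+1}(\boldsymbol{\beta},0)$. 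There is nothing to add or correct.
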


REMARKS In the above, $D$ is not a subset of $S_{0}.$ Indeed, it is not
obvious that $S_{0}$ has a dense orbit. So is it possible to modify the
construction of $\phi$ so that it becomes a bijection of $D$ onto itself ?

\section{Monotone complete $C^{\ast}$-algebra of an equivalence relation}

The idea of constructing a $C^{\ast}$-algebra or a von Neumann algebra from a
groupoid has a long history and a vast literature; there is an excellent
exposition in \cite{zq}. Here, instead of general groupoids, we use an
equivalence relation with countable equivalence classes. Our aim is to
construct monotone complete (monotone $\sigma$-complete) algebras by a
modification of the approach used in \cite{zn}. We try to balance conciseness
with putting in enough detail to convince the reader that this is an easy and
transparent way to construct examples of monotone complete $C^{\ast}%
$-algebras. It makes it possible to obtain all the algebras which arise as a
monotone cross-product by a countable discrete group acting on a commutative
algebra, but without needing to use monotone tensor products.

In this section, $X$ is a topological space where $X$ is either a G-delta
subset of a compact Hausdorff space or a Polish space (i.e. homeomorphic to a
complete separable metric space). Then $X$ is a Baire space i.e. the Baire
category theorem holds for $X.$ Let $B(X)$ be the set of all bounded complex
valued Borel functions on $X.$ When equipped with the obvious algebraic
operations and the supremum norm, it becomes a commutative $C^{\ast}$-algebra.

In the following it would be easy to use a more general setting, where we do
not assume a topology for $X$, replace the field of Borel sets with a
$\sigma-$field $\mathcal{T}$, and use $\mathcal{T-}$measurable bijections
instead of homeomorphisms. But we stick to a topological setting which is what
we need later.

Let $G$ be a countable group of homeomorphisms of $X$ and let%

\[
E=\{(x,y)\in X\times X:\exists g\in G\text{ such that }y=g(x)\}.
\]

Then $E$ is the graph of the orbit equivalence relation on $X$ arising from
the action of $G$. We shall identify this equivalence relation with its graph.
We know, from the work of Section 7, that the same orbit equivalence relation
can arise from actions by different groups.

Let us recall that for $A\subset X$, the \textit{saturation }of $A$ (by $E$) is%

\begin{align*}
E[A]  &  =\{x\in X:\exists\text{ }z\in A\text{ such that }xEz\}\\
&  =\{x\in X:\exists g\in G\text{ such that }g(x)\in A\}\\
&  =%
{\textstyle\bigcup}
\{g[A]:g\in G\}.
\end{align*}

It follows from this that the saturation of a Borel set is also a Borel set.

\begin{definition}
Let $\mathcal{I}$ be a $\sigma-$ideal of the Boolean algebra of Borel subsets
of $X$ with $X\notin\mathcal{I}.$
\end{definition}

\begin{definition}
Let $B_{\mathcal{I}}$ be the set of all $f$ in $B(X)$ such that\ $\{x\in
X:f(x)\neq0\}$ is in $\mathcal{I}$ . Then $B_{\mathcal{I}}$ is a $\sigma
-$ideal of $B(X).$ (See Section 3.) Let $q$ be the quotient homomorphism from
$B(X)$ onto $B(X)/B_{\mathcal{I}}.$
\end{definition}

\begin{lemma}
Let $A\in\mathcal{I}.$ Then $E[A]\in\mathcal{I}$ if, and only if,
$g[A]\in\mathcal{I}$ for every $g\in G.$
\end{lemma}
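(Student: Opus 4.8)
The plan is to prove the two implications separately, using only two facts: that each $g\in G$ is a homeomorphism (so carries Borel sets to Borel sets), and that $\mathcal{I}$, being a $\sigma$-ideal of the Boolean algebra of Borel subsets of $X$, is closed both downward (every Borel subset of a member of $\mathcal{I}$ lies in $\mathcal{I}$) and under countable unions. I would begin by recording that since each $g$ is a homeomorphism, $g^{-1}$ is continuous and hence Borel measurable, so $g[A]=(g^{-1})^{-1}[A]$ is a Borel set for every Borel $A$; thus each term in the union defining $E[A]=\bigcup\{g[A]:g\in G\}$ is a legitimate element of the Boolean algebra on which $\mathcal{I}$ lives.

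For the forward implication, assume $E[A]\in\mathcal{I}$. Fix $g\in G$. From the displayed formula for the saturation we have $g[A]\subseteq E[A]$, and $g[A]$ is Borel by the preceding remark. Downward closure of the ideal $\mathcal{I}$ then gives $g[A]\in\mathcal{I}$ immediately, and since $g$ was arbitrary this is the whole claim.

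For the converse, assume $g[A]\in\mathcal{I}$ for every $g\in G$. Because $G$ is countable, $E[A]=\bigcup\{g[A]:g\in G\}$ is a countable union of members of $\mathcal{I}$, and the defining $\sigma$-property of $\mathcal{I}$ yields $E[A]\in\mathcal{I}$. (I would also note in passing that the standing hypothesis $A\in\mathcal{I}$ is consistent with, and in fact recoverable from, either side, since the identity lies in $G$ and $A=\iota[A]\subseteq E[A]$.)

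I do not expect a genuine obstacle here: the result is a direct unwinding of the definitions of saturation and of $\sigma$-ideal. The only point requiring a word of justification, rather than being wholly automatic, is the measurability of the images $g[A]$, which is why I would flag the homeomorphism hypothesis at the outset; everything else is closure of $\mathcal{I}$ under subsets and countable unions.
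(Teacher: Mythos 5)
Your proof is correct and follows essentially the same route as the paper's: the forward direction is downward closure of the ideal applied to $g[A]\subset E[A]$, and the converse is closure of the $\sigma$-ideal under the countable union $E[A]=\bigcup\{g[A]:g\in G\}$. Your added remark that each $g[A]$ is Borel (since $g$ is a homeomorphism) is a point the paper also makes, just before the lemma, when it notes that the saturation of a Borel set is Borel.
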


\begin{proof}
For each $g\in G,$ $g[A]\subset E[A].$ Since $\mathcal{I}$ is an ideal, if
$E[A]\in\mathcal{I}$ then $g[A]\in\mathcal{I}.$

Conversely, if $g[A]\in\mathcal{I}$ for each $g,$ then $E[A]$ is the union of
countably many elements of the $\sigma-$ideal and hence in the ideal.
\end{proof}

In the following we require that the action of $G$ maps the ideal
$\mathcal{I}$ into itself. Equivalently, for any $A\in\mathcal{I}$, its
saturation by $E$ is again in $\mathcal{I}$. This is automatically satisfied
if $\mathcal{I} $ is the ideal of meagre Borel sets but we do not wish to
confine ourselves to this situation.

Following the approach of \cite{zn}, we indicate how orbit equivalence
relations on $X$ give rise to monotone complete $C^{\ast}$-algebras. A key
point, used in \cite{zn}, is that these algebras are constructed from the
equivalence relation without explicit mention of $G.$ But in establishing the
properties of these algebras, the existence of an underlying group is used.
This construction (similar to a groupoid $C^{\ast}$-algebra) seems
particularly natural and transparent. For the reader's convenience we give a
brief, explicit account \ which is reasonably self-contained. For reasons
explained in Section 9, the work of this section makes it possible for the
reader to safely avoid the details of the original monotone cross-product
construction. We could work in greater generality (for example we could weaken
the condition that the elements of $G$ be homeomorphisms or consider more
general groupoid constructions) but for ease and simplicity we avoid this. We
are mainly interested in two situations. First, where $X$ is an "exotic" space
as considered in \cite{zk} but $\mathcal{I}$ is only the ideal of meagre
subsets of $X.$ Secondly, where $X$ is just the Cantor space but $\mathcal{I}$
is an "exotic" ideal of the Borel sets. In \ this paper, only the first
situation will be considered but since we will make use of the second
situation in a later work and since no extra effort is required, we add this
small amount of generality.

Since $G$ is a countable group, each orbit is countable, in other words, each
equivalence class associated with the equivalence relation $E$ is countable.
(Countable Borel equivalence relations and their relationship with von Neumann
algebras were penetratingly analysed in \cite{n,p}.)

For each $x\in X$ let $[x]$ be the equivalence class generated by $x$. Let
$[X]$ be the set of all equivalence classes. Let $\ell^{2}([x])$ be the
Hilbert space of all square summable, complex valued functions from $[x]$ to $%
\mathbb{C}
$. For each $y\in\lbrack x]$ let $\delta_{y}\in\ell^{2}([x])$ be defined by%

\[
\delta_{y}(z)=0\text{ for }z\neq y;~\delta_{y}(y)=1.
\]

Then $\{\delta_{y}:y\in\lbrack x]\}$ is an orthonormal basis for $\ell
^{2}([x])$ which we shall call the \textit{canonical basis} for $\ell
^{2}([x]).$ For each $x\in X$, $%
\mathcal{L}%
(\ell^{2}([x]))$ is the von Neumann algebra of all bounded operators on
$\ell^{2}([x]).$ We now form a direct sum of these algebras by:%
\[
\mathcal{S}=%
{\textstyle\bigoplus\limits_{[x]\in\lbrack X]}}
\mathcal{L}%
(\ell^{2}([x])).
\]

This is a Type I von Neumann algebra, being a direct sum of such algebras. It
is of no independent interest but is a framework in which we embed an algebra
of "Borel matrices" and then take a quotient, obtaining monotone complete
$C^{\ast}$-algebras. To each operator $F$ in $\mathcal{S}$ we can associate,
uniquely, a function $f:E\rightarrow%
\mathbb{C}
$ as follows. First we decompose $F$ as:%

\[
F=%
{\textstyle\bigoplus\limits_{[x]\in\lbrack X]}}
F_{[x]}.
\]

Here each $F_{[x]}$ is a bounded operator on $\ell^{2}([x]).$ Now recall that
$(x,y)\in E$ precisely when $y\in\lbrack x].$ We now define $f:E\rightarrow%
\mathbb{C}
$ by%

\[
f(x,y)=\,<F_{[x]}\delta_{x},\delta_{y}>.
\]

When $f$ is restricted to $[x]\times\lbrack x]$ then it becomes the matrix
representation of $F_{[x]}$ with respect to the canonical orthonormal basis of
$\ell^{2}([x]).$ It follows that there is a bijection between operators in
$\mathcal{S}$ and those functions $f:E\rightarrow%
\mathbb{C}
$ for which there is a constant $k$ such that, for each $[x]\in\lbrack X],$
the restriction of $f$ to $[x]\times\lbrack x]$ is the matrix of a bounded
operator on $\ell^{2}([x])$ whose norm is bounded by $k.$ Call such an $f$
\textit{matrix bounded. }For each matrix bounded $f$ let $L(f)$ be the
corresponding element of $\mathcal{S}.$

When $f$ and $h$ are such functions from $E$ \ to $%
\mathbb{C}
$, then straightforward matrix manipulations give%
\[
L(f)L(h)=L(f\circ h)
\]

where $f\circ h(x,z)=\sum_{y\in\lbrack x]}f(x,y)h(y,z).$ Also $L(f)^{\ast
}=L(f^{\ast}),$ where $f^{\ast}(x,y)=\overline{f(y,x)}$ for all $(x,y)\in E.$

Let $||f||=||L(f)||$. Then the matrix bounded functions on $E$ form a
$C^{\ast}$-algebra isomorphic to $\mathcal{S}.$

Let $\Delta$ be the diagonal set $\{(x,x):x\in X\}.$ It is closed, because the
topology of $X$ is Hausdorff. It is an easy calculation to show that
$L(\chi_{\Delta})$ is the unit element of $\mathcal{S}.$ For each $g\in G,$
the map $(x,y)\rightarrow(x,g(y))$ is a homeomorphism.

So $\{(x,g(x)):x\in X\}$ is a closed set.

Let us recall that%

\[
E=%
{\textstyle\bigcup\limits_{g\in G}}
\{(x,g(x)):x\in X\}.
\]

Since $G$ is countable, $E$ is the union of countably many closed sets. Hence
$E$ is a Borel subset of $X\times X.$

\begin{definition}
Let $\mathcal{M}(E)$ be the set of all Borel measurable functions
$f:E\rightarrow%
\mathbb{C}
$ which are matrix bounded.
\end{definition}

\begin{lemma}
The set $\{L(f):f\in\mathcal{M}(E)\}$ is a $C^{\ast}$-subalgebra of
$\mathcal{S}$ which is sequentially closed with respect to the weak operator
topology of $\mathcal{S}$. We denote this algebra by $L(\mathcal{M}(E)).$ When
equipped with the appropriate algebraic operations and norm, $\mathcal{M}(E)$
is a $C^{\ast}$-algebra isomorphic to $L[\mathcal{M}(E)]$.
\end{lemma}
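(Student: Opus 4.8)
The plan is to leverage the $\ast$-isomorphism, established just before the statement, between the algebra of all matrix bounded functions on $E$ and $\mathcal{S}$, under which $f\mapsto L(f)$ satisfies $L(f)L(h)=L(f\circ h)$, $L(f)^{\ast}=L(f^{\ast})$ and $\|f\|=\|L(f)\|$. Since $\mathcal{M}(E)$ is by definition the set of \emph{Borel} matrix bounded functions, everything reduces to showing that $\mathcal{M}(E)$ is a self-adjoint subalgebra of the matrix bounded functions and that its image is sequentially closed in the weak operator topology. The linear structure is immediate, as a linear combination of Borel functions is Borel and $L$ is linear. Closure under the adjoint is also routine: the relation $E$ is symmetric, so $(x,y)\mapsto(y,x)$ is a homeomorphism of $E$, and complex conjugation is continuous, whence $f^{\ast}(x,y)=\overline{f(y,x)}$ is Borel whenever $f$ is.

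The technical heart is to verify that $f\circ h$ is Borel when $f,h\in\mathcal{M}(E)$; matrix boundedness is automatic, since $L(f\circ h)=L(f)L(h)\in\mathcal{S}$. First I would observe that, for fixed $(x,z)\in E$, both $y\mapsto f(x,y)$ and $y\mapsto h(y,z)$ belong to $\ell^{2}([x])$ (being a column of $F_{[x]}$ and, up to conjugation, a column of $H_{[x]}^{\ast}$), so the orbit sum $\sum_{y\in[x]}f(x,y)h(y,z)$ converges absolutely by Cauchy--Schwarz. To enumerate each orbit exactly once in a Borel manner I fix an enumeration $G=\{g_{1},g_{2},\dots\}$ and set
\[
A_{n}=\{x\in X:g_{n}(x)\neq g_{i}(x)\text{ for all }i<n\}.
\]
Each set $\{x:g_{n}(x)=g_{i}(x)\}$ is the fixed-point set of the homeomorphism $g_{i}^{-1}g_{n}$, hence closed, so every $A_{n}$ is Borel; and for $x\in A_{n}$ the points $g_{n}(x)$ run through the distinct elements of $[x]$ without repetition. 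Consequently
\[
f\circ h(x,z)=\sum_{n=1}^{\infty}\chi_{A_{n}}(x)\,f(x,g_{n}(x))\,h(g_{n}(x),z).
\]
Each summand is Borel on $E$, because $x\mapsto g_{n}(x)$ is continuous, $f$ and $h$ are Borel, and $\chi_{A_{n}}$ is Borel; the series converges absolutely by the preceding remark, so $f\circ h$ is a pointwise limit of Borel functions and therefore Borel. This establishes that $\mathcal{M}(E)$ is a $\ast$-subalgebra.

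For the sequential weak-operator closure, suppose $(f_{n})$ lies in $\mathcal{M}(E)$ and $L(f_{n})\to T$ in the weak operator topology of $\mathcal{S}$. As $\mathcal{S}$ is a von Neumann algebra it is weak-operator closed, so $T=L(f)$ for a unique matrix bounded $f$. Testing weak convergence against the canonical basis vectors gives $f_{n}(x,y)=\langle L(f_{n})\delta_{x},\delta_{y}\rangle\to\langle T\delta_{x},\delta_{y}\rangle=f(x,y)$ for every $(x,y)\in E$; thus $f$ is a pointwise limit of Borel functions, hence Borel, and $T\in L[\mathcal{M}(E)]$. In particular $L[\mathcal{M}(E)]$ is norm closed, so together with the algebraic closure just proved it is a $C^{\ast}$-subalgebra of $\mathcal{S}$. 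Finally, transporting the norm $\|f\|=\|L(f)\|$ and the operations $f\circ h$ and $f^{\ast}$ through the bijection makes $\mathcal{M}(E)$ a $C^{\ast}$-algebra with $f\mapsto L(f)$ a $\ast$-isomorphism onto $L[\mathcal{M}(E)]$.

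I expect the main obstacle to be the Borel measurability of the product. The difficulty is not the convergence of the orbit sum but that the index set $[x]$ varies with $x$, and the naive listing $\{g(x):g\in G\}$ overcounts whenever the stabiliser of $x$ is nontrivial; the first-hitting decomposition by the Borel sets $A_{n}$ is precisely what converts the abstract orbit sum into a countable sum of Borel functions. Were the action assumed free this step would be transparent, but since we assume only a countable group of homeomorphisms, handling the fixed-point sets measurably is the one point where genuine care is required.
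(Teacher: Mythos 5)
Your proposal is correct, and it is genuinely different from what the paper does: the paper offers no argument at all for this lemma, simply citing Lemma 2.1 of the Sullivan--Weiss--Wright paper \cite{zn}, whereas you give a complete, self-contained proof. Your argument correctly isolates the only real issue, namely that $\mathcal{M}(E)$ is closed under the product $f\circ h$ and the involution: matrix boundedness of $f\circ h$ is automatic from $L(f)L(h)\in\mathcal{S}$, symmetry of $E$ handles $f^{\ast}$, and Borel measurability of the orbit sum is exactly the point needing care, since Lemma 8.5 sits \emph{before} the paper's standing freeness assumption, so stabilisers may be nontrivial. Your first-hitting decomposition $A_{n}=\{x:g_{n}(x)\neq g_{i}(x)\ \text{for}\ i<n\}$ (each $A_n$ in fact open, the fixed-point sets being closed in the Hausdorff space $X$) converts the sum over the varying index set $[x]$ into a countable sum of Borel functions composed with the continuous maps $(x,z)\mapsto(x,g_{n}(x))$ and $(x,z)\mapsto(g_{n}(x),z)$, with absolute convergence from Cauchy--Schwarz on the rows and columns of the bounded operators $F_{[x]}$, $H_{[x]}$; this is sound. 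Your sequential closure argument is also correct and is in fact lighter than the paper's own treatment of the neighbouring Lemma 8.6: since the limit operator $T$ is given, you only need that $\mathcal{S}$ is weak-operator closed and that matrix entries converge pointwise, whereas the paper's Lemma 8.6 must first manufacture a limit via compactness of the unit ball. What your route buys is independence from \cite{zn} and explicit handling of non-free actions; what the citation buys the authors is brevity.
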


\begin{proof}
See Lemma 2.1 \cite{zn}.
\end{proof}

\begin{lemma}
Let $(f_{n})$ be a sequence in the unit ball of $\mathcal{M}(E)$ which
converges pointwise to $f.$ Then $f\in\mathcal{M}(E)$ and $L(f_{n})$ converges
to $L(f)$ in the weak operator topology of $\mathcal{S}.$ Also $f$ is in the
unit ball of $\mathcal{M}(E).$
\end{lemma}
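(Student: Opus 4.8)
The plan is to work one equivalence class at a time and to reduce the statement to two elementary facts about Hilbert-space operators: that an infinite matrix all of whose finite principal submatrices are contractions is itself the matrix of a contraction, and that on a norm-bounded set the weak operator topology is detected by matrix entries. Measurability is immediate, since $f$ is the pointwise limit on the Borel set $E$ of the Borel functions $f_n$, hence Borel.

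The real content is to produce $L(f)\in\mathcal{S}$ with $\|L(f)\|\le 1$. Fix a class $[x]$ and a finite $F\subseteq[x]$, and let $P_F$ be the projection of $\ell^2([x])$ onto the span of $\{\delta_y:y\in F\}$. Because $\|L(f_n)\|\le 1$, every component $(L(f_n))_{[x]}$ is a contraction, hence so is the compression $P_F(L(f_n))_{[x]}P_F$; but the matrix of this compression is precisely the restriction $f_n|_{F\times F}$, so $\|f_n|_{F\times F}\|\le 1$. Since $f_n\to f$ pointwise, $f_n|_{F\times F}\to f|_{F\times F}$ entrywise in the finite-dimensional space of $F\times F$ matrices, on which the operator norm is continuous; therefore $\|f|_{F\times F}\|\le 1$.

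Next I would invoke the principal-submatrix criterion applied to $m=f|_{[x]\times[x]}$: testing the associated sesquilinear form on finitely supported vectors (whose supports may be enclosed in a single finite $F$) and passing to the dense subspace of such vectors shows that $m$ is the matrix of a bounded operator $(L(f))_{[x]}$ with $\|(L(f))_{[x]}\|\le 1$. The bound $1$ is uniform in $[x]$, so $f$ is matrix bounded, $L(f)\in\mathcal{S}$, and $\|L(f)\|=\sup_{[x]}\|(L(f))_{[x]}\|\le 1$; this simultaneously gives $f\in\mathcal{M}(E)$ and that $f$ lies in the unit ball, settling the first and third assertions.

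For the weak operator convergence I would exploit that the sequence, together with its limit, is uniformly bounded by $1$. The Hilbert space on which $\mathcal{S}$ acts is $\bigoplus_{[x]}\ell^2([x])$, in which the vectors $\delta_y$ form a total set; entries across distinct classes vanish identically, while within one class $\langle L(f_n)\delta_x,\delta_y\rangle=f_n(x,y)\to f(x,y)=\langle L(f)\delta_x,\delta_y\rangle$. Convergence of matrix entries on a total set, combined with the uniform norm bound, upgrades to weak operator convergence against arbitrary vectors by a routine three-$\varepsilon$ approximation by finitely supported vectors. The one delicate step is the matrix-boundedness of $f$: rather than attempting to take a weak-operator limit directly, one must verify through the finite principal submatrices that the pointwise limit is a contraction, and it is the continuity of the operator norm in finite dimensions that makes this verification work.
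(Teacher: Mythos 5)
Your proof is correct, but it takes a genuinely different route from the paper's. The paper lets compactness do all the work: the unit ball of $\mathcal{S}$ is compact in the weak operator topology, so the tails $\{L(f_j):j\geq n\}$ have a common cluster point $T$; a subsequence argument identifies the matrix entries of $T$ with $f$, which simultaneously proves that $f$ is matrix bounded (so $f\in\mathcal{M}(E)$, in the unit ball) and that $T=L(f)$; convergence of the whole sequence then follows from the general fact that the entrywise topology, being Hausdorff and weaker than the weak operator topology, must coincide with it on the compact unit ball. You instead build $L(f)$ by hand: compressions $P_F L(f_n)_{[x]}P_F$ are contractions, norm continuity in finite dimensions passes the bound $1$ to the pointwise limit $f|_{F\times F}$, and the resulting bounded sesquilinear form on finitely supported vectors extends by density (Riesz representation) to a contraction whose matrix is $f|_{[x]\times[x]}$; weak operator convergence then comes from entrywise convergence on the total set $\{\delta_y\}$ together with the uniform norm bound, via the standard three-$\varepsilon$ approximation. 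Your argument is more elementary and constructive---it needs neither the compactness of the ball (Banach--Alaoglu) nor the compact-versus-Hausdorff topology comparison---at the cost of carrying out the principal-submatrix criterion explicitly; the paper's compactness argument is shorter because the existence of the limit operator and the upgrade from entrywise to weak operator convergence both fall out of the same compactness fact. Both proofs are sound, and each step you flag as delicate (the matrix-boundedness of the limit) is exactly the point where the paper substitutes compactness for your finite-dimensional approximation.
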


\begin{proof}
The weak operator topology gives a compact Hausdorff topology on the norm
closed unit ball of $\mathcal{S}$.

Let $K_{n}=\{L(f_{j}):j\geq n\}$ and let $clK_{n}$ be its closure in the weak
operator topology of $\mathcal{S}$. Then, by the finite intersection property,
there exists $T\in%
{\textstyle\bigcap\limits_{n\in\mathbb{N}}}
clK_{n}.$ Let $U$ be an open neighbourhood of $T$, then $U\cap K_{n}$ is
non-empty for all $n.$

Fix $(x,y)$ in $E.$ Fix $\varepsilon>0.$ Let $U=\{S\in\mathcal{S}%
:|<(S-T)\delta_{x},\delta_{y}>|<\varepsilon\}.$ Then we can find a subsequence
$(f_{n(r)})(r=1,2...)$ for which $L(f_{n(r)})\in U$ for each $r.$ Thus
$|(f_{n(r)}(x,y)-\,<T\delta_{x},\delta_{y}>|<\varepsilon.$ So
$|f(x,y)-\,<T\delta_{x},\delta_{y}>|\leq\varepsilon.$ Since this holds for all
positive $\varepsilon,$ we have $f(x,y)=<T\delta_{x},\delta_{y}>.$ So
$T=L(f).$

Let $\mathcal{T}$ be the locally convex topology of $\mathcal{S}$ generated by
all seminorms of the form $V\rightarrow|<V\delta_{x},\delta_{y}>|$ as $(x,y)$
ranges over $E.$ This is a Hausdorff topology which is weaker than the weak
operator topology. Hence it coincides with the weak operator topology on the
unit ball, because the latter topology is compact. But $<L(f_{n})\delta
_{x},\delta_{y}>\rightarrow f(x,y)=<L(f)\delta_{x},\delta_{y}>$ for all
$(x,y)\in E.$

It now follows that $L(f_{n})\rightarrow L(f)$ in the weak operator topology
of $\mathcal{S}.$ Since $f$ is the pointwise limit of a sequence of Borel
measurable functions, it too, is Borel measurable. So $f\in\mathcal{M}(E)$
and, since $T$ is in the unit ball of $\mathcal{S},$ $f$ is in the unit ball
of $\mathcal{M}(E).$
\end{proof}

Let $p$ be the homeomorphism of $\Delta$ onto $X$, given by $p(x,x)=x.$ So
$B(X)$, the algebra of bounded Borel measurable functions on $X,$ is
isometrically $\ast-$ isomorphic to $B(\Delta)$ under the map $h\rightarrow
h\circ p.$

For each $f\in\mathcal{M}(E)$ let $Df$ be the function on $E$ which vanishes
off the diagonal, $\Delta,$ and is such that, for each $x\in X,$%

\[
Df(x,x)=f(x,x).
\]

Then $D$ is a linear idempotent map from $\mathcal{M}(E)$ onto an abelian
subalgebra which we can identify with $B(\Delta),$ which can, in turn, be
identified with $B(X).$ Let $\widetilde{D}f$ be the function on $X$ such that
$\widetilde{D}f(x)=Df(x,x)$ for all $x\in X.$ We shall sometimes abuse our
notation by using $Df$ instead of $\widetilde{D}f$ .

Let $\pi:B(X)\rightarrow\mathcal{M}(E)$ be defined by $\pi(h)(x,x)=h(x)$ for
$x\in X$ and $\pi(h)(x,y)=0$ for $x\neq y.$ Then $\pi$ is a $\ast-$isomorphism
of $B(X)$ onto an abelian $\ast-$subalgebra of $\mathcal{M}(E),$ which can be
identified with the range of $D.$

We have $\pi\widetilde{D}f=Df$ for $f\in\mathcal{M}(E).$ Also, for $g\in
B(X),$ $\widetilde{D}\pi(g)=g.$

\begin{lemma}
$D$ is a positive map.
\end{lemma}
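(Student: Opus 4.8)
The plan is to reduce the positivity of $D$ to the elementary fact that a positive operator on a Hilbert space has non-negative diagonal entries relative to any orthonormal basis. By the isomorphism $f\mapsto L(f)$ of $\mathcal{M}(E)$ onto $L(\mathcal{M}(E))\subset\mathcal{S}$ (established in Lemma 8.6), an element $f\in\mathcal{M}(E)$ is positive precisely when $L(f)=\bigoplus_{[x]\in[X]}F_{[x]}$ is a positive element of $\mathcal{S}$, which in the direct-sum algebra is equivalent to each fibre operator $F_{[x]}$ being positive on $\ell^{2}([x])$.

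So I would begin by fixing $f\geq 0$ in $\mathcal{M}(E)$ and using the defining relation $f(x,x)=\langle F_{[x]}\delta_{x},\delta_{x}\rangle$. Since $F_{[x]}\geq 0$, each such diagonal entry is a non-negative real number, whence $\widetilde{D}f(x)=f(x,x)\geq 0$ for every $x\in X$; that is, $\widetilde{D}f$ is a non-negative real-valued function in $B(X)$.

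Finally, I would transport this conclusion back into $\mathcal{M}(E)$. We have $Df=\pi(\widetilde{D}f)$, and $\pi$ is a $*$-isomorphism of $B(X)$ onto the diagonal $*$-subalgebra of $\mathcal{M}(E)$, so it carries the positive function $\widetilde{D}f$ to a positive element. Equivalently, $L(Df)$ is the direct sum over $[x]$ of the diagonal operators with entries $(f(y,y))_{y\in[x]}$, each of which is a positive diagonal operator because its entries are non-negative; hence $L(Df)\geq 0$, i.e.\ $Df\geq 0$, as required.

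I do not anticipate any genuine obstacle. The only step needing a word of care is the identification of abstract positivity in $\mathcal{M}(E)$ with fibrewise operator positivity in $\mathcal{S}$, which is immediate from the $C^{\ast}$-isomorphism $f\mapsto L(f)$ recorded earlier; everything else rests on the single inequality $\langle T\xi,\xi\rangle\geq 0$ for $T\geq 0$.
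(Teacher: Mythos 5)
Your argument is correct, but it reaches the conclusion by a different mechanism than the paper. The paper's proof is purely algebraic and computational: it invokes the factorization of an arbitrary positive element of $\mathcal{M}(E)$ as $f\circ f^{\ast}$ and then computes the diagonal entries explicitly,
\[
(\#)\qquad D(f\circ f^{\ast})(x,x)=\sum_{y\in[x]}f(x,y)\overline{f(x,y)}=\sum_{y\in[x]}|f(x,y)|^{2}\geq 0,
\]
whereas you bypass any factorization by passing through the representation $f\mapsto L(f)$ (this is Lemma 8.5, incidentally, not Lemma 8.6), using that positivity in the $C^{\ast}$-subalgebra $L[\mathcal{M}(E)]$ agrees with positivity in $\mathcal{S}$, that positivity in the direct sum is fibrewise, and finally the inequality $\langle T\delta_{x},\delta_{x}\rangle\geq 0$ for $T\geq 0$; your return trip via $\pi$ (or via diagonal operators with non-negative entries) is also sound. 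Of course the two arguments are two faces of the same computation, since $\langle L(f\circ f^{\ast})\delta_{x},\delta_{x}\rangle=\|L(f)^{\ast}\delta_{x}\|^{2}$, and yours is arguably the more conceptual and economical for the lemma as stated. What the paper's version buys is the explicit identity $(\#)$ itself, which is not a throwaway: it is reused in the proof that $I_{\mathcal{I}}$ is a two-sided ideal (Lemma 8.9) and is later rewritten as identity (iii) to drive the approximation results culminating in Proposition 8.15. So your proof establishes the lemma, but a reader following your route would still need to extract $(\#)$ separately for the subsequent arguments.
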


\begin{proof}
Each positive element of $\mathcal{M}(E)$ is of the form $f\circ f^{\ast}.$But%
\[
(\#)\text{ \ \ }D(f\circ f^{\ast})(x,x)=\sum_{y\in\lbrack x]}f(x,y)f^{\ast
}(y,x)=\sum_{y\in\lbrack x]}f(x,y)\overline{f(x,y)}=\sum_{y\in\lbrack
x]}|f(x,y)|^{2}\geq0.
\]

\end{proof}

\begin{definition}%
\begin{align*}
\text{ Let }I_{\mathcal{I}}  &  =\{f\in\mathcal{M}(E):q\widetilde{D}(f\circ
f^{\ast})=0\}\\
&  =\{f\in\mathcal{M}(E):\exists A\in\mathcal{I}\text{ such that
}\widetilde{D}(f\circ f^{\ast})(x)=0\text{ for }x\notin A\}.
\end{align*}

\end{definition}

\begin{lemma}
$I_{\mathcal{I}}$ is a \ two-sided ideal of $\mathcal{M}(E).$
\end{lemma}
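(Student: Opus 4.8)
The plan is to reformulate membership in $I_{\mathcal{I}}$ as a condition on a single Borel set and then check the ideal axioms by elementary containments, the only genuine input being the $G$-invariance of $\mathcal{I}$. For $f\in\mathcal{M}(E)$ I would introduce the \emph{row support} $R(f)=\{x\in X:\sum_{y\in[x]}|f(x,y)|^{2}\neq 0\}$. By the computation $\widetilde{D}(f\circ f^{\ast})(x)=\sum_{y\in[x]}|f(x,y)|^{2}$ used to prove that $D$ is positive, $R(f)$ is precisely $\{x:\widetilde{D}(f\circ f^{\ast})(x)\neq 0\}$; this set is Borel because $f\circ f^{\ast}\in\mathcal{M}(E)$ (Lemma 8.5) and $\widetilde{D}$ sends Borel matrix functions to Borel functions on $X$. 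Unwinding the definition of $I_{\mathcal{I}}$, the condition $q\widetilde{D}(f\circ f^{\ast})=0$ says exactly that $\widetilde{D}(f\circ f^{\ast})\in B_{\mathcal{I}}$, so $f\in I_{\mathcal{I}}$ if and only if $R(f)\in\mathcal{I}$. In words, $R(f)$ is the set of $x$ at which some entry of the row $\{x\}\times[x]$ is non-zero.

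With this description the subspace and right-ideal properties are immediate. First I would note $R(f+g)\subseteq R(f)\cup R(g)$ and $R(\lambda f)\subseteq R(f)$, so $I_{\mathcal{I}}$ is a linear subspace, using only that $\mathcal{I}$ is closed under subsets and finite unions. Next, if $x\notin R(f)$ then $f(x,y)=0$ for every $y\in[x]$, whence $(f\circ h)(x,z)=\sum_{y}f(x,y)h(y,z)=0$ for all $z$; this gives $R(f\circ h)\subseteq R(f)$, so $f\circ h\in I_{\mathcal{I}}$ whenever $f\in I_{\mathcal{I}}$. Thus $I_{\mathcal{I}}$ is automatically a right ideal, requiring no hypothesis on $\mathcal{I}$ beyond its being an ideal.

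The substantive step, and the one I expect to be the only real obstacle, is closure under left multiplication, and this is exactly where the standing assumption that $\mathcal{I}$ is $G$-invariant (equivalently, that the $E$-saturation of an $\mathcal{I}$-set again lies in $\mathcal{I}$) must enter. Here I would argue that if $x\in R(h\circ f)$ then some $z$ gives $\sum_{y\in[x]}h(x,y)f(y,z)\neq 0$, so there is $y\in[x]$ with $f(y,z)\neq 0$; this $y$ lies in $R(f)$ and satisfies $xEy$, whence $R(h\circ f)\subseteq E[R(f)]$. Since $R(f)\in\mathcal{I}$, its saturation $E[R(f)]$ is again in $\mathcal{I}$ by the invariance hypothesis (cf.\ Lemma 8.3), so $R(h\circ f)\in\mathcal{I}$ and $h\circ f\in I_{\mathcal{I}}$. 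Combining the right- and left-ideal conclusions shows that $I_{\mathcal{I}}$ is two-sided.

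As a remark, one could instead obtain the right-ideal property operator-theoretically from $f\circ h\circ h^{\ast}\circ f^{\ast}\leq\|h\|^{2}\,f\circ f^{\ast}$ together with the positivity of $D$ and of $q$, and then deduce left-sidedness by checking that invariance makes $I_{\mathcal{I}}$ self-adjoint, i.e.\ $R(f)\in\mathcal{I}\iff R(f^{\ast})\in\mathcal{I}$, so that a self-adjoint right ideal is automatically two-sided; but the direct support computation above is shorter and keeps the role of invariance fully explicit.
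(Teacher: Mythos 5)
Your proof is correct, and it takes a genuinely different route from the paper's, although both rest on the same substantive input, namely that the $E$-saturation of a set in $\mathcal{I}$ stays in $\mathcal{I}$. The paper proves closure under addition from the $C^{\ast}$-inequality $(a+b)(a+b)^{\ast}\leq 2(aa^{\ast}+bb^{\ast})$ and closure under right multiplication from $f\circ z\circ z^{\ast}\circ f^{\ast}\leq\|z\|^{2}\,f\circ f^{\ast}$, each combined with positivity of $D$ and $q$; it then uses the saturation hypothesis only once, to show $I_{\mathcal{I}}$ is \emph{self-adjoint} (replacing $A$ by $E[A]$, one gets $f(x,y)=0$ for all $x\notin E[A]$ and $y\in[x]$, hence $\widetilde{D}(f^{\ast}\circ f)$ vanishes off $E[A]$), and concludes because a self-adjoint right ideal is automatically two-sided --- which is precisely the alternative you sketch in your closing remark. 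You instead recast membership as $R(f)\in\mathcal{I}$ and verify every axiom by elementary support containments: $R(f+g)\subseteq R(f)\cup R(g)$, $R(f\circ h)\subseteq R(f)$, and the key estimate $R(h\circ f)\subseteq E[R(f)]$, so you attack left multiplication head-on rather than through the adjoint (your step ``$f(y,z)\neq 0$ implies $y\in R(f)$'' silently uses $z\in[y]$, which follows from $y,z\in[x]$ and transitivity, so there is no gap). Your version is more elementary --- no operator inequalities are needed, only that $\mathcal{I}$ is a Boolean ideal --- and it isolates exactly where $G$-invariance enters; the paper's version does less pointwise bookkeeping and its first two steps work verbatim in any setting where one only has a positive map $D$ and a quotient $q$, without a matrix picture of the elements. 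Either argument is a complete proof of the lemma.
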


\begin{proof}
In any $C^{\ast}$-algebra,%
\[
(a+b)(a+b)^{\ast}\leq2(aa^{\ast}+bb^{\ast}).
\]

So%
\[
D((f+g)\circ(f+g)^{\ast})\leq2D(f\circ f^{\ast})+2D(g\circ g^{\ast}).
\]
From this it follows that if $f$ and $g$ are both in $I_{\mathcal{I}}$ then so
also is $f+g.$

In any $C^{\ast}$-algebra,%
\[
fz(fz)^{\ast}=fzz^{\ast}f^{\ast}\leq||z||^{2}ff^{\ast}.
\]
From this it follows that $f\in I_{\mathcal{I}}$ and $z\in\mathcal{M}(E)$
implies that $f\circ z\in I_{\mathcal{I}}$.

Now suppose that $f\in I_{\mathcal{I}}$. Then \ for some $A\in\mathcal{I},$
$E(f\circ f^{\ast})(x)=0$ for $x\notin A$. Since $E[A]\in\mathcal{I}$ we can
suppose that $A=E[A].$ Hence if $x\notin E[A]$ then $[x]\cap E[A]=\varnothing
.$ For $x\notin E[A],$ we have%
\[
0=D(f\circ f^{\ast})(x)=\sum_{y\in\lbrack x]}|f(x,y)|^{2}.
\]
Thus $f(x,y)=0$ for $xEy$ and $x\notin A.$ Then, for $z\notin A,$ we have
$D(f^{\ast}\circ f)(z)=\sum_{y\in\lbrack z]}|f^{\ast}(z,y)|^{2}=\sum
_{y\in\lbrack z]}|f(y,z)|^{2}=0.$ So $f^{\ast}\in I_{\mathcal{I}}$. So
$I_{\mathcal{I}}$ is a two-sided ideal of $\mathcal{M}(E).$
\end{proof}

\begin{lemma}
If $y\in I_{\mathcal{I}}$ then $q\widetilde{D}(y)=0.$ Furthermore $y\in
I_{\mathcal{I}}$ if, and only if $q\widetilde{D}(y\circ a)=0$ for all
$a\in\mathcal{M}(E).$
\end{lemma}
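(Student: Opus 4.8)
The plan is to derive both assertions from the single computation $(\#)$ appearing in the proof that $D$ is positive (Lemma 8.7), together with the two-sided ideal property established in Lemma 8.9. Everything reduces to one pointwise domination and a clean reading of what the quotient map $q$ annihilates.

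For the first assertion, I would start from the observation that, for any $y\in\mathcal{M}(E)$ and any $x\in X$,
\[
\widetilde{D}(y\circ y^{\ast})(x)=\sum_{z\in[x]}|y(x,z)|^{2}\geq|y(x,x)|^{2}=|\widetilde{D}(y)(x)|^{2},
\]
since the sum over $z\in[x]$ contains the diagonal term $z=x$. Now suppose $y\in I_{\mathcal{I}}$. By Definition 8.8 there is some $A\in\mathcal{I}$ off which $\widetilde{D}(y\circ y^{\ast})$ vanishes; by the displayed inequality $\widetilde{D}(y)$ then also vanishes off $A$. As $\widetilde{D}(y)$ is Borel measurable (it is the composition of the Borel function $y$ with the continuous diagonal embedding $x\mapsto(x,x)$), its support is a Borel subset of $A$ and hence lies in $\mathcal{I}$. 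Therefore $\widetilde{D}(y)\in B_{\mathcal{I}}$, that is, $q\widetilde{D}(y)=0$.

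For the equivalence, the forward implication follows at once from the machinery already in place: if $y\in I_{\mathcal{I}}$ then, because $I_{\mathcal{I}}$ is a two-sided ideal (Lemma 8.9), $y\circ a\in I_{\mathcal{I}}$ for every $a\in\mathcal{M}(E)$, and applying the first assertion to $y\circ a$ yields $q\widetilde{D}(y\circ a)=0$. The converse is even cheaper: if $q\widetilde{D}(y\circ a)=0$ for all $a$, specialise to $a=y^{\ast}$ to obtain $q\widetilde{D}(y\circ y^{\ast})=0$, which is exactly the defining condition for membership in $I_{\mathcal{I}}$.

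I do not anticipate any genuine obstacle; the content is bookkeeping once the domination $|\widetilde{D}(y)|^{2}\leq\widetilde{D}(y\circ y^{\ast})$ is noted. The only points needing a little care are (i) confirming that the support of $\widetilde{D}(y)$ is Borel, so that it is a legitimate element of the Boolean algebra on which $\mathcal{I}$ acts, and (ii) recording that $q$ kills precisely those Borel functions whose support lies in $\mathcal{I}$, so that conditions of the form $q\widetilde{D}(\cdot)=0$ translate without friction into statements about $\mathcal{I}$-small supports. With these in hand, the two halves of the equivalence drop out from, respectively, the ideal property combined with the first assertion, and the single specialisation $a=y^{\ast}$.
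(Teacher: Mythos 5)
Your proof is correct, but your argument for the first assertion takes a genuinely different route from the paper's. The paper works in the quotient: identifying $B(X)/B_{\mathcal{I}}$ with $C(T)$ for $T$ its structure space, it applies a Cauchy--Schwarz inequality for the positive map $q\widetilde{D}$, namely $|q\widetilde{D}(x^{\ast}\circ y)(t)|\leq q\widetilde{D}(x^{\ast}\circ x)(t)^{1/2}\,q\widetilde{D}(y^{\ast}\circ y)(t)^{1/2}$, specialised to $x=1$; this forces it to first invoke the fact, established inside the proof of Lemma 8.9, that $y\in I_{\mathcal{I}}$ implies $y^{\ast}\in I_{\mathcal{I}}$, so that $q\widetilde{D}(y^{\ast}\circ y)=0$ (the definition of $I_{\mathcal{I}}$ involves $y\circ y^{\ast}$, not $y^{\ast}\circ y$). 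You instead work pointwise on $X$ before passing to the quotient: the diagonal term of the sum in identity $(\#)$ gives $|\widetilde{D}(y)(x)|^{2}\leq\widetilde{D}(y\circ y^{\ast})(x)$, so the support of $\widetilde{D}(y)$ is a Borel subset of a set in $\mathcal{I}$, hence lies in $\mathcal{I}$ by downward closedness of the $\sigma$-ideal, and your two points of care (Borel measurability of $\widetilde{D}(y)$, and what $q$ annihilates) are exactly the right ones. Your route is more elementary: it avoids both the operator-valued Cauchy--Schwarz inequality and the detour through $y^{\ast}\in I_{\mathcal{I}}$, at the cost of relying on the explicit matrix formula for $D$ -- which is available anyway from Lemma 8.7. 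The paper's route is more abstract and would persist in any setting where one knows only that $q\widetilde{D}$ is a positive map, without a concrete formula. Your treatment of the equivalence -- the ideal property of Lemma 8.9 combined with the first assertion for the forward direction, and the single specialisation $a=y^{\ast}$ for the converse -- coincides with the paper's.
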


\begin{proof}
Let $T$ be the (compact Hausdorff) structure space of the algebra
$B(X)/B_{\mathcal{I}}$. By applying the Cauchy-Schwartz inequality we see that
for $x,y$ in $\mathcal{M}(E)$ and $t\in T$,%
\[
|q\widetilde{D}(x^{\ast}\circ y)(t)|\leq q\widetilde{D}(x^{\ast}\circ
x)(t)^{1/2}q\widetilde{D}(y^{\ast}\circ y)(t)^{1/2}.
\]
Let $x=1$ and let $y\in I_{\mathcal{I}}.$ Then $y^{\ast}\in I_{\mathcal{I}}.$
So $q\widetilde{D}(y^{\ast}\circ y)=0.$ From the above inequality it follows
that $q\widetilde{D}(y)=0.$ Since $I_{\mathcal{I}}$ is an ideal, if $y\in
I_{\mathcal{I}}$ then $y\circ a$ is in the ideal for each $a\in M(E)$. It now
follows from the above that $q\widetilde{D}(y\circ a)=0.$ Conversely, if
$q\widetilde{D}(y\circ a)=0$ for all $a\in\mathcal{M}(E)$ then, on putting
$a=y^{\ast}$ we see that $y\in I_{\mathcal{I}}.$
\end{proof}

\begin{lemma}
$L[I_{\mathcal{I}}]$ is a (two-sided) ideal of $L[\mathcal{M}(E)]$ which is
sequentially closed in the weak operator topology of $\mathcal{S}.$

\begin{proof}
Let $(f_{r})$ be a sequence in $I_{\mathcal{I}}$ such that $(L(f_{r}))$ is a
sequence which converges in the weak operator topology to an element $T$ of
$\mathcal{S}.$ Then it follows from the Uniform Boundedness Theorem that the
sequence is bounded in norm. By Lemma 8.5, there exists $f\in\mathcal{M}(E)$
such that $L(f)=T$ where $L(f_{r})\rightarrow L(f)$ in the weak operator
topology. So $f_{r}\rightarrow f$ pointwise. Hence $\widetilde{D}%
(f_{r})\rightarrow\widetilde{D}(f)$ pointwise. For each $r$ there exists
$A_{r}\in\mathcal{I}$ such that $x\notin A_{r}$ implies $\widetilde{D}%
(f_{r})(x)=0.$ Since $\mathcal{I}$ is a Boolean $\sigma-$ideal of the Boolean
algebra of Borel subsets of $X,$ $\cup\{A_{r}:r=1,2...\}$ is in $\mathcal{I}.$
Hence $q\widetilde{D}(f)=0.$
\end{proof}
\end{lemma}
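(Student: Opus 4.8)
The plan is to separate the statement into two independent parts. The ideal assertion is immediate: the preceding lemma shows that $I_{\mathcal{I}}$ is a two-sided ideal of $\mathcal{M}(E)$, and since $L$ is a $\ast$-isomorphism of $\mathcal{M}(E)$ onto $L[\mathcal{M}(E)]$, its image $L[I_{\mathcal{I}}]$ is automatically a two-sided ideal of $L[\mathcal{M}(E)]$. So all the real work concerns sequential closure in the weak operator topology.

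For the closure, I would take a sequence $(f_r)$ in $I_{\mathcal{I}}$ with $L(f_r)$ converging in the weak operator topology to some $T \in \mathcal{S}$, and try to exhibit $f \in I_{\mathcal{I}}$ with $L(f) = T$. First the Uniform Boundedness Theorem gives $\sup_r \|L(f_r)\| < \infty$, so all the $f_r$ lie in a fixed ball of $\mathcal{M}(E)$. On such a ball the earlier analysis of $\mathcal{S}$ identifies the weak operator topology with the topology of entrywise convergence, and hence produces an $f \in \mathcal{M}(E)$ with $L(f) = T$ and $f_r(x,y) \to f(x,y)$ for every $(x,y) \in E$. The task then reduces to checking $f \in I_{\mathcal{I}}$.

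Here I would read the definition of $I_{\mathcal{I}}$ as a row-vanishing condition rather than a diagonal condition. Since $\widetilde{D}(f_r \circ f_r^{\ast})(x) = \sum_{y \in [x]} |f_r(x,y)|^2$, membership $f_r \in I_{\mathcal{I}}$ yields a set $A_r \in \mathcal{I}$ off which the whole row vanishes, i.e. $f_r(x,y) = 0$ for all $y \in [x]$ whenever $x \notin A_r$. As $\mathcal{I}$ is a $\sigma$-ideal, $A = \bigcup_r A_r \in \mathcal{I}$, and off $A$ every $f_r$ vanishes entrywise; passing to the pointwise limit gives $f(x,y) = 0$ for all $y \in [x]$ whenever $x \notin A$. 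Thus $\widetilde{D}(f \circ f^{\ast})(x) = \sum_{y \in [x]} |f(x,y)|^2 = 0$ off $A$, so $q\widetilde{D}(f \circ f^{\ast}) = 0$ and $f \in I_{\mathcal{I}}$, completing the argument.

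I expect the delicate point to be exactly this last verification. The naive move is to track only the diagonal $\widetilde{D}(f_r)$, which by the previous lemma vanishes off some $A_r$; but its limit would merely give $q\widetilde{D}(f) = 0$, the vanishing of the diagonal of $f$, which is weaker than $f \in I_{\mathcal{I}}$. The correct observation is that the defining condition $q\widetilde{D}(f_r \circ f_r^{\ast}) = 0$ annihilates the entire row $f_r(x,\cdot)$, and it is precisely this full row-vanishing, combined with the $\sigma$-ideal property absorbing the countable union of exceptional sets and the entrywise convergence coming from the weak operator topology, that transfers cleanly to $f$.
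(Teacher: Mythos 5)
Your proof is correct, and its key step takes a genuinely different route from the paper's. Both arguments share the same skeleton: the Uniform Boundedness Theorem to get norm boundedness, the identification of the WOT limit $T$ with $L(f)$ for some $f\in\mathcal{M}(E)$ via entrywise convergence, and the $\sigma$-ideal property to absorb the countable union $\bigcup_{r}A_{r}$. They diverge at the verification that $f\in I_{\mathcal{I}}$. The paper tracks only diagonals: its first pass establishes just $q\widetilde{D}(f)=0$ — exactly the "naive move" you flag as insufficient — and then repairs this with a second pass, applying the same limiting argument to $f_{r}\circ a$ for every fixed $a\in\mathcal{M}(E)$ (legitimate since $I_{\mathcal{I}}$ is an ideal and right multiplication by $L(a)$ is WOT-continuous) to conclude $q\widetilde{D}(f\circ a)=0$ for all $a$, and finally invoking the characterization of Lemma 8.10. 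You instead read membership in $I_{\mathcal{I}}$ through the identity $(\#)$ of Lemma 8.7, $\widetilde{D}(f_{r}\circ f_{r}^{\ast})(x)=\sum_{y\in[x]}|f_{r}(x,y)|^{2}$, as vanishing of the entire row off $A_{r}$; this row-vanishing passes to the entrywise limit and yields $f\in I_{\mathcal{I}}$ straight from the definition, with no second limiting pass and no appeal to Lemma 8.10. Your route is shorter and more self-contained, exploiting the positivity/sum-of-squares structure explicitly; the paper's route is more abstract and would survive in a setting where one only has $D$ as a positive map together with the algebraic characterization of Lemma 8.10, but here your direct argument is arguably cleaner. (Your one-line disposal of the two-sided-ideal assertion via Lemma 8.9 and the isomorphism $L$ is also fine; the paper leaves that part implicit.)
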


\begin{proof}
For any $a\in\mathcal{M}(E),(f_{r}\circ a)$ is a sequence in $I_{\mathcal{I}}$
such that $(L(f_{r}\circ a))$ converges in the weak operator topology to
$L(f)L(a).$ So, as in the preceding paragraph, $q\widetilde{D}(f\circ a)=0.$
By appealing to Lemma 8.10 we see that $f\in I_{\mathcal{I}}.$ Hence
$L[I_{\mathcal{I}}]$ is sequentially closed in the weak operator topology of
$\mathcal{S}.$
\end{proof}

\begin{corollary}
$\mathcal{M}(E)$ is monotone $\sigma-$complete and $I_{\mathcal{I}}$ is a
$\sigma-$ideal
\end{corollary}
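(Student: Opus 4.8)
The plan is to exploit the concrete realization of $\mathcal{M}(E)$ as the $C^{\ast}$-subalgebra $L[\mathcal{M}(E)]$ sitting inside the Type~I von Neumann algebra $\mathcal{S}$, together with the sequential weak-operator closedness already established. Since $\mathcal{S}$ is a von Neumann algebra it is monotone complete, and the classical fact I would lean on is that a norm-bounded, monotone increasing sequence in a von Neumann algebra converges in the strong (hence weak) operator topology to its least upper bound. Both halves of the corollary will then reduce to matching order-theoretic suprema with weak-operator limits.

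For monotone $\sigma$-completeness, I would take a norm-bounded monotone increasing sequence $(a_n)$ in $\mathcal{M}(E)_{sa}$ and pass to $(L(a_n))$ in $L[\mathcal{M}(E)]_{sa}$. In $\mathcal{S}$ this sequence has a supremum $T$, and $L(a_n)\to T$ in the weak operator topology. Because $L[\mathcal{M}(E)]$ is sequentially weak-operator closed (Lemma 8.5, via the pointwise-limit argument of Lemma 8.6), we have $T=L(a)$ for some $a\in\mathcal{M}(E)$. I would then verify that $a$ is genuinely the supremum inside $\mathcal{M}(E)$: it is certainly an upper bound, and for any other upper bound $b\in\mathcal{M}(E)_{sa}$ the element $L(b)$ bounds $(L(a_n))$ in $\mathcal{S}$, so $T\leq L(b)$ and hence $a\leq b$. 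Transporting this back through the isomorphism $\mathcal{M}(E)\cong L[\mathcal{M}(E)]$ shows $\mathcal{M}(E)$ is monotone $\sigma$-complete. A useful by-product of this argument is that suprema in $\mathcal{M}(E)$ are computed exactly by the inherited weak-operator limits from $\mathcal{S}$.

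For the $\sigma$-ideal claim, I already know from Lemma 8.9 that $I_{\mathcal{I}}$ is a two-sided $C^{\ast}$-ideal, so by the definition in Section~3 it remains only to check that $(I_{\mathcal{I}})_{sa}$ is a $\sigma$-subspace of $\mathcal{M}(E)_{sa}$. I would take a monotone increasing sequence $(f_n)$ in $(I_{\mathcal{I}})_{sa}$ possessing a supremum $f$ in $\mathcal{M}(E)_{sa}$. By the by-product just noted, $L(f)$ is the weak-operator limit of $(L(f_n))$. Since each $L(f_n)\in L[I_{\mathcal{I}}]$ and $L[I_{\mathcal{I}}]$ is sequentially weak-operator closed (Lemma 8.11), we conclude $L(f)\in L[I_{\mathcal{I}}]$, that is, $f\in I_{\mathcal{I}}$. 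This is precisely the $\sigma$-subspace condition, so $I_{\mathcal{I}}$ is a $\sigma$-ideal.

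The only point requiring genuine care is identifying the order-theoretic supremum in the $C^{\ast}$-algebra with the topological (weak-operator) limit in the ambient von Neumann algebra; everything else is bookkeeping through the isomorphism $\mathcal{M}(E)\cong L[\mathcal{M}(E)]$. Since Lemmas 8.5 and 8.11 have already supplied the sequential closedness, I do not anticipate any real obstacle: the corollary is essentially an assembly of those two lemmas with the standard monotone convergence theorem for von Neumann algebras.
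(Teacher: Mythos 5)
Your proposal is correct and follows essentially the same route as the paper: both arguments realize $\mathcal{M}(E)$ as $L[\mathcal{M}(E)]\subset\mathcal{S}$, use the monotone convergence of bounded increasing sequences in the von Neumann algebra $\mathcal{S}$ together with Lemma 8.5 to get monotone $\sigma$-completeness, and then invoke the sequential weak-operator closedness of $L[I_{\mathcal{I}}]$ (Lemma 8.11) for the $\sigma$-ideal claim. You simply spell out the identification of order-theoretic suprema with weak-operator limits, which the paper leaves implicit.
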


\begin{proof}
Each norm bounded monotone increasing sequence in $L(\mathcal{M}(E))$
converges in the strong operator topology to an element $T$ of $\mathcal{S}$.
By Lemma 8.5, $T\in L(\mathcal{M}(E)).$ Then $T=L(f)$ for some $f\in
\mathcal{M}(E).$ Hence $L(\mathcal{M}(E))$ (and its isomorphic image,
$\mathcal{M}(E)$) are monotone $\sigma-$complete. It now follows from Lemma
8.11, that $I_{\mathcal{I}}$ is a $\sigma-$ideal.
\end{proof}

\begin{definition}
Let $Q$ be the quotient map from $\mathcal{M}(E)$ onto $\mathcal{M}%
(E)/I_{\mathcal{I}}.$
\end{definition}

\begin{proposition}
The algebra $\mathcal{M}(E)/I_{\mathcal{I}}$ is monotone $\sigma-$complete.
There exists a positive, faithful, $\sigma-$normal, conditional expectation
$\widehat{D}$ from $\mathcal{M}(E)/I_{\mathcal{I}}$ onto a commutative
$\sigma-$subalgebra, which is isomorphic to $B(X)/B_{\mathcal{I}}.$
Furthermore, if there exists a strictly positive linear functional on
$B(X)/B_{\mathcal{I}},$ then $\mathcal{M}(E)/I_{\mathcal{I}}$ is monotone
complete and $\widehat{D}$ is normal.
\end{proposition}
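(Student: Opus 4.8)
The plan is to push the diagonal conditional expectation $D$ and the diagonal embedding $\pi$ down through the two quotient maps $Q$ and $q$, and then to read off each asserted property of $\widehat{D}$ from the corresponding property upstairs. First, since $\mathcal{M}(E)$ is monotone $\sigma$-complete and $I_{\mathcal{I}}$ is a $\sigma$-ideal (Corollary 8.12), Lemma 3.3 immediately gives that $\mathcal{M}(E)/I_{\mathcal{I}}$ is monotone $\sigma$-complete and that $Q$ is $\sigma$-normal. To locate the commutative subalgebra, I would examine $Q\circ\pi\colon B(X)\to\mathcal{M}(E)/I_{\mathcal{I}}$. Since $\pi$ is a $\ast$-homomorphism into the abelian diagonal and $B(X)$ is commutative, $\pi(g)\circ\pi(g)^{\ast}=\pi(|g|^{2})$ with $\widetilde{D}\pi(|g|^{2})=|g|^{2}$, so $\pi(g)\in I_{\mathcal{I}}$ exactly when $q(|g|^{2})=0$, i.e. when $g\in B_{\mathcal{I}}$. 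Hence $\ker(Q\pi)=B_{\mathcal{I}}$ and $Q\pi$ factors as an injective $\ast$-homomorphism $\overline{Q\pi}\colon B(X)/B_{\mathcal{I}}\to\mathcal{M}(E)/I_{\mathcal{I}}$ whose range is the abelian algebra $\mathcal{A}:=Q[\pi(B(X))]$.

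Next I would define $\widehat{D}$ by the rule $\widehat{D}\circ Q=Q\circ D$. This is well defined because, for $f\in I_{\mathcal{I}}$, Lemma 8.10 gives $\widetilde{D}(f)\in B_{\mathcal{I}}$, so $Df=\pi\widetilde{D}(f)\in\pi[B_{\mathcal{I}}]\subset I_{\mathcal{I}}$ and therefore $Q(Df)=0$. As $D$ is a positive idempotent onto $\pi[B(X)]$, the induced map $\widehat{D}$ is a positive idempotent onto $\mathcal{A}$, i.e. a conditional expectation. For faithfulness, I would take a positive $\xi$ with $\widehat{D}(\xi)=0$ and lift it as $\xi=Q(f\circ f^{\ast})$; then $Q(D(f\circ f^{\ast}))=0$ forces $\pi\widetilde{D}(f\circ f^{\ast})\in I_{\mathcal{I}}$, i.e. $q\widetilde{D}(f\circ f^{\ast})=0$, which is precisely the condition $f\in I_{\mathcal{I}}$. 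Since $I_{\mathcal{I}}$ is a two-sided ideal, $f\circ f^{\ast}\in I_{\mathcal{I}}$ as well, so $\xi=0$.

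The step I expect to require most care is $\sigma$-normality. I would first check that $D$ is itself $\sigma$-normal: a bounded increasing sequence $(a_{n})$ in $\mathcal{M}(E)$ has as its supremum the strong-operator limit of $(L(a_{n}))$, so its matrix entries converge pointwise; restricting to the diagonal shows $\widetilde{D}(a_{n})$ increases pointwise to $\widetilde{D}(\bigvee a_{n})$, and hence $\bigvee_{n}D(a_{n})=D(\bigvee_{n}a_{n})$ in $\mathcal{M}(E)$. Then, given $\xi_{n}\uparrow\xi$ in the quotient, I would use the lifting in Lemma 3.3 to obtain an increasing $(a_{n})$ with $Q(a_{n})=\xi_{n}$, bounded above; $\sigma$-normality of $Q$ gives $Q(\bigvee a_{n})=\xi$, and then $\widehat{D}(\xi)=Q(D\bigvee a_{n})=Q(\bigvee Da_{n})=\bigvee Q(Da_{n})=\bigvee\widehat{D}(\xi_{n})$. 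The same idempotent-plus-$\sigma$-normality argument shows $\mathcal{A}$ is a $\sigma$-subalgebra, which $\overline{Q\pi}$ identifies with $B(X)/B_{\mathcal{I}}$; the delicate points here are the entrywise description of suprema and the interchange of the three limit operations through the lifting.

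Finally, for the conditional assertion I would use a strictly positive $\mu$ on $B(X)/B_{\mathcal{I}}$ to build $\phi=\mu\circ(\overline{Q\pi})^{-1}\circ\widehat{D}$, a positive functional on $\mathcal{M}(E)/I_{\mathcal{I}}$ that is faithful because both $\widehat{D}$ and $\mu$ are. By Lemma 3.1 a monotone $\sigma$-complete algebra with a faithful positive functional is monotone complete, so $\mathcal{M}(E)/I_{\mathcal{I}}$ is monotone complete. To promote $\widehat{D}$ to a normal map I would take a bounded upward-directed family $(\xi_{\lambda})$ with supremum $\xi$; applying the second part of Lemma 3.1 to the downward-directed set $\{-\xi_{\lambda}\}$ extracts an increasing sequence with $\xi_{\lambda_{n}}\uparrow\xi$, and the $\sigma$-normality already proved yields $\widehat{D}(\xi)=\bigvee_{n}\widehat{D}(\xi_{\lambda_{n}})\le\bigvee_{\lambda}\widehat{D}(\xi_{\lambda})\le\widehat{D}(\xi)$, forcing equality.
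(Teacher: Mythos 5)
Your proposal is correct and follows essentially the same route as the paper's own proof: the quotient's monotone $\sigma$-completeness via Corollary 8.12 and Lemma 3.3, the identification $\ker(Q\pi)=B_{\mathcal{I}}$, the definition $\widehat{D}Q=QD$ made legitimate by Lemma 8.10, $\sigma$-normality via Lemma 3.3 lifting plus pointwise convergence of diagonal entries (Lemma 8.5), and finally $\mu\circ\widehat{D}$ with Lemma 3.1 for monotone completeness and normality. The only differences are presentational — you spell out faithfulness, which the paper leaves as "clear", and you isolate the $\sigma$-normality of $D$ upstairs before pushing it down, whereas the paper does this in one pass.
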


\begin{proof}
By Corollary 8.12 and the results of Section 3, the quotient algebra
$\mathcal{M}(E)/I_{\mathcal{I}}$ is monotone $\sigma-$complete.

Let $g\in B(X).$ Then, as remarked before Lemma 8.7, $\widetilde{D}\pi(g)=g.$

Now $\pi(g)\in$ $I_{\mathcal{I}}$ if, and only if, \ $q\widetilde{D}%
(\pi(g)\circ\pi(g)^{\ast})=0.$

But $q\widetilde{D}(\pi(g)\circ\pi(g)^{\ast})=q\widetilde{D}(\pi
(|g|^{2})=q(|g|^{2}).$

So $\pi(g)\in$ $I_{\mathcal{I}}$ if and only if $|g|^{2}\in B_{\mathcal{I}}$
i.e. if and only if $g$ vanishes off some set $A\in\mathcal{I}$ i.e. if and
only if $g\in B_{\mathcal{I}}.$

So $\pi$ induces an isomorphism from $B(X)/B_{\mathcal{I}}$ onto
$D[M(E)]/I_{\mathcal{I}}.$

Let $h\in I_{\mathcal{I}}.$ Then, by Lemma 8.10, $q\widetilde{D}(h)=0.$ That
is, $\widetilde{D}(h)\in B_{\mathcal{I}}.$ So $\pi\widetilde{D}(h)\in
I_{\mathcal{I}}.$ But $\pi\widetilde{D}(h)=Dh.$

So $h\in I_{\mathcal{I}}$ implies $QDh=0.$ It now follows that we can define
$\widehat{D}$ on $\mathcal{M}(E)/I_{\mathcal{I}}$ by $\widehat{D}$
$(f+I_{\mathcal{I}})=QDf.$

It is clear that $\widehat{D}$ is a positive linear map which is faithful. Its
range is an abelian subalgebra of $\mathcal{M}(E)/I_{\mathcal{I}}$. This
subalgebra is $D[\mathcal{M}(E)]/I_{\mathcal{I}}$ which, as we have seen
above, is isomorphic to $B(X)/B_{\mathcal{I}};$ we shall denote $D[\mathcal{M}%
(E)]/I_{\mathcal{I}}$ by $A$ and call it the diagonal algebra. Furthermore,
$\widehat{D}$ is idempotent, so it is a conditional expectation.

Let $(f_{n})$ be a sequence in $\mathcal{M}(E)$ such that $(Qf_{n})$ is an
upper bounded monotone increasing sequence in $\mathcal{M}(E)/I_{\mathcal{I}%
}.$ Then, by Lemma 3.3, we may assume that $(f_{n})$ is an upper bounded,
monotone increasing sequence in $\mathcal{M}(E).$ Let $Lf$ be the limit of
$(Lf_{n})$ in the weak operator topology. (Since the sequence is monotone, $Lf
$ is also its limit in the strong operator topology.) By Lemma 8.5,
$f\in\mathcal{M}(E),$ and $f_{n}(x,x)\rightarrow f(x,x)$ for all $x\in X.$
Thus $Df_{n}\rightarrow Df$ pointwise on $X.$ Also, since $D$ is positive,
$(Df_{n})$ is monotone increasing. Since $Q$ is a $\sigma-$homomorphism, $QDf
$ is the least upper bound of $(QDf_{n})\mathbf{.}$ Since $\widehat{D}$
$(f+I_{\mathcal{I}})=QDf$ it now follows that $\widehat{D}$ is $\sigma-$normal.

If $\mu$ is a strictly positive functional on $B(X)/B_{\mathcal{I}}$ then
$\mu\widehat{D}$ is a strictly positive linear functional on
$M(E)/I_{\mathcal{I}}$. It then follows from Lemma 3.1 that $\mathcal{M}%
(E)/I_{\mathcal{I}}$ is monotone complete. Furthermore, if $\Lambda$ is a
downward directed subset of the self-adjoint part of $\mathcal{M}%
(E)/I_{\mathcal{I}},$ with $0$ as its greatest lower bound, then there exists
a monotone decreasing sequence $(x_{n}),$ with each $x_{n}$ in $\Lambda,$ and
$%
{\textstyle\bigwedge}
x_{n}=0.$ It now follows from the $\sigma-$normality of $\widehat{D}$ that $0$
is the infimum of $\{\widehat{D}(x):x\in\Lambda\}.$ Hence $\widehat{D}$ is normal.
\end{proof}

We now make additional assumptions about the action of $G$ and use this to
construct a natural unitary representation of $G$. We give some technical
results which give an analogue of Mercer-Bures convergence, see \cite{zc} and
\cite{d}. This will be useful in Section 12 when we wish to approximate
elements of $\mathcal{M}(E)/I_{\mathcal{I}}$ by finite dimensional subalgebras.

For the rest of this section we suppose that the action of $G$ on $X$ is free
on each orbit i.e. for each $x\in X,$ $x$ is not a fixed point of $g$, where
$g\in G,$ unless $g$ is the identity element of $G.$

For each $g\in G,$ let $\Delta_{g}=\{(x,gx):x\in X\}$. Then the $\Delta_{g}$
are pairwise disjoint and $E=\cup_{g\in G}\Delta_{g}.$

For each $g\in G,$ let $u_{g}:E\rightarrow\{0,1\}$ be the characteristic
function of $\Delta_{g^{-1}}.$ As we pointed out earlier, $\chi_{\Delta}$is
the unit element of $\mathcal{M}(E),$so, in this notation, $u_{1}$ is the unit
element of $\mathcal{M}(E).$

For each $(x,y)\in E$ we have:%

\[
u_{g}\circ u_{h}(x,y)=\sum_{k\in G}u_{g}(x,kx)u_{h}(kx,y).
\]

But $u_{g}(x,kx)\neq0,$ only if $k=g^{-1}$ and $u_{h}(g^{-1}x,y)\neq0$ only if
$y=h^{-1}g^{-1}x=(gh)^{-1}x.$ So $u_{g}\circ u_{h}=u_{gh}.$

Also $u_{g}^{\ast}(x,y)=\overline{u_{g}(y,x)}=u_{g}(y,x).$ But $u_{g}%
(y,x)\neq0$ only if $x=g^{-1}y$, that is, only if $y=gx.$ So $u_{g}^{\ast
}(x,y)=u_{g^{-1}}(x,y).$ It follows that $g\rightarrow u_{g}$ is a unitary
representation of $G$ in $\mathcal{M}(E).$

Let $f$ be any element of $\mathcal{M}(E).$ Then%

\[
(i)\text{ \ }f\circ u_{g}(x,y)=\sum_{z\in\lbrack x]}f(x,z)u_{g}(z,y)=f(x,gy).
\]

So, for each $x\in X,$ $D(f\circ u_{g})(x,x)=f(x,gx). $ Then $D(f\circ
u_{g})\circ u_{g^{-1}}(x,y)=\sum_{z\in\lbrack x]}D(f\circ u_{g})(x,z)u_{g^{-1}%
}(z,y)=D(f\circ u_{g})(x,x)u_{g^{-1}}(x,y)=f(x,gx)\chi_{\Delta_{g}}(x,y).$

So%

\[
(ii)\text{ \ \ \ }D(f\circ u_{g})\circ u_{g^{-1}}(x,y)=\left\{
\begin{array}
[c]{cc}%
f(x,y) & \text{if }(x,y)\in\Delta_{g}\\
0 & \text{if }(x,y)\notin\Delta_{g}.
\end{array}
\right.
\]

The identity $(\#)$, used in Lemma 8.7,can be re-written as

\bigskip$(iii)$ \ $D(f\circ f^{\ast})(x,x)=\sum_{g\in G}|f(x,gx)|^{2}%
=\sum_{g\in G}|D(f\circ u_{g})(x,x)|^{2}=\sum_{g\in G}|\widetilde{D}(f\circ
u_{g})(x)|^{2}.$

Let $F$ be any finite subset of $G.$ Let $f_{F}=\sum_{g\in F}D(f\circ
u_{g})\circ u_{g^{-1}}.$ Then, using $(iii)$,%

\[
\text{ \ \ \ }(f-f_{F})(x,y)=\left\{
\begin{array}
[c]{cc}%
0 & \text{if }(x,y)\in\Delta_{g}\text{ and }g\in F\\
f(x,y) & \text{if }(x,y)\in\Delta_{g}\text{ and }g\notin F.
\end{array}
\right.
\]

We now replace $f$ by $f-f_{F}$ in $(iii)$ and get:%

\[
(iv)\text{ \ \ }D((f-f_{F})\circ(f-f_{F})^{\ast})(x,x)=\sum_{g\in G\backslash
F}|D(f\circ u_{g})(x,x)|^{2}=\sum_{g\in G\backslash F}|\widetilde{D}(f\circ
u_{g})(x)|^{2}.
\]

Now let $(F_{n})(n=1,2...)$ be any strictly increasing sequence of finite
subsets of $G.$ Write $f_{n}$ for $f_{F_{n}}.$ Then

$D((f-f_{n})\circ(f-f_{n})^{\ast})(x,x)$ decreases monotonically to $0$ as
$n\rightarrow\infty.$ Since $Q$ is a $\sigma-$homomorphism,
\[
(v)\text{ \ \ }%
{\textstyle\bigwedge}
QD((f-f_{n})\circ(f-f_{n})^{\ast})=0.
\]

For each $g\in G,$let $U_{g}=Qu_{g}.$ Since $Q$ is a $\ast-$homomorphism onto
$\mathcal{M}(E)/I_{\mathcal{I}}$, $U_{g}$ is a unitary and $g\rightarrow
U_{g}$ is a unitary representation of $G$ in $\mathcal{M}(E)/I_{\mathcal{I}}.$
On applying the preceding paragraph we get:

\begin{proposition}
Let $z\in\mathcal{M}(E)/I_{\mathcal{I}}.$ Let $(F(n))(n=1,2...)$ be a strictly
increasing sequence of finite subsets of $G.$ Let $z_{n}=\sum_{g\in
F(n)}\widehat{D}(zU_{g})U_{g^{-1}}.$ Then
\end{proposition}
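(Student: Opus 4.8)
The plan is to reduce everything to the pointwise identities $(i)$--$(v)$ that have just been established, by lifting $z$ to the algebra $\mathcal{M}(E)$ and transporting the computation through the $\sigma$-normal quotient map $Q$ and the conditional expectation $\widehat{D}$. Since $Q$ is surjective, I would first fix some $f\in\mathcal{M}(E)$ with $Qf=z$, and set $f_{n}=f_{F(n)}=\sum_{g\in F(n)}D(f\circ u_{g})\circ u_{g^{-1}}$, exactly as in the display preceding $(iv)$. The target statement, namely that $\widehat{D}\big((z-z_{n})(z-z_{n})^{\ast}\big)$ decreases to $0$, i.e. $\bigwedge_{n}\widehat{D}((z-z_{n})(z-z_{n})^{\ast})=0$, should then fall out of $(v)$.

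The first genuine step is to identify $z_{n}$ with $Qf_{n}$. For each $g$ we have $U_{g}=Qu_{g}$, so $zU_{g}=Q(f\circ u_{g})$; and since $\widehat{D}\circ Q=Q\circ D$ by the very definition $\widehat{D}(f+I_{\mathcal{I}})=QDf$, we obtain $\widehat{D}(zU_{g})=QD(f\circ u_{g})$. Multiplying on the right by $U_{g^{-1}}=Qu_{g^{-1}}$ and using that $Q$ is a $\ast$-homomorphism gives $\widehat{D}(zU_{g})U_{g^{-1}}=Q\big(D(f\circ u_{g})\circ u_{g^{-1}}\big)$; summing over the finite set $F(n)$ yields $z_{n}=Qf_{n}$, with no convergence issue since the sum is finite.

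Next I would compute the conditional ``variance''. From $z-z_{n}=Q(f-f_{n})$ and the multiplicativity of $Q$ we get $(z-z_{n})(z-z_{n})^{\ast}=Q\big((f-f_{n})\circ(f-f_{n})^{\ast}\big)$, and applying $\widehat{D}$ together with $\widehat{D}\circ Q=Q\circ D$ gives
\[
\widehat{D}\big((z-z_{n})(z-z_{n})^{\ast}\big)=QD\big((f-f_{n})\circ(f-f_{n})^{\ast}\big).
\]
By identity $(iv)$ the right-hand quantity is represented by the function $x\mapsto\sum_{g\in G\backslash F(n)}|\widetilde{D}(f\circ u_{g})(x)|^{2}$, which decreases monotonically to $0$ at every point of $X$ as $n\rightarrow\infty$ (the full series converges because $f$ is matrix bounded, by $(iii)$). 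Hence $QD\big((f-f_{n})\circ(f-f_{n})^{\ast}\big)$ decreases to $0$: this is precisely $(v)$, valid because $Q$ is $\sigma$-normal and so carries a monotone pointwise infimum to the infimum in the quotient. Combining the last display with $(v)$ delivers the asserted convergence.

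I do not expect a serious obstacle, since the analytic heart of the matter is already contained in $(i)$--$(v)$; the only points needing care are bookkeeping: checking that the finite sum defining $z_{n}$ lifts to $f_{n}$, and that the relation $\widehat{D}\circ Q=Q\circ D$ together with the $\sigma$-normality of $Q$ lets the pointwise monotone computation descend to the quotient. The one mild subtlety is that the lift $f$ is not unique, so one should remark that the conclusion does not depend on its choice; this is automatic because the final statement involves only $z$, $z_{n}$ and $\widehat{D}$, all of which live in $\mathcal{M}(E)/I_{\mathcal{I}}$.
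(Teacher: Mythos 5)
Your proposal is correct and is essentially the paper's own argument: the paper's proof of Proposition 8.15 consists precisely of applying the identities $(i)$--$(v)$ developed in the preceding paragraphs, with $U_{g}=Qu_{g}$ and $\widehat{D}\circ Q=Q\circ D$, which is exactly the lifting-and-descending computation you spell out. Your write-up merely makes explicit the bookkeeping (the lift $z=Qf$, the identification $z_{n}=Qf_{n}$, and the use of $\sigma$-normality of $Q$) that the paper leaves implicit.
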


\[%
{\textstyle\bigwedge}
\widehat{D}((z-z_{n})(z-z_{n})^{\ast})=0.
\]

\begin{corollary}
Let $z\in\mathcal{M}(E)/I_{\mathcal{I}}$ such that $\widehat{D}(zU_{g})=0$ for
each $g.$ Then $z=0.$
\end{corollary}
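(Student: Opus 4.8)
The plan is to derive this as an immediate consequence of the preceding proposition together with the faithfulness of the conditional expectation $\widehat{D}$ established in Proposition 8.14. The whole content is that the convergence statement collapses to a triviality once all the ``Fourier coefficients'' $\widehat{D}(zU_{g})$ vanish.

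First I would fix any strictly increasing sequence $(F(n))$ of finite subsets of $G$ and apply the preceding proposition to the given $z$. By hypothesis $\widehat{D}(zU_{g})=0$ for every $g\in G$, so each summand in
\[
z_{n}=\sum_{g\in F(n)}\widehat{D}(zU_{g})U_{g^{-1}}
\]
is zero, whence $z_{n}=0$ for all $n$. Consequently $(z-z_{n})(z-z_{n})^{\ast}=zz^{\ast}$ for every $n$, so the monotone decreasing sequence $\widehat{D}((z-z_{n})(z-z_{n})^{\ast})$ is in fact constant with value $\widehat{D}(zz^{\ast})$. The infimum of a constant sequence is that constant, so the conclusion of the proposition,
\[
{\textstyle\bigwedge}\,\widehat{D}((z-z_{n})(z-z_{n})^{\ast})=0,
\]
reads simply $\widehat{D}(zz^{\ast})=0$.

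Finally I would invoke the faithfulness of $\widehat{D}$ (proved in Proposition 8.14): since $\widehat{D}$ is a positive, faithful conditional expectation and $zz^{\ast}\geq 0$, the equality $\widehat{D}(zz^{\ast})=0$ forces $zz^{\ast}=0$, and hence $z=0$. There is no real obstacle here; the only point that requires care is recording that $z_{n}=0$ makes the approximating sequence constant, and that the meet of a constant sequence recovers the element, so that faithfulness can be applied to $zz^{\ast}$ directly.
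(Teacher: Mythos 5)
Your proof is correct and follows exactly the paper's route: the paper's own proof is the one-line observation that Proposition 8.15 applies with $z_{n}=0$ for every $n$, which is precisely your argument, with the faithfulness of $\widehat{D}$ (from Proposition 8.14) left implicit there and spelled out by you. Nothing further is needed.
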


\begin{proof}
This follows from Proposition 8.15 because $z_{n}=0$ for every $n.$
\end{proof}

\begin{lemma}
For each $f\in\mathcal{M}(E)$ and each $g\in G,$ $u_{g}^{\ast}\circ f\circ
u_{g}(x,y)=f(gx,gy).$ In particular,$f$ vanishes off $\Delta$ if, and only if,
$u_{g}^{\ast}\circ f\circ u_{g}$ vanishes off $\Delta.$
\end{lemma}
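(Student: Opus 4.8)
The plan is to compute $u_{g}^{\ast}\circ f\circ u_{g}(x,y)$ directly from the definition of the convolution-type product on $\mathcal{M}(E)$, exploiting the fact that each unitary $u_{g}$ is a characteristic function supported on a single graph. Recall that $u_{g}^{\ast}=u_{g^{-1}}$, that $u_{g}$ is the characteristic function of $\Delta_{g^{-1}}$, and that $u_{g^{-1}}$ is the characteristic function of $\Delta_{(g^{-1})^{-1}}=\Delta_{g}=\{(a,ga):a\in X\}$. Writing out the double composition,
\[
u_{g}^{\ast}\circ f\circ u_{g}(x,y)=\sum_{z\in\lbrack x]}\sum_{w\in\lbrack x]}u_{g^{-1}}(x,z)\,f(z,w)\,u_{g}(w,y),
\]
the first factor is nonzero only for $z=gx$ and the last factor is nonzero only for $w=gy$, so the double sum collapses to the single surviving term $f(gx,gy)$, which is the asserted identity. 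Equivalently, one may obtain the same conclusion in two steps: first apply formula $(i)$ to get $f\circ u_{g}(a,b)=f(a,gb)$, and then left-multiply by $u_{g^{-1}}$, whose support condition selects $z=gx$ and yields $(f\circ u_{g})(gx,y)=f(gx,gy)$. I would present this as the cleaner route, since formula $(i)$ is already available.

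For the ``in particular'' clause, I would use that $g$ is a bijection of $X$ and that the diagonal $G$-action $(x,y)\mapsto(gx,gy)$ carries $E$ onto $E$. Since each orbit $[x]$ is $G$-invariant, $(x,y)\in E$ if and only if $(gx,gy)\in E$, and plainly $x=y$ if and only if $gx=gy$; hence $(x,y)\mapsto(gx,gy)$ restricts to a bijection of the off-diagonal part $\{(x,y)\in E:x\neq y\}$ onto itself. Writing $\widetilde{f}=u_{g}^{\ast}\circ f\circ u_{g}$, the first part of the lemma gives $\widetilde{f}(x,y)=f(gx,gy)$, so $\widetilde{f}$ vanishes off $\Delta$ precisely when $f(gx,gy)=0$ for all $(x,y)\in E$ with $x\neq y$, which by the bijection just described is equivalent to $f$ vanishing off $\Delta$.

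The computation is entirely routine, so I do not anticipate a genuine obstacle. The only points requiring care are bookkeeping ones: checking that the support conditions $z=gx$ and $w=gy$ indeed lie in the orbit $[x]$ over which the sums run (they do, since $gx,gy\in\lbrack x]$ whenever $(x,y)\in E$), and, for the second assertion, verifying the $G$-invariance of $E$ under the diagonal action so that the substitution $(x,y)\mapsto(gx,gy)$ keeps us within the domain of $f$.
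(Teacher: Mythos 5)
Your proposal is correct and takes essentially the same approach as the paper: the paper likewise reduces everything to the support properties of the unitaries, proving $(u_{g}^{\ast}\circ h)(a,b)=h(ga,b)$ (via adjoints and formula $(i)$) and then setting $h=f\circ u_{g}$, which is exactly your two-step variant. Your handling of the ``in particular'' clause, via the observation that $(x,y)\mapsto(gx,gy)$ is a bijection of $E$ preserving the diagonal, correctly fills in a step the paper leaves implicit.
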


\begin{proof}
Let $h\in\mathcal{M}(E).$ Then, applying identity (i) we get $(u_{g}^{\ast
}\circ h)(a,b)=(h^{\ast}\circ u_{g})^{\ast}(a,b)=\overline{(h^{\ast}\circ
u_{g})(b,a)}=\overline{h^{\ast}(b,ga)}=h(ga,b).$

Let $h=f\circ u_{g}.$ Then $u_{g}^{\ast}\circ f\circ u_{g}(x,y)=f\circ
u_{g}(gx,y)=f(gx,gy).$
\end{proof}

\begin{corollary}
For each $a\in A\,,$the diagonal algebra, and for each $g\in G,U_{g}%
aU_{g}^{\ast}$ is in $A.$
\end{corollary}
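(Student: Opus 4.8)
The plan is to lift everything to $\mathcal{M}(E)$ and then invoke Lemma 8.18. Recall that the diagonal algebra is $A=D[\mathcal{M}(E)]/I_{\mathcal{I}}$, so every $a\in A$ can be written as $a=Q(f)$ for some $f\in D[\mathcal{M}(E)]$, that is, for some matrix bounded Borel function $f$ on $E$ which vanishes off the diagonal $\Delta$. Since $Q$ is a $\ast$-homomorphism and $U_{g}=Q(u_{g})$, we have $U_{g}aU_{g}^{\ast}=Q(u_{g}\circ f\circ u_{g}^{\ast})$. Hence it suffices to show that the element $u_{g}\circ f\circ u_{g}^{\ast}$ of $\mathcal{M}(E)$ again vanishes off $\Delta$; for then it lies in $D[\mathcal{M}(E)]$ and its image under $Q$ lies in $A$.

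To verify this I would use that $u_{g}^{\ast}=u_{g^{-1}}$, so that
\[
u_{g}\circ f\circ u_{g}^{\ast}=u_{g^{-1}}^{\ast}\circ f\circ u_{g^{-1}}.
\]
Applying Lemma 8.18 with $g$ replaced by $g^{-1}$ gives $(u_{g^{-1}}^{\ast}\circ f\circ u_{g^{-1}})(x,y)=f(g^{-1}x,g^{-1}y)$ for all $(x,y)\in E$. Because $f$ vanishes off $\Delta$, the value $f(g^{-1}x,g^{-1}y)$ is zero unless $g^{-1}x=g^{-1}y$, i.e. unless $x=y$; so $u_{g}\circ f\circ u_{g}^{\ast}$ vanishes off $\Delta$, as required. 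Indeed the second sentence of Lemma 8.18 records exactly the invariance we need: a function vanishes off $\Delta$ if and only if its conjugate by $u_{g}$ does.

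I do not expect any serious obstacle here; the corollary is essentially an immediate consequence of Lemma 8.18 once the lift of $a$ to a diagonal function has been chosen. The only place that demands a little care is the bookkeeping between $g$ and $g^{-1}$ when passing from $U_{g}aU_{g}^{\ast}$ to the conjugation $u_{g}\circ f\circ u_{g}^{\ast}$ and then matching it to the form covered by Lemma 8.18. One should also note that the choice of representative $f\in D[\mathcal{M}(E)]$ is irrelevant, since $Q$ is well defined on cosets, so the argument produces a well-defined element $U_{g}aU_{g}^{\ast}$ of $A$ independent of the lift.
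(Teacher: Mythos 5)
Your proof is correct and is essentially the paper's intended argument: the paper states this corollary without proof, as an immediate consequence of the preceding conjugation lemma (numbered 8.17 in the paper, not 8.18, though your description of its content is unambiguous). Lifting $a$ to a diagonal function $f$, writing $U_g a U_g^{\ast}=Q(u_g\circ f\circ u_g^{\ast})=Q(u_{g^{-1}}^{\ast}\circ f\circ u_{g^{-1}})$, and invoking that lemma's ``vanishes off $\Delta$'' equivalence is exactly the one-line deduction the authors had in mind.
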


\begin{lemma}
Let $T$ be a compact, totally disconnected space. Let $\theta$ be a
homeomorphism of $T$ onto $T.$ Let $\lambda_{\theta}$ be the automorphism of
$C(T)$ induced by $\theta.$ Let $E$ be a non-empty clopen set such that, for
each clopen $Q\subset T,(\lambda_{\theta}(\chi_{Q})-\chi_{Q})\chi_{E}=0.$ Then
$\theta(t)=t$ for each $t\in E.$ In other words, $\lambda_{\theta}(f)=f$ for
each $f\in\chi_{E}C(T).$
\end{lemma}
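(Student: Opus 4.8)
The plan is to translate the algebraic hypothesis into a pointwise separation statement and then invoke the fact that in a compact, totally disconnected Hausdorff space the clopen sets separate points. First I would unwind the meaning of $\lambda_{\theta}$. Since $\lambda_{\theta}$ is the automorphism of $C(T)$ induced by $\theta$, we have $\lambda_{\theta}(\chi_{Q})=\chi_{Q}\circ\theta=\chi_{\theta^{-1}[Q]}$ for each clopen $Q$. Thus the identity $(\lambda_{\theta}(\chi_{Q})-\chi_{Q})\chi_{E}=0$ says precisely that for every $t\in E$ and every clopen $Q\subset T$ we have $\chi_{Q}(\theta(t))=\chi_{Q}(t)$; equivalently, $\theta(t)\in Q$ if, and only if, $t\in Q$.

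Next I would fix a point $t\in E$ and argue by contradiction that $\theta(t)=t$. Suppose $\theta(t)\neq t$. Because $T$ is compact, Hausdorff and totally disconnected, it has a basis of clopen sets, so the clopen sets separate points: there is a clopen $Q$ with $t\in Q$ and $\theta(t)\notin Q$. This directly contradicts the equivalence established in the first step. Hence $\theta(t)=t$ for every $t\in E$, which is the first assertion.

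For the reformulation, I would note that since $\theta$ fixes $E$ pointwise and is a bijection of $T$, it also maps $T\setminus E$ onto itself. Consequently, if $f\in\chi_{E}C(T)$ (so that $f$ vanishes off $E$), then for $t\in E$ we have $f(\theta(t))=f(t)$ because $\theta(t)=t$, while for $t\notin E$ both sides vanish since $\theta(t)\notin E$. Therefore $\lambda_{\theta}(f)=f\circ\theta=f$.

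I expect no serious obstacle: the entire argument rests on the single topological fact that clopen sets separate points in a compact, totally disconnected Hausdorff space, and the remainder is a routine pointwise verification. The only minor care needed is to confirm the convention for $\lambda_{\theta}$ (whether it is $f\mapsto f\circ\theta$ or $f\mapsto f\circ\theta^{-1}$); the argument is insensitive to this choice, since in either case the hypothesis becomes the statement that $t$ and its image under $\theta^{\pm 1}$ lie in exactly the same clopen sets, which forces them to coincide.
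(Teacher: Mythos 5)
Your proposal is correct and follows essentially the same route as the paper: both rewrite $\lambda_{\theta}(\chi_{Q})$ as $\chi_{\theta^{-1}[Q]}$, and both derive a contradiction from the existence (by total disconnectedness) of a clopen set separating $t$ from $\theta(t)$ when these points differ. Your additional verification of the reformulation for $f\in\chi_{E}C(T)$ (using that $\theta$ fixes $E$ pointwise and hence preserves $T\setminus E$) is a routine check the paper omits, and it is carried out correctly.
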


\begin{proof}
Let us assume that $t_{0}\in E$ such that $\theta(t_{0})\neq t_{0}$. By total
disconnectedness,there exists a clopen set $Q$ with $\theta(t_{0})\in Q$ and
$t_{0}\notin Q.$ We have $\lambda_{\theta}(\chi_{Q})=\chi_{\theta^{-1}[Q]}.$
So%
\[
\theta^{-1}[Q]\cap E=Q\cap E.
\]
But $t_{0}$ is an element of $\theta^{-1}[Q]\cap E$ and $t_{0}\notin Q.$ This
is a contradiction.
\end{proof}

\bigskip

Let $M$ be a monotone ($\sigma-$)complete $C\ast-$algebra. Then we recall that
an automorphism $\alpha$ is \textit{properly outer} if there does not exist a
non-zero projection $e$ such that $\alpha$ restricts to the identity on $eMe.$

\begin{proposition}
Whenever $g\in G$ and $g$ is not the identity, let $a\rightarrow U_{g}%
aU_{g}^{\ast}$ be a properly outer automorphism of $\widehat{D}[\mathcal{M}%
(E)/I_{\mathcal{I}}].$ Then $\widehat{D}[\mathcal{M}(E)/I_{\mathcal{I}}]$ is a
maximal abelian $\ast-$subalgebra of $\mathcal{M}(E)/I_{\mathcal{I}}$.
\end{proposition}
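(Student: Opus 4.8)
The plan is to show that the relative commutant of the diagonal algebra $A=\widehat{D}[\mathcal{M}(E)/I_{\mathcal{I}}]$ inside $\mathcal{M}(E)/I_{\mathcal{I}}$ reduces to $A$ itself. So I take an arbitrary $z$ commuting with every element of $A$ and aim to prove $z=\widehat{D}(z)\in A$. The engine will be the Mercer--Bures type convergence of Proposition 8.15, which controls $z$ through its ``Fourier sums'' $z_{n}=\sum_{g\in F(n)}\widehat{D}(zU_{g})U_{g^{-1}}$; concretely it suffices to show that the Fourier coefficients $b_{g}:=\widehat{D}(zU_{g})\in A$ vanish for every $g\neq\imath$. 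Indeed, once this is known, choosing the increasing finite sets $F(n)$ all to contain $\imath$ makes $z_{n}=\widehat{D}(z)$ for every $n$, so Proposition 8.15 gives $\widehat{D}((z-\widehat{D}(z))(z-\widehat{D}(z))^{\ast})=0$, and faithfulness of $\widehat{D}$ (Proposition 8.14) forces $z=\widehat{D}(z)\in A$.

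To kill the off-diagonal coefficients I first extract an intertwining identity. Since $\widehat{D}$ is a conditional expectation it is an $A$-bimodule map, and by Corollary 8.18 the element $U_{g}^{\ast}aU_{g}=\beta(a)$ lies in $A$, where I write $\beta=\alpha_{g^{-1}}$ for the automorphism $a\mapsto U_{g^{-1}}aU_{g^{-1}}^{\ast}$. Using $aU_{g}=U_{g}\beta(a)$ together with $az=za$, I compute $ab_{g}=\widehat{D}(azU_{g})=\widehat{D}(zaU_{g})=\widehat{D}(zU_{g}\beta(a))=b_{g}\beta(a)$ for every $a\in A$. As $A$ is abelian this rearranges to $b_{g}(a-\beta(a))=0$ for all $a$; taking $a=\chi_{Q}$ over clopen $Q$ and multiplying by $b_{g}^{\ast}$ yields $|b_{g}|^{2}(\chi_{Q}-\beta(\chi_{Q}))=0$.

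Now suppose, for a contradiction, that $b_{g}\neq0$ for some $g\neq\imath$. Because $A\cong B(X)/B_{\mathcal{I}}$ is a commutative monotone $\sigma$-complete $C^{\ast}$-algebra, its structure space $T$ is totally disconnected and $A$ admits spectral projections; so for a suitable small $\varepsilon>0$ the projection $e=\chi_{[\varepsilon^{2},\infty)}(|b_{g}|^{2})$ is a nonzero element of $A$ on which $|b_{g}|^{2}\geq\varepsilon^{2}e$, hence $|b_{g}|^{2}e$ is invertible in the corner $eAe$. Multiplying $|b_{g}|^{2}(\chi_{Q}-\beta(\chi_{Q}))=0$ by that corner inverse (legitimate by commutativity) gives $(\chi_{Q}-\beta(\chi_{Q}))e=0$ for every clopen $Q$. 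This is precisely the hypothesis of Lemma 8.20, with $\theta$ the homeomorphism of $T$ inducing $\beta$ and $E$ the clopen set of $e$; its conclusion is that $\beta$ restricts to the identity on $eAe$. Since $g^{-1}\neq\imath$, this contradicts the assumed proper outerness of the automorphism $a\mapsto U_{g^{-1}}aU_{g^{-1}}^{\ast}$. Hence $b_{g}=0$ for all $g\neq\imath$, and the argument closes as in the first paragraph.

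\textbf{Main obstacle.} The delicate point is manufacturing a genuine \emph{nonzero} projection $e\in A$ on which the off-diagonal automorphism $\beta$ becomes trivial, so that Lemma 8.20 and proper outerness can be played against each other. This rests on $A$ having enough projections (total disconnectedness of its structure space) and on the cancellation step ``$|b_{g}|^{2}$ invertible in the corner $eAe$'' being valid; both follow from monotone $\sigma$-completeness of $A$, but they are the substantive ingredients rather than formal bookkeeping. I would double-check that the spectral projection $e$ really lies in $A$ (not merely in its monotone completion) and that every manipulation genuinely uses commutativity of $A$, since the entire conclusion hinges on producing that single nonzero $e$.
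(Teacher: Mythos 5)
Your proof is correct and follows essentially the same route as the paper's: reduce via the Fourier-coefficient machinery (Proposition 8.15/Corollary 8.16) to showing $\widehat{D}(zU_{g})=0$ for $g\neq\imath$, derive $(\lambda_{g^{-1}}(b)-b)\widehat{D}(zU_{g})=0$ from the bimodule property of $\widehat{D}$ together with Corollary 8.18, extract a nonzero spectral projection dominated by a multiple of $\widehat{D}(zU_{g})\widehat{D}(zU_{g})^{\ast}$, and play the clopen-set lemma against proper outerness. The only differences are cosmetic (the paper cancels via the inequality $\delta e\leq cc^{\ast}$ and a sandwich argument rather than a corner inverse, and quotes Corollary 8.16 rather than re-running Proposition 8.15), plus one labelling slip on your part: the lemma you invoke at the end is Lemma 8.19, not 8.20 (8.20 is the proposition being proved).
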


\begin{proof}
Let $z$ commute with each element of $\widehat{D}[\mathcal{M}%
(E)/I_{\mathcal{I}}].$ Let $a=\widehat{D}(z).$ We shall show that $z=a.$ By
Corollary 8.16, it will suffice to show that $\widehat{D(}(z-a)U_{g})=0$ for
each $g\in G$. We remark that $\widehat{D(}aU_{g})=a\widehat{D(}U_{g})=0$ when
$g$ is not the identity element of $G.$

So it is enough to show that if $g\in G$ and $g$ is not the identity element
of $G$ then $\widehat{D(}zU_{g})=0.$

We have, for each $b\in$ $\widehat{D}[\mathcal{M}(E)/I_{\mathcal{I}}],$
$bz=zb.$ So%
\[
b\widehat{D}(zU_{g})=\widehat{D}(bzU_{g})=\widehat{D}(zbU_{g})=\widehat{D}%
(zU_{g}U_{g}^{\ast}bU_{g})=\widehat{D}(zU_{g})U_{g}^{\ast}bU_{g}=U_{g}^{\ast
}bU_{g}\widehat{D}(zU_{g}).
\]
This implies that $(\lambda_{g^{-1}}(b)-b)\widehat{D}(zU_{g})=0.$ For
shortness put $c=\widehat{D}(zU_{g}).$

Assume that $c\neq0.$ Then, by spectral theory, there exists a non-zero
projection $e$ and a strictly positive real number $\delta$ such that $\delta
e\leq cc^{\ast}.$ Then%
\[
0\leq\delta(\lambda_{g^{-1}}(b)-b)e(\lambda_{g^{-1}}(b)-b)^{\ast}\leq
(\lambda_{g^{-1}}(b)-b)cc^{\ast}(\lambda_{g^{-1}}(b)-b)^{\ast}=0.
\]
So, $(\lambda_{g^{-1}}(b)-b)e=0$ for each $b$ in the range of $\widehat{D}.$
It now follows from Lemma 8.19 that $\lambda_{g^{-1}}(ea)=ea$ for each $a$ in
the range of $\widehat{D}.$ But this contradicts the freeness of the action of
$G$. So $\widehat{D}(zU_{g})=0.$ It now follows that $z$ is in $\widehat{D}%
[\mathcal{M}(E)/I_{\mathcal{I}}].$Hence $\widehat{D}[\mathcal{M}%
(E)/I_{\mathcal{I}}]$ is a maximal abelian $\ast-$subalgebra of $\mathcal{M}%
(E)/I_{\mathcal{I}}.$
\end{proof}

\begin{corollary}
When the action of $G$ is free on $X$ and $\mathcal{I}$ is the Boolean ideal
of meagre Borel subsets of $X$ then $\widehat{D}[\mathcal{M}(E)/I_{\mathcal{I}%
}]$ is a maximal abelian $\ast-$subalgebra of $\mathcal{M}(E)/I_{\mathcal{I}}%
$.\bigskip
\end{corollary}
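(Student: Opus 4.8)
The plan is to deduce this directly from Proposition 8.21: it suffices to verify that, in the stated situation, the hypothesis of that proposition holds, namely that for every $g \in G$ with $g \neq \imath$ the automorphism $\alpha_g : a \mapsto U_g a U_g^{\ast}$ of the diagonal algebra $A = \widehat{D}[\mathcal{M}(E)/I_{\mathcal{I}}]$ is properly outer. Recall from the proof of Proposition 8.14 that $A$ is isomorphic to $B(X)/B_{\mathcal{I}}$, and that when $\mathcal{I}$ is the meagre ideal this is $B(X)/M(X)$, a commutative algebra generated by its projections; hence $A \cong C(T)$ for a compact, totally disconnected space $T$. By Lemma 8.17 and Corollary 8.18, $\alpha_g$ is the automorphism $\lambda_{\theta_g}$ induced by the homeomorphism $\theta_g$ of $T$ coming from the action of $g$.

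First I would suppose, towards a contradiction, that $\alpha_g$ is \emph{not} properly outer for some $g \neq \imath$. Then there is a nonzero projection $e \in A$ on which $\alpha_g$ restricts to the identity. Writing $e = \chi_E$ with $E$ a nonempty clopen subset of $T$, the hypothesis $\alpha_g(b) = b$ for all $b \in eA$ gives $\lambda_{\theta_g}(\chi_E) = \chi_E$ and hence $(\lambda_{\theta_g}(\chi_Q) - \chi_Q)\chi_E = 0$ for every clopen $Q$, so Lemma 8.19 shows that $\theta_g$ fixes every point of $E$. Equivalently, at the level of Borel functions, $e$ is represented by a non-meagre Borel set $W \subseteq X$, and the condition that $\alpha_g$ be the identity on $eA$ translates into the statement that $g(W \cap V) = W \cap V$ modulo meagre sets for every Borel set $V$; that is, every Borel subset of $W$ is $g$-invariant modulo meagre.

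The conclusion I am aiming for is that this forces $g$ to agree with the identity on $W$ modulo a meagre set. Granting this, freeness finishes the argument at once: since $g \neq \imath$ and the action is free on all of $X$, the fixed-point set $\{x : g(x) = x\}$ is empty, so ``$g = \mathrm{id}$ on $W$ modulo meagre'' can only hold if $W$ is itself meagre, i.e. $e = 0$, contradicting $e \neq 0$. Thus every $\alpha_g$ ($g \neq \imath$) is properly outer, and Proposition 8.21 yields that $A$ is maximal abelian in $\mathcal{M}(E)/I_{\mathcal{I}}$.

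The one step that needs genuine care, and which I expect to be the main obstacle, is the implication ``every Borel subset of $W$ is $g$-invariant modulo meagre $\Rightarrow$ $g = \mathrm{id}$ on $W$ modulo meagre.'' The natural route is to fix a countable family of Borel sets separating the points of $X$ (available since $X$ is second countable in the cases of interest) and apply the invariance to each member simultaneously: off a single meagre set, no separating set distinguishes $x$ from $g(x)$ for $x \in W$, whence $g(x) = x$ there. Alternatively, and without any countability assumption, one can pass to the structure space and invoke the $\rho$-correspondence of Corollary 6.3, $g(\rho s) = \rho(\theta_g s)$: any point of $E$ fixed by $\theta_g$ projects under $\rho$ to a genuine fixed point of $g$ in $X$, which freeness forbids, so $E$ must be empty. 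Either way, the real content is simply that freeness on $X$ propagates to the absence of fixed clopen sets for the induced homeomorphism, and the remainder is bookkeeping.
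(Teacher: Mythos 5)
Your overall reduction is the intended one: the corollary is meant to follow from Proposition 8.20 (your ``Proposition 8.21'') once each $\lambda_g$, $g\neq\imath$, is shown to be properly outer on the diagonal, and your translation via Lemma 8.19 is correct --- a non-zero fixed projection $e$ yields a clopen set $E$ of fixed points of the induced homeomorphism of the spectrum, equivalently a non-meagre Borel set $W\subseteq X$ all of whose Borel subsets are $g$-invariant modulo meagre sets. The genuine gap is in the step you yourself flag as the crux: neither of your two routes closes it in the generality the paper requires. Route (a) assumes $X$ is second countable, but in Section 8 the space $X$ is only a G-delta subset of a compact Hausdorff space or a Polish space, and in the paper's actual applications (Section 9 invokes Section 8 with $X=Y$, a dense G-delta subset of the extremally disconnected space $S$) the space is \emph{never} second countable: $Y$ is extremally disconnected with no isolated points, and an extremally disconnected metrizable space is discrete. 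Route (b) misuses Corollary 6.3: that corollary is proved for \emph{compact} $X$, whereas here the surjection $\rho$ dual to the embedding of $C_b(X)$ into $B(X)/M(X)\cong C(T)$ maps $T$ onto a compactification of $X$, not onto $X$ itself. A point of $E$ fixed by $\theta_g$ therefore projects to a fixed point of the \emph{extension} of $g$, which may lie in the compactification outside $X$, where freeness on $X$ says nothing. The paper itself shows this loss is real, not bookkeeping: in Sections 6--7 a free dense orbit yields freeness of the induced action only on a dense G-delta subset of $S$, never on all of $S$.

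What actually suffices --- with no countability or compactness --- is a direct Baire category argument, and this is what you should substitute for your final paragraph. If $e\neq 0$, write $W=U\,\Delta\,M$ with $U$ open non-empty and $M$ meagre (every Borel set has the Baire property). By freeness, pick $x\in U$ with $g(x)\neq x$, choose disjoint open sets $O_1\ni x$ and $O_2\ni g(x)$, and put $V=U\cap O_1\cap g^{-1}[O_2]$, a non-empty open set. Then $W\cap V$ is comeagre in $V$, hence non-meagre since $X$ is a Baire space; but $g[W\cap V]\subseteq O_2$ is disjoint from $W\cap V\subseteq O_1$, so $g[W\cap V]\,\Delta\,(W\cap V)\supseteq W\cap V$ is non-meagre, contradicting the $g$-invariance modulo meagre sets of the Borel set $W\cap V$. (The same computation works if your convention produces $g^{-1}$ in place of $g$.) With this replacement the proof is complete and covers every case in which the corollary is stated and used.
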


A unitary $w\in\mathcal{M}(E)/I_{\mathcal{I}}$ is said to normalise a $\ast
-$subalgebra $A$ if $\ wAw^{\ast}=A.\ $For future reference we define the
\textit{normaliser subalgebra }of $\mathcal{M}(E)/I_{\mathcal{I}}$ to be the
smallest monotone complete $\ast-$subalgebra of $\mathcal{M}(E)/I_{\mathcal{I}%
}$ which contains every unitary which normalizes the diagonal subalgebra. It
follows from Corollary 8.18 that each $U_{g}$ is a normalising unitary for the
diagonal algebra, $A=$ $\widehat{D}[\mathcal{M}(E)/I_{\mathcal{I}}];$ since
each element of $A$ is a finite linear combination of unitaries in $A,$ it
follows immediately that $A$ is contained in the normaliser subalgebra.

\section{Cross-product algebras}

First let us recall some familiar facts. Let $A$ be a unital $C^{\ast}%
$-algebra. Let $\alpha$ be an automorphism of $A.$ If there exists a unitary
$u\in A$ such that, for each $z\in A,$ $\alpha(z)=uzu^{\ast}$ then $\alpha$ is
said to be an \textit{inner automorphism. }When no such unitary exists in $A$
then $\alpha$ is an \textit{outer} automorphism.

Let $G$ be a countable group and let $g\rightarrow\beta_{g}$ be an
homomorphism of $G$ into the group of all automorphisms of $A.$ Intuitively, a
cross-product algebra, for this action of $G,$ is a larger $C^{\ast}$-algebra,
$B,$ in which $A$ is embedded as a subalgebra and where each $\beta_{g}$ is
induced by a unitary in $B.$ More precisely, there is an injective
*-homomorphism $j:A\rightarrow B,$ and a group homomorphism $g\rightarrow
U_{g},$ (from $G$ into the group of unitaries in $B)$, such that, for each
$z\in A,$ $j\beta_{g}(z)=U_{g}j(z)U_{g}^{\ast}.$ So when we identify $A$ with
its image $j[A]$ in $B,$ although $\beta_{g}$ need not be an inner
automorphism of $A$ it can be extended to an inner automorphism of the larger
algebra $B.$ We also require that $B$ is "in some sense" generated by $j[A]$
and the collection of unitaries $U_{g}$. When $A$ is a monotone complete
$C^{\ast}$-algebra, we can always construct a $B$ which is monotone complete.

An account of monotone cross-products when $A$ is commutative was given by
Takenouchi \cite{zp}, see also Sait\^{o} \cite{ze}. (This was for abelian
groups, but everything extends without difficulty to non-abelian groups.) This
was later generalised by Hamana \cite{x} to the situation where $A$ is not
commutative. For the purposes of this paper we only need to consider the
situation when $A$ is commutative. So for the rest of this section, $A$ shall
be a monotone complete commutative $C^{\ast}$-algebra.\ Hence $A\simeq C(S)$
where $S$ is a compact, Hausdorff, extremally disconnected space. We shall
outline below\ some of the properties of the monotone cross-product over
abelian algebras. More information can be found in \cite{zp} and \cite{ze}. It
turns out that they can be identified with algebras already constructed in
Section 8. But, historically, the construction of monotone cross-products came first.

We shall suppose for the rest of this section that $S$ has no isolated points.
Then, as remarked in Section 4, any dense subset $Y$ has no isolated points.

We shall use a result from \cite{zn} to relate monotone cross-products to the
monotone complete $C^{\ast}$-algebra of an orbit equivalence relation.

Let $G$ be a countably infinite group of homeomorphisms of $S$, where the
action of $G$ has a free, dense orbit, then we shall show that the
corresponding monotone cross-product is isomorphic to one obtained by an
action of $\bigoplus\mathbb{Z}_{2}.$

Let $X$ be any dense subset of $S$. Then, see Section 4, $S$ can be identified
with the Stone-Czech compactification of $X$. So if $f:X\rightarrow%
\mathbb{C}
$ is a bounded continuous function then it has a unique extension to a
continuous function $\widehat{f}:S\rightarrow%
\mathbb{C}
.$ It follows that $f\rightarrow\widehat{f}$ is an isometric $\ast
-$isomorphism of $C_{b}(X),$ the algebra of bounded continuous functions on
$X,$ onto $C(S).$ Similarly, as remarked earlier, any homeomorphism $\theta$
from $X$ onto $X$ has a unique extension to a homeomorphism $\widehat{\theta}$
from $S$ onto $S.$ We may abuse our notation by using $\theta$ instead of
$\widehat{\theta}$ i.e. using the same symbol for a homeomorphism of $X$ and
for its unique extension to a homeomorphism of $S$. Slightly more generally,
when $X_{1}$ and $X_{2}$ are dense subsets of $S$, if there exists an
homeomorphism of $X_{1}$ onto $X_{2}$ then it has a unique extension to a
homeomorphism of $S$ onto itself.

Let $g$ be any homeomorphism of $S$ onto itself. Let $\alpha_{g}(f)=f\circ g$
for each $f\in C(S).$ Then $\alpha_{g}$ is a $\ast-$automorphism of $C(S).$
All $\ast-$automorphisms of $C(S)$ arise in this way. Let $G$ be a subgroup of
$Homeo(S),$ the group of homeomorphisms of $S$ \ onto itself. Then the map
$g\rightarrow\alpha_{g}$ is an injective map from $G$ into $Aut(C(S)),$ the
group of $\ast-$automorphisms. If $G$ is not abelian then this is not a group
homomorphism but an injective anti-homomorphism.

Let us recall that for any group $\Gamma,$ the opposite group, $\Gamma^{op},$
is the same underlying set as $\Gamma$ but with a new group operation defined
by $x\times y=yx.$ Also $\Gamma^{op}$ and $\Gamma$ are isomorphic groups, the
map $g\rightarrow g^{-1}$ gives an isomorphism. So, in the \ preceding
paragraph, $g\rightarrow\alpha_{g}$ is a group isomorphism of $G^{op}$ into
$Aut(C(S)).$ Since $G^{op}$ is isomorphic to $G$ this is not of major
significance. But to avoid confusion, we shall define $\beta_{g}%
=\alpha_{g^{-1}}$ for each $g\in G$. Then $g\rightarrow\beta_{g}$ is a group
isomorphism of $G$ into the automorphism group of $C(S).$

Let $\alpha$ be a $\ast-$automorphism of $A.$ Then $\alpha$ is said to be
\textit{properly outer} if, for each non-zero projection $p,$ the restriction
of $\alpha$ to $pA$ is not the identity. Let $\Gamma$ be a sub-group of
$Aut(A)$ such that every element, except the identity, is properly outer. Then
the action of $\Gamma$ on $A$ is said to be \textit{free.}

Let $G$ be a countable group of homeomorphisms of $S.$ Then, see \cite{zn}, if
$g\rightarrow\beta_{g}$ is a free action of $G$ on $A$ then there exists a
dense G-delta set $Y\subset S$, where $Y$ is $G-$invariant, such that,
whenever $g\in G$ is not the identity, then $g$ has no fixed points in $Y.$ In
other words, for each $y\in Y,$ $G[y]$ is a free orbit. Conversely, we have:

\begin{lemma}
Let $X$ be a dense subset of $S,$ where $X$ is $G-$invariant. Let $G[x]$ be a
free orbit for each $x\in X.$ Then $g\rightarrow\beta_{g}$ is a free action of
$G$ on $C(S).$
\end{lemma}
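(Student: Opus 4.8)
The plan is to unwind the definition of a free action: since $g\mapsto\beta_{g}$ is injective, it suffices to prove that $\beta_{g}$ is properly outer whenever $g\neq\imath$. Recall that $\beta_{g}=\alpha_{g^{-1}}$ is the automorphism of $C(S)$ induced by the homeomorphism $g^{-1}$, so $\beta_{g}(f)=f\circ g^{-1}$. Because $S$ is compact, Hausdorff and extremally disconnected, it is totally disconnected and its clopen subsets form a basis; consequently every non-zero projection of $C(S)$ has the form $\chi_{E}$ for a non-empty clopen set $E\subseteq S$. I would argue by contradiction: fix $g\neq\imath$ and suppose $\beta_{g}$ is not properly outer, so there is a non-empty clopen $E$ with $\beta_{g}(h)=h$ for every $h\in\chi_{E}C(S)$.

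The first step is to show $E$ is $g$-invariant. Since $\chi_{E}\in\chi_{E}C(S)$, we have $\beta_{g}(\chi_{E})=\chi_{E}$; but $\beta_{g}(\chi_{E})=\chi_{E}\circ g^{-1}=\chi_{g[E]}$, so $g[E]=E$. The second step is to verify the hypothesis of Lemma 8.19 for the homeomorphism $\theta=g^{-1}$ and this set $E$. For any clopen $Q$ the product $\chi_{E}\chi_{Q}=\chi_{E\cap Q}$ lies in $\chi_{E}C(S)$ and is therefore fixed by $\beta_{g}$; using multiplicativity together with $\beta_{g}(\chi_{E})=\chi_{E}$ this gives $\chi_{E}\beta_{g}(\chi_{Q})=\chi_{E}\chi_{Q}$, that is, $(\lambda_{g^{-1}}(\chi_{Q})-\chi_{Q})\chi_{E}=0$ (recall $\beta_{g}=\lambda_{g^{-1}}$, both being induced by $g^{-1}$). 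Lemma 8.19 then yields $g^{-1}(s)=s$, equivalently $g(s)=s$, for every $s\in E$.

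Finally I would bring in the hypotheses on $X$. The set $E$ is a non-empty open subset of $S$ and $X$ is dense, so there is a point $x\in X\cap E$; by the previous step $g(x)=x$. But $x\in X$ means the orbit $G[x]$ is free, so $g(x)=x$ forces $g=\imath$, contradicting our choice of $g$. Hence $\beta_{g}$ is properly outer for every $g\neq\imath$, and the action $g\mapsto\beta_{g}$ is free.

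I expect no serious obstacle here: the content is entirely concentrated in the translation of \emph{properly outer} into the absence of a clopen set of fixed points, which is exactly what Lemma 8.19 supplies once one observes that $S$ is totally disconnected. The only point requiring a little care is the bookkeeping that turns ``$\beta_{g}$ fixes $\chi_{E}C(S)$ pointwise'' into the precise relation $(\lambda_{g^{-1}}(\chi_{Q})-\chi_{Q})\chi_{E}=0$ demanded by that lemma; after that, the density of $X$ together with the freeness of the orbits closes the argument immediately.
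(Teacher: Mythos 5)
Your proof is correct, and it follows the same skeleton as the paper's own proof: assume $\beta_{g}$ is not properly outer for some $g\neq\imath$, extract a non-empty clopen set $E$ on which $\beta_{g}$ acts identically, deduce that $g$ fixes $E$ pointwise, and then combine the density of $X$ with the freeness of each orbit $G[x]$ to force $g=\imath$. The one genuine difference is the middle step. The paper argues directly: from $\chi_{K}a=\beta_{g}(\chi_{K}a)=\chi_{g[K]}(a\circ g^{-1})$ it first gets $K=g[K]$, and then, supposing $x_{1}\in K$ with $g(x_{1})\neq x_{1}$, it chooses (by Urysohn) a continuous $a$ with $a(g(x_{1}))=1$ and $a(x_{1})=0$ and evaluates the identity at $g(x_{1})$ to reach the contradiction $0=1$. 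You instead delegate the fixed-point extraction to Lemma 8.19, verifying its hypothesis $(\lambda_{g^{-1}}(\chi_{Q})-\chi_{Q})\chi_{E}=0$ for every clopen $Q$ via the computation $\beta_{g}(\chi_{E}\chi_{Q})=\chi_{E}\beta_{g}(\chi_{Q})=\chi_{E}\chi_{Q}$; this is precisely how that lemma is used in the proof of Proposition 8.20, so the reuse is natural and your identification $\beta_{g}=\lambda_{g^{-1}}$ is the right one. What each approach buys: the paper's version is self-contained and needs only Urysohn's lemma (no appeal to total disconnectedness beyond the existence of the clopen set $K$), while yours is shorter and exposes that the mechanism is the one already isolated in Section 8, at the mild cost of invoking total disconnectedness of $S$ (which indeed holds, as $S$ is extremally disconnected). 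Your supporting steps --- $g[E]=E$, the identification of projections in $C(S)$ with characteristic functions of clopen sets, and the final density/free-orbit argument --- are all accurate, so there is no gap.
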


\begin{proof}
Let $g\in G,$ such that, $\beta_{g}$ is not properly outer. So, for some
non-empty clopen set \ $K$,\textbf{\ }%
\[
\chi_{K}a=\beta_{g}(\chi_{K}a)=(\chi_{K}\circ g^{-1})(a\circ g^{-1}%
)=\chi_{g[K]}(a\circ g^{-1}).
\]
In particular, $K=g[K].$ Suppose that $x_{1}\in K$ with $g(x_{1})\neq x_{1}.$
Then we can find a continuous function $a$ which takes the value $1$ at
$g(x_{1})$ and $0$ at $x_{1}.$ But, from the equation above, this implies
$0=1.$ So $g(x)=x$ for each $x\in K.$ Because $X$ is dense in $S$ and $K$ is a
non-empty open set, there exists $y\in X\cap K.$ So $y$ is a fixed point of
$g$ and $G[y]$ is a free orbit. This is only possible if $g$ is the identity
element of $G.$ Hence the action $g\rightarrow\beta_{g}$ \ is free.
\end{proof}

In all that follows, $G$ is a countably infinite group of homeomorphisms of $S
$ and $Y$ is a dense G-delta subset of $S$, where $Y$ is $G-$invariant. Let
$g\rightarrow\beta_{g}$ be the corresponding action of $G$ as automorphisms of
$A.$ Let $M(C(S),G)$ be the associated (Takenouchi) monotone cross-product. We
shall describe the monotone cross-product below.

The key fact is that, provided the $G-$action is free, the monotone
cross-product algebra can be identified with the monotone complete $C^{\ast}%
$-algebra arising from the $G-$orbit equivalence relation. The end part of
Section 8 already makes this plausible. The reader who is willing to assume it
can skip to Theorem 9.5.

Before saying more about the monotone cross-product, we recall some properties
of the Hamana tensor product, as outlined in \cite{zn}. More detailed
information can be found in \cite{w,x,zm}.

(Comment: An alternative, equivalent, approach avoiding the tensor product, is
to use the theory of Kaplansky-Hilbert modules \cite{z,za,zb,zzc}, see below.) \ 

For the rest of this section, $H$ is a separable Hilbert space and $H_{1}$ is
an arbitrary Hilbert space. Let us fix an orthonormal basis for $H.$ Then,
with respect to this basis, each $V\in L(H_{1})\overline{\otimes}L(H)$ has a
unique representation as a matrix $[V_{ij}]$, where each $V_{ij}$ is in
$L(H_{1})$. Let $M$ be a von Neumann subalgebra of $L(H_{1}).$ Then the
elements of $M\overline{\otimes}L(H)$ are those $V$ for which each $V_{ij}$ is
in $M.$ Let $T$ be any set and $Bnd(T)$ the commutative von Neumann algebra of
all bounded functions on $T.$ Then, as explained in \cite{zn},
$Bnd(T)\overline{\otimes}L(H)$ can be identified with the algebra of all
matrices $[m_{ij}]$ over $Bnd(T)$ for which $t\rightarrow\lbrack m_{ij}(t)]$
is a norm bounded function over $T.$

We denote the commutative $C^{\ast}$-algebra of bounded, complex valued Borel
measurable functions on $Y$ by $B(Y).$ Following \cite{zm}, the product
$B(Y)\widetilde{\otimes}L(H)$ may be defined as the Pedersen-Borel envelope of
$B(Y)\otimes_{\min}L(H)$ inside $Bnd(Y)\overline{\otimes}L(\ H).$ The elements
of $B(Y)\widetilde{\otimes}L(H)$ correspond to the matrices $[b_{ij}]$ where
each $b_{ij}\in B(Y)$ \ and $y\rightarrow\lbrack b_{ij}(y)]$ is a norm bounded
function from $Y$ into $L(H).$

Let $M^{\sigma}(B(Y),G)$ be the subalgebra of $B(Y)\widetilde{\otimes}L(H)$
consisting of those elements of the tensor product which have a matrix
representation over $B(Y)$ of the form $[a_{\gamma,\sigma}]$ $(\gamma\in
G,\sigma\in G)$ where $a_{\gamma\tau,\sigma\tau}(y)=a_{\gamma,\sigma}(\tau y)$
for all $y\in Y$ and all $\gamma,\sigma,\tau$ in $G.$ Let $E$ be the orbit
equivalence relation on $Y$ arising from $G,$ that is%

\[
E=\{(y,gy):y\in Y,g\in G\}.
\]

By Lemma 3.1 \cite{zn} we have:

\begin{lemma}
Assume that each $g\in G$ has no fixed points in $Y$ unless $g$ is the
identity element. Then $M^{\sigma}(B(Y),G)$ is naturally isomorphic to
$\mathcal{M}(E).$
\end{lemma}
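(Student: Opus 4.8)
The plan is to build an explicit $\ast$-isomorphism $\Phi$ by hand, exploiting that the action is free on $Y$. Since $G$ is countably infinite, I would take $H=\ell^2(G)$ with the fixed orthonormal basis $\{e_\gamma:\gamma\in G\}$, so that, by the characterisation quoted just above, the elements of $B(Y)\widetilde{\otimes}L(H)$ are precisely the matrices $[b_{\gamma,\sigma}]_{\gamma,\sigma\in G}$ with each $b_{\gamma,\sigma}\in B(Y)$ and $y\mapsto[b_{\gamma,\sigma}(y)]$ a norm-bounded function into $L(\ell^2(G))$. For $f\in\mathcal{M}(E)$ I would define $\Phi(f)=[a_{\gamma,\sigma}]$ by $a_{\gamma,\sigma}(y)=f(\gamma y,\sigma y)$ for $y\in Y$; this is legitimate because $Y$ is $G$-invariant and $(\gamma y,\sigma y)=(\gamma y,(\sigma\gamma^{-1})(\gamma y))\in E$.

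First I would check that $\Phi$ lands in $M^{\sigma}(B(Y),G)$. Borel measurability of each $a_{\gamma,\sigma}$ is immediate, since $y\mapsto(\gamma y,\sigma y)$ is continuous from $Y$ into $E$ and $f$ is Borel. The covariance condition is automatic: $a_{\gamma\tau,\sigma\tau}(y)=f(\gamma\tau y,\sigma\tau y)=f(\gamma(\tau y),\sigma(\tau y))=a_{\gamma,\sigma}(\tau y)$. The crucial point is the norm estimate. For fixed $y$, freeness makes $\gamma\mapsto\gamma y$ a bijection of $G$ onto the orbit $[y]$, and under the induced unitary $e_\gamma\mapsto\delta_{\gamma y}$ from $\ell^2(G)$ onto $\ell^2([y])$ the matrix $[a_{\gamma,\sigma}(y)]=[f(\gamma y,\sigma y)]$ is carried to $F_{[y]}$, the operator whose matrix is the restriction of $f$ to $[y]\times[y]$. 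Hence $\|[a_{\gamma,\sigma}(y)]\|=\|F_{[y]}\|$, and taking the supremum over $y\in Y$ gives $\sup_y\|[a_{\gamma,\sigma}(y)]\|=\sup_{[x]}\|F_{[x]}\|=\|f\|$. So $\Phi(f)$ is a norm-bounded matrix, hence lies in the tensor product, and $\Phi$ is isometric.

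Next I would verify that $\Phi$ is a $\ast$-homomorphism and exhibit its inverse. Writing $[y]=\{\rho y:\rho\in G\}$ and reindexing the defining sum for $\circ$,
\[
\Phi(f\circ h)_{\gamma,\sigma}(y)=\sum_{\rho\in G}f(\gamma y,\rho y)h(\rho y,\sigma y)=\sum_{\rho\in G}a_{\gamma,\rho}(y)b_{\rho,\sigma}(y),
\]
which is ordinary matrix multiplication, while $\Phi(f^{\ast})_{\gamma,\sigma}(y)=\overline{f(\sigma y,\gamma y)}=\overline{a_{\sigma,\gamma}(y)}$ is the matrix adjoint. For the inverse $\Psi$, given a covariant $[a_{\gamma,\sigma}]$ and $(x,y)\in E$, I use freeness to select the unique $g$ with $y=gx$ and set $\Psi(a)(x,y)=a_{\imath,g}(x)$; covariance yields $a_{\imath,\sigma\gamma^{-1}}(\gamma x)=a_{\gamma,\sigma}(x)$, so $\Psi(a)(\gamma x,\sigma x)=a_{\gamma,\sigma}(x)$. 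This shows both that $\Psi(a)$ is matrix bounded (its restriction to $[x]\times[x]$ has matrix $[a_{\gamma,\sigma}(x)]$, of norm at most $\sup_y\|[a_{\gamma,\sigma}(y)]\|$) and that it is Borel (on each $\Delta_g$ it is the composition of $(x,gx)\mapsto x$ with the Borel function $a_{\imath,g}$, and $E=\bigsqcup_g\Delta_g$). The identities $\Psi\Phi(f)(x,gx)=f(x,gx)$ and $\Phi\Psi(a)_{\gamma,\sigma}(y)=a_{\gamma,\sigma}(y)$ then show $\Phi$ and $\Psi$ are mutually inverse, so $\Phi$ is the desired $\ast$-isomorphism.

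The main obstacle is the norm identification in the second step: everything hinges on the freeness hypothesis, which alone guarantees that $\gamma\mapsto\gamma y$ is a genuine bijection $G\to[y]$, converting the ``matrix-bounded'' condition defining $\mathcal{M}(E)$ into the ``pointwise norm-bounded matrix'' condition defining membership in $B(Y)\widetilde{\otimes}L(H)$; without freeness the index sets $G$ and $[y]$ differ in size and the correspondence collapses. The remaining care is bookkeeping: confirming Borel measurability through the disjoint decomposition $E=\bigsqcup_g\Delta_g$ and verifying that the covariance condition is exactly what makes $\Psi$ well defined, independently of the representative chosen within an orbit.
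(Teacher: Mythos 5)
Your proposal is correct and follows essentially the same route as the paper's own sketch: the same forward correspondence $a_{\gamma,\sigma}(y)=f(\gamma y,\sigma y)$ with the same covariance computation, and the same inverse defined by $f(y,\tau y)=a_{\iota,\tau}(y)$, using freeness to make the assignment unambiguous via the disjointness of the sets $\{(y,\tau y):y\in Y\}$. The only difference is that you fill in details the paper leaves implicit, notably the isometry via the unitary $e_{\gamma}\mapsto\delta_{\gamma y}$ identifying $G$-indexed matrices with orbit-indexed ones, and the explicit $\ast$-homomorphism and mutual-inverse checks.
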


For the reader's convenience we sketch the argument. The correspondence
between these two algebras is given as follows. Let $f\in\mathcal{M}(E).$ For
each $\sigma,\gamma$ in $G~$and, for all $y\in Y,$let
\[
a_{\gamma,\sigma}(y)=f(\gamma y,\sigma y)\text{ }.
\]

Then $a_{\gamma,\sigma}$ is in $B(Y)$. Also the norm of $[a_{\gamma,\sigma
}(y)]$ is uniformly bounded for $y\in Y.$ So $[a_{\gamma,\sigma}]$ is in
$B(Y)\widetilde{\otimes}L(H).$ Also%

\[
a_{\gamma\tau,\sigma\tau}(y)=f(\gamma\tau y,\sigma\tau y)=a_{\gamma,\sigma
}(\tau y).
\]

It now follows that $[a_{\gamma,\sigma}]$ is in $M^{\sigma}(B(Y),G).$

Conversely, let $[a_{\gamma,\sigma}]$ be in $M^{\sigma}(B(Y),G).$ We now use
the freeness hypothesis on the action of $G$ on $Y$ to deduce that $(\{(y,\tau
y):y\in Y\})(\tau\in G)$ is a countable family of pairwise disjoint closed
subsets of $E.$ So we can now define, unambiguously, a function
$f:E\rightarrow%
\mathbb{C}
$ by $f(y,\tau y)=a_{\iota,\tau}(y).$ This is a bounded Borel\textbf{\ }%
function on $E.$ It follows from the definition of $M^{\sigma}(B(Y),G)$ that
$a_{\gamma,\sigma}(y)=a_{\iota,\sigma\gamma^{-1}}(\gamma y)=f(\gamma y,\sigma
y)$ for all $\sigma,\gamma$ in $G$ and all $y$ in $Y$. From this it follows
that $f$ is in $\mathcal{M}(E).$

Let $\pi$ be the quotient homomorphism from $B(Y)$ onto $C(S).$ (Each $F$ in
$B(S)$ differs only on a meagre set from a unique function in $C(S).$ Since
$S\backslash Y$ is a meagre set, each $f$ in $B(Y)$ corresponds to a unique
element of $C(S)$ which we denote $\pi(f)$).

Each element of the Hamana tensor product $C(S)\overline{\otimes}L(H)$ has a
representation as a matrix over $C(S).$ [Remark: the product is not
straightforward. ] By Theorem 2.5 \cite{zm} there exists a $\sigma
-$homomorphism $\Pi$ from $B(Y)\widetilde{\otimes}L(H)$ onto $C(S)\overline
{\otimes}L(H)$ where $\Pi([b_{\gamma,\sigma}])=[\pi(b_{\gamma,\sigma})].$

(Comment: As indicated above, we may use Kaplansky-Hilbert modules as an
alternative approach. We replace the Hilbert space $H$ by the separable
Hilbert space $\ell^{2}(G),$ consisting of all square summable complex
functions on $G$ with the standard basis $\{\xi_{\gamma}\}_{\gamma\in G}$. Let
$\{e_{\gamma,\sigma}\}$ be the standard system of matrix units for
$\mathcal{L}(H)$.

\smallskip\ Let $\mathfrak{M}=\ell^{2}(G,C(S))$ be the Kaplansky-Hilbert
module, over a monotone complete $C^{\ast}$-algebra $C(S)$, of all $\ell^{2}%
$-summable family of elements in $C(S)$ on $G$ and let $\{\delta_{\sigma}\}$
be the standard basis for $\mathfrak{M}$ \cite{zb} and, as Kaplansky defined,
let $\{E_{\gamma,\sigma}\}$ be the standard system of matrix units for the
monotone complete $C^{\ast}$-algebra $\text{End}_{C(S)}(\mathfrak{M})$ of all
bounded module endomorphisms on $\mathfrak{M}$. Then we know that
$(C(S)\overline{\otimes}\mathcal{L}(H),\{1\otimes e_{\gamma,\sigma}\})$ is
isomorphic to ($\text{End}_{C(S)}(\mathfrak{M}),\{E_{\gamma,\sigma}\})$ by
using \cite{z}. Then $\Pi([b_{\gamma,\sigma}])$ can be identified with the
$\ell^{2}$-limit\textbf{\ \ } $\sum_{\sigma\in G}\sum_{\gamma\in G}%
\pi(b_{\gamma,\sigma})E_{\gamma,\sigma}$ using Kadison-Pedersen
order-convergence \cite{y}.)

But the Takenouchi monotone cross-product is the subalgebra of $C(S)\overline
{\otimes}L(\ H)$ corresponding to matrices \thinspace$\lbrack a_{\gamma
,\sigma}]$ over $C(S)$ for which $\beta_{\tau^{-1}}(a_{\gamma,\sigma
})=a_{\gamma\tau,\sigma\tau}$ for all $\gamma,\sigma,\tau$ in $G.$
Equivalently, $a_{\gamma\tau,\sigma\tau}(s)=a_{\gamma,\sigma}(\tau s)$ for all
$\gamma,\sigma,\tau$ in $G$ and $s\in S.$ From this it follows that the
homomorphism $\Pi$ maps $M^{\sigma}(B(Y),G)$ onto $M(C(S),G).$ See Lemma 3.2
\cite{zn}.

The diagonal subalgebra of $M(C(S),G)$ consists of those matrices
\ $[a_{\gamma,\sigma}]$ which vanish off the diagonal i.e. $a_{\gamma,\sigma
}=0$ for $\gamma\neq\sigma.$ Also, $\beta_{\tau^{-1}}(a_{\iota,\iota}%
)=a_{\tau,\tau}$ for each $\tau\in G$. It follows that we can define an
isomorphism from $A$ onto the diagonal of $M(C(S),G)$ by $j(a)=Diag(...,\beta
_{\tau^{-1}}(a),...)$. We recall, see \cite{zp} and \cite{ze}, that the
freeness of the action $G$ implies that the diagonal algebra of $M(C(S),G)$ is
a maximal abelian $\ast-$subalgebra of $M(C(S),G),$ alternatively, apply the
results of Section 8.

Then, by Lemma 3.3 \cite{zn}, we have:

\begin{lemma}
Let $E$ be the graph of the relation of orbit equivalence given by $G$ acting
on $Y.$ Then there exists a $\sigma-$normal homomorphism $\delta$ from
$\mathcal{M}(E)$ onto $M(C(S),\beta,G).$

The kernel of $\delta$ is $J=\{z\in\mathcal{M}(E):D(zz^{\ast})$ vanishes off a
meagre subset of $Y\}.$

Furthermore, $\delta$ maps the diagonal subalgebra of $\mathcal{M}(E)$ onto
the diagonal subalgebra of $M(C(S),\beta,G).$ In particular, $\delta$ induces
an isomorphism of $\mathcal{M}(E)/J$ onto $M(C(S),G).$
\end{lemma}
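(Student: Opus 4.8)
The plan is to realise $\delta$ as a composition of two maps that are already at hand, and then to identify its kernel by a direct calculation. By Lemma 9.2, the freeness of the action supplies a natural $\sigma$-isomorphism $\Phi:\mathcal{M}(E)\to M^{\sigma}(B(Y),G)$, given by $\Phi(f)=[a_{\gamma,\sigma}]$ with $a_{\gamma,\sigma}(y)=f(\gamma y,\sigma y)$. By Theorem 2.5 of \cite{zm}, the entrywise map $\Pi([b_{\gamma,\sigma}])=[\pi(b_{\gamma,\sigma})]$ is a $\sigma$-homomorphism of $B(Y)\widetilde{\otimes}L(H)$ onto $C(S)\overline{\otimes}L(H)$ which carries $M^{\sigma}(B(Y),G)$ onto $M(C(S),G)$. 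I therefore set $\delta=\Pi\circ\Phi$. Since $\Phi$ is a $\sigma$-isomorphism and $\Pi$ is a surjective $\sigma$-homomorphism onto $M(C(S),G)$, it is immediate that $\delta$ is a $\sigma$-normal $\ast$-homomorphism of $\mathcal{M}(E)$ onto $M(C(S),\beta,G)$, establishing the first assertion.

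The substantive step is to show $\ker\delta=J$. Because $\Phi$ is an isomorphism, $\delta(f)=0$ holds precisely when $\Phi(f)\in\ker\Pi$, i.e. when $\pi(a_{\gamma,\sigma})=0$ for every pair $(\gamma,\sigma)$, which says that each $a_{\gamma,\sigma}$ vanishes off a meagre subset of $Y$. Recalling from identity $(\#)$ that $\widetilde{D}(f\circ f^{\ast})(y)=\sum_{z\in[y]}|f(y,z)|^{2}$, I would argue the required equivalence in two directions. If $D(f\circ f^{\ast})$ vanishes off a meagre set $N$, I first replace $N$ by its saturation $E[N]$; since each $g\in G$ is a homeomorphism, meagre sets go to meagre sets, so $E[N]$ is a meagre, $G$-invariant set. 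For $y\notin E[N]$ the entire orbit $[y]$ avoids $N$, whence $f(\gamma y,\sigma y)=0$ for all $\gamma,\sigma$, so every $a_{\gamma,\sigma}$ vanishes off $E[N]$ and $\delta(f)=0$. Conversely, if $\delta(f)=0$, each $a_{\gamma,\sigma}$ vanishes off some meagre $N_{\gamma,\sigma}$; as $G$ is countable the union $N=\bigcup_{\gamma,\sigma}N_{\gamma,\sigma}$ is meagre, and off $N$ we have $f(\gamma y,\sigma y)=0$ for all $\gamma,\sigma$, giving $\sum_{z\in[y]}|f(y,z)|^{2}=0$, i.e. $D(f\circ f^{\ast})$ vanishes off $N$. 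Hence $\ker\delta=J$.

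For the diagonal assertion, the diagonal subalgebra of $\mathcal{M}(E)$ is $D[\mathcal{M}(E)]$, consisting of those $f$ supported on $\Delta$. By freeness, $\gamma y=\sigma y$ forces $\gamma=\sigma$, so for such $f$ the matrix $\Phi(f)=[a_{\gamma,\sigma}]$ has $a_{\gamma,\sigma}=0$ whenever $\gamma\neq\sigma$; applying $\Pi$ entrywise preserves this vanishing, so $\delta$ maps the diagonal into the diagonal subalgebra of $M(C(S),G)$, and surjectivity of $\delta$ then forces it onto the diagonal. Finally, the standard isomorphism theorem for $\sigma$-normal homomorphisms yields that $\delta$ induces an isomorphism $\mathcal{M}(E)/J\cong M(C(S),G)$. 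I expect the kernel computation to be the one genuinely delicate point: the bookkeeping that passes between the condition ``every entry $a_{\gamma,\sigma}$ vanishes off a meagre set'' and the single condition on $D(f\circ f^{\ast})$, where both the countability of $G$ and the $G$-invariance of the meagre $\sigma$-ideal (via saturation) are essential.
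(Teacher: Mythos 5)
Your construction is the same as the paper's: Lemma 9.3 is stated there as a consequence of Lemma 3.3 of \cite{zn}, and the two ingredients assembled in the paragraphs preceding it are exactly your $\Phi$ (Lemma 9.2) and the entrywise map $\Pi$ (Theorem 2.5 of \cite{zm}), so $\delta=\Pi\circ\Phi$ is precisely the intended map. Your kernel computation, which the paper delegates to \cite{zn}, is correct: since matrix representations in $C(S)\overline{\otimes}L(H)$ are unique, $\delta(f)=0$ is indeed the statement that every entry $a_{\gamma,\sigma}$ lies in $\ker\pi$, and your two-way argument (saturating the meagre null set of $D(f\circ f^{\ast})$ and using $G$-invariance in one direction; taking the countable union of the sets $N_{\gamma,\sigma}$ and specialising to $\gamma=\iota$ in the other) is exactly the bookkeeping required.

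The one step that does not stand as written is the last one: from ``$\delta$ maps the diagonal into the diagonal'' together with ``$\delta$ is surjective'' it does not follow that $\delta$ maps the diagonal \emph{onto} the diagonal; a surjective homomorphism can carry a subalgebra into, but not onto, a prescribed subalgebra of its target. The gap is easily closed in either of two ways. (a) Observe that $\pi:B(Y)\rightarrow C(S)$ is surjective: given $a\in C(S)$, the restriction $a|_{Y}$ lies in $B(Y)$ and $\pi(a|_{Y})=a$ because $S\backslash Y$ is meagre. A diagonal element of $M(C(S),G)$ is determined by its $(\iota,\iota)$ entry $a$ (the paper notes $a_{\tau,\tau}=\beta_{\tau^{-1}}(a_{\iota,\iota})$), and the function $f$ on $E$ defined by $f(y,y)=a(y)$ and $f=0$ off $\Delta$ is a diagonal element of $\mathcal{M}(E)$ with $\delta(f)$ equal to the given element. (b) Alternatively, your freeness computation, applied to $Df$ for arbitrary $f$, shows the intertwining $\delta\circ D=D^{\prime}\circ\delta$, where $D^{\prime}$ kills the off-diagonal entries in $M(C(S),G)$; then for any diagonal $b$, writing $b=\delta(z)$ gives $b=D^{\prime}\delta(z)=\delta(D(z))$ with $D(z)$ diagonal. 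With either repair the proposal is complete and follows the same route as the paper.
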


Let $C(S)\times_{\beta}G$ be the smallest monotone closed $\ast-$subalgebra of
$M(C(S),G)$ which contains the diagonal and each unitary which implements the
$\beta-$action of $G.$ It will sometimes be convenient to call $C(S)\times
_{\beta}G$ the "small" monotone cross-product. It turns out that
$C(S)\times_{\beta}G$ does not depend on $G$ only on the orbit equivalence
relation. This is not at all obvious but is an immediate consequence of
Theorem 10.1. This theorem shows that when $w$ is a unitary in $M(C(S),G)$
such that $w$ normalises the diagonal then $w$ is in $C(S)\times_{\beta}G.$
So, the isomorphism of $M(C(S),G)$ onto $\mathcal{M}(E)/J$ maps $C(S)\times
_{\beta}G$ onto the normalizer subalgebra of $\mathcal{M}(E)/J.$

Does the small monotone cross product equal the "big" monotone cross-product ?
Equivalently, is $\mathcal{M}(E)/J$ equal to its normaliser subalgebra ? This
is unknown, but we can approximate each element of $M(C(S),G)$ by a sequence
in $C(S)\times_{\beta}G,$ in the following precise sense:

\begin{lemma}
Let $z\in M(C(S),G).$ Then there exists a sequence $(z_{n})$ in $C(S)\times
_{\beta}G$ such that the sequence $D((z-z_{n})^{\ast}(z-z_{n}))$ is monotone
decreasing and
\[%
{\textstyle\bigwedge\limits_{n=1}^{\infty}}
D((z-z_{n})^{\ast}(z-z_{n}))=0.
\]

\end{lemma}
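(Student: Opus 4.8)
The plan is to recognise this lemma as the image, under the isomorphism of Lemma 9.4, of the Mercer--Bures convergence already established in Proposition 8.15. Concretely, let $\overline{\delta}:\mathcal{M}(E)/J\to M(C(S),G)$ be the isomorphism induced by $\delta$, where $J=I_{\mathcal{I}}$ with $\mathcal{I}$ the $\sigma$-ideal of meagre Borel subsets of $Y$. Given $z\in M(C(S),G)$, I would first pull it back to the unique element $w=\overline{\delta}^{-1}(z)\in\mathcal{M}(E)/I_{\mathcal{I}}$. By Lemma 9.4 the homomorphism $\delta$ carries the diagonal subalgebra of $\mathcal{M}(E)$ onto the diagonal of $M(C(S),G)$ and is $\sigma$-normal; since the faithful normal conditional expectation onto the diagonal is unique, $\overline{\delta}$ intertwines $\widehat{D}$ with the diagonal expectation $D$ on $M(C(S),G)$, and it carries the unitaries $U_{g}$ to the unitaries implementing the $\beta$-action.

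With this dictionary the lemma becomes a restatement of Proposition 8.15 applied to $w^{\ast}$. Fixing an increasing sequence $(F(n))$ of finite subsets of $G$ with $\bigcup_{n}F(n)=G$, set $w_{n}=\sum_{g\in F(n)}\widehat{D}(w^{\ast}U_{g})U_{g^{-1}}$, so that Proposition 8.15 gives that $\widehat{D}((w^{\ast}-w_{n})(w^{\ast}-w_{n})^{\ast})$ is monotone decreasing with infimum $0$. Taking adjoints and writing $\widetilde{w}_{n}=w_{n}^{\ast}$, one has the identity $(w^{\ast}-w_{n})(w^{\ast}-w_{n})^{\ast}=(w-\widetilde{w}_{n})^{\ast}(w-\widetilde{w}_{n})$, so $\widehat{D}((w-\widetilde{w}_{n})^{\ast}(w-\widetilde{w}_{n}))$ is monotone decreasing with infimum $0$. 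Each $\widetilde{w}_{n}=\sum_{g\in F(n)}U_{g}\,\widehat{D}(U_{g}^{\ast}w)$ is a finite combination of diagonal elements and the unitaries $U_{g}$, hence lies in the normaliser subalgebra of $\mathcal{M}(E)/I_{\mathcal{I}}$. Setting $z_{n}=\overline{\delta}(\widetilde{w}_{n})$ then places each $z_{n}$ inside $C(S)\times_{\beta}G$, which is precisely the image of that normaliser subalgebra.

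Finally, I would apply $\overline{\delta}$ to the whole convergence statement. Because $\overline{\delta}$ is a $\sigma$-normal $\ast$-isomorphism intertwining the conditional expectations, it preserves adjoints, products and the greatest lower bound of a monotone decreasing sequence; hence $D((z-z_{n})^{\ast}(z-z_{n}))=\overline{\delta}\bigl(\widehat{D}((w-\widetilde{w}_{n})^{\ast}(w-\widetilde{w}_{n}))\bigr)$ is monotone decreasing with $\bigwedge_{n=1}^{\infty}D((z-z_{n})^{\ast}(z-z_{n}))=0$, which is exactly the claim. The only point requiring care --- and the step I expect to be the main, though minor, obstacle --- is the verification that $\overline{\delta}$ genuinely intertwines the two diagonal conditional expectations and is $\sigma$-normal enough to transport the infimum; this rests on the uniqueness of the faithful normal diagonal expectation together with the diagonal-preserving and $\sigma$-normality properties recorded in Lemma 9.4.
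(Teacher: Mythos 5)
Your proposal is correct and takes essentially the same route as the paper: the paper's entire proof of this lemma is ``This follows from Proposition 8.15,'' transported through the identification of $M(C(S),G)$ with $\mathcal{M}(E)/J$ from Lemma 9.3 (the result you cite, though you label it 9.4), and you simply supply the details the paper leaves implicit --- in particular the adjoint switch, applying Proposition 8.15 to $w^{\ast}$ so as to convert its control of $(z-z_{n})(z-z_{n})^{\ast}$ into the stated control of $(z-z_{n})^{\ast}(z-z_{n})$, and the observation that the approximants are finite sums of products of diagonal elements with the unitaries $U_{g}$ and hence land in $C(S)\times_{\beta}G$. One refinement: rather than appealing to uniqueness of the faithful normal conditional expectation onto the diagonal (not proved in the paper, and not obviously available at this level of generality), the intertwining $\overline{\delta}\circ\widehat{D}=D\circ\overline{\delta}$ follows directly from the construction, since under the correspondence $a_{\gamma,\sigma}(y)=f(\gamma y,\sigma y)$ of Lemma 9.2 and the quotient map $\Pi$ both expectations are ``restrict the matrix to its diagonal.''
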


\begin{proof}
This follows from Proposition 8.15.
\end{proof}

\begin{theorem}
Let $G_{j}(j=1,2)$ be countable, infinite groups of homeomorphisms of $S$. Let
$g\rightarrow\beta_{g}^{j}$ be the corresponding action of $G_{j}$ as
automorphisms of $C(S).$ Let $Y$ be a G-delta, dense subset of $S$ such that
$G_{j}[Y]=Y$ and $G_{j}$ acts freely on $Y$. Let $E_{j}$ be the orbit
equivalence relation on $Y$ arising from the action of $G_{j}.$ Suppose that
$E_{1}=E_{2}.$ Then there exists an isomorphism of $M(C(S),G_{1})$ onto
$M(C(S),G_{2})$ which maps the diagonal algebra of $M(C(S),G_{1})$ onto the
diagonal algebra of $M(C(S),G_{2}).$ Furthermore, this isomorphism maps
$C(S)\times_{\beta^{1}}G_{1}$ onto $C(S)\times_{\beta^{2}}G_{2}.$
\end{theorem}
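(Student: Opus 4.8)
The plan is to exploit the fact, already visible in Lemma 9.4, that the algebra $\mathcal{M}(E)$, the diagonal map $D$, and the kernel $J=\{z\in\mathcal{M}(E):D(zz^{\ast})\text{ vanishes off a meagre subset of }Y\}$ are manufactured purely from the equivalence relation $E$ and the meagre ideal; none of them refers to the group producing $E$. So I would first observe that the hypothesis $E_{1}=E_{2}=:E$ makes $\mathcal{M}(E_{1})$ and $\mathcal{M}(E_{2})$ literally the same algebra, with the same $J$. Lemma 9.4 then supplies two isomorphisms $\bar\delta_{j}:\mathcal{M}(E)/J\to M(C(S),G_{j})$ ($j=1,2$), and I would take the required map to be $\Phi=\bar\delta_{2}\circ\bar\delta_{1}^{-1}:M(C(S),G_{1})\to M(C(S),G_{2})$. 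For the diagonal claim I would simply quote the last sentence of Lemma 9.4: each $\bar\delta_{j}$ carries the diagonal subalgebra of $\mathcal{M}(E)/J$ onto the diagonal subalgebra of $M(C(S),G_{j})$; since the source diagonal is one and the same for $j=1,2$, the composite $\Phi$ sends the diagonal of $M(C(S),G_{1})$ onto that of $M(C(S),G_{2})$.

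The substantive part is the statement about the small cross-products. Transporting everything back along the $\bar\delta_{j}$, the subalgebra $C(S)\times_{\beta^{j}}G_{j}$ corresponds to $\mathcal{N}_{j}$, the smallest monotone closed $\ast$-subalgebra of $\mathcal{M}(E)/J$ containing the diagonal together with the unitaries $U^{j}_{g}=Q(u^{j}_{g})$ implementing the $G_{j}$-action. It therefore suffices to prove $\mathcal{N}_{1}=\mathcal{N}_{2}$, and by symmetry it is enough to show $U^{1}_{g}\in\mathcal{N}_{2}$ for each $g\in G_{1}$. Here I would use $E_{1}=E_{2}$ together with the freeness of $G_{2}$ on $Y$: for fixed $g$ the sets $P_{h}=\{y\in Y:gy=hy\}$ ($h\in G_{2}$) are pairwise disjoint with union $Y$, so their diagonal projections $f_{h}$ are orthogonal with $\bigvee_{h}f_{h}=1$; the elements $v_{h}=U^{2}_{h}f_{h}$ lie in $\mathcal{N}_{2}$, are partial isometries with orthogonal source and range projections (again by freeness of $G_{2}$), and, after the obvious bookkeeping with inverses, satisfy $U^{1}_{g}=\sum_{h}v_{h}$ exactly at the level of the matrix functions on $E$. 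Applying Proposition 8.15 (equivalently Lemma 9.6) to $z=U^{1}_{g}$ and the group $G_{2}$ shows that the finite partial sums $x_{F}=\sum_{h\in F}v_{h}\in\mathcal{N}_{2}$ converge to $U^{1}_{g}$ in the Mercer--Bures sense, i.e. $\bigwedge_{F}\widehat{D}\bigl((U^{1}_{g}-x_{F})(U^{1}_{g}-x_{F})^{\ast}\bigr)=0$.

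The main obstacle is precisely this last upgrade: Mercer--Bures convergence of a net inside $\mathcal{N}_{2}$ does \emph{not} by itself place the limit in $\mathcal{N}_{2}$, because a monotone closed subalgebra need not be closed for this weaker mode of convergence. Indeed $\|U^{1}_{g}-x_{F}\|=1$ for every finite $F$, so there is no norm limit, and the self-adjoint pieces $v_{h}+v_{h}^{\ast}$ have non-orthogonal supports, so no direct monotone-sum argument is available either. What rescues the argument is the identification of each small cross-product with the full \textbf{normaliser subalgebra} $\mathcal{N}$ of $\mathcal{M}(E)/J$ --- the group-free, smallest monotone complete $\ast$-subalgebra containing \emph{every} unitary that normalises the diagonal. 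Each $U^{j}_{g}$ normalises the diagonal by Corollary 8.18, so $\mathcal{N}_{j}\subseteq\mathcal{N}$ is immediate; the reverse inclusion, that every diagonal-normalising unitary already lies in the small cross-product, is the genuinely hard input (it is exactly the content of the normaliser theorem of the next section). Granting it, $\mathcal{N}_{1}=\mathcal{N}=\mathcal{N}_{2}$, whence $\Phi$ carries $C(S)\times_{\beta^{1}}G_{1}=\bar\delta_{1}[\mathcal{N}]$ onto $\bar\delta_{2}[\mathcal{N}]=C(S)\times_{\beta^{2}}G_{2}$, which finishes the proof. I would flag clearly in the writeup that the big-cross-product and diagonal assertions are elementary consequences of Lemma 9.4 and the group-independence of $\mathcal{M}(E)$ and $J$, and that all the real weight of the theorem sits in the normaliser characterisation.
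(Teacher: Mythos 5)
Your proposal is correct and is essentially the paper's own proof: the big cross-product and diagonal claims come from composing the two isomorphisms with $\mathcal{M}(E)/J$ supplied by Lemma 9.3 (which you cite as ``Lemma 9.4'' --- an off-by-one slip in numbering), and the small cross-product claim comes from identifying each $C(S)\times_{\beta^{j}}G_{j}$ with the group-free normaliser subalgebra via Theorem 10.1, exactly as the paper does. Your middle-paragraph observation that Mercer--Bures approximation of $U^{1}_{g}$ by elements of $\mathcal{N}_{2}$ cannot by itself place $U^{1}_{g}$ in $\mathcal{N}_{2}$ correctly pinpoints why the paper calls this step ``not at all obvious'' and routes it through the normaliser theorem.
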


\begin{proof}
The first part is a straight forward application of Lemma 9.3. Let
$E=E_{1}=E_{2}.$ Then both algebras are isomorphic to $\mathcal{M}(E)/J.$ The
final part is a consequence of Theorem 10.1.
\end{proof}

In the next result, we require $S$ to be separable.

\begin{corollary}
Let $G$ be a countable, infinite group of homeomorphisms of $S$. Suppose, for
some $s_{0}\in S,G[s_{0}]$ is a free, dense orbit. Then there exists an
isomorphism $\phi$ of $\bigoplus\mathbb{Z}_{2}$ into $Homeo(S),$ such that
there exists an isomorphism of $M(C(S),G)$ onto $M(C(S),\bigoplus
\mathbb{Z}_{2})$ which maps the diagonal algebra of $M(C(S),G)$ onto the
diagonal algebra of $M(C(S),\bigoplus\mathbb{Z}_{2}).$
\end{corollary}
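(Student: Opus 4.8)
The plan is to produce the $\bigoplus\mathbb{Z}_{2}$-action from Theorem 7.10 and then invoke the orbit-equivalence rigidity of Theorem 9.5. Since $D=G[s_{0}]$ is free, no non-identity element of $G$ fixes any point of $D$, so $D$ is pseudo-free (a free orbit is pseudo-free, by the Remark following Definition 7.1); moreover $D$ is a countable dense subset of $S$, so $S$ is separable, as required. Thus the hypotheses of Theorem 7.10 hold (the inclusion $\varepsilon\colon G\hookrightarrow Homeo(S)$ is visibly non-degenerate). That theorem supplies an injective homomorphism $\phi=\gamma\colon\bigoplus\mathbb{Z}_{2}\rightarrow Homeo(S)$ and a $G_{\delta}$ set $Y_{0}$ with $D\subset Y_{0}\subset S$, such that $Y_{0}$ is invariant under both $G$ and $\gamma[\bigoplus\mathbb{Z}_{2}]$, the $\gamma$-orbit of $s_{0}$ is again $D$ and is free, and the orbit equivalence relations of the two groups coincide on $Y_{0}$.

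To feed this into Theorem 9.5 I must produce a single dense $G_{\delta}$ set that is invariant under both groups, on which both groups act freely, and on which the two orbit equivalence relations agree. Theorem 7.10 already gives the coincidence of orbit relations and the joint invariance, but it only asserts freeness of the single orbit $D$, not of the whole action; repairing this is the one point requiring work. Since $D$ is a dense, $G$-invariant, free orbit, Lemma 9.1 shows that $g\mapsto\beta_{g}$ is a free action of $G$ on $C(S)$, whence (by the remark preceding Lemma 9.1) there is a dense, $G$-invariant $G_{\delta}$ set $Y_{G}$ on which no non-identity $g$ has a fixed point. Applying the same two results to the free dense orbit $D$ of $\gamma[\bigoplus\mathbb{Z}_{2}]$ yields a dense, $\bigoplus\mathbb{Z}_{2}$-invariant $G_{\delta}$ set $Y_{\gamma}$ on which $\gamma$ acts freely. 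Put $W=Y_{0}\cap Y_{G}\cap Y_{\gamma}$, a dense $G_{\delta}$ subset of $S$, let $\Lambda$ be the countable group generated by $G$ and $\gamma[\bigoplus\mathbb{Z}_{2}]$, and, exactly as in the proof of Lemma 7.4, set $Y=\bigcap\{\lambda[W]:\lambda\in\Lambda\}$. As a countable intersection of dense $G_{\delta}$ sets in the Baire space $S$, the set $Y$ is again a dense $G_{\delta}$ set; it is $\Lambda$-invariant, hence invariant under both $G$ and $\bigoplus\mathbb{Z}_{2}$; since $Y\subset Y_{G}\cap Y_{\gamma}$ both groups act freely on $Y$; and since $Y\subset Y_{0}$ the two orbit equivalence relations still coincide on $Y$.

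Finally I apply Theorem 9.5 with $G_{1}=G$, with $G_{2}=\gamma[\bigoplus\mathbb{Z}_{2}]$ (which is isomorphic to $\bigoplus\mathbb{Z}_{2}$ because $\gamma$ is injective, Theorem 7.10(4)), and with the set $Y$ just constructed: both are countable infinite groups of homeomorphisms of $S$, both preserve the dense $G_{\delta}$ set $Y$ and act freely on it, and their orbit equivalence relations on $Y$ are equal. Theorem 9.5 then delivers an isomorphism of $M(C(S),G)$ onto $M(C(S),\bigoplus\mathbb{Z}_{2})$ carrying the diagonal subalgebra onto the diagonal subalgebra, which is precisely the assertion. (Since each cross-product is realised as $\mathcal{M}(E)/J$ and $S\setminus Y$ is meagre, the particular choice of $Y$ does not affect the algebras themselves.) The only genuine obstacle is the middle step, namely upgrading freeness from the distinguished orbit $D$ to a full common dense $G_{\delta}$ set preserved by both groups; everything else is a direct assembly of Theorem 7.10 and Theorem 9.5.
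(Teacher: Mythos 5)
Your proposal is correct and follows the same skeleton as the paper's proof: invoke Theorem 7.10 to obtain the $\bigoplus\mathbb{Z}_{2}$-action together with a common invariant dense $G_{\delta}$ set on which the two orbit equivalence relations coincide, then feed this into Theorem 9.5. The paper's own proof is exactly this two-step assembly and nothing more; in particular it does not comment on the freeness hypothesis of Theorem 9.5, which demands that \emph{both} groups act freely on the chosen $Y$, whereas Theorem 7.10 only asserts freeness of the single orbit $D$ (under $\gamma$) and invariance plus coincidence of orbit relations on its $Y$. The middle step of your argument -- upgrading freeness of the distinguished orbit to pointwise freeness of both actions on a smaller invariant dense $G_{\delta}$ set, via Lemma 9.1, the remark preceding it, and the intersection trick of Lemma 7.4 -- is therefore a genuine repair of a step the paper leaves tacit, and you carry it out correctly: the final $Y$ is dense $G_{\delta}$ by the Baire property, invariant under the group $\Lambda$ generated by both actions, inherits freeness from $Y_{G}\cap Y_{\gamma}$, and inherits coincidence of the orbit relations from $Y_{0}$. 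One small simplification is available: Lemma 6.7, applied directly to $S$ with the free dense orbit $D$ (once for $G$, once for $\gamma[\bigoplus\mathbb{Z}_{2}]$), produces the sets $Y_{G}$ and $Y_{\gamma}$ in a single step, avoiding the detour through properly outer automorphisms of $C(S)$; after that your intersection argument is unchanged.
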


\begin{proof}
By Theorem 7.10, there exists an isomorphism $\phi$ from $\bigoplus
\mathbb{Z}_{2}$ into $Homeo(S)$ such that there exists a dense G-delta set $Y$
in $S$ with the following properties. First, $Y$ is invariant under the action
of both $G$ \ and $\bigoplus\mathbb{Z}_{2}.$ Secondly the induced orbit
equivalence relations coincide on $Y$. Theorem 9.5 then gives the result.
\end{proof}

REMARK: By Lemma 5.1, when $G$ has a dense orbit in $S$ then the action on $S
$ is such that the only invariant clopen set is empty or the whole space. This
implies that the action $g\rightarrow\beta_{g}$ is ergodic, as defined in
\cite{zp,ze}, which implies that the algebra $M(C(S),G)$ is a monotone
complete factor.

\begin{lemma}
Let $\mathcal{B}$ be a Boolean $\sigma-$algebra. Let $(p_{n})$ be a sequence
in $\mathcal{B}$ which $\sigma-$generates $\mathcal{B}$, that is,
$\mathcal{B}$ is the smallest $\sigma-$subalgebra of $\mathcal{B}$ which
contains each $p_{n}$. Let $\Gamma$ be a group of automorphisms of
$\mathcal{B}.$ Let $\Gamma$ be the union of an increasing sequence of finite
subgroups $(\Gamma_{n}).$ Then we can find an increasing sequence of finite
Boolean algebras $(\mathcal{B}_{n})$ where each $\mathcal{B}_{n}$ is invariant
under the action of $\Gamma_{n}$ and $\cup\mathcal{B}_{n}$ \ is a Boolean
algebra which $\sigma-$generates $\mathcal{B}$.
\end{lemma}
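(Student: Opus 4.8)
The plan is to build $(\mathcal{B}_n)$ by induction, at each stage enlarging the previous algebra so that it absorbs the next generator $p_n$ and is then closed up under the action of $\Gamma_n$. Two elementary facts drive the argument. First, the Boolean subalgebra generated by a \emph{finite} subset of $\mathcal{B}$ is itself finite. Secondly, if $\theta$ is a Boolean automorphism of $\mathcal{B}$ and $T\subset\mathcal{B}$, then $\theta$ carries the subalgebra generated by $T$ onto the subalgebra generated by $\theta[T]$; consequently, if a finite set $T$ is setwise invariant under a finite group $\Lambda$ of automorphisms, then the finite Boolean algebra it generates is $\Lambda$-invariant. The whole proof is a careful application of these two observations, so the only thing requiring vigilance is the bookkeeping: we must ensure finiteness survives the group-closure at each stage (guaranteed by the finiteness of $\Gamma_n$) and we must exploit that the invariance group is allowed to grow with $n$, rather than being demanded uniformly over all of $\Gamma$.

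For the base step I would set $T_1=\{\gamma(p_1):\gamma\in\Gamma_1\}$. Since $\Gamma_1$ is finite this is a finite set, and since $\Gamma_1$ is a group we have $\gamma'[T_1]=T_1$ for every $\gamma'\in\Gamma_1$. Letting $\mathcal{B}_1$ be the Boolean subalgebra generated by $T_1$, the remarks above show that $\mathcal{B}_1$ is finite, contains $p_1$, and is $\Gamma_1$-invariant.

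For the inductive step, suppose $\mathcal{B}_n$ has been constructed, finite, $\Gamma_n$-invariant, and containing $p_1,\dots,p_n$. Put $T_{n+1}=\{\gamma(b):\gamma\in\Gamma_{n+1},\ b\in\mathcal{B}_n\cup\{p_{n+1}\}\}$, which is finite because both $\Gamma_{n+1}$ and $\mathcal{B}_n$ are finite. Since the identity lies in the subgroup $\Gamma_{n+1}$, we have $\mathcal{B}_n\cup\{p_{n+1}\}\subset T_{n+1}$, and since $\Gamma_{n+1}$ is a group, $\gamma'[T_{n+1}]=T_{n+1}$ for each $\gamma'\in\Gamma_{n+1}$. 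Taking $\mathcal{B}_{n+1}$ to be the Boolean algebra generated by $T_{n+1}$ then produces a finite algebra which contains $\mathcal{B}_n$ and $p_{n+1}$ and which is $\Gamma_{n+1}$-invariant. This completes the induction and yields an increasing sequence $(\mathcal{B}_n)$ of finite Boolean algebras with $\mathcal{B}_n$ invariant under $\Gamma_n$ and $p_n\in\mathcal{B}_n$.

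Finally, $\bigcup_n\mathcal{B}_n$ is an increasing union of Boolean subalgebras, hence a Boolean subalgebra of $\mathcal{B}$, and it contains every $p_n$. Because $(p_n)$ $\sigma$-generates $\mathcal{B}$, the smallest $\sigma$-subalgebra containing $\bigcup_n\mathcal{B}_n$ already contains each $p_n$ and therefore equals $\mathcal{B}$; that is, $\bigcup_n\mathcal{B}_n$ $\sigma$-generates $\mathcal{B}$, as required. I do not anticipate a genuine obstacle here: the argument is soft, and the one point needing care—that closing a finite generating set under the finite group $\Gamma_{n+1}$ keeps it both finite and $\Gamma_{n+1}$-invariant—is handled precisely by the two elementary facts isolated at the outset.
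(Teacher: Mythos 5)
Your proof is correct and follows essentially the same route as the paper: induct, saturating $\mathcal{B}_n\cup\{p_{n+1}\}$ under the finite group $\Gamma_{n+1}$ and taking the (finite) Boolean algebra this generates. You spell out two points the paper leaves implicit — that an automorphism carries the algebra generated by a set onto the algebra generated by its image, and the final $\sigma$-generation step — but the substance is identical.
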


\begin{proof}
For any natural number $k$ the free Boolean algebra on $k$ generators has
$2^{k}$ elements. So a Boolean algebra with $k$ generators, being a quotient
of the corresponding free algebra, has a finite number of elements.

We proceed inductively. Let $B_{1}$ be the subalgebra generated by
$\{g(p_{1}):g\in\Gamma_{1}\}.$ Then $B_{1}$ is finite and $\Gamma_{1}%
-$invariant. Suppose we have constructed $B_{1},B_{2}...B_{n}.$ Then
$B_{n}\cup\{p_{n+1}\}$ is a finite set. So its saturation by the finite group
$\Gamma_{n+1}$ is again a finite set. So the Boolean algebra this generates,
call it $B_{n+1},$ is finite. Clearly $B_{n}\subset B_{n+1}$ and $B_{n+1}$ is
invariant under the action of $\Gamma_{n+1}.$
\end{proof}

A commutative monotone complete $C^{\ast}$-algebra is \textit{countably
}$\sigma-$\textit{generated} if its Boolean algebra of projections is
$\sigma-$generated by a countable subset.

\begin{proposition}
Let the Boolean algebra of projections in $C(S)$ be countably $\sigma
-$generated by $(p_{n})$. Let $\Gamma$ be a group of automorphisms of $C(S).$
Let $\Gamma$ be the union of an increasing sequence of finite subgroups
$(\Gamma_{n}).$ Let $g\rightarrow u_{g}$ be the unitary representation of
$\Gamma$ in $M(C(S),\Gamma)$ which implements the action of $\Gamma$ on the
diagonal algebra $A$. Let $\pi$ \ be the canonical isomorphism of $C(S)$ onto
$A.$ Then the $C^{\ast}$-algebra generated by $\{u_{g}:g\in\Gamma\}\cup
\{\pi(p_{n}):n=1,2...\}\ $is the closure of an increasing sequence of finite
dimensional subalgebras.
\end{proposition}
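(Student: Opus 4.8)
The plan is to build the required increasing sequence of finite-dimensional subalgebras explicitly, using Lemma 9.7 to produce a filtration of the projection lattice that is compatible with the group filtration $(\Gamma_n)$. First I would apply Lemma 9.7 to the Boolean $\sigma$-algebra of projections of $C(S)$, with generating sequence $(p_n)$ and exhausting chain of finite subgroups $(\Gamma_n)$, obtaining an increasing sequence of finite Boolean algebras $(\mathcal{B}_n)$ with each $\mathcal{B}_n$ invariant under $\Gamma_n$ and with $p_k \in \mathcal{B}_k$ for every $k$ (both features are explicit in the inductive construction given there). For each $n$ I would then let $M_n$ be the $*$-subalgebra of $M(C(S),\Gamma)$ generated by the finite set $\{u_g : g \in \Gamma_n\} \cup \{\pi(e) : e \in \mathcal{B}_n\}$.

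The core of the proof is showing that each $M_n$ is finite-dimensional. Since $g \mapsto u_g$ implements the $\Gamma$-action on the diagonal, $u_g \pi(a) u_g^* = \pi(\beta_g(a))$ for all $a \in C(S)$; together with $u_g u_h = u_{gh}$ this lets me rewrite any word in the generators, pushing all unitaries to the right, as a linear combination of elements $\pi(e) u_g$. I would then verify that the linear span $V_n = \operatorname{span}\{\pi(e)u_g : e \in \mathcal{B}_n,\ g \in \Gamma_n\}$ is already a $*$-algebra. For products, $(\pi(e)u_g)(\pi(f)u_h) = \pi\!\left(e \wedge \beta_g(f)\right)u_{gh}$, and here the $\Gamma_n$-invariance of $\mathcal{B}_n$ gives $\beta_g(f) \in \mathcal{B}_n$ while the finiteness of the \emph{subgroup} $\Gamma_n$ gives $gh \in \Gamma_n$; for adjoints, $(\pi(e)u_g)^* = \pi\!\left(\beta_{g^{-1}}(e)\right)u_{g^{-1}}$ lies in $V_n$ for the same reasons. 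Thus $M_n = V_n$ is a finite-dimensional $*$-subalgebra, and since finite-dimensional $*$-subalgebras of a $C^*$-algebra are automatically norm-closed, each $M_n$ is a finite-dimensional $C^*$-subalgebra.

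It then remains to check that the $M_n$ increase and that their closed union is the target algebra $M$ generated by $\{u_g : g \in \Gamma\} \cup \{\pi(p_n) : n \geq 1\}$. The inclusions $\mathcal{B}_n \subset \mathcal{B}_{n+1}$ and $\Gamma_n \subset \Gamma_{n+1}$ give $M_n \subset M_{n+1}$ immediately. For $\bigcup_n M_n \subset M$, each $e \in \mathcal{B}_n$ is a Boolean combination of projections $\beta_g(p_k)$ with $g \in \Gamma_n$, and in the commutative algebra the operations meet, join and complement are algebraic (for commuting projections $p,q$ one has $p \wedge q = pq$, $p \vee q = p+q-pq$, $\neg p = 1-p$); since $\pi(\beta_g(p_k)) = u_g \pi(p_k) u_g^* \in M$, it follows that $\pi(e) \in M$ and hence $M_n \subset M$. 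For the reverse inclusion it suffices to note that every generator of $M$ already sits in some $M_n$: each $g \in \Gamma = \bigcup_n \Gamma_n$ lies in some $\Gamma_n$, so $u_g \in M_n$, and each $p_k \in \mathcal{B}_k$, so $\pi(p_k) \in M_k$. Therefore $M = \overline{\bigcup_n M_n}$, as required.

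I expect the one genuinely delicate point to be the finite-dimensionality step, and in particular the verification that the normal form $\pi(e)u_g$ is stable under multiplication: this relies crucially on the \emph{simultaneous} finiteness and invariance delivered by Lemma 9.7. If $\mathcal{B}_n$ were only $\Gamma$-invariant rather than $\Gamma_n$-invariant, or if the $\Gamma_n$ were not subgroups, the span $V_n$ would in general fail to be closed under the algebra operations, and the argument would collapse. Everything else is routine bookkeeping.
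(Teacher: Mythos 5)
Your proposal is correct and follows essentially the same route as the paper: invoke Lemma 9.7 to get finite $\Gamma_{n}$-invariant Boolean algebras $\mathcal{B}_{n}$, and take $B_{n}$ (your $M_{n}=V_{n}$) to be the linear span of $\{\pi(e)u_{g}:e\in\mathcal{B}_{n},\,g\in\Gamma_{n}\}$, which is a finite dimensional $\ast$-subalgebra, with the union of the increasing sequence generating the target algebra. Your explicit verification of the product and adjoint formulas, and of the identification of the closed union with the algebra generated by $\{u_{g}\}\cup\{\pi(p_{n})\}$, simply fills in details the paper leaves as "clearly".
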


\begin{proof}
Let $\mathcal{B}$ be the complete Boolean algebra of all projections in $A$.
By Lemma 9.7, we can find an increasing sequence of finite Boolean algebras of
projections $(\mathcal{B}_{n})$ where $\mathcal{B}$ is $\sigma-$generated by
$\cup\mathcal{B}_{n}$ and each $\mathcal{B}_{n}$ is invariant under the action
of $\Gamma_{n}.$

Let $A_{n}$ be the (complex) linear span of $\mathcal{B}_{n}.$ Then $A_{n}$ is
a finite dimensional $\ast-$subalgebra of $A$. Also, for $g\in\Gamma_{n}, $
$u_{g}A_{n}u_{g}^{\ast}=A_{n}.$

Now let $B_{n}$ be the linear span of $\{bu_{g}:g\in\Gamma_{n}$ and $b\in
A_{n}\}.$ Then $B_{n}$ is a finite dimensional $\ast-$subalgebra. Clearly
$(B_{n})$ is an increasing sequence and $\cup_{n=1}^{\infty}B_{n}$ is a $\ast-
$subalgebra generated by $\{u_{g}:g\in\Gamma\} \cup\{ \pi(p_{n}):n=1,2...\}.$
\end{proof}

\section{The normaliser algebra}

After the proof of Theorem 9.5, we made some claims concerning the normaliser
algebra of a monotone cross-product. They follow from Theorem 10.1 below.

In this section $M$ is a monotone complete $C^{\ast}$-algebra with a maximal
abelian $\ast-$subalgebra $A$ and $D:M\rightarrow A$ a positive, linear,
idempotent map of $M$ onto $A.$ It follows from a theorem of Tomiyama
\cite{zu} that $D$ is a conditional expectation, that is, $D(azb)=a(Dz)b$ for
each $z\in M$ and every $a,b$ in $A$.\textbf{\ }Clearly the monotone
cross-product algebras considered in Section 9 satisfy these conditions.

We recall that a unitary $w$ in $M$ is a \textit{nomaliser} of $A$ if
$wAw^{\ast}=A.$ It is clear that the normalisers of $A$ form a subgroup of the
unitaries in $M.$ We use $\mathcal{N}(A,M)$ to denote this normaliser
subgroup. Let $M_{\mathcal{N}}$ be the smallest monotone closed $\ast
-$subalgebra of $M$ which contains $\mathcal{N}(A,M)$. Then $M_{\mathcal{N}}$
is said to be the\textit{\ normaliser subalgebra} of $M.$

Let $G$ be a countable group. Let $g\rightarrow u_{g}$ be a unitary
representation of $G$ in $\mathcal{N}(A,M).$ Let $\lambda_{g}(a)=u_{g}%
au_{g}^{\ast}.$

Let $\alpha$ be an automorphism of $A$. We recall that $\alpha$ is
\textit{properly outer} if, for each non-zero projection $e\in A$, the
restriction of $\alpha$ to $eA$ is not the identity map. We further recall
that the action $g\rightarrow\lambda_{g}$ is \textit{free} provided, for each
$g$ other than the identity, $\lambda_{g}$ is properly outer.

\begin{theorem}
Let $M_{0}$ be the smallest monotone closed subalgebra of $M$ which contains
$A\cup\mathcal{\{}u_{g}:g\in G\mathcal{\}}.$We suppose that:

(i) The action $g\rightarrow\lambda_{g}$ is free.

(ii) For each $z\in M,$ if $D(zu_{g})=0$ for every $g\in G$ then $z=0.$

Then $M_{0}$ contains \textit{every }unitary in $M$ which normalises $A$, that
is, $M_{0}=M_{\mathcal{N}}.$
\end{theorem}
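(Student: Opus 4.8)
We have a monotone complete $M$ with a MASA $A$, a conditional expectation $D : M \to A$, and a free unitary representation $g \mapsto u_g$ of a countable group $G$ in $\mathcal{N}(A,M)$ satisfying the "Fourier uniqueness" condition (ii). Let $M_0$ be the smallest monotone closed $\ast$-subalgebra containing $A \cup \{u_g\}$. The claim is that every normalising unitary of $A$ already lies in $M_0$.

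The plan is as follows.

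Let me sketch how I would prove this.

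The overall strategy is to show that an arbitrary normalising unitary $w$ is captured by the "Fourier-type" machinery built around the $u_g$. The key structural fact I would lean on is condition (ii), which says an element of $M$ is determined by its "Fourier coefficients" $D(z u_g)$. Combined with the Mercer–Bures-type approximation in Proposition 8.15 (which tells us $z = \bigwedge$-limit of $z_n = \sum_{g\in F(n)} D(zu_g)u_{g^{-1}}$), this reduces everything to showing that each Fourier coefficient $D(w u_g)$ has a suitable form. So the first step I would carry out is to fix a normalising unitary $w$ and examine the elements $a_g := D(w u_g) \in A$; the goal becomes showing that each summand $a_g u_{g^{-1}}$ lies in $M_0$, for then the approximating sequence $w_n$ lies in $M_0$ and, since $M_0$ is monotone closed and $w = \bigwedge$-limit of the $w_n$, we conclude $w \in M_0$.

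\medskip

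The crux is to understand the partial isometry structure of each $a_g = D(wu_g)$. Here I would exploit that $w$ normalises $A$: the map $\theta_w(a) = waw^*$ is an automorphism of $A$, and $\theta_w$ together with $\lambda_g(a)=u_gau_g^*$ gives, for each $a \in A$, the intertwining relation $a_g \lambda_g(a) = \theta_w(a) a_g$ (obtained by applying $D$ to $w u_g (u_g^* a u_g) = w a u_g = (waw^*) w u_g$ and using the $A$-bimodularity of $D$). This forces $a_g$ to be a partial isometry-like element whose source and range projections interlock $\lambda_g$ with $\theta_w$ on the relevant clopen corners; the freeness hypothesis (i) then pins down, on each corner, that $\theta_w$ agrees with exactly one $\lambda_g$. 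Concretely, using the spectral/projection calculus in the commutative algebra $A$, I would extract an orthogonal family of projections $(e_g)_{g\in G}$ in $A$ summing to $1$ such that $w e_g = v_g u_g e_g$ for suitable unitaries $v_g \in A$, which manifestly lies in $M_0$. Summing these over $G$ (a monotone/$\sigma$-limit, legitimate because $M_0$ is monotone closed) recovers $w$ inside $M_0$.

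\medskip

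I expect the \textbf{main obstacle} to be the orthogonality/exhaustion argument showing that the projections $e_g$ (where the automorphism $\theta_w$ locally coincides with $\lambda_g$) are mutually orthogonal and sum to the identity. Orthogonality is exactly where freeness enters decisively: if $\theta_w$ agreed with both $\lambda_g$ and $\lambda_h$ on a common non-zero projection $e$, then $\lambda_{g^{-1}h}$ would restrict to the identity on $eA$, contradicting proper outerness of $\lambda_{g^{-1}h}$ for $g \neq h$ (this is the mechanism already used in the proof of Proposition 8.20 via Lemma 8.19). Establishing that the $e_g$ exhaust the identity is the delicate part: it amounts to showing that $D(w w^*) = 1$ forces $\sum_g a_g a_g^* = 1$ in the monotone-complete sense, and that no "mass" of $w$ escapes the Fourier decomposition — precisely the content of condition (ii) reincarnated through Proposition 8.15. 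I would therefore organise the hard step as: (a) show the source projections of the $a_g$ are orthogonal (freeness), (b) show they sum to $1$ (Mercer–Bures convergence plus faithfulness of $D$), and only then assemble $w = \bigvee_g a_g u_{g^{-1}} \in M_0$.
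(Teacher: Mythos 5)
Your middle section does identify the correct core structure, and it matches the paper's: the intertwining relation obtained by applying $D$ to $wau_{g}=\sigma(a)wu_{g}$, the support projections $p_{g}$ of the Fourier coefficients $D(wu_{g})$, their mutual orthogonality via proper outerness of $\lambda_{h^{-1}g}$, and their exhaustion of $1$ via hypothesis (ii) together with $ww^{\ast}=1$. The genuine gap lies in both of your assembly steps, which rest on the same invalid inference: that a sequence $(w_{n})$ in $M_{0}$ with $\bigwedge_{n}D((w-w_{n})(w-w_{n})^{\ast})=0$ forces $w\in M_{0}$ "because $M_{0}$ is monotone closed." Monotone closedness only permits suprema and infima of bounded monotone nets of self-adjoint elements of $M_{0}$; the partial sums $w_{n}=\sum_{g\in F(n)}D(wu_{g})u_{g^{-1}}$ (equally, your $\sum_{g}v_{g}u_{g}e_{g}$, or the expression $w=\bigvee_{g}a_{g}u_{g^{-1}}$, which is not a supremum of anything) are neither self-adjoint nor monotone, and Mercer--Bures convergence is not order convergence of the $w_{n}$ themselves. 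Worse, if your inference were valid it would apply verbatim to \emph{every} $z\in M$ in the cross-product setting, since Proposition 8.15/Lemma 9.4 hold for all $z$ and each summand $D(zu_{g})u_{g^{-1}}$ lies in $M_{0}$ trivially ($D(zu_{g})\in A\subset M_{0}$). You would then have proved $M=M_{0}$, i.e.\ that the "big" monotone cross-product equals its normaliser subalgebra --- which the paper explicitly flags (after Lemma 9.3, and again in Section 12) as an open problem. So your headline plan proves too much, and the fallback "summing these over $G$\ldots legitimate because $M_{0}$ is monotone closed" is the same gap in different clothing. (A secondary point: Theorem 10.1 is stated abstractly, where Proposition 8.15 is not even available; the paper gets $\sum p_{g}=1$ directly from (ii): if $q\perp p_{g}$ for all $g$ then $D(qwu_{g})=qD(wu_{g})=0$, so $qw=0$, so $q=0$ by unitarity.)

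The paper closes exactly this gap by never taking an infinite sum of non-monotone terms. From the two orthogonal families $p_{g}$ and $q_{g}=\lambda_{g}(p_{g})$, each summing to $1$, it forms the partial isometries $v_{g}=u_{g}p_{g}\in M_{0}$ (with $v_{g}^{\ast}v_{g}=p_{g}$, $v_{g}v_{g}^{\ast}=q_{g}$) and invokes the General Additivity of Equivalence for AW*-algebras (page 129 of \cite{c}) to produce a single unitary $v\in M_{0}$ with $vp_{g}=u_{g}p_{g}$ for all $g$. It then verifies $(\sigma(a)-v^{\ast}av)p_{g}=0$ for every $g$, whence $\sigma(a)=v^{\ast}av$ by a range-projection argument; thus $v^{\ast}w$ commutes with $A$, maximal abelianness gives $v^{\ast}w\in A\subset M_{0}$, and $w=v(v^{\ast}w)\in M_{0}$. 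If you wish to keep your formulation, you must replace the phrase "monotone/$\sigma$-limit" by an appeal to this additivity theorem (or an equivalent result on summing orthogonal families of partial isometries in an AW*-algebra), and then identify the resulting unitary with $w$ by an annihilator or maximal-abelianness argument; without that ingredient the proof does not go through.
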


\begin{proof}
Let $w$ be a unitary in $M$ which normalises $A$. Let $\sigma$ be the
automorphism of $A$ induced by $w$. Then, for each $a\in A$, we have
$waw^{\ast}=\sigma(a).$ So $wa=\sigma(a)w.$ Hence, for each $g,$ we have%
\[
D(wau_{g})=D(\sigma(a)wu_{g}).
\]
But $D$ is a conditional expectation. So%
\[
D(wu_{g})u_{g}^{\ast}au_{g}=D(wau_{g})=D(\sigma(a)wu_{g})=\sigma(a)D(wu_{g}).
\]
Because $A$ is abelian, it follows that $(\sigma(a)-\lambda_{g}^{-1}%
(a))D(wu_{g})=0.$

Let $p_{g}$ be the range projection of $D(wu_{g})D(wu_{g})^{\ast}$ in $A.$ So,
for every $a\in A,$%
\[
\text{(\#) \ \ \ \ }(\sigma(a)-\lambda_{g}^{-1}(a))p_{g}=0.
\]
Fix $g$ and $h$ with $g\neq h,$ and let $e$ be the projection $p_{g}p_{h}$.
Then we have, for each $a\in A,$%
\[
(\lambda_{h}^{-1}(a)-\lambda_{g}^{-1}(a))e=(\sigma(a)-\lambda_{g}%
^{-1}(a))p_{g}p_{h}-(\sigma(a)-\lambda_{h}^{-1}(a))p_{h}p_{g}=0.
\]
Let $b$ be any element of $A$ and let $a=\lambda_{g}(b)$. Then $(\lambda
_{h^{-1}g}(b)-b)e=0.$ If $e\neq0,$ then by (i) it follows that $h^{-1}g$ is
the identity element of $G.$ But this implies $g=h$, which is a contradiction.
So $0=e=p_{g}p_{h}.$ So $\{p_{g}:g\in G\}$ is a (countable) family of
orthogonal projections.

Let $q$ be a projection in $A$ which is orthogonal to each $p_{g}.$ Then
$qD(wu_{g})=qp_{g}D(wu_{g})=0.$ So $D(qwu_{g})=0$ for each $g\in G.$ Hence, by
applying hypothesis (ii), $qw=0.$ But $ww^{\ast}=1.$ So $q=0.$ Thus $%
{\textstyle\sum}
p_{g}=1.$

From (\#) we see that%
\[
(\lambda_{g}\sigma(a)-a)\lambda_{g}(p_{g})=0.
\]
We define $q_{g}$ to be the projection $\lambda_{g}(p_{g})$. Then%
\[
\text{(\# \#) \ \ \ }(a-\lambda_{g}\sigma(a))q_{g}=0.
\]
By arguing in a similar fashion to the above, we find that $\{q_{g}:g\in G\}$
is a family of orthogonal projections in $A$ with $%
{\textstyle\sum}
q_{g}=1.$

For each $g\in G,$ let $v_{g}=u_{g}p_{g}.$ Then $v_{g}$ is in $M_{0}$ and is a
partial isometry with $v_{g}v_{g}^{\ast}=q_{g}$ and $v_{g}^{\ast}v_{g}=p_{g}.$
By the General Additivity of Equivalence for AW*-algebras, see page 129
\cite{c}, there exists a unitary $v$ in $M_{0}$ such that $q_{g}v=v_{g}$ and
$vp_{g}=v_{g}p_{g}=u_{g}p_{g}.\mathbf{\ }$

From (\#), for each $a\in A,$

$\sigma(a)p_{g}=u_{g}^{\ast}au_{g}p_{g}=p_{g}v^{\ast}avp_{g}=v^{\ast}avp_{g}.
$

So $(\sigma(a)-v^{\ast}av)p_{g}=0.$ Let $y=\sigma(a)-v^{\ast}av.$ Then
$y^{\ast}yp_{g}=0.$ So the range projection of $y^{\ast}y$ is orthogonal to
$p_{g}$ for each $g,$ and hence is $0.$ So $y=0.$ It now follows that
$waw^{\ast}=vav^{\ast}$ for each $a\in A.$ Then $v^{\ast}w$ \ commutes with
each element of $A.$ Since $A$ is maximal abelian in $M$ it follows that
$v^{\ast}w$ is in $A.$ Since $v$ is in $M_{0},$ it now follows that $w$ is in
$M_{0}.$
\end{proof}

We note that the above theorem does not require the action $g\rightarrow
\lambda_{g}$ to be ergodic.

\section{Free dense actions of the Dyadic Group}

We have said a great deal about $G-$actions with a free dense orbit and the
algebras associated with them. It is incumbent on us to provide examples. We
do this in this section. We have seen that when constructing monotone complete
algebras from the action of a countably infinite group $G$ on an extremally
disconnected space $S,$ what matters is the orbit equivalence relation induced
on $S.$ When the action of $G$ has a free, dense orbit in $S $ then we have
shown that the orbit equivalence relation (and hence the associated algebras)
can be obtained from an action of $%
{\textstyle\bigoplus}
\mathbb{Z}
_{2}$ with a free, dense orbit. So, when searching for free, dense group
actions, it suffices to find them when \ the group is $%
{\textstyle\bigoplus}
\mathbb{Z}
_{2}.$

In this section we construct such actions of $%
{\textstyle\bigoplus}
\mathbb{Z}
_{2}.$ As an application, we will find $2^{c}$ hyperfinite factors which take
$2^{c}$\ different values in the weight semigroup \cite{zk}.

We begin with some purely algebraic considerations before introducing
topologies and continuity. We will end up with a huge number of examples.

We use $F(S)$ to denote the collection of all finite subsets of a set $S.$ We
shall always regard the empty set, the set with no elements, as a finite set.
We use $%
\mathbb{N}
$ to be the set of natural numbers, excluding $0.$ Let $C=\{f_{\mathbf{k}%
}:\mathbf{k}\in F(%
\mathbb{N}
)\}$ be a countable set where $\mathbf{k}\rightarrow f_{\mathbf{k}}$ is a
bijection. For each $n\in%
\mathbb{N}
$ let $\sigma_{n}$ be defined on $C$ by%

\[
\sigma_{n}(f_{\mathbf{k}})=\left\{
\begin{array}
[c]{cc}%
f_{\mathbf{k}\setminus\{n\}} & n\in\mathbf{k}\\
f_{\mathbf{k}\cup\{n\}} & \text{if $n\notin\mathbf{k.}$}%
\end{array}
\right.
\]

\begin{lemma}
(i) For each $n,$ $\sigma_{n}$ is a bijection of $C$ onto $C,$ and $\sigma
_{n}\sigma_{n}=id$, where $id$ is the identity map on $C.$

(ii) When $m\neq n$ then $\sigma_{m}\sigma_{n}=\sigma_{n}\sigma_{m}.$
\end{lemma}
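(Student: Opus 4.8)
The plan is to reduce everything to a single clean observation: the map on index sets underlying $\sigma_{n}$ is just the symmetric difference with the singleton $\{n\}$. Indeed, inspecting the two cases in the definition, if $n\in\mathbf{k}$ then $\mathbf{k}\setminus\{n\}=\mathbf{k}\,\triangle\,\{n\}$, and if $n\notin\mathbf{k}$ then $\mathbf{k}\cup\{n\}=\mathbf{k}\,\triangle\,\{n\}$, so in both cases $\sigma_{n}(f_{\mathbf{k}})=f_{\mathbf{k}\,\triangle\,\{n\}}$. Since $\mathbf{k}\mapsto f_{\mathbf{k}}$ is a bijection between $F(\mathbb{N})$ and $C$, every statement about $\sigma_{n}$ on $C$ translates into the corresponding statement about symmetric difference on $F(\mathbb{N})$, where the relevant algebra is completely standard: $(F(\mathbb{N}),\triangle)$ is an abelian group (in fact an elementary abelian $2$-group) with $\triangle$ commutative and associative, identity $\varnothing$, and each element its own inverse.

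For part (i), I would first note well-definedness, which is immediate since the bijection $\mathbf{k}\mapsto f_{\mathbf{k}}$ guarantees that $f_{\mathbf{k}\,\triangle\,\{n\}}$ depends only on $f_{\mathbf{k}}$. Then I would compute $\sigma_{n}\sigma_{n}(f_{\mathbf{k}})=\sigma_{n}(f_{\mathbf{k}\,\triangle\,\{n\}})=f_{(\mathbf{k}\,\triangle\,\{n\})\,\triangle\,\{n\}}=f_{\mathbf{k}\,\triangle\,(\{n\}\,\triangle\,\{n\})}=f_{\mathbf{k}\,\triangle\,\varnothing}=f_{\mathbf{k}}$, using associativity of $\triangle$ and $\{n\}\,\triangle\,\{n\}=\varnothing$. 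Thus $\sigma_{n}\sigma_{n}=id$, and this identity alone forces $\sigma_{n}$ to be a bijection of $C$ onto itself with $\sigma_{n}^{-1}=\sigma_{n}$. (For readers who prefer to avoid the $\triangle$ notation, the same conclusion follows by a direct two-case check, verifying that applying $\sigma_{n}$ to $f_{\mathbf{k}\setminus\{n\}}$, resp.\ $f_{\mathbf{k}\cup\{n\}}$, returns $f_{\mathbf{k}}$; but the symmetric-difference formulation makes the bookkeeping transparent.)

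For part (ii), with $m\neq n$, I would again pass through the index sets: $\sigma_{m}\sigma_{n}(f_{\mathbf{k}})=f_{(\mathbf{k}\,\triangle\,\{n\})\,\triangle\,\{m\}}$ and $\sigma_{n}\sigma_{m}(f_{\mathbf{k}})=f_{(\mathbf{k}\,\triangle\,\{m\})\,\triangle\,\{n\}}$, and these agree by commutativity and associativity of $\triangle$. (Here the hypothesis $m\neq n$ is not even needed for commutativity, since toggling two distinct coordinates obviously commutes; it is only recorded to match the statement.) There is no genuine obstacle in this lemma: the content is entirely the elementary group structure of $(F(\mathbb{N}),\triangle)$, and the only point requiring any care is making the reduction from $C$ to $F(\mathbb{N})$ explicit, i.e.\ recording that $\sigma_{n}=f\circ(\,\cdot\,\triangle\,\{n\})\circ f^{-1}$ where $f:\mathbf{k}\mapsto f_{\mathbf{k}}$, so that the $\sigma_{n}$ inherit the relations $\sigma_{n}^{2}=id$ and $\sigma_{m}\sigma_{n}=\sigma_{n}\sigma_{m}$ directly from $\bigoplus\mathbb{Z}_{2}$. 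This also foreshadows the intended role of the $\sigma_{n}$ as generators of a $\bigoplus\mathbb{Z}_{2}$-action.
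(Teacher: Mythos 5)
Your proof is correct, and it takes a slightly different (and cleaner) route than the paper's. The paper disposes of (i) by noting that $\sigma_{n}\sigma_{n}=id$ is clear from the two-case definition, and proves (ii) by a direct verification split into four cases, according to whether $\mathbf{k}$ contains neither of $m,n$, both, or exactly one of them. You instead observe that $\sigma_{n}(f_{\mathbf{k}})=f_{\mathbf{k}\,\triangle\,\{n\}}$, i.e.\ that under the bijection $\mathbf{k}\mapsto f_{\mathbf{k}}$ each $\sigma_{n}$ is conjugate to translation by $\{n\}$ in the abelian group $(F(\mathbb{N}),\triangle)$, so the involution property and the commutativity are both inherited from standard facts about symmetric difference (the group is abelian and every element is its own inverse). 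The mathematical substance is the same---proving associativity and commutativity of $\triangle$ from scratch would amount to the same finite case checks the paper performs---but your formulation eliminates the explicit casework and, more importantly, makes visible why the construction works: the $\sigma_{n}$ generate a copy of $\bigoplus\mathbb{Z}_{2}$ acting on $C$, which is precisely what the paper exploits immediately afterwards when it defines $\varepsilon_{g}=\sigma_{n_{1}}\sigma_{n_{2}}\cdots\sigma_{n_{p}}$ and asserts that $g\rightarrow\varepsilon_{g}$ is a homomorphism. The paper's approach buys brevity and requires no identification of any group structure; yours buys conceptual clarity and places the later freeness statement (Lemma 11.2) in the same algebraic framework. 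Your parenthetical remark that $m\neq n$ is not needed for commutativity is also correct, since for $m=n$ the claim is trivial.
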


\begin{proof}
(i) It is clear that $\sigma_{n}\sigma_{n}=id$ and hence $\sigma_{n}$ is a bijection.\ 

(ii) Fix $f_{\mathbf{k}}$. Then we need to show $\sigma_{m}\sigma
_{n}(f_{\mathbf{k}})=\sigma_{n}\sigma_{m}(f_{\mathbf{k}}).$ This is a
straightforward calculation, considering separately the four cases when
$\mathbf{k}$ contains neither $m$ nor $n,$ contains both $m$ and $n,$ contains
$m$ but not $n$ and contains $n$ but not $m.$
\end{proof}

\bigskip

We recall that the Dyadic Group, $%
{\textstyle\bigoplus}
\mathbb{Z}
_{2},$ can be identified with the additive group of functions from $\mathbb{N}
$ to $%
\mathbb{Z}
_{2},$ where each function takes only finitely many non-zero values. For each
$n\in\mathbb{N}$, let $g_{n}$, be the element defined by $g_{n}(m)=\delta
_{m,n}$ for all $m\in\mathbb{N}$. Then $\{g_{n}\ |\ n\in\mathbb{N}\}$ is a set
of generators of $%
{\textstyle\bigoplus}
\mathbb{Z}
_{2}$.

Take any $g\in%
{\textstyle\bigoplus}
\mathbb{Z}
_{2}$ then $g$ has a unique representation as $g=g_{n_{1}}+\cdots+g_{n_{p}}$
where $1\leq n_{1}<\cdots<n_{p}$ or $g$ is the zero. Let us define
\[
\varepsilon_{g}=\sigma_{n_{1}}\sigma_{n_{2}}...\sigma_{n_{p}}.
\]
Here we adopt the notational convention that $\sigma_{n_{1}}\sigma_{n_{2}%
}...\sigma_{n_{p}}$ denotes the identity map of $C$ onto itself when
$\{n_{1},..n_{p}\}=\emptyset.$

Then $g\rightarrow\varepsilon_{g}$ is a group homomorphism of $%
{\textstyle\bigoplus}
\mathbb{Z}
_{2}$ into the group of bijections of $C$ onto $C.$ It will follow from Lemma
11.2 (ii) that this homomorphism is injective.

\begin{lemma}
(i) $C=\{ \varepsilon_{g}(f_{\emptyset}):g\in\allowbreak%
{\textstyle\bigoplus}
\mathbb{Z}
_{2}\}=\{ \sigma_{n_{1}}\sigma_{n_{2}}...\sigma_{n_{p}}(f_{\emptyset}%
):\{n_{1},n_{2}\cdots,n_{p}\} \in F(%
\mathbb{N}
)\}$.

In other words $C$ is an orbit.

(ii) For each $\mathbf{k}\in F(%
\mathbb{N}
),$ where $\mathbf{k}=\{n_{1},...,n_{p}\},$%
\[
\sigma_{n_{1}}\sigma_{n_{2}}...\sigma_{n_{p}}(f_{\mathbf{k}})=f_{\mathbf{k}}%
\]
only if $\sigma_{n_{1}}\sigma_{n_{2}}...\sigma_{n_{p}}=id.$
\end{lemma}

\begin{proof}
(i) Let $\mathbf{k}=\{n_{1},\cdots,n_{p}\}$ where $n_{i}\neq n_{j}$ for $i\neq
j.$ Then $\sigma_{n_{1}}\sigma_{n_{2}}...\sigma_{n_{p}}(f_{\emptyset
})=f_{\mathbf{k}}.$

(ii) Assume this is false. Then, for some $\mathbf{k}\in F(%
\mathbb{N}
)$ we have $\sigma_{n_{1}}\sigma_{n_{2}}...\sigma_{n_{p}}(f_{\mathbf{k}%
})=f_{\mathbf{k}}$ where $\sigma_{n_{1}}\sigma_{n_{2}}...\sigma_{n_{p}}$ is
not the identity map. So we may assume, without loss of generality, that
$\{n_{1},n_{2},...n_{p}\}=\mathbf{m}$ is a non-empty set of $p$ natural numbers.

First consider the case where $\mathbf{k}$ is the empty set. Then
$\sigma_{n_{1}}\sigma_{n_{2}}...\sigma_{n_{p}}(f_{\mathbf{\emptyset}%
})=f_{\mathbf{\emptyset}}.$ So $f_{\mathbf{m}}=f_{\mathbf{\emptyset}}.$ But
this is not possible because the map $\mathbf{k}\rightarrow f_{\mathbf{k}}$ is injective.

So $\mathbf{k}$ cannot be the empty set; let $\mathbf{k=}$ $\{m_{1}%
,m_{2},...m_{q}\}$. Then $\sigma_{m_{1}}\sigma_{m_{2}}...\sigma_{m_{q}%
}(f_{\emptyset})=f_{\mathbf{k}}.$ Hence%
\[
\sigma_{n_{1}}\sigma_{n_{2}}...\sigma_{n_{p}}\sigma_{m_{1}}\sigma_{m_{2}%
}...\sigma_{m_{q}}(f_{\emptyset})=\sigma_{m_{1}}\sigma_{m_{2}}...\sigma
_{m_{q}}(f_{\emptyset}).
\]
On using the fact that the $\sigma_{j}$ are idempotent and mutually
commutative, we find that $\sigma_{n_{1}}\sigma_{n_{2}}...\sigma_{n_{p}%
}(f_{\emptyset})=f_{\emptyset}.$ But, from the above argument, this is
impossible. So (ii) is proved.
\end{proof}

In \cite{zk} we consider the "Big Cantor Space" $\{0,1\}^{%
\mathbb{R}
},$ which is compact, totally disconnected and separable but not metrisable or
second countable. In \cite{zk} we pointed out that each compact, separable,
totally disconnected space is homeomorphic to a subspace of $\{0,1\}^{%
\mathbb{R}
}.$ Let $C$ be a countable subset of $\{0,1\}^{%
\mathbb{R}
}$ then $clC$, the closure of $C,$ is a compact separable, totally
disconnected space. This implies that $C$ is completely regular and hence has
a Stone-Czech compactification $\beta C.$

We recall from the work of Section 6, that the regular $\sigma-$completion of
$C(clC)$ is monotone complete and can be identified with $B^{\infty
}(clC)/M(clC).$ Let $\widehat{clC\text{ }}$ be the \ maximal ideal space of
$B^{\infty}(clC)/M(clC).$ Then this may be identified with the Stone space of
the complete Boolean algebra of regular open subsets of $clC.$ By varying $C$
in a carefully controlled way, we exhibited $2^{c}$ essentially different
extremally disconnected spaces in the form $\widehat{clC}$.

For each of these spaces $\widehat{clC}$\ we shall construct an action of $%
{\textstyle\bigoplus}
\mathbb{Z}
_{2}$ with a free dense orbit.

We need to begin by recalling some notions from \cite{zk}. A pair
$(T,\mathbf{O})$ is said to be \textit{feasible} if it satisfies the following conditions:

(i) $T$ is a set of cardinality $c=2^{\aleph_{0}}; \mathbf{O}=(O_{n}%
)(n=1,2...)$ is an infinite sequence of non-empty subsets of $T$, with
$O_{m}\neq O_{n}$ whenever $m\neq n.$

(ii) Let $M$ be a finite subset of $T$ and $t\in T\backslash M.$ For each
natural number $m$ there exists $n>m$ such that $t\in O_{n}$ and $O_{n}\cap
M=\varnothing.$

In other words\textbf{\ }$\{n\in%
\mathbb{N}
:t\in O_{n}$\textbf{\ }and\textbf{\ }$O_{n}\cap M=\varnothing\}$\ is an
infinite set.

An example satisfying these conditions can be obtained by putting $T=2^{%
\mathbb{N}
},$ the Cantor space and letting $\mathbf{O}$ be an enumeration (without
repetitions) of the (countable) collection of all non-empty clopen subsets.

For the rest of this section $(T,\mathbf{O})$ will be a fixed but arbitrary
feasible pair.

Let $(T,\mathbf{O})$ be a feasible pair and let $R$ be a subset of $T.$ Then
$R$ is said to be \textit{admissible} if

(i) $R$ is a subset of $T,$with $\#R=\#(T\backslash R)=c.$

(ii) $O_{n}$ is not a subset of $R$ for any natural number $n.$

Return to the example where $T$ is the Cantor space and $\mathbf{O}$ an
enumeration of the non-empty clopen subsets. Then, whenever $R$ $\subset2^{%
\mathbb{N}
}$ is nowhere dense and of cardinality $c,$ $R$ is admissible.

Throughout this section the feasible pair is kept fixed and the existence of
at least one admissible set is assumed.\textbf{\ }For the moment, $R$ is a
fixed admissible subset of $T.$ Later on we shall vary $R.$

Since $F(%
\mathbb{N}
)\times F(T)$ has cardinality $c,$ we can identify the Big Cantor space with
$2^{F(%
\mathbb{N}
)\times F(T)}.$ For each $\mathbf{k}\in F(%
\mathbb{N}
),$ let $f_{\mathbf{k}}\in2^{F(%
\mathbb{N}
)\times F(T)}$ be the characteristic function of the set
\[
\{(\mathbf{l},L):L\in F(T\backslash R),\mathbf{l}\subset\mathbf{k}%
\ \text{and}\ O_{n}\cap L=\varnothing\ \text{whenever}\ n\in\mathbf{k}%
\ \text{and}\ n\notin\mathbf{l}\}.
\]

As in \cite{zk}, let $N(t)=\{n\in%
\mathbb{N}
:t\in O_{n}\}.~$By feasibility, this set is infinite for each $t\in T.$ It is
immediate that%

\[
f_{\mathbf{k}}(\mathbf{l},L)=1\text{ precisely when }L\in F(T\backslash
R),\mathbf{l}\subset\mathbf{k}\ \text{and, for each }t\in L,\ N(t)\cap
(\mathbf{k}\backslash\mathbf{l})=\varnothing.
\]

Let $X_{R}$ be the countable set $\{f_{\mathbf{k}}:\mathbf{k}\in F(%
\mathbb{N}
)\}.$ Let $K_{R}$ be the closure of $X_{R}$ in the Big Cantor space. Then
$K_{R}$ is a (separable) compact Hausdorff totally disconnected space with
respect to the relative topology induced by the product topology of the Big
Cantor space. We always suppose $X_{R}$ to be equipped with the relative
topology induced by $K_{R}.$

Let $C=X_{R}.$ If the map $\mathbf{k}\rightarrow f_{\mathbf{k}}$ is an
injection then we can define $\sigma_{n}$ on $X_{R}$ as before.

\begin{lemma}
Let $f_{\mathbf{k}}=f_{\mathbf{m}}.$ Then $\mathbf{k}=\mathbf{m}.$
\end{lemma}

\begin{proof}
By definition, $f_{\mathbf{k}}(\mathbf{l},\emptyset)=1$ precisely when
$\mathbf{l}\subset\mathbf{k}.$ Since $f_{\mathbf{k}}(\mathbf{m},\emptyset
)=f_{\mathbf{m}}(\mathbf{m},\emptyset)=1$ it follows that $\mathbf{m}%
\subset\mathbf{k}.$ Similarly, $\mathbf{k}\subset\mathbf{m}.$ Hence
$\mathbf{m}=\mathbf{k}.$
\end{proof}

For each $(\mathbf{k},K)\in F(%
\mathbb{N}
)\times F(T)$ let $E_{(\mathbf{k},K)}=\{x\in K_{R}:x(\mathbf{k},K)=1\}.$ The
definition of the product topology of the Big Cantor space implies that
$E_{(\mathbf{k},K)}$ and its compliment $E_{(\mathbf{k},K)}^{c}$ are clopen
subsets of $K_{R}$. It also follows from the definition of the product
topology that finite intersections of such clopen sets form a base for the
topology of $K_{R}.$ Hence their intersections with $X_{R}$ give a base for
the relative topology of $X_{R}.$ But we saw in \cite{zk} that, in fact,
$\{E_{(\mathbf{k},K)}\cap X_{R}:\mathbf{k}\in F(%
\mathbb{N}
),K\in F(T\backslash R)\}$ is a base for the topology of $X_{R}.$ Also
$E_{(\mathbf{k},K)}=\emptyset$ unless $K\subset T\backslash R.$

Since each $E_{(\mathbf{k},K)}$ is clopen, it follows from Lemma 4.1, that
$E_{(\mathbf{k},K)}$ is the closure of $E_{(\mathbf{k},K)}\cap X_{R}.$

To slightly simplify our notation, we shall write $E(\mathbf{k},K)$ for
$E_{(\mathbf{k},K)}\cap X_{R}$ and $E_{n}$ for $E_{(\{n\}.\emptyset)}\cap
X_{R}.$ Also $E_{n}^{c}$ is the compliment of $E_{n}$ in $X_{R}$, which is,
$E_{(\{n\}.\emptyset)}^{c}\cap X_{R}.$ We shall see, below, that
$\{f_{\mathbf{h}}:n\notin\mathbf{h}\}=E_{n}^{c}$, equivalently, $E_{n}%
=\{f_{\mathbf{h}}:n\in\mathbf{h}\}.$

When $G$ is a subset of $X_{R}$ we denote its closure in $\beta X_{R}$ by
$clG.$ When $G$ is a clopen subset of $X_{R}$ then $clG$ is a clopen subset of
$\beta X_{R}.~$So the closure of $E_{n}$ in $\beta X_{R}$ is $clE_{n},$
whereas its closure in $K_{R}$ is $E_{(\{n\}.\emptyset)}.$

We need to show that each $\sigma_{n}$ is continuous on $X_{R}.$ Since
$\sigma_{n}$ is equal to its inverse, this implies that $\sigma_{n}$ is a
homeomorphism of $X_{R}$ onto itself.

Our first step to establish continuity of $\sigma_{n}$ is the following.

\begin{lemma}
We have $E_{n}=\{f_{\mathbf{k}}:n\in\mathbf{k\}}$ and $E_{n}^{c}%
=\{f_{\mathbf{m}}:n\notin\mathbf{m\}.}$ Also $\sigma_{n}$ interchanges $E_{n}$
and $E_{n}^{c}.$ Furthermore, for $m\neq n,$ $\sigma_{m}$ maps $E_{n}$ onto
$E_{n}$ and $E_{n}^{c}$ onto $E_{n}^{c}.$
\end{lemma}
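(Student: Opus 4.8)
The plan is to reduce each claim to the explicit criterion for $f_{\mathbf{k}}(\mathbf{l}, L) = 1$ stated just above the lemma, specialised to the single coordinate $(\mathbf{l}, L) = (\{n\}, \emptyset)$, combined with the toggling definition of $\sigma_n$. First I would pin down $E_n$ exactly. By definition $E_n = E_{(\{n\}, \emptyset)} \cap X_R$, so $f_{\mathbf{k}} \in E_n$ exactly when $f_{\mathbf{k}}(\{n\}, \emptyset) = 1$. Reading off the criterion with $\mathbf{l} = \{n\}$ and $L = \emptyset$: the requirement $L \in F(T \setminus R)$ holds automatically because $\emptyset \in F(T \setminus R)$; the requirement $\mathbf{l} \subset \mathbf{k}$ becomes simply $n \in \mathbf{k}$; and the condition ``$N(t) \cap (\mathbf{k} \setminus \mathbf{l}) = \varnothing$ for every $t \in L$'' is vacuously satisfied since $L$ is empty. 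Hence $f_{\mathbf{k}}(\{n\}, \emptyset) = 1$ if and only if $n \in \mathbf{k}$, giving $E_n = \{f_{\mathbf{k}} : n \in \mathbf{k}\}$ and, since $X_R$ is partitioned by these functions (the indexing $\mathbf{k} \mapsto f_{\mathbf{k}}$ being injective by Lemma 11.4), $E_n^c = \{f_{\mathbf{m}} : n \notin \mathbf{m}\}$.

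Once $E_n$ is identified in this way, the whole lemma becomes a statement about how the index sets change, because $\sigma_n$ simply toggles membership of $n$. For the interchange claim I would note that if $n \in \mathbf{k}$ then $\sigma_n(f_{\mathbf{k}}) = f_{\mathbf{k} \setminus \{n\}}$ with $n \notin \mathbf{k} \setminus \{n\}$, so $\sigma_n$ carries $E_n$ into $E_n^c$; symmetrically, if $n \notin \mathbf{k}$ then $\sigma_n(f_{\mathbf{k}}) = f_{\mathbf{k} \cup \{n\}} \in E_n$, so $\sigma_n$ carries $E_n^c$ into $E_n$. Because $\sigma_n$ is an involution (Lemma 11.2(i)), these two inclusions force the equalities $\sigma_n(E_n) = E_n^c$ and $\sigma_n(E_n^c) = E_n$, which is exactly the assertion that $\sigma_n$ interchanges $E_n$ and $E_n^c$.

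For the final claim I would fix $m \neq n$ and observe that toggling $m$ leaves the membership of $n$ unchanged: for either possible value of $\sigma_m(f_{\mathbf{k}})$, namely $f_{\mathbf{k} \setminus \{m\}}$ or $f_{\mathbf{k} \cup \{m\}}$, the index still contains $n$ precisely when $\mathbf{k}$ does. Thus $\sigma_m$ maps $E_n$ into $E_n$ and $E_n^c$ into $E_n^c$, and invoking the involution property of $\sigma_m$ once more upgrades these inclusions to $\sigma_m(E_n) = E_n$ and $\sigma_m(E_n^c) = E_n^c$.

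Since every step is a direct unwinding of the definitions of $f_{\mathbf{k}}$ and of $\sigma_n$, I do not expect a genuine obstacle here. The only point demanding care is the opening reduction: one must check that evaluating at the coordinate $(\{n\}, \emptyset)$ really isolates the single condition $n \in \mathbf{k}$, with the set-theoretic condition involving $N(t)$ disappearing precisely because the $T$-component $L$ has been taken to be empty. After that the argument is formal bookkeeping resting on the involutivity and commutativity of the $\sigma_n$ recorded earlier.
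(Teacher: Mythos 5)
Your proposal is correct and follows essentially the same route as the paper: identify $E_{n}$ by evaluating at the coordinate $(\{n\},\emptyset)$ so that membership reduces to $n\in\mathbf{k}$, use the toggling definition of $\sigma_{n}$ to get the inclusions, and invoke the involution property $\sigma_{m}\sigma_{m}=id$ to upgrade inclusions to equalities (the paper phrases this as $\sigma_{m}$ being "idempotent"). The only trivial slip is the citation: the injectivity of $\mathbf{k}\rightarrow f_{\mathbf{k}}$ is Lemma 11.3, not 11.4.
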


\begin{proof}
\bigskip By definition $f_{\mathbf{k}}(\{n\},\emptyset)=1$ if, and only if
$\{n\}\subset\mathbf{k}.$ So $f_{\mathbf{m}}\in E_{n}^{c}$ precisely when
$n\notin\mathbf{m.}$

For $f_{\mathbf{k}}\in E_{n}$ we have $\sigma_{n}(f_{\mathbf{k}}%
)=f_{\mathbf{k\backslash\{n\}}}.$ So $\sigma_{n}$ maps $E_{n}$ onto $E_{n}%
^{c}.$ Similarly, it maps $E_{n}^{c}$ onto $E_{n}.$

When $m\neq n$, consider $f_{\mathbf{k}}\in E_{n}.$ Then $n\in\mathbf{k.}$ So
$n\in\mathbf{k}\cup\{m\}$ and $n\in\mathbf{k}\backslash\{m\}.~$Thus
$\sigma_{m}(f_{\mathbf{k}})$ is in $E_{n}.$i.e.$\sigma_{m}[E_{n}]\subset
E_{n}.$

Since $\sigma_{m}$ is idempotent, we get $\sigma_{m}[E_{n}]=E_{n}.$ Similarly
$\sigma_{m}[E_{n}^{c}]=E_{n}^{c}.$
\end{proof}

\begin{lemma}
The map $\sigma_{n}:X_{R}\rightarrow X_{R}$ is continuous.
\end{lemma}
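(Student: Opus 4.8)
The plan is to work with the base of clopen sets $\{E(\mathbf{k},K):\mathbf{k}\in F(\mathbb{N}),\ K\in F(T\setminus R)\}$ for $X_R$ recalled above, and to establish continuity of $\sigma_n$ by showing that the preimage $\sigma_n^{-1}[E(\mathbf{k},K)]$ of each such basic set is open. I would start from the membership criterion: $f_{\mathbf{j}}\in E(\mathbf{k},K)$ exactly when $\mathbf{k}\subseteq\mathbf{j}$ and $N(t)\cap(\mathbf{j}\setminus\mathbf{k})=\varnothing$ for every $t\in K$. Since $\sigma_n(f_{\mathbf{i}})=f_{\mathbf{i}\setminus\{n\}}$ when $n\in\mathbf{i}$ and $\sigma_n(f_{\mathbf{i}})=f_{\mathbf{i}\cup\{n\}}$ when $n\notin\mathbf{i}$, the condition $\sigma_n(f_{\mathbf{i}})\in E(\mathbf{k},K)$ translates into an explicit condition on the index $\mathbf{i}$, obtained by substituting the toggled index for $\mathbf{j}$. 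I would organise this translation by whether $n\in\mathbf{k}$ and, inside each case, whether $f_{\mathbf{i}}\in E_n$ or $f_{\mathbf{i}}\in E_n^c$, using the preceding lemma to identify $E_n$ with $\{f_{\mathbf{i}}:n\in\mathbf{i}\}$.

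When $n\in\mathbf{k}$, no point of $E_n$ can lie in the preimage: for such a point $\sigma_n$ deletes $n$, producing an index omitting $n$, whereas membership in $E(\mathbf{k},K)$ requires the index to contain $\mathbf{k}\ni n$. For points of $E_n^c$ the map inserts $n$, the containment clause reduces to $\mathbf{k}\setminus\{n\}\subseteq\mathbf{i}$, and (because $n\in\mathbf{k}$) the sets $\mathbf{j}\setminus\mathbf{k}$ and $\mathbf{i}\setminus(\mathbf{k}\setminus\{n\})$ coincide, so the $N(t)$-clauses survive verbatim. Hence $\sigma_n^{-1}[E(\mathbf{k},K)]=E(\mathbf{k}\setminus\{n\},K)\cap E_n^c$, an intersection of two clopen sets, which is open.

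When $n\notin\mathbf{k}$, I would split the preimage into its disjoint traces on $E_n$ and on $E_n^c$. On $E_n$ the map deletes $n$, and the decisive simplification is that the resulting condition, together with $n\in\mathbf{i}$ and $\mathbf{k}\subseteq\mathbf{i}$, is precisely membership in the single basic set $E(\mathbf{k}\cup\{n\},K)$, since $(\mathbf{i}\setminus\{n\})\setminus\mathbf{k}=\mathbf{i}\setminus(\mathbf{k}\cup\{n\})$; this absorbs $n$ harmlessly into the parameter, even when $n\in N(t)$ for some $t\in K$. On $E_n^c$ the map inserts $n$, so $\mathbf{j}\setminus\mathbf{k}=(\mathbf{i}\setminus\mathbf{k})\cup\{n\}$ and the inserted element forces the extra requirement $n\notin N(t)$ for every $t\in K$; as this requirement does not involve $\mathbf{i}$, the trace equals $E(\mathbf{k},K)\cap E_n^c$ when $n\notin\bigcup_{t\in K}N(t)$ and is empty otherwise. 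In either event $\sigma_n^{-1}[E(\mathbf{k},K)]$ is a union of open sets, so $\sigma_n$ is continuous.

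The main obstacle, and the only genuinely non-formal point, is controlling how the freeness clauses $N(t)\cap(\mathbf{j}\setminus\mathbf{k})=\varnothing$ behave under insertion and deletion of $n$: the content of the argument is the two observations that deletion merges $n$ into $\mathbf{k}$ (yielding the clean basic set $E(\mathbf{k}\cup\{n\},K)$) while insertion produces only the index-independent side condition $n\notin N(t)$. Everything else is the routine four-way check on the membership of $n$ in $\mathbf{k}$ and in $\mathbf{i}$. Throughout I would record that $\mathbf{k}\cup\{n\}$ and $\mathbf{k}\setminus\{n\}$ remain in $F(\mathbb{N})$ and that $K$ stays in $F(T\setminus R)$, so that the sets produced really are basic sets (or finite intersections of clopen sets), confirming openness in every case.
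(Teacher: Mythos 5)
Your proposal is correct and takes essentially the same route as the paper: both verify that $\sigma_n^{-1}[E(\mathbf{l},L)]$ is open for each basic clopen set by the same case analysis on whether $n$ lies in the parameter set and in the index of the point, using exactly the identities $(\mathbf{i}\setminus\{n\})\setminus\mathbf{l}=\mathbf{i}\setminus(\mathbf{l}\cup\{n\})$, $(\mathbf{i}\cup\{n\})\setminus\mathbf{l}=\mathbf{i}\setminus(\mathbf{l}\setminus\{n\})$ (for $n\in\mathbf{l}$), and the extra condition $n\notin N(t)$ arising from insertion, and arriving at the same descriptions $E_n^c\cap E(\mathbf{l}\setminus\{n\},L)$ and $E(\mathbf{l}\cup\{n\},L)\cup\bigl(E(\mathbf{l},L)\cap E_n^c\bigr)$ of the preimage. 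The only difference is organisational: the paper argues pointwise (finding a clopen neighbourhood of each point of the preimage, with the condition $\{n\}\cap N(t)=\varnothing$ extracted from the existence of that point), whereas you compute the preimage globally with the explicit caveat that the $E_n^c$ trace is empty when $n\in N(t)$ for some $t$ in the finite set --- a caveat the paper's closing summary formula actually glosses over, so your version is, if anything, slightly more precise.
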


\begin{proof}
It suffices to show that $\sigma_{n}^{-1}[E(\mathbf{l},L)]$ is open when
$L\subset T\backslash R.$

Let $f_{\mathbf{h}}$ be in $\sigma_{n}^{-1}[E(\mathbf{l},L)].$ We shall find,
$U,$ an open neighbourhood of $f_{\mathbf{h}}$ such that $\sigma_{n}%
[U]\subset$ $E(\mathbf{l},L).$

We need to consider three possibilities.

(1) First suppose that $n\in\mathbf{h,}$ that is $f_{\mathbf{h}}\in E_{n}$ $.$
Then $f_{\mathbf{h\backslash\{}n\mathbf{\}}}=\sigma_{n}(f_{\mathbf{h}}), $
which is in $E(\mathbf{l},L).$ So $\mathbf{l}\subset\mathbf{h\backslash
\{}n\mathbf{\}}$ which implies $n\notin\mathbf{l.}$ Also $N(t)\cap
((\mathbf{h}\backslash\{n\})\backslash\mathbf{l})=\emptyset$ for all $t\in L.$
It follows that $\mathbf{l}\cup\{n\}\subset\mathbf{h}$\ and, for all $t\in L,$
$N(t)\cap(\mathbf{h}\backslash(\mathbf{l}\cup\{n\}))=\emptyset.$ Hence
$f_{\mathbf{h}}\in E_{n}\cap E(\mathbf{l}\cup\{n\},L).$

Let $f_{\mathbf{k}}\in E(\mathbf{l}\cup\{n\},L).$ Then $\mathbf{l}\cup\{n\}
\subset\mathbf{k.}$ Also, for $t\in L,$ $N(t)\cap(\mathbf{k}\backslash
(\mathbf{l}\cup\{n\}))=\emptyset.$

Hence $\mathbf{l}\subset\mathbf{k\backslash}\{n\}$ and, for $t\in L,$
$N(t)\cap((\mathbf{k}\backslash\{n\})\backslash\mathbf{l})=\emptyset.$ This
implies $\sigma_{n}(f_{\mathbf{k}})=$ $f_{\mathbf{k}\backslash\{n\}}\in
E(\mathbf{l},L).$ Thus $E(\mathbf{l}\cup\{n\},L)$ is a clopen set, which is a
neighbourhood of $f_{\mathbf{h}}$ and a subset of $\sigma_{n}^{-1}%
[E(\mathbf{l},L)].$

(2) Now suppose $n\notin\mathbf{h}$. Then $f_{\mathbf{h}\cup\{n\}}=\sigma
_{n}(f_{\mathbf{h}})$ which is in $E(\mathbf{l},L).$ This gives

(a) \ $\mathbf{h}\cup\{n\}$ contains $\mathbf{l.}$ (b) For all $t\in L,$
$N(t)\cap((\mathbf{h}\cup\{n\})\backslash\mathbf{l})\mathbf{=}\emptyset.$\ (c)
$f_{\mathbf{h}}\in E_{n}^{c}.$

Suppose, additionally, that $n\in\mathbf{l}$. Then $(\mathbf{h}\cup
\{n\})\backslash\mathbf{l}=\mathbf{h}\backslash(\mathbf{l\backslash
}\{n\}\mathbf{).}$ So, for all $t\in L,$ $N(t)\cap(\mathbf{h}\backslash
(\mathbf{l\backslash}\{n\}))\mathbf{=}\emptyset.$ So $f_{\mathbf{h}}$ is in
$E(\mathbf{l}\backslash\{n\},L).$ Hence, by (c) $f_{\mathbf{h}}$ is in
$E_{n}^{c}\cap E(\mathbf{l}\backslash\{n\},L).$

Now let $f_{\mathbf{k}}\in$ $E_{n}^{c}\cap E(\mathbf{l}\backslash\{n\},L).$
Since $f_{\mathbf{k}}\in$ $E_{n}^{c},$ it follows that $n\notin\mathbf{k}$. So
$f_{\mathbf{k}\cup\{n\}}=\sigma_{n}(f_{\mathbf{k}}).$

Since $f_{\mathbf{k}}\in E(\mathbf{l}\backslash\{n\},L),$ we have
$\mathbf{l\backslash}\{n\}\subset\mathbf{k}.$ So $\mathbf{l}\subset
\mathbf{k}\cup\{n\}.$ Also $(\mathbf{k}\cup\{n\})\backslash\mathbf{l}%
=\mathbf{k}\backslash(\mathbf{l}\backslash\{n\}).$ So, for any $t\in
L,N(t)\cap((\mathbf{k}\cup\{n\})\backslash\mathbf{l})=\emptyset$. Thus
$f_{\mathbf{k}\cup\{n\}}\in E(\mathbf{l},L).$ That is, $\sigma_{n}%
(f_{\mathbf{k}})\in E(\mathbf{l},L).$

(3) We now suppose that $n\notin\mathbf{h}$ and $n\notin\mathbf{l}$. As in
(2), statements (a), (b) and (c) hold.

Note$\ \ \mathbf{h}\backslash\mathbf{l=}\ (\mathbf{h}\cup\{n\})\backslash
(\mathbf{l}\cup\{n\}).$ It follows from (b)\ that $N(t)\cap$ $\ (\mathbf{h}%
\backslash\mathbf{l)=\emptyset}$ for each $t\in L.$ Hence $f_{\mathbf{h}}\in
E(\mathbf{l},L)\cap E_{n}^{c}.$

We also observe that, because $n\notin\mathbf{l},$ (b) implies that (d) $\{n\}
\cap N(t)\mathbf{=\emptyset}$ for each $t\in L.$

Now let $f_{\mathbf{k}}\in E(\mathbf{l},L)\cap E_{n}^{c}.$ Then $n\notin%
\mathbf{k}.$So $\sigma_{n}(f_{\mathbf{k}})=f_{\mathbf{k}\cup\{n\}}.$

Also $\mathbf{l}\subset\mathbf{k}$ and $N(t)\cap(\mathbf{k}\backslash
\mathbf{l})=\emptyset$ for any $t\in L.$ It now follows from (d) that
$((\mathbf{k}\cup\{n\})\backslash\mathbf{l})\cap N(t)\mathbf{=\emptyset}$
whenever $t\in L.$ Hence $f_{\mathbf{k}\cup\{n\}}\in E(\mathbf{l},L).$ Thus
$E(\mathbf{l},L)\cap E_{n}^{c}$ is a clopen neighbourhood of $f_{\mathbf{h}}$
and it is a subset of $\sigma_{n}[E(\mathbf{l},L)].$

It follows from (1), (2) and (3) that every point of $\sigma_{n}%
[E(\mathbf{l},L)]$ has an open neighbourhood contained in $\sigma
_{n}[E(\mathbf{l},L)].$ In other words, the set is open.

More precisely: When $n\in\mathbf{l},$ we have $\sigma_{n}[E(\mathbf{l}%
,L)]=E_{n}^{c}\cap E(\mathbf{l}\backslash\{n\},L)$ and for $n\notin%
\mathbf{l},$ we find that

$\sigma_{n}[E(\mathbf{l},L)]=E(\mathbf{l}\cup\{n\},L)%
{\textstyle\bigcup}
E(\mathbf{l},L)\cap E_{n}^{c}.$
\end{proof}

We recall that $X_{R\text{ }}$is completely regular because it is a subspace
of the compact Hausdorff space $K_{R}.$ Let $\beta X_{R}$ be its Stone-Czech
compactification. Then each continuous function $f:X_{R\text{ }}\rightarrow
X_{R}$ has a unique extension to a continuous function $F$\ from $\beta X_{R}
$ to $\beta X_{R}.$ When $f$ is a homeomorphism, then by considering the
extension of $f^{-1}$ it follows that $F$ is a homeomorphism of $\beta X_{R}.
$ In particular, each $\sigma_{n}$ has a unique extension to a homeomorphism
of $\beta X_{R}.$ We abuse our notation by also denoting this extension by
$\sigma_{n}.$

Let us recall from Section 6, that when $\theta$ is in $Homeo(\beta X_{R})$
then it induces an automorphism $h_{\theta}$ of $C(\beta X_{R})$ by
$h_{\theta}(f)=f\circ\theta.$ It also induces an automorphism of $B^{\infty
}(\beta X_{R})/M(\beta X_{R})$ by $H_{\theta}([F])=[F\circ\theta].$ Then
$H_{\theta}$ is the unique automorphism of $B^{\infty}(\beta X_{R})/M(\beta
X_{R})$ which extends $h_{\theta}.$ Let $S_{R}$ be the (extremally
disconnected) structure space of $B^{\infty}(\beta X_{R})/M(\beta X_{R})$;
this algebra can then be identified with $C(S_{R}).$ Then $H_{\theta}$
corresponds to $\widehat{\theta},$ an homeomorphism of $S_{R}.$ Then
$\theta\rightarrow h_{\theta}$ is a group anti-isomorphism of $Homeo(\beta
X_{R})$ onto $AutC(\beta X_{R});$ $h_{\theta}\rightarrow H_{\theta}$ is an
isomorphism of $AutC(\beta X_{R})$ into $AutC(S_{R}).$ Also $H_{\theta
}\rightarrow\widehat{\theta}$ is a group anti-isomorphism of $AutC(S_{R})$
into $Homeo(S_{R}).$ When $G$ \ is an Abelian subgroup of $Homeo(\beta X_{R})$
it follows that $\theta\rightarrow H_{\theta}$ and $\theta\rightarrow
\widehat{\theta}$ are group isomorphisms of $G$ into $AutC(S_{R})$ and
$Homeo(S_{R})$, respectively.

We recall that $g\rightarrow\varepsilon_{g}$ is an injective group
homomorphism of $%
{\textstyle\bigoplus}
\mathbb{Z}
_{2}$ into the group of bijections of $C$ onto $C.$ By taking the natural
bijection from $C$ onto $X_{R},$ and by applying Lemma 11.2 and Lemma 11.5, we
may regard $\varepsilon_{\ast}$ as an injective group homomorphism of $%
{\textstyle\bigoplus}
\mathbb{Z}
_{2}$ into $Homeo(X_{R}).$ Since each homeomorphism of $X_{R}$ onto itself has
a unique extension to a homeomorphism of $\beta X_{R}$ onto itself, we may
identify $\varepsilon_{\ast}$ with an injective group homomorphism of $%
{\textstyle\bigoplus}
\mathbb{Z}
_{2}$ into the group $Homeo(\beta X_{R}).$ This induces a group isomorphism,
$g\rightarrow\widehat{\varepsilon}^{g}$ from $%
{\textstyle\bigoplus}
\mathbb{Z}
_{2}$ into $AutoC(S_{R})$ by putting $\widehat{\varepsilon}^{g}=H_{\varepsilon
_{g}}.$ The corresponding isomorphism, $g\rightarrow\widehat{\varepsilon}%
_{g},$ from $%
{\textstyle\bigoplus}
\mathbb{Z}
_{2}$ into $Homeo(S_{R}),$ is defined by
\[
\widehat{\varepsilon}_{g}=\widehat{\varepsilon_{g}}\text{ for each }g\in%
{\textstyle\bigoplus}
\mathbb{Z}
_{2}.
\]

As in Section 6, $\rho$ is the continuous surjection from $S_{R}$ onto $\beta
X_{R}$ which is dual to the natural injection from $C(\beta X_{R})$ into
$\ B^{\infty}(\beta X_{R})/M(\beta X_{R})\simeq C(S_{R}).$ Let $s_{0}\in
S_{R}$ such that $\rho(s_{0})=f_{\emptyset}.$

\begin{theorem}
Let $g\rightarrow\widehat{\varepsilon}_{g}$ be the representation of $%
{\textstyle\bigoplus}
\mathbb{Z}
_{2}$, as homeomorphisms of $S_{R},$ as defined above. Then the orbit
$\{\widehat{\varepsilon_{g}}(s_{0}):g\in%
{\textstyle\bigoplus}
\mathbb{Z}
_{2}\}$ is a free, dense orbit in $S_{R}.$ There exists $Y,$ a dense
$G_{\delta}$ subset of $S_{R},$ with $s_{0}\in Y,$ such that $Y$ is invariant
under the action $\widehat{\varepsilon}$ and the action $\widehat{\varepsilon
}$ is free on $Y.$
\end{theorem}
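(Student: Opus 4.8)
The plan is to obtain the theorem as a direct application of Theorem 6.8, with $G=\bigoplus\mathbb{Z}_2$, with the compact Hausdorff space of that theorem taken to be $\beta X_R$, and with $S=S_R$. First I would record that the action $g\mapsto\varepsilon_g$ of $\bigoplus\mathbb{Z}_2$ on $X_R$ consists of homeomorphisms (each $\sigma_n$ is an involutive bijection by Lemma 11.1 and is continuous by Lemma 11.5, hence a homeomorphism, and each $\varepsilon_g$ is a finite product of these), and that every such homeomorphism of $X_R$ extends uniquely to a homeomorphism of $\beta X_R$. Thus $g\mapsto\varepsilon_g$ is an action of $\bigoplus\mathbb{Z}_2$ on $\beta X_R$ by homeomorphisms. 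I would also note that the induced action $g\mapsto\widehat{\varepsilon}_g$ on $S_R$, assembled through $\varepsilon_g\to H_{\varepsilon_g}\to\widehat{\varepsilon_g}$, is precisely the induced action of Section 6, so that the surjection $\rho:S_R\to\beta X_R$ is the map of Corollary 6.3 and $\rho(s_0)=f_\emptyset$.

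It then remains only to verify that the orbit of the point $x_0=f_\emptyset$ under $\bigoplus\mathbb{Z}_2$ acting on $\beta X_R$ is both dense and free. For density, Lemma 11.2(i) identifies this orbit with the whole countable set $C=X_R$, and $X_R$ is dense in its Stone-Czech compactification $\beta X_R$ by construction; hence the orbit is dense. For freeness, I would use the explicit form in Lemma 11.2(i): if $g=g_{n_1}+\cdots+g_{n_p}$ then $\varepsilon_g(f_\emptyset)=f_{\mathbf{k}}$ with $\mathbf{k}=\{n_1,\dots,n_p\}$, so $\varepsilon_g(f_\emptyset)=f_\emptyset$ forces $f_{\mathbf{k}}=f_\emptyset$, whence $\mathbf{k}=\emptyset$ by the injectivity of $\mathbf{k}\mapsto f_{\mathbf{k}}$ (Lemma 11.3), that is, $g$ is the identity of $\bigoplus\mathbb{Z}_2$. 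So the orbit of $f_\emptyset$ is free.

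With these two facts in hand, Theorem 6.8 applies verbatim and delivers both conclusions at once: the orbit $\{\widehat{\varepsilon}_g(s_0):g\in\bigoplus\mathbb{Z}_2\}$ is a dense free orbit in $S_R$, and there is a $\bigoplus\mathbb{Z}_2$-invariant, dense $G_\delta$ subset $Y$ of $S_R$ with $s_0\in Y$ on which $\widehat{\varepsilon}$ acts freely. I do not anticipate a genuine analytic obstacle here: the substantive labour has already been spent in building the commuting involutions $\sigma_n$ and proving they are homeomorphisms of $X_R$ (Lemmas 11.1 and 11.5), in the injectivity Lemma 11.3, and in the transfer machinery of Section 6. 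The one point that repays care is purely a matter of identification, namely confirming that the action on $S_R$ defined here coincides with the Section 6 induced action and that $\rho(s_0)=f_\emptyset$, so that Corollary 6.3 and Theorem 6.8 are legitimately available; once that identification is in place the result is immediate.
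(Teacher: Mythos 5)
Your proposal is correct and takes essentially the same route as the paper: the paper likewise gets density of the orbit from Lemma 11.2(i) (via Proposition 6.4) and freeness from Lemma 11.2(ii), and then applies Theorem 6.8 to transfer both properties to $S_{R}$ and to produce the invariant dense $G_{\delta}$ set $Y$. The only cosmetic difference is that you re-derive freeness at $f_{\emptyset}$ directly from Lemma 11.2(i) together with the injectivity of $\mathbf{k}\mapsto f_{\mathbf{k}}$ (Lemma 11.3), which is just the relevant part of the proof of Lemma 11.2(ii) inlined.
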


\begin{proof}
By Lemma 11.2(i), $X_{R}=\{\varepsilon_{g}(f_{\emptyset}):g\in%
{\textstyle\bigoplus}
\mathbb{Z}
_{2}\}.$ By Proposition 6.4 this implies the orbit $\{\widehat{\varepsilon
_{g}}(s_{0}):g\in%
{\textstyle\bigoplus}
\mathbb{Z}
_{2}\}$ is dense in $S_{R}$.

By Lemma 11.2(ii), $\{ \varepsilon_{g}(f_{\emptyset}):g\in%
{\textstyle\bigoplus}
\mathbb{Z}
_{2}\}$ is a free orbit. This theorem now follows from Theorem 6.8.
\end{proof}

\begin{corollary}
The group isomorphism, $g\rightarrow\widehat{\varepsilon}^{g},$ from $%
{\textstyle\bigoplus}
\mathbb{Z}
_{2}$ into $AutC(S_{R})$ is free and ergodic.
\end{corollary}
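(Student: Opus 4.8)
The plan is to derive both properties directly from Theorem 11.7, using the structural results of Sections 5 and 9, so that the corollary becomes essentially a repackaging of the free, dense orbit already produced. Recall that an automorphism $\alpha$ of $C(S_R)$ is \emph{properly outer} when its restriction to $pC(S_R)$ is never the identity for a non-zero projection $p$, and that the action $g\rightarrow\widehat{\varepsilon}^g$ is \emph{free} precisely when each $\widehat{\varepsilon}^g$ with $g$ not the identity is properly outer; \emph{ergodicity} means that the only projections in $C(S_R)$ fixed by every $\widehat{\varepsilon}^g$ are $0$ and $1$.

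For ergodicity I would argue exactly as in the Remark following Corollary 9.6. By Theorem 11.7 the orbit $\{\widehat{\varepsilon}_g(s_0):g\in\bigoplus\mathbb{Z}_2\}$ is dense in $S_R$, so Lemma 5.1(i) shows that every invariant open subset of $S_R$ is either empty or dense. Since $S_R$ is the structure space of the monotone complete commutative algebra $C(S_R)$, it is extremally disconnected, and therefore Lemma 5.2 gives that the only $\widehat{\varepsilon}$-invariant clopen subsets of $S_R$ are $\varnothing$ and $S_R$. Under the duality between clopen subsets of $S_R$ and projections of $C(S_R)$, the invariant clopen sets correspond exactly to the projections fixed by the dual action $g\rightarrow\widehat{\varepsilon}^g$; hence the only such fixed projections are $0$ and $1$, which is ergodicity.

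For freeness I would invoke Lemma 9.1. Theorem 11.7 also supplies a dense $G_\delta$ subset $Y\subset S_R$, invariant under $\widehat{\varepsilon}$, on which the action is free, so that $\{\widehat{\varepsilon}_g(y):g\in\bigoplus\mathbb{Z}_2\}$ is a free orbit for every $y\in Y$. Taking this $Y$ as the dense invariant set required by Lemma 9.1, with $\bigoplus\mathbb{Z}_2$ in the role of $G$, yields at once that the induced action $g\rightarrow\widehat{\varepsilon}^g$ on $C(S_R)$ is free.

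Because the hard analytic content, namely producing the free, dense orbit and the invariant dense $G_\delta$ on which the action is genuinely free, is already discharged in Theorem 11.7, the only real care needed here is bookkeeping: checking that $S_R$ is extremally disconnected so that Lemma 5.2 applies, and correctly transporting ``free on a dense invariant set'' through Lemma 9.1 to ``properly outer'', recalling that in $\bigoplus\mathbb{Z}_2$ each element is its own inverse, so the distinction between $\alpha_g$ and $\beta_g=\alpha_{g^{-1}}$ is immaterial. I expect the matching of the space-level and algebra-level notions of freeness and ergodicity to be the main, though modest, obstacle.
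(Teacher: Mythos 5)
Your proposal is correct and follows exactly the route the paper intends: the paper states this corollary without proof, as an immediate consequence of the preceding theorem (Theorem 11.6 in the paper's numbering, which you call 11.7) combined with Lemma 9.1 for freeness and the Lemma 5.1/5.2 argument for ergodicity already spelled out in the Remark after Corollary 9.6. Your added observations — that $S_R$ is extremally disconnected so Lemma 5.2 applies, and that self-inverseness in $\bigoplus\mathbb{Z}_2$ makes the $\alpha_g$ versus $\beta_g=\alpha_{g^{-1}}$ distinction vanish — are precisely the bookkeeping the paper leaves implicit.
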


We shall see below that we can now obtain some additional information about
this action of $%
{\textstyle\bigoplus}
\mathbb{Z}
_{2}$ as automorphisms of $C(S_{R})$. This will enable us to construct huge
numbers of hyperfinite, small wild factors.

We have seen that, for each natural number $n$, $\sigma_{n}$ is a
homeomorphism of $X_{R}$ onto itself with the following properties. First,
$\sigma_{n}=\sigma_{n}^{-1}.$ Secondly, $\sigma_{n}[E_{n}]=E_{n}^{c}$ and, for
$m\neq n,$we have $\sigma_{n}[E_{m}]=E_{m}.$ (This notation was introduced
just before Lemma 11.4, above.)

We have seen that $\sigma_{n}$ has a unique extension to a homeomorphism of
$\beta X_{R},$which we again denote by $\sigma_{n}.$ Then

$\sigma_{n}[clE_{n}]=clE_{n}^{c}$ and, for $m\neq n,$we have $\sigma
_{n}[clE_{m}]=clE_{m}.$

We define $e_{n}\in C(\beta X_{R})$ as the characteristic function of the
clopen set $clE_{n}.$

Using the above notation, $\widehat{\varepsilon}^{\sigma_{n}}$ is the $\ast
-$automorphism of $B^{\infty}(\beta X_{R})/M(\beta X_{R})\simeq C(S_{R})$
induced by $\sigma_{n}.$ We have%

\[
\widehat{\varepsilon}^{\sigma_{n}}(e_{n})=1-e_{n}\text{ and, for }m\neq
n,\text{ }\widehat{\varepsilon}^{\sigma_{n}}(e_{m})=e_{m}.\text{ }%
\]

We recall, see the final paragraph of Section 6, that $B^{\infty}%
(K_{R})/M(K_{R})$ can be identified with $B^{\infty}(\beta X_{R})/M(\beta
X_{R})$ and so with $C(S_{R}).$ By Proposition 13 \cite{zk} the smallest
monotone $\sigma-$complete $\ast-$subalgebra of $B^{\infty}(\beta
X_{R})/M(\beta X_{R})$ which contains $\{e_{n}:n=1,2...\}$ is $B^{\infty
}(\beta X_{R})/M(\beta X_{R})$ itself. We shall see that the (norm-closed)
$\ast-$algebra generated by $\{e_{n}:n=1,2...\}$ is naturally isomorphic to
$C(2^{%
\mathbb{N}
}).$

When $S\subset%
\mathbb{N}
$ we use $\eta_{S}$ to denote the element of $2^{%
\mathbb{N}
}$ which takes the value $1$ when $n\in S$ and $0$ otherwise. Let $G_{n}$ be
the clopen set $\{\eta_{S}\in2^{%
\mathbb{N}
}:n\in S\}.$ These clopen sets generate the (countable) Boolean algebra of
clopen subsets of $2^{%
\mathbb{N}
}.$ An application of the Stone-Weierstrass Theorem shows that the $\ast
-$subalgebra of $C(2^{%
\mathbb{N}
}),$ containing each $\chi_{G_{n}}$ , is dense in $C(2^{%
\mathbb{N}
}).$

\begin{lemma}
There exists an isometric isomorphism, $\pi_{0},$ from $C(2^{%
\mathbb{N}
})$ into $C(\beta X_{R})$ such that $\pi_{0}(\chi_{G_{n}})=e_{n}.$
\end{lemma}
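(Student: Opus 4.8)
The plan is to realize $\pi_0$ as the transpose of a continuous surjection $\Phi:\beta X_R\rightarrow 2^{\mathbb{N}}$. I would define $\Phi$ coordinatewise by declaring the $n$th coordinate of $\Phi(x)$ to be $e_n(x)$; equivalently $\Phi(x)(n)=1$ precisely when $x\in clE_n$. Since each $clE_n$ is a clopen subset of $\beta X_R$, every coordinate function $x\mapsto e_n(x)$ is continuous, so $\Phi$ is continuous as a map into the product space $2^{\mathbb{N}}$. Granting that $\Phi$ is surjective, I will set $\pi_0(f)=f\circ\Phi$ for $f\in C(2^{\mathbb{N}})$. This is a unital $\ast$-homomorphism; it is injective because $\Phi$ is onto (if $f(y)\neq 0$ with $y=\Phi(x)$, then $(f\circ\Phi)(x)\neq 0$), and an injective $\ast$-homomorphism between $C^{\ast}$-algebras is automatically isometric. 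Hence $\pi_0$ is an isometric isomorphism onto its range. Finally $\pi_0(\chi_{G_n})=\chi_{G_n}\circ\Phi=\chi_{\Phi^{-1}[G_n]}$, and $\Phi^{-1}[G_n]=\{x:\Phi(x)(n)=1\}=clE_n$, so $\pi_0(\chi_{G_n})=e_n$, exactly as required.

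The crux is the surjectivity of $\Phi$, which I would reduce to a computation of $\Phi$ on the dense subset $X_R$. For $f_{\mathbf{k}}\in X_R$, note that $E_n$ is clopen in $X_R$, so $clE_n\cap X_R=E_n$, whence $f_{\mathbf{k}}\in clE_n$ if and only if $f_{\mathbf{k}}\in E_n$. By Lemma 11.4, $E_n=\{f_{\mathbf{k}}:n\in\mathbf{k}\}$, so $\Phi(f_{\mathbf{k}})(n)=1$ exactly when $n\in\mathbf{k}$. Therefore $\Phi(f_{\mathbf{k}})=\eta_{\mathbf{k}}$, the characteristic function of the finite set $\mathbf{k}$, and consequently $\Phi[X_R]=\{\eta_{\mathbf{k}}:\mathbf{k}\in F(\mathbb{N})\}$ is precisely the set of finitely supported points of $2^{\mathbb{N}}$.

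Two elementary observations then finish the argument. First, $\{\eta_{\mathbf{k}}:\mathbf{k}\in F(\mathbb{N})\}$ is dense in $2^{\mathbb{N}}$: any nonempty basic clopen set is determined by prescribing finitely many coordinates, and the finite set $\mathbf{k}$ consisting of the indices required to take value $1$ gives a point $\eta_{\mathbf{k}}$ in that set. Second, $X_R$ is dense in $\beta X_R$. Since $\Phi$ is continuous and $\beta X_R$ is compact, $\Phi[\beta X_R]$ is a compact, hence closed, subset of $2^{\mathbb{N}}$ containing the dense set $\Phi[X_R]$, so $\Phi[\beta X_R]=2^{\mathbb{N}}$ and $\Phi$ is surjective. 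I do not anticipate a serious obstacle: the only points needing care are the identity $clE_n\cap X_R=E_n$ (so that the coordinates of $\Phi$ are read off correctly on $X_R$ via Lemma 11.4) and the standard density of the finitely supported sequences in the Cantor space.
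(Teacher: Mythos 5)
Your proof is correct and is essentially the paper's argument: both dualize a continuous surjection from $\beta X_{R}$ onto $2^{\mathbb{N}}$ whose preimage of $G_{n}$ is $clE_{n}$, with the key computation being that $f_{\mathbf{k}}$ lies in $E_{n}$ exactly when $n\in\mathbf{k}$. The only cosmetic difference is that the paper exhibits this surjection as $\Gamma\circ\phi$, where $\phi:\beta X_{R}\rightarrow K_{R}$ comes from the universal property of the Stone-Czech compactification and $\Gamma$ is the coordinate-restriction map, whereas you define the very same map directly coordinatewise via the clopen sets $clE_{n}$ and verify surjectivity explicitly by density of the finitely supported sequences together with compactness.
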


\begin{proof}
As in Section 6 of \cite{zk} we define a map $\Gamma$ from the Big Cantor
space, $2^{F(%
\mathbb{N}
)\times F(T)},$ onto the classical Cantor space,$2^{%
\mathbb{N}
},$ by $\Gamma(\mathbf{x})(n)=\mathbf{x}((\{n\},\emptyset)).$ Put
$J=\{(\{n\},\emptyset):n=1,2...\}$. Then we may identify $2^{%
\mathbb{N}
}$ with $2^{J}.$ So $\Gamma$ may be regarded as a restriction map and, by
definition of the topology for product spaces, it is continuous.

From the definition of $f_{\mathbf{k}},$ we see that $f_{\mathbf{k}%
}(\{n\},\emptyset)=1$ precisely when $n\in\mathbf{k}$. So $\Gamma
f_{\mathbf{k}}=\eta_{\mathbf{k}}.$ Hence
\[
\Gamma\lbrack E_{n}]=\{\eta_{\mathbf{k}}:n\in\mathbf{k}\text{ and }%
\mathbf{k}\in F(%
\mathbb{N}
)\}.
\]

By the basic property of the Stone-Czech compactification, the natural
embedding of $X_{R}$ into $K_{R}$ factors through $\beta X_{R}$ . So there
exists a continuous surjection $\phi$ from $\beta X_{R}$ onto $K_{R}$ which
restricts to the identity map on $X_{R}.$ Then $\Gamma\phi$ maps $clE_{n}$
onto $G_{n}$ and $clE_{n}^{c}$ onto $G_{n}^{c}.$ For $f\in C(2^{%
\mathbb{N}
})$ let $\pi_{0}(f)=f\circ\Gamma\phi.$ Then $\pi_{0}$ is the required
isometric isomorphism into $C(\beta X_{R})\subset B^{\infty}(\beta
X_{R})/M(\beta X_{R}).$

Let $\widehat{\varepsilon}$ be the action of the Dyadic Group on $C(S_{R})$
considered above. Let $M_{R}$ be the corresponding monotone cross-product
algebra. So there exists an isomorphism $\pi_{R}$ from $C(S_{R})$ onto the
diagonal subalgebra of $M_{R}$ and a group representation $g\rightarrow u_{g}
$ of the Dyadic Group in the unitary group of $M_{R}$ such that $u_{g}\pi
_{R}(a)u_{g}^{\ast}=\pi_{R}(\widehat{\varepsilon}^{g}(a)).$ Since each element
of the Dyadic Group is its own inverse, we see that each $u_{g}$ is
self-adjoint. Since the Dyadic Group is Abelian, $u_{g}u_{h}=u_{h}u_{g}$ for
each $g$ and $h.$

As before, let $g_{n}$ be the $n^{th}$ term in the standard sequence of
generators of $%
{\textstyle\bigoplus}
\mathbb{Z}
_{2}$ that is, $g_{n}$ takes the value $1$ in the $n^{th}$ coordinate and $0$
elsewhere. We abuse our notation by writing "$u_{n}"$ for the unitary
$u_{g_{n}}$ and "$e_{n}$" for the projection $\pi_{R}(e_{n})$ in the diagonal
subalgebra of $M_{R}.$ We then have:%
\[
u_{n}e_{n}u_{n}=1-e_{n}~\ \text{and, for }m\neq n,\text{ }u_{n}e_{m}%
u_{n}=e_{m}.
\]
Let $A_{R}=\pi_{R}[C(S_{R})]$ be the diagonal algebra of $M_{R}.$ We recall
that the Boolean $\sigma-$subalgebra of the projections of $A_{R},$ generated
by $\{e_{n}:n=1,2...\},$ contains all the projections of $A_{R}.$
\end{proof}

\begin{lemma}
Let $\mathcal{F}$ be the Fermion algebra. Then there exists an isomorphism
$\Pi$ from $\mathcal{F}$ onto the smallest norm \ closed $\ast-$subalgebra of
$M_{R}$ which contains $\{u_{n}:n=1,2...\}\ $and $\{e_{n}:n=1,2...\}.$ This
isomorphism takes the diagonal of $\mathcal{F}$ onto the smallest closed
abelian $\ast-$subalgebra containing $\{e_{n}:n=1,2...\}.$
\end{lemma}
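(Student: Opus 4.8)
The plan is to manufacture, from each pair $u_{n},e_{n}$, a system of $2\times2$ matrix units, to check that the systems belonging to distinct indices commute, and thereby to identify the norm-closed $\ast$-algebra generated by $\{u_{n}\}\cup\{e_{n}\}$ with the infinite tensor product $\bigotimes_{n=1}^{\infty}M_{2}(\mathbb{C})$, which is the Fermion algebra. First I would record the relations already in hand: since every element of $\bigoplus\mathbb{Z}_{2}$ is its own inverse and the group is abelian, each $u_{n}$ is a self-adjoint unitary with $u_{n}^{2}=1$ and $u_{n}u_{m}=u_{m}u_{n}$; the $e_{n}$ are commuting projections in the abelian diagonal $A_{R}$; and the displayed relations give $u_{n}e_{n}u_{n}=1-e_{n}$ and $u_{n}e_{m}u_{n}=e_{m}$ for $m\neq n$, the latter saying that $u_{n}$ commutes with $e_{m}$ whenever $m\neq n$. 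The flip relation also yields $e_{n}u_{n}=u_{n}(1-e_{n})$ and hence $e_{n}u_{n}e_{n}=0$.

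Next I would set, for each $n$, $e^{(n)}_{00}=e_{n}$, $e^{(n)}_{11}=1-e_{n}$, $e^{(n)}_{01}=e_{n}u_{n}$, $e^{(n)}_{10}=u_{n}e_{n}$, and verify the matrix-unit relations $e^{(n)}_{ij}e^{(n)}_{kl}=\delta_{jk}e^{(n)}_{il}$, $(e^{(n)}_{ij})^{\ast}=e^{(n)}_{ji}$ and $e^{(n)}_{00}+e^{(n)}_{11}=1$. The only non-routine identities are $e^{(n)}_{01}e^{(n)}_{10}=e_{n}u_{n}^{2}e_{n}=e_{n}$, $e^{(n)}_{10}e^{(n)}_{01}=u_{n}e_{n}u_{n}=1-e_{n}$, and $e^{(n)}_{01}e^{(n)}_{00}=e_{n}u_{n}e_{n}=0$, using $u_{n}^{2}=1$ and the flip relation. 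Thus each $n$ yields a unital copy $M^{(n)}\cong M_{2}(\mathbb{C})$, and from the relations above ($e_{n},e_{m}$ commute, $u_{n},u_{m}$ commute, and $u_{n}$ commutes with $e_{m}$ for $n\neq m$) every generator of $M^{(n)}$ commutes with every generator of $M^{(m)}$ when $n\neq m$, so the copies $M^{(n)}$ mutually commute. Since $u_{n}=e^{(n)}_{01}+e^{(n)}_{10}$ and $e_{n}=e^{(n)}_{00}$, the $\ast$-algebra generated by all the $e^{(n)}_{ij}$ is exactly the one generated by $\{u_{n}\}\cup\{e_{n}\}$.

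Then I would build $\Pi$ by the usual inductive-limit argument: for each $N$ the commuting copies $M^{(1)},\dots,M^{(N)}$ determine a unital $\ast$-homomorphism from $\bigotimes_{n=1}^{N}M_{2}(\mathbb{C})\cong M_{2^{N}}(\mathbb{C})$ into $M_{R}$ (sending the matrix units of the $n$-th factor to $e^{(n)}_{ij}$), compatible with the inclusions $M_{2^{N}}\hookrightarrow M_{2^{N+1}}$. Simplicity of $M_{2^{N}}(\mathbb{C})$ forces each such map to be injective, hence isometric, so they define an isometric homomorphism on the dense subalgebra $\bigcup_{N}M_{2^{N}}$ of $\mathcal{F}$ that extends to an isometric isomorphism $\Pi$ of $\mathcal{F}$ onto the norm closure of the $\ast$-algebra generated by the $e^{(n)}_{ij}$---that is, onto the smallest norm-closed $\ast$-subalgebra of $M_{R}$ containing $\{u_{n}\}\cup\{e_{n}\}$. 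For the final assertion, the diagonal of $\mathcal{F}$ is the closed subalgebra generated by the diagonal units $e^{(n)}_{ii}$, which $\Pi$ sends to $e_{n}$ and $1-e_{n}$; being isometric, $\Pi$ carries the diagonal onto the norm-closed abelian $\ast$-subalgebra generated by $\{e_{n}\}$.

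Being in essence a verification, the argument presents no serious obstacle; the points needing care are checking the cross-commutation precisely enough to be certain that the $M^{(n)}$ genuinely commute, and organising the simplicity step so that injectivity---and hence isometry---of $\Pi$ is obtained uniformly across all the inductive stages.
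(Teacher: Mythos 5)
Your argument is correct, but it is routed differently from the paper's own proof, which is a reduction plus a citation rather than a construction. The paper first uses Lemma 11.8 (the isometric embedding $\pi_{0}$ of $C(2^{\mathbb{N}})$ with $\pi_{0}(\chi_{G_{n}})=e_{n}$) to check that every product $e_{1}^{\alpha_{1}}e_{2}^{\alpha_{2}}\cdots e_{n}^{\alpha_{n}}$ (with $p^{(0)}=p$, $p^{(1)}=1-p$) is neither $0$ nor $1$, i.e.\ that $(e_{n})$ is a sequence of commuting \emph{independent} projections; it then invokes Proposition 2.1 of \cite{zzd}, which delivers the isomorphism with $\mathcal{F}$ and the fact that $\{u_{j},e_{j}:j\leq n\}$ generates a copy of $M_{2^{n}}(\mathbb{C})$. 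Your explicit matrix units $e^{(n)}_{ij}$, the cross-commutation check, and the inductive-limit-plus-simplicity argument amount to proving the quoted proposition rather than citing it, and this buys two things. First, the proof becomes self-contained. Second, Lemma 11.8 is no longer needed as an input: independence of the $(e_{n})$ falls out as a consequence, since each $e_{1}^{\alpha_{1}}\cdots e_{n}^{\alpha_{n}}$ is the image of a nonzero elementary tensor under a unital homomorphism of the simple algebra $M_{2^{n}}(\mathbb{C})$, hence nonzero; in other words, the relations alone (commuting symmetries $u_{n}$ flipping the commuting projections $e_{n}$ and fixing the others) already force independence, whereas the paper verifies independence from the topological model and delegates the algebraic work to a reference. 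The one place where you lean implicitly on a convention is the final sentence: identifying the image of the diagonal with the closed abelian $\ast$-subalgebra \emph{generated by} $\{e_{n}\}$ requires the paper's standing convention that subalgebras are unital (so that $1-e_{n}$ is available from $e_{n}$); with that convention, your identification is exactly the paper's statement, so this is a point of bookkeeping, not a gap.
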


\begin{proof}
For any projection $p$ we define $p^{(0)}=p$ and $p^{(1)}=1-p.$

For each choice of $n$ and for each choice of $\alpha_{1},...\alpha_{n}$ from
$%
\mathbb{Z}
_{2},$ it follows from Lemma 11.8 that the product $e_{1}^{\alpha_{1}}%
e_{2}^{\alpha_{2}}...e_{n}^{\alpha_{n}}$ is neither $1$ nor $0.$ In the
notation of \cite{zzd}, $(e_{n})$ is a sequence of (mutually commutative)
independent projections. The Lemma now follows from (the easy part) of the
proof of Proposition 2.1 \cite{zzd}. In particular, for each $n, $
$\{u_{j}:j=1,2,...,n\}$ $\cup\{e_{j}:j=1,2,...,n\}$ generates a subalgebra
isomorphic to the algebra of all $2^{n}\times2^{n}$ complex matrices.
\end{proof}

\begin{definition}
\bigskip Let $B_{R}$ be the the smallest monotone $\sigma-$complete $\ast
-$subalgebra of $M_{R}$ which contains $\Pi\lbrack\mathcal{F}].$
\end{definition}

\begin{lemma}
$B_{R}$ is a monotone complete factor which contains $A_{R}$ as a maximal
abelian $\ast-$subalgebra. There exists a faithful normal conditional
expectation from $B_{R}$ onto $A_{R}.$ The state space of $B_{R}$ is
separable. The factor $B_{R}$ is wild and of Type III. It is also a small
$C^{\ast}$-algebra.
\end{lemma}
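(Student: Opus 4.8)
The plan is to assemble this final lemma from the machinery already developed, establishing each asserted property in turn. The algebra $B_R$ is defined as the smallest monotone $\sigma$-complete $\ast$-subalgebra of $M_R$ containing $\Pi[\mathcal{F}]$, so I would first record that $B_R$ contains both $\{u_n\}$ and $\{e_n\}$ via Lemma 11.10, and hence contains the diagonal $A_R$ as a subalgebra. The \emph{key structural claim} is that $A_R$ is maximal abelian in $B_R$ with a faithful normal conditional expectation onto it. For this I would invoke Corollary 11.7, which tells us the $\bigoplus\mathbb{Z}_2$ action is free and ergodic, together with Proposition 8.20 (freeness implies the diagonal is maximal abelian in $\mathcal{M}(E)/I_{\mathcal{I}}$) and Proposition 8.14 (existence of a faithful normal conditional expectation $\widehat{D}$). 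The one subtlety is that these results are stated for the full algebra $M_R\cong\mathcal{M}(E)/I_{\mathcal{I}}$, so I would need to check that restricting $\widehat{D}$ to $B_R$ still lands in $A_R$ and remains faithful and normal --- this follows since $B_R\supseteq A_R$ is monotone closed and $\widehat{D}$ is already normal on all of $M_R$, and maximal abelianness of $A_R$ in the smaller $B_R$ is inherited from maximal abelianness in $M_R$.

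Next I would establish \textbf{separability of the state space}. Here the point is that $B_R$ is $\sigma$-generated by the countable family $\Pi[\mathcal{F}]$, which is a separable $C^*$-algebra; combined with the fact that $S_R$ is separable (so $C(S_R)\cong B^\infty(\beta X_R)/M(\beta X_R)$ has separable state space by the arguments of Section 6, since $S_R$ sits under a surjection from $\beta\mathbb{N}$), one concludes $B_R$ is almost separably representable. By Corollary 3.2 this already forces $B_R$ to be monotone complete (upgrading monotone $\sigma$-completeness), and it supplies a strictly positive functional $\mu$. I would then feed this strictly positive functional into the normality clause of Proposition 8.14 to guarantee $\widehat{D}$ is genuinely normal rather than merely $\sigma$-normal.

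For \textbf{the factor and Type III assertions}, I would argue as follows. Since the action is ergodic (Corollary 11.7), the centre of $B_R$ is trivial, so $B_R$ is a factor; ergodicity gives this because any central projection would correspond to an invariant clopen subset of $S_R$, forcing it to be $0$ or $1$. Because $B_R$ is a monotone complete factor possessing a strictly positive functional and is \emph{not} a von Neumann algebra, the result quoted in the Introduction (the application of \cite{zzg}, see also \cite{zf}) forces it to be of Type III. The remaining piece is \textbf{wildness}: I must show $B_R$ is not a von Neumann algebra. This is the step I expect to be the \emph{main obstacle}, and I would handle it via separable representability --- by Wright's theorem \cite{zzb}, a separably representable monotone complete factor is a von Neumann algebra, so conversely I must exhibit that $B_R$ is \emph{not} separably representable despite having separable state space. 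This is precisely the phenomenon isolated in \cite{zzf}; concretely the failure should follow because the nontrivial behaviour of the exotic space $S_R$ (encoded by the admissible set $R$) obstructs any faithful representation on a separable Hilbert space, distinguishing $B_R$ from its would-be von Neumann completion.

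Finally, \textbf{smallness} should follow from the characterisation recalled in the Introduction: $B_R$ is small precisely when each matrix amplification $B_R\otimes M_n(\mathbb{C})$ has separable state space. Since $B_R$ is $\sigma$-generated by the separable Fermion algebra $\mathcal{F}$ and sits inside the monotone cross-product over the separable space $S_R$, I would verify that the amplifications remain almost separably representable, drawing on \cite{zg} and the fact that smallness is known to hold for the analogous constructions in \cite{zk}. Throughout, the recurring technical care is to confirm that properties proven for $M_R$ (or for $\mathcal{M}(E)/I_{\mathcal{I}}$) descend to the monotone-closed subalgebra $B_R$ containing the diagonal, which they do because $B_R$ is $\sigma$-generated by $\Pi[\mathcal{F}]$ and the conditional expectation and its normality are inherited from the ambient factor.
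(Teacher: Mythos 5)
Your proposal follows the paper's skeleton for most of the statement (restrict the conditional expectation $D_{R}$ to $B_{R}$, get a faithful state, invoke Lemma 3.1 for monotone completeness, inherit maximal abelianness of $A_{R}$ from $M_{R}$, use ergodicity plus the fact that the centre sits inside $A_{R}$ to get a factor, and get Type III from wildness together with a strictly positive functional). But there is a genuine gap at the step you yourself flag as the main obstacle: wildness. You propose to show $B_{R}$ is not a von Neumann algebra by showing it is not separably representable, quoting Wright's theorem \cite{zzb} (separably representable monotone complete factor $\Rightarrow$ von Neumann) and then taking a ``converse.'' That converse is invalid: from ``separably representable $\Rightarrow$ von Neumann'' one cannot infer ``not separably representable $\Rightarrow$ not von Neumann'' --- von Neumann algebras with non-separable predual (e.g. $\ell^{\infty}(\mathbb{R})$) are not separably representable. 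The implication you actually need is Akemann's \cite{a}: a von Neumann algebra with separable state space \emph{is} separably representable, so that separable state space plus non-separable-representability would rule out von Neumann-ness. Even granting that repair, you never prove that $B_{R}$ fails to be separably representable; you only assert that the exotic space $S_{R}$ ``obstructs any faithful representation on a separable Hilbert space,'' which is exactly the thing to be proved and is the hard route. The paper's argument is entirely different and much shorter: $A_{R}$ is a maximal abelian $\ast$-subalgebra of $B_{R}$, and $A_{R}=C(S_{R})$ is known from \cite{zk} not to be a von Neumann algebra; since a maximal abelian $\ast$-subalgebra of a von Neumann algebra is itself a von Neumann algebra (it equals its relative commutant, which is weakly closed), $B_{R}$ cannot be a von Neumann algebra. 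Your proposal has no substitute for this step.

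Two smaller soft spots, both fixable. For separability of the state space, ``$\sigma$-generated by the separable algebra $\Pi[\mathcal{F}]$'' does not by itself yield a separable state space; the paper's argument is that the (weak*-continuous) restriction map sends the state space of $M_{R}$ \emph{onto} that of the unital subalgebra $B_{R}$, and the former is separable. For smallness, rather than checking separability of the state spaces of all matrix amplifications, the paper simply quotes Sait\^{o} \cite{zg}: a monotone complete factor with separable state space is small. Note also that your conclusion ``$B_{R}$ contains $\{e_{n}\}$, hence contains $A_{R}$'' silently uses the fact (recalled in the paper from Proposition 13 of \cite{zk}) that the projections $e_{n}$ $\sigma$-generate all projections of $A_{R}$; this should be cited, though the paper makes the same move.
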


\begin{proof}
Let $D_{R}$ be the faithful normal conditional expectation from $M_{R}$ onto
$A_{R}.$ The maximal ideal space of $A_{R}$ can be identified with the
separable space $S_{R}.$ Then, arguing as in Corollary 3.2, there exists a
faithful state $\phi$ on $A_{R}$. Hence $\phi D_{R}$ is a faithful state on
$M_{R}$ and restricts to a faithful state on $B_{R}$. So, by Lemma 3.1,
$B_{R}$ is monotone complete. Let $D$ be the restriction of $D_{R}$ to $B_{R}$
then $D$ is a faithful and normal conditional expectation from $B_{R}$ onto
$A_{R}.$

Since each $e_{n}$ is in $B_{R}$ it follows that $A_{R}$ is a $\ast
-$subalgebra of $B_{R}.$ Since it is maximal abelian in $M_{R}$ it must be a
maximal abelian $\ast-$subalgebra of $B_{R}.$\ So the centre of $B_{R}$ is a
subalgebra of $A_{R}.$ Each $u_{n}$ is in $B_{R}$ and so each central
projection of $B_{R}$ commutes with each $u_{n}.$ Since the action
$\widehat{\varepsilon}$ of the Dyadic group is ergodic, it follows that the
only projections in $A_{R}$ which commute with every $u_{n}$ are $0$ and $1.$
So $B_{R}$ is a (monotone complete) factor.

The state space of every unital $C^{\ast}$-subalgebra of $M_{R}$ is a
surjective image of the state space of\ $M_{R}\,,$ which is separable. So the
state space of $B_{R}$ is separable. Equivalently, $B_{R}$ is almost separably
representable. A slightly more elaborate argument shows that \ this algebra is
small. See the remark preceding Theorem 6 \cite{zk}.

Since $B_{R}$ contains a maximal abelian $\ast-$subalgebra which is not a von
Neumann algebra it is a wild factor. Also $M_{R}$ is almost separably
representable, hence it possesses a strictly positive state and so is a Type
III factor \cite{zzg}, see also \cite{zf}.

It now follows immediately that the factor is a small $C^{\ast}$-algebra. For,
by work of K.Sait\^{o} \cite{zg}, a monotone complete factor is a small
$C^{\ast}$-algebra whenever it has a separable state space.
\end{proof}

\begin{proposition}
The homomorphism $\Pi$ extends to a $\sigma-$homomorphism $\Pi^{\infty}$ from
$\mathcal{F}^{\infty}$, the Pedersen-Borel envelope of the Fermion algebra,
onto $B_{R}.$ Let $J_{R}$ be the kernel of $\Pi^{\infty}$ then $\mathcal{F}%
^{\infty}/J_{R}$ is isomorphic to $B_{R}.$
\end{proposition}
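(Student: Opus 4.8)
The plan is to obtain $\Pi^{\infty}$ as a direct application of Corollary 3.5, and then to identify its range with $B_R$. First I would record that $M_R$ is monotone complete, and hence in particular monotone $\sigma$-complete. Since $\Pi:\mathcal{F}\rightarrow M_R$ is a unital $\ast$-homomorphism and $M_R$ is monotone $\sigma$-complete, Corollary 3.5 applies with $A=\mathcal{F}$ and $B=M_R$; note that $\mathcal{F}\subset\mathcal{F}^{\infty}$, so the extension it produces genuinely restricts to $\Pi$ on $\mathcal{F}$. This yields a unique $\sigma$-normal positive linear extension $\Pi^{\infty}:\mathcal{F}^{\infty}\rightarrow M_R$ of $\Pi$. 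Because $\Pi$ is a $\ast$-homomorphism, Corollary 3.5 guarantees that $\Pi^{\infty}$ is also a $\ast$-homomorphism, that its range is a $\sigma$-subalgebra of $M_R$, that the self-adjoint part of $\Pi^{\infty}[\mathcal{F}^{\infty}]$ is the smallest $\sigma$-subspace of $(M_R)_{sa}$ containing $\Pi[\mathcal{F}_{sa}]$, and that its kernel $J_R$ is a $\sigma$-ideal with $\mathcal{F}^{\infty}/J_R\approx\Pi^{\infty}[\mathcal{F}^{\infty}]$. Everything in the statement then follows once I show $\Pi^{\infty}[\mathcal{F}^{\infty}]=B_R$.

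The key remaining step is this identification of the range. Since $\Pi$ is an isomorphism, $\Pi[\mathcal{F}]$ is a norm-closed unital $\ast$-subalgebra of $M_R$, so it coincides with the $C^{\ast}$-subalgebra of $M_R$ that it generates. By the REMARK following Corollary 3.5, the $\sigma$-subalgebra of $M_R$ which is $\sigma$-generated by $\Pi[\mathcal{F}]$ has as its self-adjoint part exactly the smallest $\sigma$-subspace of $(M_R)_{sa}$ containing $\Pi[\mathcal{F}]_{sa}=\Pi[\mathcal{F}_{sa}]$. But this $\sigma$-generated $\sigma$-subalgebra is precisely $B_R$, since $B_R$ is by definition the smallest monotone $\sigma$-complete $\ast$-subalgebra of $M_R$ containing $\Pi[\mathcal{F}]$, with suprema inherited from $M_R$. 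Comparing with the third conclusion of Corollary 3.5, the self-adjoint parts of $B_R$ and of $\Pi^{\infty}[\mathcal{F}^{\infty}]$ are the same $\sigma$-subspace of $(M_R)_{sa}$, and hence $B_R=\Pi^{\infty}[\mathcal{F}^{\infty}]$.

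Combining the two steps, $\Pi^{\infty}$ is a $\sigma$-normal $\ast$-homomorphism of $\mathcal{F}^{\infty}$ onto $B_R$, its kernel $J_R$ is a $\sigma$-ideal of $\mathcal{F}^{\infty}$, and $\mathcal{F}^{\infty}/J_R$ is isomorphic to $B_R$, as required.

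I expect the only genuinely delicate point to be the identification $B_R=\Pi^{\infty}[\mathcal{F}^{\infty}]$, and specifically the matching of the abstract description of $B_R$ as the smallest monotone $\sigma$-complete $\ast$-subalgebra containing $\Pi[\mathcal{F}]$ with the concrete $\sigma$-subspace delivered by Corollary 3.5. This rests on two facts that I would state explicitly: that the monotone $\sigma$-complete structure of $B_R$ is the one inherited from $M_R$, so that $B_R$ is genuinely a $\sigma$-subalgebra of $M_R$ in the sense of Section 3 and not merely abstractly monotone $\sigma$-complete; and that $\Pi[\mathcal{F}]$ is already norm-closed, so that it equals the $C^{\ast}$-algebra it generates and the REMARK applies to it verbatim. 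Everything else is a routine quotation of Corollary 3.5.
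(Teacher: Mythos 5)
Your proposal is correct and follows essentially the same route as the paper, whose entire proof is the one-line citation ``This follows by Corollary 3.5.'' Your elaboration—applying Corollary 3.5 with $A=\mathcal{F}$, $B=M_{R}$, and then identifying $\Pi^{\infty}[\mathcal{F}^{\infty}]$ with $B_{R}$ via the REMARK on $\sigma$-generated $\sigma$-subalgebras, noting that $\Pi[\mathcal{F}]$ is norm-closed and that $B_{R}$ carries the $\sigma$-structure inherited from $M_{R}$—is exactly the argument the paper leaves implicit.
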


\begin{proof}
This follows by Corollary 3.5.
\end{proof}

\section{Approximately finite dimensional algebras}

We could proceed in greater generality, but for ease and simplicity, we shall
only consider monotone complete $C^{\ast}$-algebra which possess a faithful
state. \ Every almost separably representable algebra has this property and
hence so does every small $C^{\ast}$-algebra.

\begin{definition}
Let $B$ be a monotone complete $C^{\ast}$-algebra with a faithful state. Then
$B$ is said to be \textbf{approximately finite dimensional}\ if there exists
an increasing sequence of finite dimensional $\ast-$subalgebras $(F_{n})$ such
that the smallest monotone closed subalgebra of $B$ which contains $\cup
_{n=1}^{\infty}F_{n}$ is $B$ itself.
\end{definition}

\begin{definition}
If (i) $B$ is a monotone complete $C^{\ast}$-algebra which satisfies the
conditions of Definition 12.1 and (ii) we can take each $F_{n}$ to be a full
matrix algebra, then $B$ is said to be\textit{\ }\textbf{strongly hyperfinite}.
\end{definition}

\begin{definition}
Let $M$ be a monotone complete $C^{\ast}$-algebra with a faithful state. We
call $M$ \textbf{nearly approximately}\textit{\ \textbf{finite dimensional}
(with respect to }$B$\textit{)} if it satisfies the following conditions:

(i) $M$ contains a monotone closed subalgebra $B$, where $B$ is
\textbf{approximately finite dimensional}.

(ii) There exists a linear map $D:M\rightarrow B$ which is positive, faithful
and normal.

(iii) For each $z\in M,$ there exists a sequence $(z_{n})(n=1,2...)$ in $B,$
such that

$D((z-z_{n})(z-z_{n})^{\ast})\geq D((z-z_{n+1})(z-z_{n+1})^{\ast})$ for each
$n,$ and%
\[%
{\textstyle\bigwedge\limits_{n=1}^{\infty}}
D((z-z_{n})(z-z_{n})^{\ast})=0.
\]

\end{definition}

\begin{definition}
Let $M$ be a monotone complete $C^{\ast}$-algebra with a faithful state. We
call $M$ \textbf{hyperfinite} if it contains a monotone closed subalgebra $B$
such \ that (i) $M$ is nearly approximately finite dimensional with respect to
$B$ and (ii) $B$ is strongly hyperfinite.
\end{definition}

\bigskip

\begin{proposition}
`Let $S$ be a compact Hausdorff extremally disconnected space. Let $G$ be a
countably infinite group and $g\rightarrow\beta^{g}$ be a free action of $G$
as automorphisms of $C(S).$ Let $M(C(S),G)$ be the corresponding monotone
cross-product; let $\pi\lbrack C(S)]$ be the diagonal subalgebra; let
$D:M(C(S),G)\rightarrow\pi\lbrack C(S)]$ be the diagonal map. Let
$g\rightarrow u_{g}$ be a unitary representation of $G$ in $M(C(S),G)$ such
that $\beta^{g}(a)=u_{g}au_{g}^{\ast}$ for each $a\in\pi\lbrack C(S)].$ Let
$B$ be the monotone closure of the $\ast-$algebra generated by $\pi\lbrack
C(S)]$ $\cup\{u_{g}:g\in G\}.$ If $B$ is AFD then $M(C(S),G)$ is nearly AFD.
\end{proposition}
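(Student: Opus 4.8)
The plan is to show that $M(C(S),G)$ satisfies the three conditions of Definition 12.3 with respect to the subalgebra $B$ defined in the hypothesis. First I would verify condition (i): by assumption $B$ is a monotone closed subalgebra and we are given that $B$ is AFD, so this is immediate. For condition (ii), I would take $D$ to be the diagonal conditional expectation $D:M(C(S),G)\rightarrow\pi[C(S)]$ supplied in the statement; however, the definition requires a positive, faithful, normal linear map into $B$, not merely into $\pi[C(S)]$. The key observation is that $\pi[C(S)]$ is contained in $B$ (since $\pi[C(S)]$ is one of the generating sets for $B$), so we may regard $D$ as a map into $B$. That $D$ is positive, faithful and normal follows from Proposition 8.14 (transported across the isomorphism of Lemma 9.3 identifying $M(C(S),G)$ with $\mathcal{M}(E)/J$), so condition (ii) holds.

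The substance of the proof lies in condition (iii), the Mercer--Bures type approximation. Here I would invoke Lemma 9.4 (equivalently Proposition 8.15): for any $z\in M(C(S),G)$ there exists a sequence $(z_n)$ in the small cross-product $C(S)\times_\beta G$ with $D((z-z_n)(z-z_n)^\ast)$ monotone decreasing and with infimum $0$. Explicitly one takes $z_n=\sum_{g\in F(n)}D(zu_g)u_{g^{-1}}$ for a strictly increasing sequence $(F(n))$ of finite subsets of $G$. The point to check is that each such $z_n$ actually lies in $B$. Since $D(zu_g)\in\pi[C(S)]\subset B$ and each $u_g\in B$ by construction, every $z_n$ is a finite sum of products of elements of $B$, hence $z_n\in B$. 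This is precisely what is needed, since Definition 12.3(iii) demands the approximating sequence lie in $B$.

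The main obstacle I anticipate is a subtle mismatch between the adjoint conventions in Proposition 8.15 and in Definition 12.3. Proposition 8.15 is stated with $\widehat{D}((z-z_n)(z-z_n)^\ast)$, which matches Definition 12.3(iii) directly, so the approximation is in the correct form; one should simply confirm that the $D$ appearing in the hypothesis of the present proposition is the same conditional expectation (up to the canonical identifications) as the $\widehat{D}$ of Section 8. This is guaranteed by the fact that both are the unique faithful normal conditional expectation onto the diagonal, and such expectations are unique by the maximal abelianness of the diagonal (Proposition 8.20 together with Tomiyama's theorem). Once this identification is made, the three conditions of Definition 12.3 are all verified and the conclusion that $M(C(S),G)$ is nearly AFD follows at once. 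In summary, conditions (i) and (ii) are essentially formal, and condition (iii) reduces to the already-established convergence result of Lemma 9.4 combined with the elementary remark that the approximants $z_n$ sit inside $B$.
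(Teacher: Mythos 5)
Your proof is correct and follows essentially the same route as the paper: both rest on Lemma 9.4 (equivalently Proposition 8.15) to verify condition (iii) of Definition 12.3 with respect to $B$ and the diagonal map $D$, with conditions (i) and (ii) being immediate. The only difference is cosmetic: the paper first invokes Theorem 10.1 to identify $B$ with the normalizer subalgebra, whereas you check directly that the approximants $z_{n}=\sum_{g\in F(n)}D(zu_{g})u_{g^{-1}}$ lie in $B$ (since $D(zu_{g})\in\pi[C(S)]\subset B$ and $u_{g^{-1}}\in B$), which makes that invocation unnecessary because $B$ coincides by definition with the small cross-product $C(S)\times_{\beta}G$ in which Lemma 9.4 places them.
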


\begin{proof}
By Theorem 10.1, $B$ is the normalizer subalgebra of $M(C(S),G).$

By Lemma 9.4, $M(C(S),G)$ satisfies condition (iii) of Definition 12.3, with
respect to $B$ and the diagonal map $D.$

It follows immediately that $M(C(S),G)$ is nearly AFD whenever $B$ is
AFD.\bigskip
\end{proof}

We recall that in Section 3 \cite{zk} we constructed a weight semigroup,
$\mathcal{W},$ which classifies monotone complete $C^{\ast}$-algebras. In
particular, for algebras $B_{1}$ and $B_{2},$ they are equivalent (as defined
in \cite{zk}) precisely when their values in the weight semigroup, $wB_{1}$
and $wB_{2},$ are the same.

\ 

REMARK \ The theory of von Neumann algebras would lead us to expect
$M(C(S),G)=C(S)\times_{\beta}G.$ But this is an open problem. However it is
easy to show that $w(M(C(S),G))=w(C(S)\times_{\beta}G)$, that is, the two
algebras are equivalent.

Let $(T,\mathbf{O})$ be a feasible pair as in Section 11. Let $\mathcal{R}$ be
the collection of all admissible subsets of $T$. For each $R\in\mathcal{R}$
let $A_{R}=C(S_{R}).$ Then, by Corollary 20\textbf{\ } \cite{zk}, we can find
$\mathcal{R}_{0}\mathcal{\subset}$ $\mathcal{R}$ such that $\# \mathcal{R}%
_{0}=2^{c}$ where $c=2^{\aleph_{0}}$ with the following property. Whenever
$R_{1}$ and $R_{2}$ are distinct elements of $\mathcal{R}_{0}$ then $A_{R_{1}%
}$ is not equivalent to $A_{R_{2}}$that is, $wA_{R_{1}}\neq wA_{R_{2}}.$

\begin{theorem}
There exists a family of monotone complete $C^{\ast}$-algebras, $(B_{\lambda
},\lambda\in\Lambda)$ with the following properties: Each $B_{\lambda}$ is a
strongly hyperfinite Type III factor, each $B_{\lambda}$ is a small $C^{\ast}%
$-algebra, each $B_{\lambda}$ is a quotient of the Pedersen-Borel envelope of
the Fermion algebra. The cardinality of $\Lambda$ is $2^{c}$, where
$c=2^{\aleph_{0}}.$ When $\lambda\neq\mu$ then $B_{\lambda}$ and $B_{\mu}$
take different values in the classification semi-group $\mathcal{W}$; in
particular, they cannot be isomorphic.
\end{theorem}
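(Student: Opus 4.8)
The plan is to assemble the final theorem from the machinery already built, essentially by reading off the properties of the family $\{B_R : R \in \mathcal{R}_0\}$ constructed in Section 11 and invoking the classification results of \cite{zk}. First I would take $\Lambda = \mathcal{R}_0$, where $\mathcal{R}_0 \subset \mathcal{R}$ is the set of admissible subsets of cardinality $2^c$ furnished by Corollary 20 of \cite{zk}, with the property that $wA_{R_1} \neq wA_{R_2}$ whenever $R_1 \neq R_2$ in $\mathcal{R}_0$. For each such $R$ the algebra $B_R$ is the monotone $\sigma$-complete subalgebra of $M_R$ defined in Definition 11.11, and Lemma 11.13 already records that $B_R$ is a monotone complete wild factor of Type III, almost separably representable, with separable state space, hence small. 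So the bulk of the stated properties is immediate once we identify $B_\lambda := B_R$.

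Next I would verify the two properties not yet packaged in a single statement, namely that each $B_R$ is \emph{strongly hyperfinite} and that each $B_R$ is a quotient of $\mathcal{F}^\infty$, the Pedersen-Borel envelope of the Fermion algebra. The second is exactly Proposition 11.14: the isomorphism $\Pi$ of Lemma 11.10 extends to a $\sigma$-homomorphism $\Pi^\infty$ from $\mathcal{F}^\infty$ onto $B_R$, so $B_R \simeq \mathcal{F}^\infty/J_R$. For strong hyperfiniteness I would appeal to Lemma 11.10: the families $\{u_j : j \le n\} \cup \{e_j : j \le n\}$ generate full matrix algebras $M_{2^n}(\mathbb{C})$, these form an increasing sequence of full matrix subalgebras, and by Definition 11.11 the monotone $\sigma$-completion of their union is precisely $B_R$. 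This matches Definition 12.2 verbatim, so each $B_R$ is strongly hyperfinite.

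Finally I would establish the separation statement: for $R_1 \neq R_2$ in $\mathcal{R}_0$ we have $wB_{R_1} \neq wB_{R_2}$, whence $B_{R_1} \not\cong B_{R_2}$. Here I would use the principle described in the introduction that, for a free ergodic action on a commutative algebra, the cross-product factor has the same weight as the underlying commutative algebra; concretely $wB_R = wA_R$ where $A_R = C(S_R)$ is the diagonal maximal abelian subalgebra. Since Corollary 11.7 shows the $\bigoplus\mathbb{Z}_2$-action is free and ergodic, and since Corollary 20 of \cite{zk} guarantees $wA_{R_1} \neq wA_{R_2}$, the factors take distinct values in $\mathcal{W}$, and distinct $\mathcal{W}$-values preclude isomorphism because equivalent algebras have equal weight.

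The main obstacle I anticipate is cleanly justifying $wB_R = wA_R$ inside the present framework. The introduction asserts this equality for cross-product factors as ``easy to show'', but making it rigorous requires relating the weight of $B_R$ (which is the monotone closure of $\Pi[\mathcal{F}]$, not the full cross-product $M_R$) to the weight of its diagonal $A_R$. I would route this through the faithful normal conditional expectation $D : B_R \to A_R$ of Lemma 11.13 together with the near-AFD approximation of Lemma 9.4 and Definition 12.3, which let one sandwich $B_R$ between $A_R$ and an algebra with the same normality weight; the delicate point is checking that the spectroid invariant $\partial B_R$ of \cite{zk} is computed by $A_R$ and is insensitive to passing from $A_R$ to the factor it generates. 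Once that weight-computation is in hand, the theorem follows by direct citation of the cited separation result.
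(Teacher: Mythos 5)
Your overall assembly coincides with the paper's: take $\Lambda=\mathcal{R}_{0}$, read off the factor, Type III, and smallness properties from Lemma 11.13, get the quotient-of-$\mathcal{F}^{\infty}$ property from Proposition 11.14, and obtain strong hyperfiniteness from Lemma 11.10 together with Definition 11.11 (the paper phrases this as: $\Pi[\mathcal{F}]$ is the closure of an increasing sequence of full matrix algebras, and $B_{R}$ is its monotone closure). The separation of the algebras via $\mathcal{W}$ is also the same in outline.

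The one place you diverge --- and where your proposal has a genuine gap --- is precisely the step you flag as the ``main obstacle'', namely $wB_{R}=wA_{R}$. Your proposed route (checking that the spectroid $\partial B_{R}$ ``is computed by $A_{R}$'' and is insensitive to passing from $A_{R}$ to the factor it generates, plus the near-AFD approximation of Lemma 9.4) is both unnecessary and unjustified as stated: nothing in the paper or in \cite{zk} asserts such an insensitivity of the spectroid, and you give no argument for it. The paper's actual argument is a short application of the quasi-ordering of \cite{zk}: the inclusion of a monotone closed subalgebra is a faithful normal positive map, so $A_{R}\precsim B_{R}\precsim M_{R}$; and the diagonal expectation $D_{R}:M_{R}\rightarrow A_{R}$ is faithful, normal and positive, so $M_{R}\precsim A_{R}$. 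Hence $A_{R}\sim B_{R}\sim M_{R}$ and $wA_{R}=wB_{R}=wM_{R}$. Equivalently, the conditional expectation $D:B_{R}\rightarrow A_{R}$ of Lemma 11.13 --- which you already mention --- directly gives $B_{R}\precsim A_{R}$, and the inclusion gives the reverse relation; no spectroid computation and no AFD structure is involved in this step. The spectroid enters only inside \cite{zk}, to prove $wA_{R_{1}}\neq wA_{R_{2}}$ for distinct admissible sets, which you correctly cite as Corollary 20. So your proof is repairable with tools you already list, but as written the crucial weight equality is left to a speculative and misdirected argument rather than the immediate one.
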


\begin{proof}
First we put $\Lambda=\mathcal{R}_{0}.$ For each $R\in\mathcal{R}_{0}$ we have
a faithful normal conditional expectation from $B_{R}$ onto the maximal
abelian $\ast-$subalgebra $A_{R}.$ We use the partial ordering defined in
\cite{zk}. Since $A_{R}$ is a monotone closed subalgebra of $B_{R}$ and
$B_{R}$ is a monotone closed subalgebra of $M_{R}$ we obtain $A_{R}\precsim
B_{R}\precsim M_{R}.$ Since $D_{R}$ is a faithful normal map from $M_{R}$ onto
$A_{R},$ we have $M_{R}$ $\precsim A_{R}.$ Hence $A_{R}\symbol{126}%
B_{R}\symbol{126}M_{R}.$ By using the classification weight semi-group
$\mathcal{W},$ we get $wA_{R}=wB_{R}=wM_{R}.$ Since, for $R_{1}\neq R_{2},$ we
have $wA_{R_{1}}\neq wA_{R_{2}}$ $\ $this implies $wB_{R_{1}}\neq wB_{R_{2}}$.

The only item left to prove is that each factor $B_{R}$ is strongly
hyperfinite. But the Fermion algebra is isomorphic to $\Pi\lbrack
\mathcal{F]}.$ So $\Pi\lbrack\mathcal{F]}$ is the closure of an increasing
sequence of full matrix algebras. It now follows that $B_{R}$ is strongly hyperfinite.
\end{proof}

\begin{corollary}
For each orbit equivalence relation $E(R),$ corresponding to $R$, the orbit
equivalence factor $M_{E(R)}$ is hyperfinite.\textbf{\ }
\end{corollary}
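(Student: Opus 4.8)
The plan is to exhibit $M_{E(R)}$ as the monotone cross-product $M_R = M(C(S_R),\bigoplus\mathbb{Z}_2)$ and then to verify the two clauses of Definition 12.4 using $B_R$ as the distinguished subalgebra. By Theorem 11.6 and Corollary 11.7 the action $\widehat{\varepsilon}$ of $\bigoplus\mathbb{Z}_2$ on $S_R$ has a free, dense orbit and is free and ergodic. Because the action is free, the kernel $J$ of Lemma 9.3 coincides with the meagre ideal $I_{\mathcal{I}}$ of Section 8, so $M_{E(R)} = \mathcal{M}(E(R))/I_{\mathcal{I}}$ is canonically isomorphic to $M_R$; it therefore suffices to prove that $M_R$ is hyperfinite.

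The decisive step is the identification of $B_R$ with the normaliser subalgebra of $M_R$. The hypotheses of Theorem 10.1 are met here: the action $g\mapsto\lambda_g$ is free by Corollary 11.7, and condition (ii) is exactly Corollary 8.16. Hence the normaliser subalgebra $M_{\mathcal{N}}$ equals the monotone closure of the $\ast$-algebra generated by the diagonal $A_R$ together with the unitaries $\{u_g\}$. I would then establish $B_R = M_{\mathcal{N}}$ by two inclusions. Since $M_{\mathcal{N}}$ is monotone complete and contains every $e_n\in A_R$ and every $u_n$, it contains $\Pi[\mathcal{F}]$, and as $B_R$ is the smallest monotone $\sigma$-complete subalgebra containing $\Pi[\mathcal{F}]$ we get $B_R\subseteq M_{\mathcal{N}}$. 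Conversely $B_R$ is monotone complete by Lemma 11.11, the $e_n$ $\sigma$-generate all of $A_R$, and $B_R$ contains each $u_n$ and hence each $u_g$; so $B_R$ is a monotone complete subalgebra containing $A_R\cup\{u_g\}$, forcing $M_{\mathcal{N}}\subseteq B_R$.

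Granting this identification the argument concludes rapidly. By Theorem 12.6 the factor $B_R$ is strongly hyperfinite, and in particular approximately finite dimensional, which supplies clause (ii) of Definition 12.4. Applying Proposition 12.5 with $B = B_R = M_{\mathcal{N}}$, now known to be AFD, shows that $M_R$ is nearly approximately finite dimensional with respect to $B_R$: the faithful normal conditional expectation $D_R$ onto $A_R\subseteq B_R$ provides the map required by Definition 12.3(ii), and the Mercer-Bures style approximation of Lemma 9.4 provides clause (iii). Clauses (i) and (ii) of Definition 12.4 together give that $M_R\cong M_{E(R)}$ is hyperfinite.

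I expect the identification $B_R = M_{\mathcal{N}}$ to demand the most care, since it forces one to reconcile the monotone $\sigma$-complete closure used to define $B_R$ with the monotone complete closure defining the normaliser subalgebra. This reconciliation rests squarely on the prior knowledge, from Lemma 11.11, that $B_R$ is in fact monotone complete, and on the fact that $\{e_n\}$ $\sigma$-generates the entire diagonal $A_R$; without both of these ingredients the two closures could a priori differ.
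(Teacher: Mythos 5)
Your proof is correct and follows exactly the route the paper intends for this corollary, which it states without an explicit proof as an immediate consequence of Theorem 12.6 combined with Proposition 12.5, Lemma 9.3/9.4 and Theorem 10.1. Your careful reconciliation of the monotone $\sigma$-closure defining $B_R$ with the monotone closure defining the normaliser subalgebra $M_{\mathcal{N}}$ --- resting on Lemma 11.11 (monotone completeness of $B_R$ via the faithful state) and on the fact that the $e_n$ $\sigma$-generate $A_R$ --- is precisely the point the paper glosses over (compare the parenthetical remark in the proof of Theorem 12.8), and you handle it correctly.
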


\begin{theorem}
Let $S$ be a separable compact Hausdorff extremally disconnected \ space. Let
$C(S)$ be countably $\sigma-$generated. Let $G$ be a countably infinite group
of homeomorphisms of $S$ with a free, dense orbit. Let $E$ be the orbit
equivalence engendered by $G$ and $M_{E}$ the corresponding monotone complete
factor. Suppose that the Boolean algebra of projections of $C(S)$ is countably
generated. Then $M_{E}$ is nearly AFD. Let $B$ be the smallest monotone closed
$\ast-$subalgebra of $M_{E}$ containing the diagonal of $M_{E}$ and the
unitaries induced by the action of $G.\ $\ Then $B$ is AFD.
\end{theorem}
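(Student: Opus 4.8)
The plan is to reduce this theorem to the machinery already developed, primarily Theorem 7.10, Corollary 9.6, Proposition 9.8, and Proposition 12.5. The key observation is that the hypotheses—$S$ separable compact Hausdorff extremally disconnected, $C(S)$ with a countably generated Boolean algebra of projections, and a countably infinite group $G$ of homeomorphisms with a free dense orbit—are precisely the ingredients needed to invoke the replacement of $G$ by the Dyadic Group $\bigoplus\mathbb{Z}_2$.

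First I would apply Theorem 7.10 (via Corollary 9.6) to replace the action of $G$ by an action $\gamma$ of $\bigoplus\mathbb{Z}_2$ having the same orbit equivalence relation on a suitable invariant dense $G_\delta$ set $Y$. By Theorem 9.5, the monotone cross-product $M(C(S),G)$ is then isomorphic to $M(C(S),\bigoplus\mathbb{Z}_2)$ by an isomorphism carrying diagonal to diagonal and, crucially, carrying the small cross-product $C(S)\times_{\beta^1}G$ onto $C(S)\times_{\beta^2}\bigoplus\mathbb{Z}_2$. Since $M_E$ is identified with the cross-product and $B$ (the normaliser subalgebra, by Theorem 10.1) is identified with the small cross-product, it suffices to prove the AFD conclusion for the Dyadic Group action. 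The point of switching groups is that $\bigoplus\mathbb{Z}_2$ is the union of an increasing sequence of finite subgroups $\Gamma_n$, namely those generated by the first $n$ standard generators, which is exactly the hypothesis of Proposition 9.8.

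Next I would invoke Proposition 9.8 directly. Its hypotheses are now all met: the Boolean algebra of projections of $C(S)$ is countably $\sigma$-generated by some sequence $(p_n)$ (this is the countable-generation assumption), $\Gamma = \bigoplus\mathbb{Z}_2$ is the union of an increasing sequence of finite subgroups $(\Gamma_n)$, and $g \mapsto u_g$ is the implementing unitary representation. The conclusion of Proposition 9.8 gives that the $C^\ast$-algebra generated by $\{u_g : g \in \Gamma\} \cup \{\pi(p_n) : n=1,2,\dots\}$ is the norm-closure of an increasing sequence of finite dimensional subalgebras $(B_n)$. Taking the monotone closure of this norm-closed algebra yields exactly $B$, the smallest monotone closed $\ast$-subalgebra containing the diagonal and the normalising unitaries; this identification uses that the $(p_n)$ $\sigma$-generate all the diagonal projections, so the monotone closure recovers all of the diagonal $A$ together with the unitaries. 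By Definition 12.1, $B$ is therefore AFD. Finally, $M_E$ is nearly AFD with respect to $B$: condition (ii) of Definition 12.3 is the faithful normal conditional expectation $\widehat{D}$ (Proposition 8.14), and condition (iii) is precisely the Mercer–Bures approximation of Lemma 9.4 (equivalently Proposition 8.15). This matches the conclusion assembled in Proposition 12.5.

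The main obstacle I expect is the bookkeeping in the group-replacement step—verifying that the isomorphism from Theorem 9.5 genuinely carries the normaliser subalgebra for the $G$-action onto the normaliser subalgebra for the $\bigoplus\mathbb{Z}_2$-action, and that "countably $\sigma$-generated" and "countably generated Boolean algebra of projections" in the hypotheses line up with what Proposition 9.8 literally requires. One must be careful that the orbit equivalence relations coincide only on $Y$, not on all of $S$, so the identification of the algebras is through $\mathcal{M}(E)/J$ rather than a naive pointwise correspondence; but since the cross-product depends only on the orbit equivalence relation (Theorem 9.5), this causes no difficulty. Once the group is replaced and Proposition 9.8 is applied, the remaining implications are immediate from the cited results, so the theorem follows without further work.
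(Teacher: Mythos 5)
Your proposal is correct and follows essentially the same route as the paper's own proof: identify $M_{E}$ with $M(C(S),\bigoplus\mathbb{Z}_{2})$ via Corollary 9.6, apply Proposition 9.8 to a $\sigma$-generating sequence of projections together with the implementing unitaries, identify the monotone closure of the resulting algebra with the normaliser subalgebra $B$ (Theorem 10.1), and conclude that $M_{E}$ is nearly AFD via Lemma 9.4 and Proposition 12.5. The only difference is one of exposition: you make explicit the bookkeeping (the final clause of Theorem 9.5 carrying the small cross-product onto the small cross-product, and the coincidence of the orbit relations only on the invariant dense $G_{\delta}$ set $Y$) that the paper's proof leaves implicit.
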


\begin{proof}
By Corollary 9.6,we may identify $M_{E}$ with $M(C(S),\bigoplus\mathbb{Z}%
_{2}).$ In other words we can assume that $G=\bigoplus\mathbb{Z}_{2}$. We can
further assume that $g\rightarrow\beta_{g}$, the action of $G$ as
homeomorphisms of $S$, is free and ergodic. Hence the corresponding action
$g\rightarrow\beta^{g},$ as automorphisms of $C(S),$\ is free and ergodic. Let
$\pi$ be the isomorphism from $C(S)$ onto the diagonal. Let $g\rightarrow
u_{g}$ be a unitary representation of $\bigoplus\mathbb{Z}_{2}$ such that
$u_{g}\pi(a)u_{g}^{\ast}=\pi(\beta^{g}(a))$ for each $a\in C(S).$

Let $(p_{n})$ be a sequence of projections in $C(S)$ which $\sigma-$generate
$C(S).$ By Proposition 9.8 the $C^{\ast}$-algebra,$B_{0}$, generated by
$\{p_{n}:n=1,2...\}\cup\{u_{n}:n=1,2...\}$ is the closure of the union of an
increasing sequence of finite dimensional subalgebras. Let $B$ be the smallest
monotone $\sigma-$closed subalgebra containing $B_{0}.$ ($B$ is the normalizer
subalgebra.) Then $B$ is monotone closed (because $M_{E}$ has a faithful
state) and AFD. Hence $M_{E}$ is nearly AFD.
\end{proof}

In Theorem 12.8 the hypotheses allow us to deduce that we can approximate
factors by increasing sequences of finite dimensional subalgebras (AFD) but in
the $2^{c}$ examples \ constructed in Section 11, we can do better. We can
approximate by sequences of full matrix algebras (strongly hyperfinite). Let
$M$ be a monotone factor which is AFD. Is $M$ strongly hyperfinite? Experience
with von Neumann algebras would suggest a positive answer but for wild factors
this is unknown.

Is $M_{E}$ equal to its normalizer subalgebra? Equivalently is the "small"
monotone cross-product equal to the "big" monotone cross-product? In general,
this is unknown. This is closely related to the following question, which has
been unanswered for over thirty years: Let $A$ be a closed $\ast-$subalgebra
of $L(H).$ Let $A^{\sigma}$ be the Pedersen-Borel envelope of $A.$ Let
$A^{\Sigma}$be the sequential closure of $A$ in the weak operator topology. By
a theorem of Davies \cite{h} $A^{\Sigma}$ is a $C^{\ast}$-algebra. Clearly
$A^{\sigma}\subset A^{\Sigma}.$ Are these algebras the same? For some special
cases, a positive answer is known, see the discussion in \cite{zd}.

Let $A$ be a monotone complete factor which is almost separably representable.
When $A$ is a von Neumann algebra being AFD is equivalent to being strongly
hyperfinite and is equivalent to being injective. But, as we pointed out in
Section 1, when $A$ is a wild factor the relationship between injectivity and
being AFD is a mystery.

\end{document}